\definecolor{darkgreen}{rgb}{0,0.45,0}
\DeclareMathAlphabet{\mathbf}{OT1}{cmr}{b}{n}
\tikzset{
  tail arrow/.style={
    postaction={
      decorate,
      decoration={
        markings,
        mark=at position 0 with {\arrow{>}}
      }
    }
  }
}
\DeclareRobustCommand{\tparen}[1]{%
  \mathpalette\tparen@aux{#1}%
}
  \newcommand{\tparen@aux}[2]{%
  \mathopen{%
    \smash{\raisebox{0pt}[0pt][0pt]{\scalebox{0.7}[1]{$#1($}}}%
  }%
  #2%
  \mathclose{%
    \smash{\raisebox{0pt}[0pt][0pt]{\scalebox{0.7}[1]{$#1)$}}}%
  }%
}
\definecolor{mypurple}{rgb}{0.5, 0.0, 0.5}
\theoremstyle{plain}
\newtheorem{thm}{Theorem}[subsection]
\newtheorem{cor}[thm]{Corollary}
\newtheorem{lem}[thm]{Lemma}
\newtheorem{prop}[thm]{Proposition}
\theoremstyle{remark}
\newtheorem{rmk}[thm]{Remark}
\newtheorem{rex}[thm]{Running Example}
\newtheorem{notat}[thm]{Notation}
\theoremstyle{definition}
\newtheorem{defi}[thm]{Definition}
\newtheorem{hyp}{Assumption}
\crefname{equation}{}{}
\crefname{lem}{Lemma}{Lemmas}
\crefname{thm}{Theorem}{Theorems}
\crefname{rmk}{Remark}{Remarks}
\crefname{defi}{Definition}{Definitions}
\crefname{conj}{Conjecture}{Conjectures}
\crefname{ex}{Example}{Examples}
\crefname{sec}{Section}{Sections}
\crefname{prop}{Proposition}{Propositions}
\crefname{hyp}{Assumption}{Assumptions}
\crefname{rex}{Running Example}{Running Example}
\newcommand{\ie}{i.e.\xspace}
\newcommand{\cf}{cf.\xspace}
\newcommand{\eg}{e.g.\xspace}
\newcommand{\defeq}{\mathrel{\mathop:}=}
\newcommand{\co}{\colon}
\newcommand{\op}{\mathrm{op}}
\newcommand{\id}{\mathrm{id}}
\newcommand{\hcomp}{\circ} % horizontal composition
\newcommand{\bimcomp}{\bullet} % bimodule composition
\newcommand{\thg}{{\mathord{\text{--}}}}
\DeclareFontFamily{U}{mathx}{}
\DeclareFontShape{U}{mathx}{m}{n}{<-> mathx10}{}
\DeclareSymbolFont{mathx}{U}{mathx}{m}{n}
\DeclareMathAccent{\wh}{0}{mathx}{"70}
\DeclareMathAccent{\wc}{0}{mathx}{"71}
\newcommand{\comp}[1]{\wh{#1}}
\newcommand{\coj}[1]{\wc{#1}}
\newcommand{\one}{\mathbf{1}}
\newcommand{\mi}{\textrm{-}}
\newcommand{\ot}{\otimes}
\newcommand{\ev}{\mathsf{ev}}
\newcommand{\bxt}{\boxtimes} % oplax tensor product
\newcommand{\nctensor}{\mathbin{\Box}} % non-commuting tensor product
\newcommand{\ctensor}{\otimes} % commuting tensor product
\DeclarePairedDelimiter\boxhom\langle\rangle
\DeclarePairedDelimiter\nchom\llbracket\rrbracket
\DeclarePairedDelimiter\chom\lbrack\rbrack
\newcommand{\ca}{\mathcal} % font for categories
\newcommand{\dc}{\mathbb} % font for double categories
\newcommand{\defcat}{\mathsf}% explicitly defined category (like Set, Cat, ...)
 \newcommand{\dcC}{\dc{C}}
\newcommand{\Bim}{\mathbb{B}\defcat{im}}
\newcommand{\BimC}{\Bim(\dcC)}
\newcommand{\End}{\defcat{End}}
\newcommand{\EndC}{\End(\dc{C})}
\newcommand{\uEndC}{\underline{\smash{\End}}(\dc{C})}
\newcommand{\Mnd}{\defcat{Mnd}}
\newcommand{\uMndC}{\underline{\smash{\Mnd}}(\dc{C})}
\newcommand{\MndC}{\Mnd(\dc{C})}
\newcommand{\EndCA}[1][A]{\End_{#1}(\dc{C})}
\newcommand{\MndCA}[1][A]{\Mnd_{#1}(\dc{C})}
\newcommand{\MultMndC}{\defcat{MultMnd}(\dcC)}
\newcommand{\CommMultMnd}{\defcat{CMultMnd}}
\newcommand{\CommMultMndC}{\defcat{CMultMnd}(\dcC)}
\newcommand{\CommMultBimC}{\defcat{CMultBim}(\dcC)}
\newcommand{\Set}{\defcat{Set}}
\newcommand{\Cat}{\defcat{Cat}}
\newcommand{\VCat}{\defcat{Cat}_\ca{V}}
\newcommand{\BMod}{\dc{B}\defcat{Mod}}
\newcommand{\Mat}{\dc{M}\defcat{at}}
\newcommand{\VMat}{\dc{M}\defcat{at}_\ca{V}}
\newcommand{\Prof}{\dc{P}\defcat{rof}}
\newcommand{\VProf}{\dc{P}\defcat{rof}_\ca{V}}
\newcommand{\Sym}{\dc{S}\defcat{ym}}
\newcommand{\VSym}{\dc{S}\defcat{ym}_\ca{V}}
\newcommand{\VCatSym}{\dc{C}\defcat{atSym}_\ca{V}}
\newcommand{\VSMultCat}{\defcat{SMultCat}_\ca{V}}
\newcommand{\OpdBim}{\dc{O}\defcat{pdBim}}
\newcommand{\VOpdBim}{\OpdBim_\ca{V}}
\newcommand{\SMultProf}{\dc{S}\defcat{MultProf}}
\newcommand{\VSMultProf}{\dc{S}\defcat{MultProf}_\ca{V}}
\newcommand{\freemnd}{\textsf{Free}}
\newcommand{\disc}[1]{\Delta_{#1}}
\newcommand{\coend}{\int}
\tikzset{tick/.style={postaction={decorate,decoration={markings,mark=at
position 0.5 with {\draw[-] (0,.4ex) -- (0,-.4ex);}}}}}
\tikzset{bigtick/.style={postaction={decorate,decoration={markings,mark=at
position 0.5 with {\draw[-] (0,.6ex) -- (0,-.6ex);}}}}}
\newcommand{\tickar}{\begin{tikzcd}[baseline=-0.5ex,cramped,sep=small,ampersand
replacement=\&]{}\ar[r,tick]\&{}\end{tikzcd}}
\newcommand{\sbul}{\scriptstyle\bullet}
\tikzset{bul/.style={postaction={decoration={markings,mark=at position 0.5 with
{\node{$\sbul$};}},decorate}}}
\tikzset{Rightarrow/.style={double equal sign distance,>={Implies},->},
triple/.style={-,preaction={draw,Rightarrow}}}
\newcommand{\tor}{\ensuremath{\relbar\joinrel\mapstochar\joinrel\rightarrow}}
\newcommand{\hid}{\mathrm{id}}
\newcommand{\Two}{\scriptstyle\Downarrow}
\newcommand{\twocong}[2][0.5]{\ar@{}[#2] \save ?(#1)*{\cong}\restore}
\newcommand{\twoeq}[2][0.5]{\ar@{}[#2] \save ?(#1)*{=}\restore}
\newcommand{\rtwocell}[3][0.5]{\ar@{}[#2] \ar@{=>}?(#1)+/l 0.2cm/;?(#1)+/r
0.2cm/^{#3}}
\newcommand{\ltwocell}[3][0.5]{\ar@{}[#2] \ar@{=>}?(#1)+/r 0.2cm/;?(#1)+/l
0.2cm/^{#3}}
\newcommand{\ltwocello}[3][0.5]{\ar@{}[#2] \ar@{=>}?(#1)+/r 0.2cm/;?(#1)+/l
0.2cm/_{#3}}
\newcommand{\dtwocell}[3][0.5]{\ar@{}[#2]|{\Two {#3}}}
\newcommand{\dltwocell}[3][0.5]{\ar@{}[#2] \ar@{=>}?(#1)+/ur  0.2cm/;?(#1)+/dl
0.2cm/^{#3}}
\newcommand{\drtwocell}[3][0.5]{\ar@{}[#2] \ar@{=>}?(#1)+/ul  0.2cm/;?(#1)+/dr
0.2cm/^{#3}}
\newcommand{\dthreecell}[3][0.5]{\ar@{}[#2] \ar@3{->}?(#1)+/u  0.2cm/;?(#1)+/d
0.2cm/^{#3}}
\newcommand{\utwocell}[3][0.5]{\ar@{}[#2] \ar@{=>}?(#1)+/d 0.2cm/;?(#1)+/u
0.2cm/_{#3}}
\newcommand{\dtwocelltarg}[3][0.5]{\ar@{}#2 \ar@{=>}?(#1)+/u  0.2cm/;?(#1)+/d
0.2cm/^{#3}}
\newcommand{\utwocelltarg}[3][0.5]{\ar@{}#2 \ar@{=>}?(#1)+/d  0.2cm/;?(#1)+/u
0.2cm/_{#3}}
\begin{document}
\title[Commuting tensor products]{A unified treatment of commuting tensor products of categories, operads, symmetric multicategories and their bimodules}

\author[N. Gambino]{Nicola Gambino}
\address{Department of Mathematics, The University of Manchester}
\email{nicola.gambino@manchester.ac.uk}

\author[R. Garner]{Richard Garner}
\address{School of Mathematical and Physical Sciences, Macquarie University}
\email{richard.garner@mq.edu.au}

\author[C. Vasilakopoulou]{Christina Vasilakopoulou}
\address{School of Applied Mathematical and Physical Sciences, National Technical University of Athens}
\email{cvasilak@math.ntua.gr}

\begin{abstract} 
We provide a unified treatment of several commuting tensor products considered in the literature,
including the tensor product of enriched categories and the Boardman--Vogt tensor product of 
operads and symmetric multicategories, subsuming work of Elmendorf and Mandell. 
We then show how a commuting tensor product  extends to bimodules, generalising results of Dwyer and Hess. In particular, we construct a double category of symmetric multicategories, symmetric 
multifunctors and bimodules and show that it admits a symmetric oplax monoidal structure.
These applications are obtained as instances of a general construction of commuting 
tensor products on double categories of monads, monad morphisms and bimodules.
\end{abstract}

%\date{\today}

\maketitle

\setcounter{tocdepth}{1}
\tableofcontents

\section{Introduction}

\subsection*{Context, aims and motivation} Eighty years since its birth~\cite{EilenbergS:gentne}, Category Theory continues to demonstrate the importance of abstraction and compositionality in Mathematics,
helping us to make analogies precise and to construct complex structures from simple ones. Here, we offer additional evidence of this idea, by providing a general analysis of commuting tensor products, unifying and extending a variety of results in the literature, including some by Elmendorf and Mandell~\cite{ElmendorfA:percma} and by Dwyer and Hess~\cite{DwyerW:BoardmanVtpo}. Before describing our contributions in more detail, we provide some context and motivation.

The general idea of a commuting tensor product can be understood in a simple example. For categories $A, B, C$, a functor in two variables 
(often called a sesquifunctor) $F \co A, B \to C$ consists of two families of functors, 
$( F(a, -) \co B \to C)_{a \in A}$ and $( F(-, b) \co A \to C)_{b \in B}$, which agree on objects, \ie such 
that~$F(a,-)(b) = F(a, -)(b)$, so that we can write $F(a,b)$ for their common value for $a \in A$ and $b \in B$. 
We say that such $F$ is commuting (or that it is  a bifunctor) if for all morphisms $f \co a \to a'$ in $A$ and~$g \co b \to b'$ in $B$, the diagram
\[
\begin{tikzcd}[column sep = large]
F(a,b) \ar[r, "{F(f,b)}"] \ar[d, "{F(a, g)}"'] & F(a',b) \ar[d, "{F(a', g)}"] \\
F(a,b') \ar[r, "{F(f, b')}"'] & F(a',b') 
\end{tikzcd}
\]
commutes. This is a commutation condition since the two paths around
the diagram involve the actions of $f$ and $g$ in different order
This is even more apparent when $A$, $B$ and $C$ are one-object
categories, hence monoids: for then, $F$ is given by a pair of monoid
morphisms $f \colon A \rightarrow C$, $g \co B \rightarrow C$, and is commuting just when
each $f(a)$ commutes with each $g(b)$ in $C$.
It is a classical result that, just as bilinear maps of rings are classified by
the tensor products of rings, commuting functors in two variables as above are classified by the cartesian product $A \times B$ of $A$ and~$B$, in the sense that there is a commuting functor in two variables 
$A, B \to A \times B$ such that composition with it induces a bijection between functors
$A \times B \to C$ and commuting functors in two variables $A, B \to C$, for every category $C$.

An important property of the commuting tensor product of categories is that it extends naturally to act on profunctors, also known as bimodules or distributors~\cite{BenabouJ:dis}. Recall that a 
profunctor $F \co A \tickar B$ is a functor $F \co B^\op \times A \to \Set$.
The extension of the commuting tensor product means that, for profunctors $F_1 \co A_1 \tickar B_1$ and $F_2 \co A_2 \tickar B_2$, 
we can define a profunctor $F_1 \times F_2 \co A_1 \times A_2 \tickar B_1 \times B_2$,
 so that the bicategory of profunctors becomes symmetric monoidal~\cite{ConstrSymMonBicatsFun}.
This is in analogy with the way in which the tensor product of rings can be extended to ring bimodules.
Crucially, this involves proving a suitable interaction between composition
of profunctors and the commuting tensor product.

The initial motivation for this work was to investigate whether a similar phenomenon occurs for symmetric multicategories---also known as coloured operads~\cite{yau2016colored} (an operad is exactly 
a symmetric multicategory with one object~\cite{Mayoperad}). Symmetric multicategories were introduced by Boardman and Vogt as part of their work on homotopy theory~\cite{BoardmanJ:homias}, building on the notion of a 
multicategory, originally defined by Lambek within categorical
logic~\cite{LambekJ:dedscs}. They continue to play an important role in algebraic topology to date (see  \cite{BergerC:rescorh,ElmendorfA:rinmai} for example).
Much as with the situation for categories, there is a commuting tensor product for symmetric multicategories, commonly known as the Boardman--Vogt tensor product~\cite{BoardmanJ:homias}. This can be understood
as a commuting tensor product since, for symmetric multicategories $M$ and $N$, an algebra for 
their tensor product $M \ctensor N$ is an object equipped with operations which make it into an $M$-algebra and an $N$-algebra and
commute with each other in a suitable sense. Equivalently, they are $M$-algebras in the category of $N$-algebras (and conversely). Importantly, fundamental classes of spaces can be characterised 
as algebras for symmetric multicategories obtained via (derived version of) the Boardman--Vogt tensor
product (see~\cite{MoerdijkI:mystp} for a recent overview).

There is also an analogue of profunctor between symmetric multicategories;
these were called bimodules in~\cite[Section~4.4]{GambinoJoyal}, as they generalise 
the operad bimodules of~\cite{Fresse,Kapranov,Rezk}. It is thus natural to ask
whether, just as the commuting tensor product of categories extends to profunctors, the Boardman--Vogt tensor product of symmetric multicategories extends to their bimodules. This means that, for bimodules $P_1 \co M_1 \tickar N_1$ and $P_2 \co M_2 \tickar N_2$,  there is a bimodule $P_1 \otimes P_2 \co M_1 \otimes N_1 \tickar M_2 \otimes P_2$, interacting suitably with composition of bimodules. 
The only result in this direction that we are aware of was obtained by Dwyer and Hess in~\cite{DwyerW:BoardmanVtpo}, where they showed that the Boardman--Vogt tensor product of 
(single-coloured) \emph{operads} extends to their bimodules. Even in the case of operads, however, it is not known what kind of monoidal structure this is and how it interacts with the composition of bimodules. This paper provides
answers to these questions.

Our first goal in this paper is to provide a unified account of a variety of commuting tensor products of the kind
described above for categories and symmetric multicategories. Our results will unify several
known results and derive some new ones.
Our second goal is to show that, in the general setting of our analysis, commuting tensor products can be extended to act on bimodules. As an application, we will show that the Boardman--Vogt tensor product extends to bimodules (which we will call \emph{symmetric multiprofunctors} here, to highlight  the analogy with profunctors), extending the work of Dwyer and Hess. We will also establish the kind of monoidal
structure this gives rise to, in particular describing precisely the  interaction of the Boardman--Vogt tensor product with the composition of symmetric multiprofunctors.

This work therefore contributes to the study of the bicategory of
of symmetric sequences~\cite{FioreM:carcbg} and the bicategory of 
symmetric multicategories and symmetric multiprofunctors~\cite{GambinoJoyal},
which have attracted significant interest in recent years, thanks to the
applications they have found in algebraic topology, mathematical logic and theoretical
computer science. Indeed, these bicategories have been shown to admit a rich structure, including being cartesian closed~\cite{FioreM:carcbg,GambinoJoyal},
to admit fixpoint operators~\cite{GalalZ:fixocs}, and support a notion of differentiation~\cite{FioreM:monbdll}. 
Our work here extends this work by showing the existence of additional structure on the double
in which these bicategories naturally embed, which we hope will foster new applications. 

\begin{table}[htb]
\begin{tabular}{|c|ccc|}
\hline
Double category & $\BMod$ & $\Prof$ & $\SMultProf$  \\ \hline
Objects &  Rings & Categories & Symmetric multicategories \\
Vertical maps & Ring homomorphisms & Functors & Symmetric multifunctors \\
Horizontal maps & Ring bimodules & Profunctors & Symmetric multiprofunctors \\
Non-commuting tensor product & $R + S$ & $A \nctensor B$ & $M \sharp N$ \\
Commuting tensor product & $R \otimes S$ & $A \times B$ & $M \ctensor N$ \\ \hline
\end{tabular}
\medskip
\caption{Commuting and non-commuting tensor products.}
\label{tab:commuting}
\end{table}

\subsection*{Main results} In order to develop our general analysis of commuting tensor products,
we shall work in the setting of a double category~\cite{Ehresmanndouble}, in which one has two classes of morphisms, vertical and horizontal. This is useful for us because it matches our examples, where one has two kinds of maps, \eg functors and profunctors. Another feature of double categories that is important for us is that they provide a a more general and convenient framework for
the construction of monoidal structures than bicategories~\cite{GarnerR:lowdsf,ConstrSymMonBicatsFun}, \cf our comments below.

Crucially for our purposes, the double categories $\BMod$, $\Prof$ and $\SMultProf$ (whose definitions are outlined in \cref{tab:commuting}) can be described
homogeneously.  For a double category $\dcC$ with stable local reflexive coequalisers, there is a double category $\BimC$ with horizontal monads in $\dcC$ as objects,
monad morphisms as vertical maps, and monad bimodules as horizontal maps~\cite{CruttwellShulman,Framedbicats}. 
If $\dcC = \Mat$, the double category of matrices, then $\BimC = \Prof$. In particular, as is
well-known, monads in $\Mat$ are categories. If $\dcC = \Sym$, the double category of symmetric sequences~\cite{Paper1}, then $\BimC = \SMultProf$. In particular, monads in $\Sym$ are symmetric multicategories. In light of this, our unified treatment will be achieved by understanding under what assumptions on a double category $\dcC$ one can define a `commuting' tensor product on the double category  $\BimC$. Our theory will apply also to the $\ca{V}$-enriched versions of the examples above, where $\ca{V}$ is a fixed symmetric monoidal category satisfying appropriate axioms. We will
write $\VMat$  for the enriched counterpart of~$\Mat$ and use similar notation for the other double
categories under consideration. 

To achieve our goal, we shall extend the analysis of commutativity carried out in~\cite{GarnerLopezFranco}.
This work was developed
in the context of a normal duoidal category, \ie a category equipped with two monoidal structures $(\ca{E}, \circ, J, \bxt, I)$ related by (not necessarily invertible) interchange 
maps, subject to appropriate axioms; for example, any symmetric (or braided) monoidal category is normal duoidal, with the two tensor products taken to be the same. One of the main theorems in~\cite{GarnerLopezFranco} shows that, for a normal duoidal category $\ca{E}$ satisfying appropriate assumptions, the category of $\circ$-monoids and $\circ$-monoid morphisms in $\ca{E}$  admits a `commuting' monoidal structure $\otimes$ such that $\circ$-monoid maps $A \otimes B \rightarrow C$ classify pairs of $\circ$-monoid maps $A \rightarrow C$, $B \rightarrow C$ which commute with respect to the other tensor product $\bxt$ of $\ca{E}$. Here, the meaning of ``commuting'' generalises the motivating case of monoids in $(\Set, \times, 1, \times, 1)$, for which it means ``with images that commute elementwise''.

This result of~\cite{GarnerLopezFranco}, applied to suitable bases $\ca{E}$, provides a unified treatment of the commuting tensor product of operads and Lawvere theories, but does not apply to categories or symmetric multicategories. Now,~\cite{GarnerLopezFranco} also considers a more general situation which, rather than the category of $\circ$-monoids, starts from the category of $(\ca{E}, \circ, J)$-enriched categories, and constructs a `commuting' tensor product thereon. Applied to $(\Set, \times, 1, \times, 1)$, this reconstructs perfectly the commuting tensor product (= cartesian product) of categories, and more generally, the usual tensor product of $\ca{V}$-categories over a symmetric (or braided) monoidal base $\ca{V}$. Yet even this generalisation does not suffice to capture symmetric multicategories, which are not $\ca{V}$-categories for any known enrichment base $\ca{V}$.

Thus, our development will provide a different  generalisation of the `one-object' results of~\cite{GarnerLopezFranco} to the `many-object' case.
We will replace the monoidal category~$(\ca{E}, \circ, J)$ with a double category~$\dcC$, using the horizontal composition in $\dcC$ as the counterpart of the substitution monoidal structure of~$\ca{E}$ and correspondingly, replacing $\circ$-monoids in 
the monoidal category with horizontal monads in the double category (see \cref{sec:preliminiaries} for details).
The second monoidal structure $\bxt$ on the duoidal category $\ca{E}$ is then replaced by a \emph{normal oplax monoidal structure}~\cite{Paper1}
on the double category $\dcC$.
Here, the oplaxity means that the functoriality of the tensor functor holds only up to an interchange 2-cell of the form
\begin{equation}
\label{equ:first} 
\begin{tikzcd}[column sep=.4in]
 A_1\bxt A_2\ar[rr,tick,"(y_1\circ x_1)\bxt(y_2\circ x_2)"]\ar[d,equal]\ar[drr,phantom,"\scriptstyle\Downarrow\xi"] && C_1\bxt C_2\ar[d,equal] \\
 A_1\bxt A_2\ar[r,tick,"x_1\bxt x_2"'] & B_1\bxt B_2\ar[r,tick,"y_1\bxt y_2"'] & C_1\bxt C_2 \mathrlap{.}
\end{tikzcd}
\end{equation}
For example, any symmetric monoidal double category $\dcC$ is normal oplax monoidal. In particular, if 
$\ca{V}$~is a symmetric monoidal category, then the double category $\VMat$ is symmetric monoidal in an evident way, and hence normal oplax monoidal. Less trivially,~\cite{Paper1} showed that, for a \emph{cartesian} monoidal category~$\ca{V}$, the double category of $\ca{V}$-enriched symmetric sequences $\VSym$ is normal oplax monoidal under a tensor product $\bxt$ which generalises the \emph{arithmetic product} of one-object symmetric sequences from~\cite{DwyerW:BoardmanVtpo}.
%One is therefore led to consider replacing the tensor product $(\ca{E}, \bxt, I)$ with a monoidal structure on $\ca{K}$. Some issues start to emerge, however, as monoidal bicategories are
%notoriously complex and difficult to work with~\cite{Monoidalbicatshopfalgebroids}. Furthermore, the counterpart of the interchange law for a duoidal structure involves the notion of an oplax monoidal 
%bicategory~\cite{Paper1}, for which we do not have yet coherence theorems to help us with calculations. More importantly, however, working with bicategories suffers from an additional problem, namely that it does not allow us to leverage directly the definition of a commuting tensor product on monads and monad morphisms to define a commuting tensor product on monads and bimodules. 

In the context of a normal oplax monoidal double category, we introduce a notion of a monad multimorphism, by extending to the abstract
setting the concept of a functor in many variables, and say what it means for a monad multimorphism to be commuting, by generalising the definition for a family of monoid morphisms to be commuting in~\cite{GarnerLopezFranco}.
The essential content of this definition can be understood in the binary case, as follows. First of all, a binary monad multimorphism $(f, \phi, \psi) \co 
(A, a), (B, b) \to (C, c)$ involves a vertical map $f \colon A \bxt B \rightarrow C$ and monad maps $(f, \phi) \colon (A \bxt B, a \bxt \hid_B) \rightarrow (C,c)$ and $(f, \psi) \colon (A \bxt B, \hid_A \bxt b) \rightarrow (C,c)$. Now such a multimorphism is said to be commuting if 
\begin{displaymath}
\begin{tikzcd}[column sep = large]
A \bxt B \ar[rr, tick, "a \bxt b"] \ar[d, equal] 
\ar[drr,  phantom, pos=.5,"\Two \sigma"] 
& & A \bxt B \ar[d, equal] \\
A \bxt B \ar[r, tick,"a \bxt \id_{B}"] \ar[d, "f"'] 
\ar[dr,  phantom, pos=.4,"\Two \phi"]  &
A \bxt B \ar[r, tick,"\id_{A} \bxt b"] \ar[d, "f"] 
\ar[dr,  phantom, pos=.4,"\Two \psi"] &
A \bxt B \ar[d, "f"] \\
C \ar[r, tick,"c"'] \ar[d, equal] \ar[drr,  phantom, pos=.5,"\Two \mu"]  &
C \ar[r, tick,"c"'] &
C  \ar[d, equal] \\
C \ar[rr, tick,"c"']  & & 
C 
  \end{tikzcd}  \quad = \quad
  \begin{tikzcd}[column sep = large]
A \bxt B \ar[rr, tick,"a \bxt b"] \ar[d, equal] 
\ar[drr,  phantom, pos=.5,"\Two \tau"] 
& & A \bxt B \ar[d, equal] \\
A \bxt B \ar[r, tick,"\id_{A} \bxt b "] \ar[d, "f"'] 
\ar[dr,  phantom, pos=.4,"\Two \psi"]  &
A \bxt B \ar[r, tick,"a \bxt  \id_{B}"] \ar[d, "f"] 
\ar[dr,  phantom, pos=.4,"\Two \phi"] &
A \bxt B \ar[d, "f"] \\
C \ar[r, tick,"c"'] \ar[d, equal] \ar[drr,  phantom, pos=.5,"\Two \mu"]  &
C \ar[r, tick,"c"'] &
C  \ar[d, equal] \\
C \ar[rr, tick,"c"']  & & 
C \mathrlap{,}
  \end{tikzcd} 
  \end{displaymath}
where the 2-cells $\sigma$ and $\tau$ can be
obtained via the interchange 2-cell $\xi$ in~\eqref{equ:first}.
Here, commutativity can be intuitively seen by the swapping of $\phi$ and $\psi$ between
the two diagrams (see \cref{thm:monad-multimorphism,defi:commuting-monad-multimorphism} for details).
As we will see, this captures the appropriate
notions of commutativity in the examples of interest. Our motivating question then becomes one of identifying conditions under which commuting monad multimorphisms can be classified, meaning that 
there exists a monad $(A, a) \ctensor (B, b)$ with a commuting multimorphism
$(A, a), (B, b) \to (A, a)  \otimes (B,  b)$ which is \emph{universal}, in that it induces, via composition, a bijection between monad morphisms
$(A, a) \ctensor (B, b) \to (C,c)$ and commuting monad multimorphisms
$(A, a), (B, b) \to (C, c)$ for every monad $(C,c)$.

Our first main result, \cref{thm:ctensor-monads}, shows that, under appropriate assumptions,
the category of
monads and monad morphisms in $\dcC$, \ie the vertical category of $\BimC$,
admits a commuting tensor product, which is moreover part of a symmetric monoidal
closed structure. When applied to the normal oplax monoidal double category of matrices $\VMat$,
this gives back the tensor product of enriched categories and enriched
functors. When applied to the normal oplax monoidal double category of symmetric sequences $\VSym$, this
gives back the construction of the Boardman--Vogt tensor product of enriched symmetric multicategories and symmetric multifunctors obtained in~\cite{ElmendorfA:percma}.
The second main result proved here, \cref{thm:ctensor-bimodules}, shows that 
the commuting tensor product of \cref{thm:ctensor-monads} extends to monad
bimodules and  that the double category $\BimC$ of bimodules in $\dcC$
admits a symmetric oplax monoidal structure. 

We then provide two applications of our main results. First, we consider the double category of matrices $\VMat$. When $\ca{V}$ is symmetric monoidal, our results then give an abstract construction of the tensor product of $\ca{V}$-categories and its
extension to profunctors (\cref{thm:app-prof}). While this is known, it is pleasing to have a conceptual
proof of the existence of the symmetric monoidal structure on the bicategory of $\ca{V}$-profunctors, in 
particular taking care of the daunting coherence axioms. 
However, when $\ca{V}$ is merely normal duoidal, we still have normal oplax monoidal structure on $\VMat$, and in this context, we re-find exactly the previous `many-object' theory of commutativity developed in~\cite{GarnerLopezFranco}---but, in addition, an extension of that theory to bimodules between enriched categories, which appears to be new.

For our second main application, we consider the normal oplax monoidal double category of symmetric sequences $\VSym$. As noted above, our results here give
back the Boardman--Vogt tensor product of  symmetric $\ca{V}$-multicategories, but furthermore provide a new result
in extending it to their bimodules, which we we call here \emph{symmetric $\ca{V}$-multiprofunctors} (\cref{thm:bv-bimod}).
Our approach with double categories is useful here:  the double category $\VSMultProf$ admits a symmetric oplax monoidal structure that does not seem to satisfy the
extra condition of normality that is needed in order to obtain a symmetric monoidal structure on its horizontal bicategory (\cf \cref{thm:not-normal}).

Even when restricted to operads (\cref{corollary:oplaxoperads}), this provides something new, in that it describes precisely
the interaction between the Boardman--Vogt tensor product and the composition operation of bimodules.
Given that the definition of the Boardman--Vogt tensor product of operads and operad morphisms goes back to~\cite{BoardmanJ:homias} and its extension to operad bimodules to~\cite{DwyerW:BoardmanVtpo}, this
appears to fill a gap in our knowledge of operads.

Let us also remark that our theory can be applied to yield commuting
tensor products for variants of symmetric multicategories, such as cartesian multicategories~\cite{LambekJ:mulr} and potentially also to their affine and relevant counterparts, which are of interest in mathematical logic
and theoretical computer science~\cite{Olimpieri}. Indeed, cartesian multicategories with a single object
are exactly Lawvere theories~\cite{LawvereFW:funsat}. Since this requires substantial additional work, we do not include it here.

\subsection*{Technical aspects} Our main results establish the definition and key properties of the commuting tensor product of monads
and of bimodules. However, they come at the end of a development that involves
a number of auxiliary notions and results, some of which may be of independent interest. 
Throughout, we have found it useful to use symmetric multicategories not only as source of examples, but also as tool in our proofs. Thus, rather than construct symmetric monoidal structures on a category directly, 
we instead enhance the category to a 
symmetric multicategory and apply the notion of representability due to Hermida~\cite{RepresentableMulticats}. This not only matches naturally what
happens in examples, but allows us to organise some of the more complex
proofs in conceptual terms. 

As a first step, we construct and study the symmetric multicategory of monads and 
(not necessarily commuting) monad multimorphisms. In particular, we show that, under suitable hypotheses on our base double category $\dcC$, 
this symmetric multicategory is closed and representable. We obtain
what may be called the non-commuting tensor product of monads (\cref{thm:nctensor-smc}).
In the example of $\VMat$, where monads are enriched categories, this gives back what is sometimes called the ``funny tensor product''
of enriched categories~\cite{Categoricalstructures}, whose internal hom consists of functors and not-necessarily-natural
transformations. In the example of $\VSym$, where monads are symmetric multicategories,
this is the auxiliary tensor product written $M \sharp N$ in \cite{ElmendorfA:percma}, \cf \cref{tab:commuting}.

Building on this, we introduce our notion of a commuting monad multimorphism (\cref{defi:commuting-monad-multimorphism}) and proceed in parallel to what we did for the non-commuting tensor
product. In particular, we introduce a symmetric multicategory of monads and 
commuting monad multimorphisms and prove that it is closed and representable.
While we can leverage the earlier work on non-commuting tensor product to simplify
some of the work, the results here involve several additional delicate checks on the preservation of
commutativity by the transposition operations that are involved in the representability.
 
We next turn to extending the commuting tensor product to bimodules, abstracting and generalising the work of Dwyer and Hess in~\cite{DwyerW:BoardmanVtpo}. We use the multicategorical approach again,
introducing a notion of commuting bimodule morphism and showing that it can be classified. This may be of interest in that it gives a universal characterisation of the commuting tensor product of bimodules, an aspect which was not considered in~\cite{DwyerW:BoardmanVtpo}. We then study the interaction between the composition of bimodules and the commuting tensor of monads, showing that we have the structure of symmetric oplax monoidal double category. Here, it should be pointed out that the interaction between the composition of bimodules and the commuting tensor of monads is entirely novel and
 was not considered, even in the case of operads, in~\cite{DwyerW:BoardmanVtpo}.
  
Last, but not least, when it comes to examples, we will have to carry out some additional work to 
ensure that the hypotheses of our theorems are satisfied.  For 
our application to symmetric multicategories, we extend the work in~\cite{Paper1} and
show that the  oplax monoidal structure double category $\VSym$ of symmetric sequences is symmetric and closed, a result that is of independent
interest as it generalises the `divided powers' closed structure of~\cite{DwyerW:BoardmanVtpo}.

\subsection*{Outline} \cref{sec:preliminiaries} sets out the preliminaries needed for the paper, including
the key facts from~\cite{Paper1} that we will use. In \cref{sec:non-commuting} we construct the
non-commuting tensor product of monads. This is then used in \cref{sec:commuting} to define
the commuting tensor product. We extend this to bimodules in \cref{sec:bimodules}. We end
the paper in \cref{sec:applications} with applications.

\subsection*{Acknowledgements} Nicola Gambino acknowledges that this material is based upon work supported by the US Air Force Office for Scientific Research under award number FA9550-21-1-0007, by EPSRC via grant EP/V002325/2, and ARIA via grant MSAI-PR01-P12. Christina Vasilakopoulou acknowledges that this work was implemented in the framework of H.F.R.I call ``3rd Call for H.F.R.I.’s Research Projects to
Support Faculty Members \& Researchers'' (H.F.R.I. Project Number: 23249).

\section{Monads and bimodules in double categories} 
\label{sec:preliminiaries}

\subsection{Double categories and fibrant double categories}
We begin by recalling the notion of (pseudo) double category
\cite{Limitsindoublecats,ConstrSymMonBicatsFun}, in part to establish
our notational conventions.

\begin{defi}
  A \emph{double category} $\dc{C}$ consists of a graph of categories
  $\mathfrak{s}, \mathfrak{t} \colon \dc{C}_1 \rightrightarrows
  \dc{C}_0$, together with composition and identity functors
  $\circ\colon\dc{C}_1\times_{\dc{C}_0}\dc{C}_1\to\dc{C}_1$ and
  $\hid\colon\dc{C}_0\to\dc{C}_1$ which are associative and unital up
  to coherent natural isomorphism. We refer to objects
  $A,B,\dots$ and morphisms $f,g,\dots \colon A \rightarrow B$ of
  $\dcC_0$ as \emph{objects} and \emph{vertical $1$-cells} of $\dcC$,
  and refer to objects $x,y,\dots$ and morphisms
  $\alpha, \beta, \dots \colon x \rightarrow y$ of $\dcC_1$ as
  \emph{horizontal $1$-cells} and \emph{$2$-morphisms} of $\dcC$. We
  typically write horizontal $1$-cells as $x \colon A \tor B$ to
  indicate that $\mathfrak{s}(x) = A$ and $\mathfrak{t}(x) = B$, and
  write $2$-morphisms $\alpha \colon x \rightarrow y$ as to the left in:
  \begin{equation}
    \label{eq:two-morphisms}
    \begin{tikzcd}
      A\ar[d,"f"']\ar[r,tick,"x"]\ar[dr,phantom,"\Two\alpha"] &
      B\ar[d,"g"] \\
      C\ar[r,tick,"y"'] &
      D
    \end{tikzcd} \qquad \qquad 
    \begin{tikzcd}
      A\ar[d,equal]\ar[r,tick,"x"]\ar[dr,phantom,"\Two\alpha"] &
      B\ar[d,equal] \\
      A\ar[r,tick,"y"'] &
      B
    \end{tikzcd}
  \end{equation}
  to indicate that $\mathfrak{s}(\alpha) = f$ and
  $\mathfrak{t}(\alpha) = g$. The invertible witnesses to the weak
  associativity of composition and identities are so-called
  \emph{globular} $2$-morphisms, i.e., ones living in the fibres of
  the functor
  $(\mathfrak{s}, \mathfrak{t}) \colon \dcC_1 \rightarrow \dcC_0
  \times \dcC_0$; a typical globular $2$-morphism is displayed to the
  right above. We will not name these globular coherence cells, and
  will typically suppress associativity isomorphisms
  completely.
\end{defi}
In order to illustrate the notions to be introduced throughout the
paper, we begin here the development of a simple running example. As
the paper progresses, we will slowly add further hypotheses to this
example, in parallel with the hypotheses we require in the general theory.

\begin{rex} \label{thm:running-1}
  Let $\ca{V}$ be a monoidal category with coproducts preserved by tensor product in each variable. We write $\VMat$ for the double category of $\ca{V}$-matrices with sets as objects, functions as vertical maps, and $\ca{V}$-matrices as horizontal maps. Explicitly, a $\ca{V}$-matrix $x \co A \tickar B$ from~$A$ to $B$ is a $\ca{V}$-functor $x \co B \times A \to \ca{V}$ where $A$ and $B$ are considered as discrete $\ca{V}$-categories. The value of such an $x$ at $(b,a) \in B \times A$ will be written as~$x[b;a]$\footnote{We use this slightly unorthodox notation to draw an analogy with symmetric sequences of~\cref{subsec:symseq}.}. With this notation, the composition of $x$ with $y \colon B \tickar C$ is given as to the left in 
  \begin{equation*}
    (y \circ x)[c; a] = \sum_{b \in B} y[c; b] \otimes x[b; a] \qquad \qquad (\hid_A)[a'; a] = \delta_{a'a}\rlap{ ,}
  \end{equation*}
  while the horizontal identity at $A$ is given as to the
  right; here, $\delta_{aa} = I$ and $\delta_{a'a} = 0$ for
  $a' \neq a$. Finally, a $2$-morphism $\alpha$ in $\VMat$ as to the
  left in~\eqref{eq:two-morphisms} is a $B \times A$-indexed
  family of morphisms
  $\alpha_{b,a} \co x[b;a] \to y[gb; fa]$ in $\ca{V}$.
For more details on this double category, see~\cite{Framedbicats,VCocats}.
\end{rex}

Any double category $\dcC$ has an underlying \emph{horizontal
  bicategory} $\mathcal{H}\dcC$ comprising the objects, horizontal
$1$-cells and globular $2$-morphisms of $\dcC$. We will make use of
the hom-categories of this bicategory, which we denote $\dcC[A,B]$
rather than $\mathcal{H}\dcC(A,B)$ for simplicity. Thus, $\dcC[A,B]$ is the category of horizontal
$1$-cells from $A$ to $B$ and globular $2$-morphisms between them, and we often refer to these as ``hom-categories'' of the double category.
For example, in $\VMat$, we have
  \begin{equation}
  \label{Matlp}
    \VMat[A, B] = \VCat(B \times A, \ca{V})
  \end{equation}
and $\ca{H}\VMat$ is the bicategory of matrices, see~\cite{BenabouJ:dis,Varthrenr}.

Fibrancy is one of the most fundamental assumptions
we make on our base double category. It is useful for numerous reasons, for example since 
it often allows us to deduce ``global''
universal properties from ``local'' ones, \cf \cref{prop:strongerfreemonads} below.

 \begin{defi}\label{defi:fibrant}
  A double category $\dcC$ is \emph{fibrant} if
  $(\mathfrak{s}, \mathfrak{t}) \colon \dcC_1 \rightarrow \dcC_0
  \times \dcC_0$ is a Grothendieck fibration.
\end{defi}
A fibrant double category is the same as a \emph{framed bicategory} in
the sense of~\cite[Section~4]{Framedbicats}, and as shown in
Theorem~4.1 of \emph{op.~cit.}, there are various ways of
reformulating the condition. In particular, we have for any vertical
$1$-cell $f \colon A \rightarrow B$, a pair of horizontal $1$-cells
$\comp{f} \colon A \tor B$ and $\coj{f} \colon B \tor A$ 
 and associated
$2$-morphisms $p_1$ and $q_1$, obtained as cartesian liftings of
$\hid_B \in \dcC_1$ along the maps $(f,1)$ and $(1,f)$ in $\dcC_0
\times \dcC_0$, as to the left in:
\begin{equation*}
  \begin{tikzcd}[column sep = scriptsize]
    A\ar[r,tick,"\comp{f}"]\ar[d,"f"']\ar[dr,phantom,"\Two p_1"] & {B}\ar[d,equal] & & & 
    B\ar[r,tick,"\coj{f}"]\ar[d,equal]\ar[dr,phantom,"\Two q_1"] & A\ar[d,"f"] \\
    {B}\ar[r,tick,"\hid_{B}"'] & {B} & & & 
    B\ar[r,tick,"\hid_B"'] & {B}
  \end{tikzcd} \qquad \qquad \qquad 
  \begin{tikzcd}[column sep = scriptsize]
    A\ar[r,tick,"\hid_{A}"]\ar[d,equal]\ar[dr,phantom,"\Two p_2"] & {A}\ar[d,"f"] & & & 
    A\ar[r,tick,"\hid_A"]\ar[d,"f"']\ar[dr,phantom,"\Two q_2"] & A\ar[d,equal] \\
    {A}\ar[r,tick,"\comp{f}"'] & {B} & & &
    B\ar[r,tick,"\coj{f}"'] & {A}\rlap{ .}
  \end{tikzcd}
\end{equation*}
Furthermore, we have $2$-morphisms $p_2, q_2$ obtained by factoring
the $2$-morphism $\hid_f \colon \hid_B \rightarrow \hid_B$ through these
respective cartesian lifts, as to the right above. The $2$-morphisms
$p_1$ and $p_2$, respectively $q_1$ and $q_2$, exhibit the horizontal
$1$-cells $\comp{f}$ and $\coj{f}$ as respectively the
\emph{companion} and the \emph{conjoint} of the vertical $1$-cell $f$.
So a fibrant double category has all companions and conjoints, and in
fact, this characterises fibrancy: for indeed, the cartesian lift of a
general map $(f,g) \colon (A,B) \rightarrow (C,D)$ of
$\dcC_0 \times \dcC_0$ at an object $y$ of $\dcC_1$ over $(C,D)$ can
be constructed by pasting the composite
\begin{equation*}
  \begin{tikzcd}
    A\ar[r,tick,"\comp{f}"]\ar[d,"f"']\ar[dr,phantom,"\Two p_1"] &
    {C}\ar[d,equal]  \ar[r,tick,"y"] &
    {D}\ar[d,equal]  \ar[r,tick,"\coj g"] \ar[dr,phantom,"\Two q_1"]&
    {C} \ar[d, "g"]
    \\
    {C}\ar[r,tick,"\hid_{C}"'] &
    {C} \ar[r,tick,"y"'] &
    {D}\ar[r,tick,"\hid_{D}"'] &
    {D}
  \end{tikzcd}
\end{equation*}
with the identity coherence $2$-morphisms of $\dcC$. So fibrant double
categories are precisely those with all companions and conjoints; and
by dualising appropriately, we see that these are, in turn, the same
as double categories for which
$(\mathfrak{s}, \mathfrak{t}) \colon \dcC_1 \rightarrow \dcC_0 \times
\dcC_0$ is a Grothendieck \emph{op}fibration. Notice that the fibre categories are precisely the above mentioned hom-categories $\dcC[A,B]$ of the double category.

\begin{rex} \label{thm:running-2}
  $\VMat$ is fibrant: given a vertical map $f \colon A \rightarrow B$, i.e., a function between sets, the corresponding $\comp f \colon A \tickar B$ and $\coj f \colon B \tickar A$ are the matrices given as follows, where $\delta$ is defined as before:
  \begin{equation*}
    \comp f[b; a] = \delta_{b, fa} \qquad \text{and} \qquad \coj f[a; b] = \delta_{fa, b}\rlap{ .}
  \end{equation*}
\end{rex}

\subsection{Monads}  In this paper we will be concerned with monads (and later bimodules)
in a double
category. It will be convenient to approach monads via the category of
horizontal endo-$1$-cells, see, \eg, \cite{Monadsindoublecats}, which we now recall.
For a double category $\dcC$, the ordinary category $\EndC$ of \emph{endomaps} in $\dcC$ is defined as follows:
  \begin{itemize}
  \item the objects are pairs $(A, x)$ of an object $A \in \dcC$ and a
    horizontal map $x \colon A \tickar A$;
  \item the morphisms $(f, \phi) \co (A, x) \rightarrow (B,y)$, are pairs consisting of
    a vertical map $f \colon A \rightarrow B$ and a $2$-morphism
    \begin{displaymath}
      \begin{tikzcd}
        A\ar[r,tick,"x"]\ar[d,"f"']\ar[dr,phantom,"\Two\phi"] & A\ar[d,"f"] \\
        B\ar[r,tick,"y"'] & B\rlap{ .}
      \end{tikzcd}
    \end{displaymath}
  \end{itemize}
 Equivalently, $\EndC$ can be defined as the following pullback of categories:
\begin{equation}\label{eq:endc-pullback}
  \begin{tikzcd}
    \EndC \ar[r]\ar[d,"\pi"'] & \dcC_1 \ar[d,"{(\mathfrak{s},\mathfrak{t})}"] \\
    \dcC_0 \ar[r,"\Delta"'] & \dcC_0 \times \dcC_0 \rlap{ .}
  \end{tikzcd}
\end{equation}

We now recall the notions of monad and monad map in $\dc{C}$, as
found, for example, in \cite[\S11]{Framedbicats} (there termed monoids and
monoid homomorphisms) or \cite[Def.~2.4]{Monadsindoublecats}
(termed monads and vertical monad maps).

\begin{defi}\label{def:monads}
  A \emph{monad} $(A,a, \mu, \eta)$ in $\dcC$ consists of an object
  $(A, a \co A \tickar A)$ of $\EndC$ together with globular
  2-morphisms
  \begin{equation*}
    \begin{tikzcd}
      A \ar[r, tick, "a"]  \ar[d, equal]  \ar[drr, phantom, "\Two \mu"] & A \ar[r, tick, "a"]  & A \ar[d, equal] \\
      A \ar[rr, tick, "a"'] &  & A 
    \end{tikzcd} \qquad \qquad 
    \begin{tikzcd}
      A \ar[r, tick, "\id_A"] \ar[d, equal]  \ar[dr, phantom, "\Two \eta"] & A \ar[d, equal] \\
      A \ar[r, tick, "a"'] & A 
    \end{tikzcd}
  \end{equation*}
  subject to the following
  standard associativity and unitality axioms:
  \begin{displaymath}
    \begin{tikzcd}[column sep = 1.5em]
      A\ar[r,tick,"a"]\ar[d,equal]\ar[drr,phantom,"\Two\mu"] & A\ar[r,tick,"a"] & A\ar[r,tick,"a"]\ar[d,equal] & A\ar[d, equal] \\
      A\ar[rr,tick,"a"]\ar[d,equal]\ar[drrr,phantom,"\Two\mu"] && A\ar[r,tick,"a"] & A\ar[d,equal] \\
      A\ar[rrr,tick,"a"'] &&& A \mathrlap{,}
    \end{tikzcd}=
    \begin{tikzcd}[column sep = 1.5em]
      A\ar[r,tick,"a"]\ar[d,equal] & A\ar[r,tick,"a"]\ar[d,equal]\ar[drr,phantom,"\Two\mu"] & A\ar[r,tick,"a"] & A\ar[d, equal] \\
      A\ar[r,tick,"a"]\ar[d,equal]\ar[drrr,phantom,"\Two\mu"] & A\ar[rr,tick,"a"] && A\ar[d,equal] \\
      A\ar[rrr,tick,"a"'] &&& A
    \end{tikzcd} \qquad 
    \begin{tikzcd}[column sep = 1.5em]
      A\ar[r,tick,"\id_A"]\ar[dr,phantom,"\Two\eta"]\ar[d,equal] & A\ar[r,tick,"a"]\ar[d,equal] & A\ar[d,equal] \\
      A\ar[r,tick,"a"]\ar[d,equal]\ar[drr,phantom,"\Two\mu"] & A\ar[r,tick,"a"] & A\ar[d,equal] \\
      A\ar[rr,tick,"a"'] &&A \mathrlap{,}
    \end{tikzcd}=\id_a=
    \begin{tikzcd}[column sep = 1.5em]
      A\ar[r,tick,"a"]\ar[d,equal] & A\ar[r,tick,"\id_A"]\ar[dr,phantom,"\Two\eta"]\ar[d,equal] & A\ar[d,equal] \\
      A\ar[r,tick,"a"]\ar[d,equal]\ar[drr,phantom,"\Two\mu"] & A\ar[r,tick,"a"] & A\ar[d,equal] \\
      A\ar[rr,tick,"a"'] &&A.
    \end{tikzcd}
  \end{displaymath}
  A \emph{monad map} $(f,\phi)$ between monads $(A,a, \mu, \eta)$
  and $(B,b,\mu,\eta)$ is a map
  $(f, \phi) \colon (A,a) \rightarrow (B,b)$ of $\EndC$ satisfying the
  following multiplication- and unit-preservation axioms:
  \begin{equation}
    \label{eq:monad-map-axioms}
    \begin{tikzcd}
      A\ar[r,tick,"a"]\ar[d,equal]\ar[drr,phantom,"\Two\mu"] & A\ar[r,tick,"a"] & A\ar[d,equal] \\
      A\ar[rr,tick,"a"] \ar[d,"f"']\ar[drr,phantom,"\Two\phi"] && A\ar[d,"f"] \\
      B\ar[rr,tick,"b"'] && B
    \end{tikzcd}=
    \begin{tikzcd}
      A\ar[r,tick,"a"]\ar[d,"f"']\ar[dr,phantom,"\Two\phi"] & A\ar[r,tick,"a"]\ar[d,"f"]\ar[dr,phantom,"\Two\phi"] & A\ar[d,"f"] \\
      B\ar[r,tick,"b"] \ar[d,equal]\ar[drr,phantom,"\Two\mu"] & B\ar[r,tick,"b"] & B\ar[d,equal] \\
      B\ar[rr,tick,"b"'] && B
    \end{tikzcd} \qquad\qquad   \begin{tikzcd}
      A\ar[r,tick,"\id_A"]\ar[d,equal]\ar[dr,phantom,"\Two\eta"] & A\ar[d,equal] \\
      A\ar[r,tick,"a"']\ar[d,"f"']\ar[dr,phantom,"\Two\phi"] & A\ar[d,"f"] \\
      B\ar[r,tick,"b"'] & B
    \end{tikzcd}=
    \begin{tikzcd}
      A\ar[r,tick,"\id_A"]\ar[d,"f"'] & A\ar[d,"f"] \\
      B\ar[r,tick,"\id_B"']\ar[d,equal]\ar[dr,phantom,"\Two\eta"] & B\ar[d,equal] \\
      B\ar[r,tick,"b"'] & B. 
    \end{tikzcd}
  \end{equation}
  We write $\MndC$ for the category of monads and monad maps in
  $\dcC$. We will generally write objects of $\MndC$ as $(A,a)$,
  suppressing the multiplication and unit $2$-morphisms.
\end{defi}
\begin{rex} \label{thm:running-3}
  The double category $\End(\VMat)$ is the category of $\ca{V}$-graphs, i.e., pairs of a set $A$ of ``vertices'' and an $A \times A$-indexed set of ``$\ca{V}$-objects of edges'' between them. On the other hand, a monad $(A,a)$ in $\VMat$ is precisely a $\ca{V}$-category $\ca{A}$ with object-set $A$, and hom-sets $\ca{A}(x,y) = a[x; y]$. The $2$-morphisms $\mu$ and $\eta$ provide the composition and identities, for which the monad axioms express associativity and unitality. A monad map $(f, \phi) \colon (A,a) \rightarrow (B,b)$ is precisely a $\ca{V}$-functor $\ca{A} \rightarrow \ca{B}$, with action on objects $f$ and action on homs $\phi$. Therefore $\Mnd(\VMat)=\VCat$.
\end{rex}
We have a commutative triangle of forgetful functors
\begin{equation}
  \label{eq:mnd-to-end-forgetful}
  \begin{tikzcd}[column sep = small]
    \MndC \ar[rr] \ar[dr] & & \EndC \ar[dl] \\ & \dcC_0 \mathrlap{,}
  \end{tikzcd}
\end{equation}
where the map across the top forgets the monad structure, and the maps
down to $\dcC_0$ forget the horizontal structure. We will write
$\EndCA$ and $\MndCA$ for the fibre categories of $\EndC$ and $\MndC$
over $A \in \dcC_0$. Note that $\EndCA$ is the hom-category $\dcC[A,A]$;
and, if we
view $\EndCA$ as monoidal under horizontal composition $\circ$, then $\MndCA$
is precisely the category of $\circ$-monoids in $\EndCA$.

\begin{rmk}
  \label{rmk:discrete}
  It is a trivial, but important, observation that the forgetful
  functor $\MndC \rightarrow \dcC_0$ has a left adjoint $\disc{}$.
  This adjoint sends $A \in \dcC_0$ to the identity monad
  $\disc A = (A, \hid_A)$, with unit and multiplication given by the
  appropriate coherence constraints in $\dcC$. When $\dcC = \VMat$,
  this $\disc A$ is equally well the discrete $\ca{V}$-category on the
  set $A$, and so we may refer to $\disc A$ as the \emph{discrete} monad on $A$.
\end{rmk}

The preceding remark tells us that the free monad on an object of
$\dcC_0$ always exists. However, in this paper, we will also require
free monads on endomaps, whose existence is an extra hypothesis -- see also \cite[Def.~2.8]{Monadsindoublecats}.

\begin{defi}\label{def:free-monads}
We say that the double category $\dcC$ \emph{admits free monads} if the forgetful functor $\MndC \rightarrow \EndC$
in~\cref{eq:mnd-to-end-forgetful} has a left adjoint over $\dcC_0$.
\end{defi}

This definition expresses the familiar universal property of a free
monad on an endomap, and will be used in \cref{sec:commuting} to
construct commuting tensor products of monads. However, it is often
easier in practice to verify the weaker condition that each forgetful
functor $\MndCA \rightarrow \EndCA$ has a left adjoint for every $A \in \dcC$. For example,
in $\VMat$, an object of $\EndCA$ is a $\ca{V}$-graph 
$x \in [A \times A, {\ca{V}}]$, and the free monad thereon is given by
the
usual free $\ca{V}$-category $\ca X \in \MndCA$ on this $\ca{V}$-graph (see e.g.~\cite{Wolff}), with homs
\begin{equation*}
  \ca X(a, a') = \sum_{\substack{a_0,\dots, a_n \in A\\ a_0 = a, a_n = a'}} x[a_{n-1}; a_n] \otimes \cdots \otimes x[a_{0}; a_1]
\end{equation*}
However, the weaker condition just described will imply the stronger
property of \cref{def:free-monads} if the double category $\dcC$ is
fibrant. 
 This result is due
to~\cite{Monadsindoublecats}; we sketch the proof as we will need some
of the details. Most importantly, we will have further use for the
following lemma:

\begin{lem}
  \label{lem:EndMndfib}
Assume $\dcC$ is fibrant. The functor $\EndC\to\dcC_0$ is a bifibration, $\MndC\to\dcC_0$ is a fibration and the
triangle~\cref{eq:mnd-to-end-forgetful} is a fibred functor.
%\footnote{i.e., the horizontal functor preserves cartesian arrows.}
\end{lem}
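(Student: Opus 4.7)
The plan is to exploit fibrancy of $\dcC$ at each stage. For $\EndC \to \dcC_0$, I would use the defining pullback square~\eqref{eq:endc-pullback}, which exhibits this functor as the pullback of $(\mathfrak{s},\mathfrak{t}) \colon \dcC_1 \to \dcC_0 \times \dcC_0$ along the diagonal $\Delta$. Since $\dcC$ is fibrant, $(\mathfrak{s},\mathfrak{t})$ is both a Grothendieck fibration and, by the dual characterisation via conjoints noted in the excerpt, an opfibration; and pullbacks of (op)fibrations along arbitrary functors remain (op)fibrations, so $\EndC \to \dcC_0$ is a bifibration. Explicitly, the cartesian lift of $f \colon A \to B$ at $(B, y)$ is $(A, \coj f \circ y \circ \comp f)$ with structure $2$-morphism obtained by pasting $p_1$, $\hid_y$ and $q_1$; the opcartesian lift of $(A, x)$ at $f$ is constructed dually.

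To show $\MndC \to \dcC_0$ is a fibration I would lift the cartesian structure through the forgetful $\MndC \to \EndC$. Given $(B, b, \mu, \eta) \in \MndC$ and $f \colon A \to B$, I would endow the endomap $f^\ast b \defeq \coj f \circ b \circ \comp f$ with a monad structure using the companion-conjoint adjunction $\comp f \dashv \coj f$ in $\mathcal H\dcC$, whose unit $\nu \colon \hid_A \to \coj f \circ \comp f$ and counit $\varepsilon \colon \comp f \circ \coj f \to \hid_B$ are assembled from $p_1, p_2, q_1, q_2$. Specifically, the multiplication of $f^\ast b$ is the composite
\[
\coj f \circ b \circ \comp f \circ \coj f \circ b \circ \comp f \xrightarrow{\;\hid \cdot \hid \cdot \varepsilon \cdot \hid \cdot \hid\;} \coj f \circ b \circ b \circ \comp f \xrightarrow{\;\hid \cdot \mu \cdot \hid\;} f^\ast b
\]
and the unit is $\hid_A \xrightarrow{\nu} \coj f \circ \comp f \xrightarrow{\hid \cdot \eta \cdot \hid} f^\ast b$. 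The monad axioms for this structure reduce to those of $(b,\mu,\eta)$ combined with the triangle identities of the adjunction, and by construction the cartesian $2$-morphism $\phi \colon f^\ast b \to b$ over $(f,f)$ satisfies~\eqref{eq:monad-map-axioms}, giving a monad map $(f, \phi) \colon (A, f^\ast b) \to (B, b)$.

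For cartesianness in $\MndC$, suppose a monad map $(g, \psi) \colon (C, c) \to (B, b)$ factors as $g = f \circ h$ in $\dcC_0$. Cartesianness in $\EndC$ supplies a unique $(h, \tilde\psi) \colon (C, c) \to (A, f^\ast b)$ with $(f, \phi) \circ (h, \tilde\psi) = (g, \psi)$, and it remains to check that $\tilde\psi$ is a monad map. I would post-compose the proposed identity $\mu_{f^\ast b} \cdot (\tilde\psi \circ \tilde\psi) = \tilde\psi \cdot \mu_c$ with $\phi$: using the monad-map axiom for $\phi$ and the interchange law, the left side becomes $\mu_b \cdot (\psi \circ \psi)$ while the right becomes $\psi \cdot \mu_c$, and these agree since $\psi$ is a monad map. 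The uniqueness clause of cartesianness in $\EndC$ then cancels $\phi$ to yield the axiom; the unit axiom is handled analogously. Finally, the fibred functor condition for~\eqref{eq:mnd-to-end-forgetful} is immediate because the cartesian lifts in $\MndC$ are, by construction, built on the cartesian lifts in $\EndC$.

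The main obstacle is the bookkeeping required to verify the associativity and unit axioms for the proposed monad structure on $f^\ast b$: these amount to a few string-diagram manipulations combining the axioms of $b$ with the triangle identities of $\comp f \dashv \coj f$. Once these are granted, the remaining arguments are purely formal.
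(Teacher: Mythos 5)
Your proposal is correct and follows essentially the same route as the paper: the bifibration claim for $\EndC\to\dcC_0$ is obtained from the pullback~\eqref{eq:endc-pullback}, and the fibration structure on $\MndC\to\dcC_0$ comes from transporting monad structures along the reindexing $\coj f\circ(\thg)\circ\comp f$ using the companion--conjoint cells. The paper merely packages your explicit construction of the monad $f^\ast b$ and the cartesianness check as the general fact that the reindexing functors are lax monoidal (via $p_1\circ q_1$ and $q_2\circ p_2$, i.e.\ your counit and unit), so that taking monoids fibrewise again yields a fibration.
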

\begin{proof}
  Since $(\mathfrak{s},\mathfrak{t})$ is a Grothendieck bifibration,
  so is its pullback $\pi \colon \EndC \rightarrow \dcC_0$ as
  in~\cref{eq:endc-pullback}. Each fibre category $\EndCA = \dcC[A,A]$
  is, as noted above, monoidal under $\circ$, and for any
  $f \colon A \rightarrow B$ in $\dcC_0$, the reindexing functor
  $\comp f \circ (\thg) \circ \coj f \colon \EndCA[B] \rightarrow
  \EndCA$ can be made lax monoidal via the globular $2$-morphisms
  $p_1 \circ q_1 \colon \comp{f}\circ \coj{f} \Rightarrow \hid_B$ and
  $q_2 \circ p_2 \colon \hid_A \Rightarrow \coj{f}\circ \comp{f}$. It
  follows that taking monoids in each fibre yields a new fibration
  over the same base; this fibration is easily seen to be
  $\MndC \rightarrow \dcC_0$, which is thus a Grothendieck fibration,
  with~\cref{eq:mnd-to-end-forgetful} being the fibred forgetful
  functor. For more details, we refer the reader
  to~\cite[\S3]{Monadsindoublecats} and~\cite[\S3.2]{VCocats}.
\end{proof}

From this, we quickly conclude:
\begin{prop}\label{prop:strongerfreemonads}
 Assume $\dcC$ is fibrant and that for every $A \in \dcC$, the forgetful functor $\MndCA \rightarrow \EndCA$ has a left adjoint.
Then $\dcC$ admits free monads, namely those local free objects have the stronger universal property of providing the values of a left adjoint to the functor
  $\MndC \rightarrow \EndC$.
\end{prop}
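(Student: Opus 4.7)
The plan is to bootstrap the fibrewise left adjoints to a global one using the fibrational structure assembled in \cref{lem:EndMndfib}. This realises a general principle: a fibred functor $F \co \mathcal{E} \to \mathcal{D}$ between Grothendieck fibrations over a common base admits a left adjoint whenever every fibrewise functor $F_A$ does. Here we apply this with $F$ the forgetful $\MndC \to \EndC$, which by \cref{lem:EndMndfib} is a fibred functor over $\dcC_0$, with the fibrewise left adjoints supplied by hypothesis.

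Concretely, for $(A, x) \in \EndC$, take the candidate free monad to be $(A, L_A x) \in \MndC$, where $L_A \dashv U_A \co \MndCA \to \EndCA$ is the fibrewise adjunction, with unit $\eta_x \co x \to L_A x$ in $\EndCA$. To verify the global universal property, let $(B, b)$ be a monad and $(f, \phi) \co (A, x) \to (B, b)$ a morphism in $\EndC$. The crux is to factor $(f, \phi)$ through a cartesian lift. Using that $\MndC \to \dcC_0$ is a fibration, form the reindexed monad $(A, f^* b) \in \MndCA$, where $f^* b = \comp{f} \circ b \circ \coj{f}$, together with a cartesian monad map $(f, \pi) \co (A, f^* b) \to (B, b)$. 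Since the forgetful $\MndC \to \EndC$ is a fibred functor, the underlying $2$-morphism $\pi$ is already cartesian in $\EndC$, so $(f, \phi)$ factors uniquely as a fibre morphism $(\id_A, \phi') \co (A, x) \to (A, f^* b)$ in $\EndCA$ followed by $(f, \pi)$.

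The fibrewise universal property of $L_A x$ now yields a unique monad map $\overline{\phi'} \co (A, L_A x) \to (A, f^* b)$ in $\MndCA$ satisfying $\overline{\phi'} \circ \eta_x = \phi'$, and composing with $(f, \pi)$ gives the required unique extension $(A, L_A x) \to (B, b)$ in $\MndC$. The main point of delicacy is not in any individual step but in certifying that the construction $(A, x) \mapsto (A, L_A x)$ assembles into a genuine left adjoint rather than merely an object-wise universal property; this, however, follows formally from the uniqueness clauses invoked above together with the compatibility between fibrewise adjunctions and reindexing automatically induced by $\MndC \to \EndC$ being a fibred functor. Thus \cref{lem:EndMndfib} does essentially all of the real work, and the proof reduces to the factorisation argument just sketched.
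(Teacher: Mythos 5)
Your proof is correct and follows essentially the same route as the paper: both rest on \cref{lem:EndMndfib} together with the general principle that a fibred functor between fibrations with fibrewise left adjoints has a global left adjoint. The only difference is that the paper cites this principle (to \cite{Handbook2}, \cite{Jacobs} and \cite{Doctrinal}) whereas you supply its standard proof inline, via factorisation through cartesian lifts; your closing worry about assembling an object-wise universal property into a functorial adjoint is moot, since a pointwise initial object in each comma category is already equivalent to the existence of a left adjoint.
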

\begin{proof}
  The result follows from the fact that a fibred functor between
  fibrations has a left adjoint if and only if it has a left adjoint
  on each fibre; see, for example~\cite[Proposition~8.4.2]{Handbook2}
  or~\cite[Lemma~1.8.9]{Jacobs}\footnote{Note that these references
    assume a further Beck--Chevalley condition on the fibrewise left
    adjoints, and conclude that the global left adjoint is itself a
    fibred functor; however, an inspection of the proofs show that
    they carry through without these further assumptions on each side.
    Indeed, the weaker result can be seen as a special case of the
    \emph{doctrinal adjunction} of~\cite{Doctrinal}.
  }.
\end{proof}

\subsection{Bimodules}\label{sec:bimodules-defn}  Under suitable assumptions on $\dcC$, the category of
monads in $\dcC$ is the category of objects and vertical
morphisms of a new double category $\BimC$, whose horizontal arrows
are the \emph{bimodules} between monads:

\begin{defi}\label{defi:bimodules}  \leavevmode
\begin{enumerate}[(i)]
\item Let $(A, a)$ and $(B, b)$ be monads in the double category $\dcC$. A $(b,a)$-\emph{bimodule} $p \co (A, a) \tickar (B,b)$ is a horizontal map $p \co A \tickar B$
equipped with a left $b$-action and a right $a$-action, \ie 2-cells
\[
\begin{tikzcd}
A \ar[r,tick, "p"] \ar[d, equal]  \ar[drr, phantom, "\Two \lambda"] & B \ar[r,tick, "b"] & B \ar[d, equal] \\
A \ar[rr, tick, "p"']  & & B \mathrlap{,}
\end{tikzcd} \qquad
\begin{tikzcd}
A \ar[r, tick, "a"] \ar[d, equal] \ar[drr, phantom, "\Two \rho"] & A \ar[r,tick, "p"] & B \ar[d, equal] \\
A \ar[rr, "p"']  & & B \mathrlap{,}
\end{tikzcd} 
\]
respectively, subject to associativity and unitality axioms, and the compatibility condition
\begin{equation}
\label{equ:action}
\begin{tikzcd}
b \circ p \circ a \ar[r, "b \circ \rho"] \ar[d, "\lambda \circ a"'] & b \circ p \ar[d, "\lambda"] \\
p \circ a \ar[r, "\rho"'] & p  \mathrlap{.}
\end{tikzcd}
\end{equation}
\item Let $p \co (A, a) \tickar (B, b)$ and $p' \co (A', a')  \tickar (B', b')$ be bimodules. A \emph{bimodule 2-morphism} 
\[
\begin{tikzcd}
(A,a) \ar[r, tick, "p"] \ar[d, "{(f, \phi)}"'] \ar[dr, phantom, "\Two \theta"] & (B, b) \ar[d, "{(g, \psi)}"]  \\
(A',a')  \ar[r, tick, "p' "'] & (B', b') 
\end{tikzcd}
\]
consists of monad maps $(f, \phi) \co (A, a) \to (A', a')$, $(g, \psi) \co (B,b) \to (B', b')$ and a 2-morphism
\[
\begin{tikzcd}
A \ar[r, tick, "p"] \ar[d, "f"'] \ar[dr, phantom, "\Two \theta"] & B \ar[d, "g"]  \\
A' \ar[r, tick, "p' "'] & B'
\end{tikzcd}
\]
in $\dcC$ subject to standard axioms (see \cite[Def.~11.1]{Framedbicats}).
\end{enumerate}
\end{defi}

\begin{notat} \label{thm:action-notation} For a bimodule $p \co (A,a) \tickar (B, b)$ as above,  we write 
\begin{equation*}
% \label{equ:common-action}
\alpha \co b \circ p \circ a \to p
\end{equation*} 
for the common value of the two composites in \eqref{equ:action} and refer to it
as the \emph{action} of the bimodule.
\end{notat}

The bimodules and bimodule $2$-morphisms in $\dcC$ organise themselves
into a category ${\BimC}_1$ with evident projection functors
$\mathfrak{s}, \mathfrak{t} \colon {\BimC}_1 \rightrightarrows \MndC$.
For suitable $\dcC$ this double graph will underlie a double
category $\BimC$, wherein the horizontal composition $q \bullet p$ of
bimodules $p \co (A,a) \tickar (B, b)$ and $q \co (B,b) \tickar (C,c)$
classifies ``bilinear module $2$-morphisms'', in the sense that
bimodule morphisms $q \bullet p \rightarrow r$ are in bijection with
$2$-morphisms $q \circ p \rightarrow r$ in $\dcC$ which are
equivariant with respect to the outer $a$- and $c$-actions but also
with respect to the inner $b$-actions on $p$ and $q$.

Sufficient assumptions on $\dcC$ to define $\BimC$ are that each hom-category
$\dcC[A,B]$ should have reflexive coequalisers, and these should be
preserved by pre- and post-composition by horizontal $1$-cells; this
is a special case of~\cref{defi:parallel_local} below, in whose
nomenclature we will say that $\dcC$ has ``stable local reflexive
coequalisers''. For such a $\dcC$, we can construct $q \bullet p
\colon (A, a) \tickar (C,c)$ as a
reflexive coequaliser
\begin{equation}
\label{equ:bim-comp}
  \begin{tikzcd}
    q \circ b \circ p  
    \ar[r, shift right = 1, "b \circ \rho"'] \ar[r, shift left =1, "\lambda \circ p"] &
    q \circ p \ar[r] &
    q \bimcomp p
  \end{tikzcd}
\end{equation}
in $\dcC[A,C]$, and equip it with the structure of a $(c, a)$-bimodule
using its universal property and the stability of the local reflexive
coequalisers. We can also have horizontal units: for a monad $(A,a)$,
its identity $(a,a)$-bimodule is the horizontal map
$a \co A \tickar A$ of $\dcC$ equipped with its regular left and right
actions. In this way, one obtains a double category of monads, monad
maps and bimodules:
 
\begin{prop}[Shulman] \label{thm:bim-c} Let $\dcC$ be a double
  category with stable local reflexive coequalisers. There is a double
  category $\BimC$ with monad as objects, monad morphisms as vertical
  maps, monad bimodules as horizontal maps and bimodule 2-morphisms as
  2-morphisms. Furthermore, $\BimC$ itself has stable local
  reflexive coequalisers.
\end{prop}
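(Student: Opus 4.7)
The plan is to construct $\BimC$ by hand and verify the double-category axioms essentially in the standard way attributed to Bénabou and Street for bicategories of bimodules, adapted to the double-categorical setting using local stable reflexive coequalisers. Throughout, universal properties will do almost all of the work, so that the coherence axioms can be checked by unique extension.

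First I would define the horizontal data. The horizontal composition of bimodules $p \co (A,a) \tickar (B,b)$ and $q \co (B,b) \tickar (C,c)$ is the coequaliser $q \bimcomp p$ in $\dcC[A,C]$ from \eqref{equ:bim-comp}; the left $c$- and right $a$-actions on $q \bimcomp p$ are induced using that pre- and post-composition with $c$ and $a$ preserve reflexive coequalisers (\ie local stable reflexive coequalisers), giving unique action 2-cells out of $c \circ (q \bimcomp p) \circ a$. The associativity and unit axioms of these actions, as well as the compatibility \eqref{equ:action}, follow by a standard diagram chase using the universal property. The horizontal identity at a monad $(A,a)$ is the underlying horizontal 1-cell $a \co A \tickar A$, with its left and right actions given by the monad multiplication $\mu$. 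Functoriality in 2-morphisms is handled by pushing a pair of bimodule $2$-morphisms $\theta \co p \Rightarrow p'$, $\theta' \co q \Rightarrow q'$ through the coequaliser diagrams and invoking universality.

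Next I would build the coherence constraints. The left and right unitors $a \bimcomp p \xrightarrow{\cong} p$ and $p \bimcomp a \xrightarrow{\cong} p$ come from the observation that the action $\alpha \co a \circ p \to p$ of a bimodule coequalises the parallel pair whose coequaliser defines $a \bimcomp p$, and conversely the unit $p \cong \hid \circ p \xrightarrow{\eta \circ p} a \circ p \to a \bimcomp p$ provides an inverse; the monad unit axiom gives one composite identity and the other follows from the fact that $\eta$ splits the reflector. The associator $(r \bimcomp q) \bimcomp p \cong r \bimcomp (q \bimcomp p)$ is constructed from two iterated coequalisers in two different orders; because horizontal composition with a fixed $1$-cell preserves reflexive coequalisers on either side (by local stability), both iterated coequalisers present the colimit of a common reflexive simplicial-like diagram $r \circ b \circ q \circ b \circ p \rightrightarrows r \circ q \circ p$, and the associator is the canonical comparison. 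Pentagon, triangle and the naturality of unitors and associator are then all verified by uniqueness of maps out of coequalisers.

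After this, I would assemble the pieces into a double category: checking that the source and target functors $\mathfrak{s}, \mathfrak{t} \co \BimC_1 \rightrightarrows \MndC$ are functorial, and that the horizontal composition is a functor out of the pullback $\BimC_1 \times_{\MndC} \BimC_1$, again via the universal property of coequalisers applied pointwise in each variable. For the final clause, stability of local reflexive coequalisers in $\BimC$ follows from stability in $\dcC$: a reflexive coequaliser of bimodules $(A,a) \tickar (B,b)$ can be computed as the reflexive coequaliser of their underlying horizontal 1-cells in $\dcC[A,B]$, equipped with the unique actions; preservation under $\bimcomp$ then reduces, via another application of universal properties, to the preservation of reflexive coequalisers under $\circ$ in $\dcC$.

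The principal obstacle is not any single deep step but rather the bookkeeping involved in constructing the associator and verifying the pentagon: one must juggle several coequaliser diagrams at once and track the induced actions through each comparison. The combination of the stability hypothesis (to ensure that composites of coequalisers remain coequalisers on each side) and the uniqueness half of the universal property is what renders this routine rather than delicate; once the associator is in place, every remaining coherence axiom is forced by uniqueness, which is exactly why the proof is typically indicated rather than written out in full.
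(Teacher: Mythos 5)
Your proposal is correct and is essentially a write-out of the argument the paper defers to: the paper's proof simply cites Shulman's Theorem~11.5 of \emph{Framed bicategories and monoidal fibrations} and observes that it goes through verbatim with reflexive coequalisers, which is exactly the construction you describe (coequaliser composition, split-coequaliser unitors, iterated-coequaliser associator, coherence by uniqueness, and creation of reflexive coequalisers by the forgetful functor for stability).
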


\begin{proof} The proof in~\cite[Theorem~11.5]{Framedbicats} carries
  over. The development therein assumes stable local coequalisers,
  rather than stable local \emph{reflexive} coequalisers, but only
  only the latter are used. Of course, since we only assume stable
  local reflexive coequalisers, that is all we get in $\BimC$ as well.
\end{proof}

Moreover, we have:
\begin{prop}[Shulman] \label{thm:bim-c-fibrant} Let $\dcC$ be a double
  category with stable local reflexive coequalisers. If $\dcC$ is
  fibrant, then so is $\BimC$. \qed
\end{prop}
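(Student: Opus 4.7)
The plan is to use the characterisation of fibrancy from Section 2.1: a double category is fibrant if and only if it admits all companions and conjoints. Thus, given any monad morphism $(f, \phi) \co (A, a) \to (B, b)$, it suffices to construct a companion bimodule $\tilde{\comp{f}} \co (A,a) \tickar (B,b)$ and a conjoint bimodule $\tilde{\coj{f}} \co (B,b) \tickar (A,a)$, together with their four structure 2-morphisms, and then verify the zigzag identities.

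For the companion, I would take the underlying horizontal 1-cell to be $b \circ \comp{f} \co A \tickar B$ in $\dcC$, where $\comp{f}$ is the companion of $f$ in $\dcC$ provided by its fibrancy. This carries a $(b,a)$-bimodule structure as follows. The left $b$-action is $\mu \circ \comp{f} \co b \circ b \circ \comp{f} \to b \circ \comp{f}$. For the right $a$-action, let $\phi^\sharp \co \comp{f} \circ a \to b \circ \comp{f}$ denote the mate of $\phi$ under the companion/conjoint adjunction, obtained by pasting $\phi$ against $p_1$ and $p_2$; then the right action is $b \circ \phi^\sharp$ followed by $\mu \circ \comp{f}$. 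The bimodule axioms follow from the associativity and unitality of $b$, together with the multiplication- and unit-preservation axioms on $\phi$ recorded in~\eqref{eq:monad-map-axioms}. Dually, the conjoint bimodule has underlying 1-cell $\coj{f} \circ b \co B \tickar A$ (equivalently $a \circ \coj{f}$, up to a canonical isomorphism mediated by the actions), equipped with an $(a,b)$-bimodule structure constructed in the mirror-image manner.

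The companion/conjoint structure cells in $\BimC$ are then obtained from the corresponding cells $p_1, p_2, q_1, q_2$ of $\dcC$ by whiskering with the multiplication of $b$ or $a$; for instance, $p_1^{\BimC}\co \tilde{\comp{f}} \to \hid_{(B,b)}$ is $b \circ p_1 \co b \circ \comp{f} \to b \circ \hid_B \cong b$, where the horizontal identity at $(B,b)$ in $\BimC$ is $b$ itself with its own multiplication as a regular $(b,b)$-bimodule. One then verifies that each such cell is indeed a bimodule 2-morphism (i.e., equivariant with respect to both actions) and that they satisfy the two zigzag identities exhibiting $\tilde{\comp{f}}$ and $\tilde{\coj{f}}$ as companion and conjoint in $\BimC$. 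I expect the main obstacle to be the sheer bookkeeping of these compatibility checks, but each one ultimately reduces to a combination of the monad axioms for $a$ and $b$, the monad map axioms for $\phi$, and the companion/conjoint zigzags in $\dcC$ (which is fibrant by hypothesis). Crucially, this construction does \emph{not} invoke local reflexive coequalisers: these are needed only for composing bimodules, not for lifting vertical maps, so the fibrancy of $\dcC$ alone suffices for the conclusion.
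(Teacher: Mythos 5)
Your proposal is correct and follows essentially the same route as the argument the paper defers to (it gives no proof of its own, citing Shulman): the companion and conjoint of a monad map $(f,\phi)$ are the one-sided free modules $b \circ \comp{f}$ and $\coj{f} \circ b$, with the $a$-side actions induced by the mate of $\phi$, and the structure $2$-morphisms are whiskerings of $p_1,p_2,q_1,q_2$; your closing remark that local reflexive coequalisers are irrelevant to the lifting itself is also accurate. One small correction: the parenthetical claim that $\coj{f} \circ b$ is canonically isomorphic to $a \circ \coj{f}$ is false in general --- already in $\Prof$ these compute $\ca{B}(f-,-)$ and a coproduct of hom-objects of $\ca{A}$ indexed by the fibres of $f$, respectively --- but since your construction only ever uses $\coj{f} \circ b$, nothing in the proof depends on it.
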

Thus, for instance, we deduce in the situation of our running example
that $\VProf$ is fibrant when it exists.

\begin{rex} \label{thm:running-4}
  If $\ca{V}$ is a symmetric monoidal category whose tensor product
  preserves all colimits in each variable, then the double category
  $\VMat$ will have stable local reflexive coequalisers, and so
  supports the construction of the double category of bimodules. This
  is exactly the double category $\VProf$ of $\ca{V}$-profunctors. We
  have already seems that monads in $\VMat$ are
  $\ca{V}$-categories, and monad morphisms are $\ca{V}$-functors. Given
  monads $(A,a)$ and $(B,b)$, corresponding to $\ca{V}$-categories
  $\ca{A}$ and $\ca{B}$, a monad bimodule $(A,a) \tickar (B,b)$
  involves a horizontal arrow $p \colon A \tickar B$, i.e., a matrix
  of $\ca{V}$-objects $p[y; x]$ for $y \in B$ and $x \in A$, together
  with associative and unital actions by $a$ and $b$, which amounts to
  giving $\ca{V}$-morphisms
  \begin{equation*}
    b[y'; y] \otimes p[y; x] \rightarrow p[y'; x] \qquad \text{and} \qquad p[y; x] \otimes a[x; x'] \rightarrow p[y; x']
  \end{equation*}
  satisfying the evident axioms. These are precisely the data of a
  $\ca{V}$-functor $p \colon \ca{B}^\mathrm{op} \otimes \ca{A}
  \rightarrow \ca{V}$, \ie a $\ca{V}$-profunctor; see
  e.g.~\cite[Example~11.8]{Framedbicats} for more details.
\end{rex}
\begin{rmk} The restriction to stable local reflexive coequalisers (in~\cref{thm:bim-c}) rather than all coequalisers is necessary for
  our applications in \cref{sec:applications}. Indeed, we will see
  there that the double category $\VCatSym$ of categorical symmetric
  sequences has all local coequalisers, but only the reflexive ones
  need be stable.
\end{rmk}

We now recall the construction of free bimodules and some of their properties, as these will play an important
role in our development. We write
\begin{equation*}
%\label{equ:forget-bim-to-c}
U \co \BimC \to \dcC
\end{equation*}
for the evident forgetful double functor, whose components fit into the commutative diagram of categories and (ordinary) functors
\begin{equation}
\label{equ:forget-is-fibred}
\begin{tikzcd}[column sep = large]
\BimC_1 \ar[r, "U_1"] \ar[d,  "{(\mathfrak{s}, \mathfrak{t})}"'] & \dcC_1 \ar[d,  "{(\mathfrak{s}, \mathfrak{t})}"] \\
\BimC_0 \times \BimC_0 \ar[r, "U_0 \times U_0"'] & \dcC_0 \times \dcC_0 
\end{tikzcd}
\end{equation}
which amounts to $U=(U_0,U_1)$ preserving source and target. Notice that $U_0\colon\MndC\to\dcC_0$ coincides with the left leg of \cref{eq:mnd-to-end-forgetful}.

Let $(A,a)$ and $(B, b)$ be monads. By the notation introduced in \cref{sec:preliminiaries},
the hom-category $\BimC[(A,a), (B,b)]$ has $(b,a)$-bimodules as objects 
and  globular bimodule 2-morphisms as maps. There is a
forgetful functor 
\[
U_{a,b} \co \BimC[(A,a), (B,b)] \to \dcC[A,B]
\]
mapping a bimodule to its underlying horizontal map and a bimodule 2-morphism to its underlying 2-morphism. 

\begin{lem} \label{thm:free-bimodule} 
The functor $U_{a,b} \co \BimC[(A,a), (B,b)] \to \dcC[A,B]$ is monadic.
\end{lem}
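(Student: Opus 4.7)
The plan is to exhibit $U_{a,b}$ (up to isomorphism) as the Eilenberg--Moore forgetful functor for an explicit monad $T$ on $\dcC[A,B]$, since such functors are monadic by construction.

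To build this $T$, I would set $T(x) = b \circ x \circ a$, extended to 2-morphisms by horizontal whiskering. The unit $\eta^T_x \co x \to b \circ x \circ a$ is the globular $2$-morphism $\eta_b \circ x \circ \eta_a$, and the multiplication $\mu^T_x \co b \circ b \circ x \circ a \circ a \to b \circ x \circ a$ is $\mu_b \circ x \circ \mu_a$. The associativity and unitality of $T$ follow directly from the monad axioms for $a$ and from those for $b$, combined with the interchange law for horizontal composition. Note that this step uses only the existence of horizontal composition in $\dcC$; no coequaliser hypothesis is needed.

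Next, I would identify $T$-algebras with $(b,a)$-bimodules on the nose. The paper has already set up half of this in \cref{thm:action-notation}: any bimodule $(p,\lambda,\rho)$ determines a single globular $2$-morphism $\alpha \co b \circ p \circ a \to p$, namely the common value of the two composites in \eqref{equ:action}; one checks that associativity of $\lambda$ and $\rho$ together with~\eqref{equ:action} is exactly the associativity $\alpha \circ T\alpha = \alpha \circ \mu^T_p$, while unitality of $\lambda$ and $\rho$ gives $\alpha \circ \eta^T_p = \id_p$. Conversely, from a $T$-algebra $\alpha$ one recovers $\lambda := \alpha \circ (b \circ p \circ \eta_a)$ and $\rho := \alpha \circ (\eta_b \circ p \circ a)$, and the $T$-algebra axioms force the compatibility square~\eqref{equ:action} to commute with both composites equal to $\alpha$. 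A short verification shows these two assignments are mutually inverse and respect morphisms, giving an isomorphism $\BimC[(A,a),(B,b)] \cong \dcC[A,B]^T$ over $\dcC[A,B]$; under this isomorphism, $U_{a,b}$ is the Eilenberg--Moore forgetful functor, so it is monadic.

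The only non-routine step is checking that the two translations between $T$-algebra structures and bimodule structures are mutually inverse; this is a diagram chase using the monad unit laws for $a$ and $b$ and interchange, with no conceptual obstacle. Everything else reduces to bookkeeping with horizontal composition, and, notably, the lemma requires neither fibrancy of $\dcC$ nor stable local reflexive coequalisers.
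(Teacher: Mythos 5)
Your proposal is correct and follows essentially the same route as the paper: the paper's proof likewise defines the monad $x \mapsto b \hcomp x \hcomp a$ on $\dcC[A,B]$ and identifies its Eilenberg--Moore algebras with $(b,a)$-bimodules, merely leaving the unit, multiplication, and the algebra/bimodule translation as "evident" where you spell them out. Your observation that neither fibrancy nor stable local reflexive coequalisers is needed for this particular lemma is also accurate, since the hom-category $\BimC[(A,a),(B,b)]$ is defined without reference to horizontal composition of bimodules.
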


\begin{proof}  We can define a monad on
$\dcC[A,B]$ mapping $x \co A \tickar B$ to $b \hcomp x \hcomp a \co A \tickar B$, with evident multiplication
and unit. The category of algebras for this monad is precisely the category $\BimC[(A,a), (B,b)]$
and thus the claim follows. Explicitly, the left adjoint functor sends $x \co A \tickar B$ to the free $(b,a)$-bimodule on $x$, which we write $F_{a,b}(x) \co (A, a) \to (B, b)$, or simply $F(x) \co (A, a) \to (B, b)$
when this does not cause confusion. This is defined as the horizontal composite
\begin{equation}
\label{equ:free-bimodule}
F_{a,b}(x) \defeq b \hcomp x \hcomp a \mathrlap{,} 
\end{equation}
with left and right actions given by the multiplication of the monads $a$ and $b$, respectively.
\end{proof} 

\begin{rmk} By \cref{thm:free-bimodule}, 
%, as a bimodule can be defined equivalently as an algebra for the monad mapping $x \co A \to B$ to $b \hcomp x \hcomp a \co A \to B$. 
%Therefore, 
as a special case of a general result for the Eilenberg-Moore algebras of
a monad, every bimodule  is a reflexive coequaliser of free bimodules. Explicitly,
for a bimodule $p \co (A,a) \tickar (B, b)$, we have the reflexive coequaliser diagram
\begin{equation}
\label{equ:bim-as-coeq-of-free}
\begin{tikzcd}
F_{a,b}F_{a,b}(p) \ar[r, shift right, "\delta"'] \ar[r, shift left, "\gamma"] & 
F_{a,b}(p)  \ar[r, "\alpha"] &
p  
\end{tikzcd}
\end{equation}
 in $\BimC[(A,a), (B,b)]$, where  $\alpha$ is the action of the bimodule (\cf \cref{thm:action-notation}),
 $\gamma  = \mu \circ p \circ \mu$, and $\delta = b \circ \alpha \circ a$. This fact
will be used repeatedly in what follows.
\end{rmk}

\begin{rmk} The horizontal composite of two free bimodules is again a free bimodule.
Indeed, for free bimodules $F_{a,b}(x) \co (A,a) \to (B,b)$ and $F_{b,c}(y) \co (B, b) \to (C,c)$,
where $x \co A \tickar B$ and $y \co B \tickar C$ are horizontal maps in $\dcC$, 
there is a globular  isomorphism
\begin{equation}
\label{equ:bimcomp-of-free-is-free}
F_{b,c}(y) \bimcomp F_{a,b}(x) \cong   F_{a,c}(y \circ b \circ  x) \mathrlap{.} 
\end{equation}
in $\BimC[(A,a), (C,c)]$. 
\end{rmk}

It will be useful to record that, thanks to the fibrancy of $\dcC$ and of $\BimC$, 
the free bimodule on a horizontal map enjoys also a universal 
property that is stronger than the one given by the adjointness of \cref{thm:free-bimodule}.
By~\cref{equ:forget-is-fibred}, there is a diagram 
\[
\begin{tikzcd}[column sep = large]
\BimC_1 \ar[ddr, bend right = 8,  "{(\mathfrak{s}, \mathfrak{t})}"'] \ar[drr, bend left = 8,"U_1"] \ar[dr, "U'"] & & \\
 & \mathcal{E} \ar[r] \ar[d]  &  \dcC_1 \ar[d, "{(\mathfrak{s}, \mathfrak{t})}"]  \\
& \BimC_0 \times \BimC_0 \ar[r, "U_0 \times U_0"']  & \dcC_0 \times \dcC_0 \mathrlap{,}
\end{tikzcd}
\]
where the inner square is a pullback of categories.
Explicitly, $\mathcal{E}$ is the category
whose objects are triples of the form $( (A,a), (B, b), x )$, where $(A,a)$ and $(B,b)$ are 
monads and $x \co A \tickar B$ is a horizontal map, and whose
morphisms $(f, g, \phi) \co ( (A,a), (B, b), x ) \to ( (A',a'), (B', b'), x' )$ are 2-morphisms
\[
\begin{tikzcd}
A \ar[r,tick, "x"] \ar[d, "f"'] \ar[dr, phantom,"\Two \phi"] & B \ar[d, "g"]  \\
A' \ar[r,tick,"x'"'] & B' 
\end{tikzcd}
\]
in $\dcC$. The functor $U'$ acts like $U_1$. We can now state the
desired universal property, which will be useful in the proof of \cref{prop:commuting-bimodules-prerepresentable}. 
This is an instance of \cite[Theorem~3.4]{FibredAdjunctions}, but we give a proof for concreteness.

\begin{lem} \label{thm:extra-fibred-free-bimod}
The functor $U' \co \BimC_1 \to \mathcal{E}$ has a left adjoint.
\end{lem}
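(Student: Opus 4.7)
The plan is to construct the left adjoint $L \co \mathcal{E} \to \BimC_1$ directly, as a global version of the fibrewise free-bimodule functors $F_{a,b}$ from~\cref{thm:free-bimodule}. On objects, set $L((A,a),(B,b),x) = F_{a,b}(x) = b \circ x \circ a$ equipped with its canonical free-bimodule structure from~\cref{equ:free-bimodule}. On a morphism $(f,g,\phi)$ of $\mathcal{E}$, where $f, g$ are monad morphisms with structural $2$-cells $\hat f, \hat g$, let $L(f,g,\phi)$ be the bimodule $2$-morphism whose underlying $2$-morphism in $\dcC_1$ is the horizontal pasting $\hat g \circ \phi \circ \hat f \co b \circ x \circ a \to b' \circ x' \circ a'$. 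Checking this is a well-defined bimodule $2$-morphism is a direct calculation using the monad-morphism axioms~\eqref{eq:monad-map-axioms}.

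The unit $\eta \co \id_{\mathcal{E}} \To U'L$ at a triple $((A,a),(B,b),x)$ is the identity on the monad coordinates, with underlying $2$-morphism $x \to b \circ x \circ a$ obtained by whiskering with the units of the monads $a$ and $b$. The counit $\varepsilon \co LU' \To \id_{\BimC_1}$ at a bimodule $p$ is its action $\alpha \co b \circ p \circ a \to p$ in the sense of~\cref{thm:action-notation}, which is a bimodule $2$-morphism thanks to the axioms of $p$. The triangle identities then reduce to unitality of the monad units and the bimodule action.

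For the universal property, given any morphism $(f,g,\phi) \co ((A,a),(B,b),x) \to U'(p)$ in $\mathcal{E}$ with $p \co (A',a') \tickar (B',b')$, I would define its transpose $\tilde\theta \co F_{a,b}(x) \to p$ as the composite $b \circ x \circ a \xrightarrow{\hat g \circ \phi \circ \hat f} b' \circ p \circ a' \xrightarrow{\alpha} p$. Existence requires verifying that this is a genuine bimodule $2$-morphism, which combines the monad-morphism axioms with the compatibility~\eqref{equ:action} of left and right actions on $p$. Uniqueness follows because any bimodule $2$-morphism out of a free bimodule $F_{a,b}(x)$ is determined by its pre-composition with the unit $x \to b \circ x \circ a$, by the unitality of bimodule actions.

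The main obstacle is the bookkeeping in the preceding paragraph: verifying that $L(f,g,\phi)$ and the transpose $\tilde\theta$ are bimodule $2$-morphisms requires a careful interplay between the monad-morphism $2$-cells, the action $\alpha$ of $p$, and the monad units and multiplications. An alternative, more abstract route that sidesteps this bookkeeping is to observe that both $\BimC_1$ and $\mathcal{E}$ are fibrations over $\MndC \times \MndC$ (the former by~\cref{thm:bim-c-fibrant}, the latter as a pullback of $(\mathfrak{s},\mathfrak{t}) \co \dcC_1 \to \dcC_0 \times \dcC_0$ along $U_0 \times U_0$), that $U'$ is a fibred functor whose fibre over $((A,a),(B,b))$ is precisely $U_{a,b}$, and then to invoke the general fibred-adjunction criterion cited in the proof of~\cref{prop:strongerfreemonads} to upgrade the fibrewise adjunctions $F_{a,b} \dashv U_{a,b}$ to a global one.
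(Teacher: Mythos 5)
Your proposal is correct, but your primary route differs from the paper's. The paper's proof is precisely the ``alternative, more abstract route'' you sketch in your final paragraph: it observes that $\BimC_1$ and $\mathcal{E}$ are fibred over $\BimC_0 \times \BimC_0$, that $U'$ is a fibred functor whose fibre over $((A,a),(B,b))$ is $U_{a,b}$, and then invokes the fibrewise-to-global adjunction criterion already used in \cref{prop:strongerfreemonads}, so that the whole lemma reduces to \cref{thm:free-bimodule} (with companions and conjoints used to mediate between the global hom-sets and the fibrewise ones). Your main argument instead builds the left adjoint by hand: $L(x) = F_{a,b}(x)$, $L(f,g,\phi) = \hat g \circ \phi \circ \hat f$, unit given by whiskering with the monad units, counit given by the action $\alpha$, with the triangle identities following from unitality. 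This is sound: well-definedness of $L$ on morphisms is exactly the multiplication-preservation axioms of~\eqref{eq:monad-map-axioms}, naturality of the counit is exactly the bimodule $2$-morphism axiom, and your uniqueness argument (precomposing the equivariance square for $\theta$ with the unit) is the standard free-algebra argument. The trade-off is as you say: the direct route makes everything explicit at the cost of several equivariance verifications, while the fibred route outsources these to \cref{thm:free-bimodule} and general fibration theory---but note that the one hypothesis of that criterion not already on record is that $U'$ preserves cartesian morphisms, which your sketch (like the paper) asserts without proof; if you take that route, that is the point deserving a check.
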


\begin{proof} Recall that $\BimC_1$ is fibred over $\BimC_0 \times \BimC_0$ since $\BimC$ is a fibrant double category and note that $\mathcal{E}$ is  fibred over $\BimC_0 \times \BimC_0$ as a pullback of a fibration. It can be checked that the functor $U'$, in 
\[
\begin{tikzcd}[column sep = small]
(\BimC)_1 \ar[rr, "U'"] \ar[dr, "{(\mathfrak{s}, \mathfrak{t})}"'] & & \ca{E} \ar[dl, "{(\mathfrak{s}, \mathfrak{t})}"] \\
 & \BimC_0 \times \BimC_0 & 
 \end{tikzcd} 
 \]
 is a fibred functor. Therefore, as in the proof of \cref{prop:strongerfreemonads}, it is sufficient to prove that $U'$ has a fibrewise left adjoint. 
 
 Let $(A,a)$ and $(B, b)$ be monads  and consider the fibers of $\BimC_1$ and $\ca{E}$ over
 $( (A,a), (B, b))$ together with the fiberwise $U'_{(A,a), (B,b)}$ between them. A left adjoint $F'_{(A,a), (B,b)}$
to $U'_{(A,a), (B,b)}$ can be defined by mapping a horizontal map $x \co A \tickar B$ to the free $(b,a)$-bimodule on it,
 $F_{a,b}(x) \co (A,a) \tickar (B, b)$. The required universal property asserts that
there is a natural family of bijections between  2-morphisms in $\BimC$ of the form
\[
\begin{tikzcd}
(A,a) \ar[r,tick, "F x"] \ar[d, "{(f, \phi)}"'] \ar[dr, phantom, "\Two"] & (B,b) \ar[d, "{(g, \psi)}"]  \\
(A',a')  \ar[r,tick, "p'"'] & (B', b')
\end{tikzcd}
\]
and 2-morphisms in $\dcC$ of the form
\[
\begin{tikzcd}
A \ar[r,tick, "x"] \ar[d, "f"'] \ar[dr, phantom, "\Two"] & B \ar[d, "g"]  \\
A' \ar[r, tick, "p'"'] & B' \mathrlap{,}
\end{tikzcd}
\]
 where  $x \co A \tickar B$ is horizontal map and  $p' \co (A', a') \tickar (B',b')$ is a bimodule.
This can be shown using companions and conjoints and \cref{thm:free-bimodule}, crucially exploiting the compatibility of forgetful functor $U$ with companions and conjoints.
\end{proof} 

\subsection{Limits and colimits in a double category}
For many constructions in this paper, we will require various kinds of
completeness and cocompleteness of our base double category. Firstly,
we have notions which relate to ``ordinary'' $1$-categorical limits
and colimits.

\begin{defi}\label{defi:parallel_local}
  Let $\dcC$ be a double category, and let $\mathcal{D}$ be a class of
  diagram-shapes (i.e., a class of small categories).
  \begin{itemize}[itemsep=0.25\baselineskip]
  \item We say that $\dcC$ has \emph{parallel
      $\mathcal{D}$-(co)limits} if $\dcC_1$ and $\dcC_0$ are
    equipped with chosen (co)limits of $\mathcal{D}$-shaped diagrams, and
    these are preserved strictly by the source and target functors
    $\mathfrak s$ and $\mathfrak t$. 
  \item We say that $\dcC$ has \emph{local $\mathcal{D}$-(co)limits}
    if each hom-category $\dcC[A,B]$ is equipped with chosen
    (co)limits of $\mathcal{D}$-shaped diagrams. We say that these local
    (co)limits are \emph{stable} if they are preserved by each pre-
    and postcomposition functor $\dcC[A,B] \rightarrow \dcC[A',B]$ and
    $\dcC[A,B] \rightarrow \dcC[A,B']$.
  \end{itemize}
\end{defi}

The term ``parallel (co)limit'' is due to Par\'e, see also \cite[Def.~3.1.9]{SweedlerTheoryDouble}. In fact, the strict preservation of chosen (co)limits by $\mathfrak s$ and $\mathfrak t$ is ensured when $\dc{C}$ is fibrant and $\mathfrak s$ and $\mathfrak t$ preserve (co)limits in the usual -- up to isomorphism -- sense.
Moreover, notice that what we here call ``stable local'' (co)limits are elsewhere called only ``local'', see e.g. \cite[Def.~11.4]{Framedbicats}, and coincide with what are commonly termed local (co)limits in the horizontal bicategory.
  
Parallel limits and colimits are ``global'', and so it should not be a
surprise that we can use fibrancy to deduce their existence from that
of ``local'' limits and colimits.

\begin{lem}
  \label{lem:local-to-global-limits}
  Assume that $\dcC$ is fibrant and let $\mathcal{D}$ be a class of
  diagram-shapes. If $\dcC_0$ has $\mathcal{D}$-(co)limits, and $\dcC$ has local
  $\mathcal{D}$-(co)limits, then $\dcC$ has parallel $\mathcal{D}$-(co)limits. 
\end{lem}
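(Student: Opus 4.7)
The plan is to exploit fibrancy via the classical result that a Grothendieck fibration inherits $\mathcal{D}$-limits from its base and fibres whenever reindexing preserves them, and dually for colimits via the opfibration structure.

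First, by the discussion preceding \cref{defi:fibrant}, the functor $(\mathfrak{s}, \mathfrak{t}) \colon \dcC_1 \rightarrow \dcC_0 \times \dcC_0$ is a Grothendieck bifibration. An inspection of the cartesian lift constructed there shows that the reindexing functor along $(f, g) \colon (A, B) \rightarrow (A', B')$ is given by $(f, g)^* = \coj g \circ (-) \circ \comp f \colon \dcC[A', B'] \rightarrow \dcC[A, B]$, while dualising yields the opreindexing $(f, g)_! = \comp g \circ (-) \circ \coj f$ in the opposite direction. Using the standard fact that $\comp h \dashv \coj h$ in the horizontal bicategory for every vertical arrow $h$ (a basic property of companions and conjoints), combining the induced postcomposition adjunction $\comp g \circ (-) \dashv \coj g \circ (-)$ with the precomposition adjunction $(-) \circ \coj f \dashv (-) \circ \comp f$ gives an adjunction $(f, g)_! \dashv (f, g)^*$. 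In particular, each reindexing $(f, g)^*$, being a right adjoint, preserves all limits present in its fibre, and each opreindexing $(f, g)_!$, being a left adjoint, preserves all colimits.

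Now since $\dcC_0$ has chosen $\mathcal{D}$-(co)limits so does $\dcC_0 \times \dcC_0$ componentwise, and each fibre $\dcC[A, B]$ has chosen $\mathcal{D}$-(co)limits by the local hypothesis. The classical fibred (co)completeness theorem (in the style of the references cited after \cref{prop:strongerfreemonads}, e.g.\ \cite{Handbook2,Jacobs}) then provides chosen $\mathcal{D}$-limits in $\dcC_1$ strictly preserved by $(\mathfrak{s}, \mathfrak{t})$, and dually chosen $\mathcal{D}$-colimits. Concretely, the chosen limit of a diagram $d \colon \mathcal{D} \to \dcC_1$ is obtained by first computing the chosen limit $(\bar A, \bar B)$ of $(\mathfrak{s}, \mathfrak{t}) \circ d$ in $\dcC_0 \times \dcC_0$, then reindexing $d$ along the limit projections to produce a functor $\mathcal{D} \to \dcC[\bar A, \bar B]$, and then taking its chosen fibrewise limit; the dual prescription yields chosen colimits. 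Strict preservation by each of $\mathfrak{s}$ and $\mathfrak{t}$ is then immediate from the construction.

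The main point to verify is the adjunction $(f, g)_! \dashv (f, g)^*$, which is where fibrancy does essential work: it provides exactly the companion-conjoint adjunctions needed to upgrade the mere existence of local (co)limits into the reindexing-preservation property required to apply the classical fibred (co)completeness theorem.
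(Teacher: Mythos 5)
Your proof is correct and follows essentially the same route as the paper: the paper likewise reduces to the standard fact that a Grothendieck bifibration whose base and fibres have chosen $\mathcal{D}$-(co)limits has a total category with chosen $\mathcal{D}$-(co)limits strictly preserved by the projection, noting in a footnote that reindexing and opreindexing are adjoint in a bifibration. Your explicit verification of that adjunction via the companion--conjoint adjunctions $\comp h \dashv \coj h$ is precisely the detail the paper delegates to the cited references.
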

Thus, in our running example of $\VMat$, if $\ca{V}$ has
$\ca D$-(co)limits, then each hom-category
$\VMat[A,B] = \ca{V}^{B \times A}$ will have
$\ca D$-(co)limits; whence by this result $\VMat$ will have
parallel $\ca D$-(co)limits.
\begin{proof}
  The following is a standard fact about Grothendieck bifibrations
  $p \colon \ca{E} \rightarrow \ca{B}$: if $\ca{B}$ has a choice of
  $\mathcal{D}$-(co)limits, and each fibre category $\ca{E}_b$ has a
  choice of $\mathcal{D}$-(co)limits, then the total category $\ca{E}$
  has a choice of $\mathcal{D}$-(co)limits that is strictly preserved
  by $p$\footnote{This can be deduced, for example, from \cite[Cor.~3.7]{FibredAdjunctions} together with the fact that in a bifibration, reindexing functors for the fibration and the opfibration structure are adjoints to one another. See also~\cite[Prop.~3.1.13]{SweedlerTheoryDouble}.}. Apply this fact to the bifibration
  $(\mathfrak{s}, \mathfrak{t}) \colon \dcC_1 \rightarrow \dcC_0
  \times \dcC_0$, whose fibre categories are $\dcC[A,B]$.
\end{proof}

In this paper we will also make use of limits and colimits in $\EndC$
and $\MndC$. The following lemma allows us to ensure the necessary
completeness and cocompleteness, see also \cite[Prop.~3.1.11\&3.3.6]{SweedlerTheoryDouble}.

\begin{lem}
  \label{lem:limits-in-end-mnd}
  Let $\dcC$ be a double category, and $\mathcal{D}$ a class of diagram-shapes.
  \begin{enumerate}[(i)]
  \item If $\dcC$ has parallel $\mathcal{D}$-(co)limits, then $\EndC$
  has $\mathcal{D}$-(co)limits strictly preserved by $\EndC
  \rightarrow \dcC_0$.
  \item If $\dcC$ has parallel $\mathcal{D}$-limits, then $\MndC$
  has $\mathcal{D}$-limits which are strictly preserved by $\MndC
  \rightarrow \dcC_0$.
  \item If $\dcC$ has stable local reflexive coequalisers, free
    monads, and local $\mathcal{D}$-colimits, then each $\MndCA$ has $\mathcal{D}$-colimits.
  \end{enumerate}
\end{lem}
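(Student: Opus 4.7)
\emph{Plan.} For part (i), I would exploit the pullback description \eqref{eq:endc-pullback}. The hypothesis of parallel $\mathcal{D}$-(co)limits gives such (co)limits in $\dcC_0$ and $\dcC_1$ strictly preserved by $\mathfrak{s}$ and $\mathfrak{t}$, hence by $(\mathfrak{s},\mathfrak{t})$; the diagonal $\Delta$ preserves (co)limits since they are pointwise in $\dcC_0\times\dcC_0$. Pullbacks of cocontinuous cospans of cocomplete categories inherit (co)limits pointwise, so $\EndC$ has $\mathcal{D}$-(co)limits strictly preserved by $\pi\colon\EndC\to\dcC_0$.

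For part (ii), given a $\mathcal{D}$-diagram of monads $(A_d,a_d,\mu_d,\eta_d)$, first use part (i) to form the limit $(A,a)=\lim_d(A_d,a_d)$ in $\EndC$, with projection maps $\pi_d\colon(A,a)\to(A_d,a_d)$ whose vertical parts I also denote $\pi_d\colon A\to A_d$. To produce a multiplication on $(A,a)$, I consider the cone over the diagram whose $d$-th leg pairs the vertical map $\pi_d$ with the $2$-morphism $\mu_d\cdot(\pi_d\circ\pi_d)\colon a\circ a\to a_d$; compatibility of this cone is checked using the monad-map axiom \eqref{eq:monad-map-axioms} satisfied by each edge of the original diagram. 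The limit universal property then induces a unique factorisation $(A,a\circ a)\to(A,a)$ whose vertical part is forced to be $\id_A$, yielding a globular $2$-morphism $\mu\colon a\circ a\to a$. The unit $\eta$ is constructed dually from the $\eta_d$. Associativity, unitality, and the universal property in $\MndC$ all follow by repeated use of the uniqueness in the limit universal property. Strict preservation by $\MndC\to\dcC_0$ holds because, by construction, the underlying object is $\lim_d A_d\in\dcC_0$.

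For part (iii), the free monad hypothesis gives a left adjoint $F\colon\EndCA\to\MndCA$ to the forgetful functor $U$. I would proceed in two steps. First, I would show that $\MndCA$ has reflexive coequalisers: for a reflexive pair $(a,\mu_a)\rightrightarrows(b,\mu_b)$, take its coequaliser $c$ in $\EndCA$ (available by local reflexive coequalisers), and equip $c$ with a monad structure obtained by factoring $b\circ b\to b\to c$ through $c\circ c$. This requires that $c\circ c$ be the coequaliser of the induced pair $a\circ a\rightrightarrows b\circ b$, which is a consequence of the joint preservation of reflexive coequalisers by a bifunctor preserving them separately in each variable---exactly what is guaranteed by stable local reflexive coequalisers. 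Associativity, unit, and the universal property of $c$ as a coequaliser in $\MndCA$ then follow by uniqueness of factorisations. Second, I apply a standard Linton-style argument: every $a\in\MndCA$ is the reflexive coequaliser $FUFUa\rightrightarrows FUa\to a$ of its canonical presentation, so the $\mathcal{D}$-colimit of a diagram $D\colon\mathcal{D}\to\MndCA$ can be built as the coequaliser in $\MndCA$ of $F(\colim_d UFUD_d)\rightrightarrows F(\colim_d UD_d)$, with the inner colimits formed in $\EndCA$ (using local $\mathcal{D}$-colimits) and preserved by $F$ as a left adjoint.

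The main obstacle lies in the first step of part (iii): one must verify that the coequaliser $c$ in $\EndCA$ genuinely inherits a well-defined monad structure from the reflexive pair, which requires the joint-preservation lemma for reflexive coequalisers under the bifunctor $\circ$, together with delicate but routine uniqueness arguments to establish associativity, unitality, and the universal property in $\MndCA$.
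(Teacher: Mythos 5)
Your argument follows the paper's proof closely in all three parts: (i) via the strict pullback description \eqref{eq:endc-pullback} of $\EndC$, (ii) by showing in effect that the forgetful functor $\MndC \rightarrow \EndC$ creates limits strictly (you spell out the creation explicitly where the paper merely asserts it), and (iii) via a Linton-style bootstrap from coequalisers in $\MndCA$. The one place you genuinely diverge is the first step of (iii): the paper invokes a cited theorem to conclude that $\MndCA$ has \emph{all} coequalisers from the stability of local reflexive coequalisers, whereas you construct only \emph{reflexive} coequalisers of monoids directly, using the standard fact that a bifunctor preserving reflexive coequalisers separately in each variable preserves them jointly on the diagonal. Your version is more self-contained but yields a weaker conclusion; it still suffices for the Linton step, because the parallel pair $F(\mathrm{colim}(UFUD)) \rightrightarrows F(\mathrm{colim}(UD))$ is reflexive, with common section $F(\mathrm{colim}(\eta_{UD}))$ (both retraction equations follow from the triangle identities and the definition of the canonical comparison map). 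You should make that reflexivity check explicit, since your argument depends on it while the paper's does not; with that added, the proof is complete.
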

For example, if $\ca{V}$ has reflexive coequalisers preserved by tensor
in each variable, then $\VMat$ will have stable local reflexive
coequalisers, and we have already seen that it has free monads. Thus,
each $\MndCA$ will have any class of colimits that $\ca{V}$ has.
\begin{proof}
  For (i), the assumptions imply that~\eqref{eq:endc-pullback} is a
  pullback of strict $\mathcal{D}$-(co)limit-preserving functors: whence the
  pullback category $\EndC$ also has $\mathcal{D}$-(co)limits which are
  preserved by the projections to $\dcC_0$ and $\dcC_1$.
  For (ii), it is easy to see that, under no assumptions at all,
  the forgetful functor $\MndC \rightarrow \EndC$ creates limits
  strictly; now combine this with (i).

  For (iii), the assumptions imply that each $\EndCA=\dcC[A,A]$ has reflexive
  coequalisers preserved by the composition monoidal structure. It
  follows by~\cite[Theorem~2.3]{ColimitsMonoids} that the category of monoids $\MndCA$ has all
  coequalisers. Now since $U \colon \MndCA \rightarrow \EndCA$ has, as a result of our assumptions (\cref{def:free-monads}), a left adjoint $F$, it is monadic. It follows by
  the argument of~\cite{Linton1969Coequalizers} that $\MndCA$ has all
  $\mathcal{D}$-colimits, since $\EndCA$ does; explicitly, for $\mathcal{I} \in \mathcal{D}$,
  the colimit of a diagram $D \colon \ca{I} \rightarrow \MndCA$
  can be
  computed as the coequaliser of the  parallel arrows
  \begin{align*}
    F(\mathrm{colim}(UFUD)) &\xrightarrow{\qquad \qquad \ \ \quad F(\mathrm{colim}(U\varepsilon_D))\qquad \qquad \ \ \quad } F(\mathrm{colim}(UD)) \qquad \text{and} \\
    F(\mathrm{colim}(UFUD)) &\xrightarrow{F(\text{can})} FUF(\mathrm{colim}(UD)) \xrightarrow{\varepsilon_{UF(\mathrm{colim}(UD))}} F(\mathrm{colim}(UD))
  \end{align*}
  in $\MndCA$, where
  $\text{can} \colon \mathrm{colim}(UFUD) \rightarrow
  UF(\mathrm{colim}\ D)$ is the canonical comparison map.
\end{proof}

The following lemma that concerns connected colimits in the fibre and total category of $\dcC_1$ and $\Mnd(\dcC)$ above $\dcC_0$, will also be of use.
Notice that here, colimits in the base category are not required. 
 
\begin{lem}
  \label{lem:local-to-global-connected-limits}\label{thm:fibrancy-coequalizers}
  Assume that $\dcC$ is fibrant and that $\mathcal{D}$ is a class of
  \emph{connected} diagram-shapes. Then 
  \begin{enumerate}[(i)]
  \item a $\mathcal{D}$-colimit in $\dcC[A,B]$ is a colimit in $\dcC_1$, and   
  \item a $\mathcal{D}$-colimit in $\MndCA$ is a colimit in $\MndC$.
  \end{enumerate}
\end{lem}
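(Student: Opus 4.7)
The plan is to exploit the fact that cartesian lifts in a fibration allow us to reduce statements about cocones in the total category to statements about cocones in the fibre, provided the indexing category is connected.

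First, I would handle (i). Let $D \colon \ca{I} \to \dcC[A,B]$ be a connected diagram with colimit $c \in \dcC[A,B]$, with colimit cocone $(\kappa_i \colon D_i \to c)_{i \in \ca{I}}$. The task is to show that $c$ is also a colimit of $D$ regarded as a diagram in $\dcC_1$. Given any cocone $(\gamma_i \colon D_i \to y)_{i \in \ca{I}}$ in $\dcC_1$, the first observation is that the pairs $(\mathfrak{s}(\gamma_i), \mathfrak{t}(\gamma_i))$ are independent of $i$. Indeed, for any morphism $\alpha \colon i \to j$ in $\ca{I}$, the morphism $D(\alpha)$ lies in the fibre $\dcC[A,B]$, so $(\mathfrak{s}(D(\alpha)), \mathfrak{t}(D(\alpha))) = (1_A, 1_B)$; then the cocone equation $\gamma_j \circ D(\alpha) = \gamma_i$ together with the functoriality of $(\mathfrak{s}, \mathfrak{t})$ forces $(\mathfrak{s}(\gamma_i), \mathfrak{t}(\gamma_i)) = (\mathfrak{s}(\gamma_j), \mathfrak{t}(\gamma_j))$. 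By connectedness of $\ca{I}$, this common value is some $(f, g) \colon (A,B) \to (A',B')$.

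Next, I would invoke fibrancy: let $\bar{y} \colon (f,g)^\ast y \to y$ be a cartesian lift of $y$ over $(f,g)$. Each $\gamma_i$ factors uniquely as $\gamma_i = \bar{y} \circ \gamma'_i$ for a unique $\gamma'_i \colon D_i \to (f,g)^\ast y$ in $\dcC[A,B]$. Uniqueness of cartesian factorisation, applied to the identities $\gamma_j \circ D(\alpha) = \gamma_i$, shows that $(\gamma'_i)$ is a cocone on $D$ in $\dcC[A,B]$, so there is a unique globular $h \colon c \to (f,g)^\ast y$ with $h \circ \kappa_i = \gamma'_i$. The composite $\bar{y} \circ h \colon c \to y$ then provides a cocone map in $\dcC_1$. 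For uniqueness, any cocone map $k \colon c \to y$ in $\dcC_1$ must have $(\mathfrak{s}(k), \mathfrak{t}(k)) = (f,g)$ (by the same connectedness argument applied to $\kappa_i$, which are globular, and $k \circ \kappa_i = \gamma_i$), so $k$ factors cartesianly as $k = \bar{y} \circ k'$, and $k'$ is then forced to equal $h$ by the universal property of $c$ in the fibre.

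For part (ii), I would run exactly the same argument, now using that the forgetful functor $\MndC \to \dcC_0$ is a Grothendieck fibration by \cref{lem:EndMndfib}. Given a connected diagram $D \colon \ca{I} \to \MndCA$ with fibre colimit $(c, \gamma)$ and a cocone in $\MndC$ with legs into $(B,b)$, connectedness ensures that all legs share a common underlying vertical $1$-cell $f \colon A \to B$; cartesian lifting of $(B,b)$ along $f$ in $\MndC$ then reduces the problem to the universal property in $\MndCA$, exactly as in (i).

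I do not anticipate any major obstacle: the real content is the observation that for a connected diagram in a fibre, any cocone in the total category is automatically ``vertically uniform'' (all legs share a common underlying $(f,g)$), and from there the result is a completely formal consequence of the existence of cartesian lifts. The only care needed is to verify that the factorised cocone $(\gamma'_i)$ is indeed natural, which uses uniqueness of cartesian factorisation rather than any special structure of $\dcC$.
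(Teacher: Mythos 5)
Your proof is correct and is essentially the paper's argument: the paper also reduces to the general fact that a connected-colimit in a fibre of a Grothendieck fibration is a colimit in the total category, proved by observing that connectedness forces all legs of a cocone to lie over a common base morphism and then using cartesian lifts (the paper phrases your explicit factorisation through $\bar y$ as a chain of natural bijections via "the universal property of reindexing"). No gaps.
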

In particular, if $\dcC$ has local $\mathcal{D}$-colimits (resp., each
$\MndCA$ has $\mathcal{D}$-colimits) then $\dcC_1$ (resp., $\MndC$)
has colimits of all identity-on-objects $\mathcal{D}$-shaped diagrams.

\begin{proof}
  This follows from another general fact about Grothendieck fibrations
  $p \colon \ca{E} \rightarrow \ca{B}$: namely, that the colimit of a
  connected diagram $D \colon \ca{I} \rightarrow \ca{E}_b$ in any
  fibre category is also a colimit in $\ca{E}$. Applying this to the
  fibrations
  $(\mathfrak s,\mathfrak t)\colon\dcC_1\to\dcC_0\times\dcC_0$ and
  $\Mnd(\dcC)\to\dcC_0$ yields the result.

  To verify the general fact, let
  $\bigl(q_i \colon Di \rightarrow \mathrm{colim}(D)\bigr)$ be a colimiting cocone in
  $\ca{E}_b$, and $(f_i \colon Di \rightarrow W)$ any cocone under
  $D$ in $\ca{E}$. By connectedness, we must have that
  $p(f_i) = p(f_j) \colon b \rightarrow p(W)$ for all
  $i,j \in \ca{I}$. Thus, by the universal property of
  reindexing we have the first natural bijection in:
  \begin{equation*}
    [\ca{I}, \ca{E}](D, \Delta W) \cong \sum_{h \colon b \rightarrow pW} [\ca{I}, \ca{E}_b](D, \Delta (h^\ast W)) \cong \sum_{h \colon b \rightarrow pW} \ca{E}_b(\mathrm{colim}(D), h^\ast W) \cong  \ca{E}(\mathrm{colim}(D), W)
  \end{equation*}
  wherein the other bijections are the universal property of the
  colimit, and the universal property of reindexing again.  
  It follows
  that $\bigl(q_i \colon Di \rightarrow \mathrm{colim}(D)\bigr)$ is
  also a colimit in $\ca{E}$. 
\end{proof}

Finally in this section, we record an additional kind of limit,
peculiar to double categories, which will be used in this paper; see \cite[Prop.~4.1]{Niefieldspancospan}.

\begin{defi}\label{def:1tabulators}
  A double category $\dcC$ has \emph{1-tabulators} if the
  identities functor $\hid \colon \dcC_0 \rightarrow \dcC_1$ has a
  right adjoint $T \colon \dcC_1 \rightarrow \dcC_0$. More explicitly,
  this means that for every horizontal arrow $x \colon A \tor B$ of $\dcC$,
  there is an object $Tx \in \dcC$, called the \emph{$1$-tabulator} of $x$,
  equipped with a $2$-morphism
  \begin{equation*}%\label{eq:univ-tabulator}
    \begin{tikzcd}[column sep = scriptsize]
      Tx\ar[r,tick,"\hid_{Tx}"]\ar[d,"\pi_{\mathfrak s}"']\ar[dr,phantom,"\Two \pi"] & {Tx}\ar[d,"\pi_{\mathfrak{t}}"] \\
      {A}\ar[r,tick,"x"'] & {B}
    \end{tikzcd}
  \end{equation*}
  which is universal among $2$-morphisms into $x$ from a horizontal
  identity $1$-cell.
\end{defi}

\begin{rex} \label{thm:running-5}
  In $\VMat$, each horizontal arrow
  $x \colon A \tickar B$ has a 1-tabulator given by the set
  \begin{equation*}
    %\label{equ:mat-tabulator}
    Tx \defeq \{\, (a, b, t) \mid a \in A, b \in B, t \colon I \rightarrow x[b; a] \text{ in } \ca{V}\,\}\rlap{ .}
  \end{equation*}
  The maps $\pi_{\mathfrak s} \co Tx \to A$ and $\pi_{\mathfrak t} \co Tx \to B$ are the two evident projections, and $\pi$
  is the family of $\ca{V}$-morphisms
  \begin{equation*}
    \pi_{(a,b,t), (a',b',t')} \colon \delta_{(a,b,t), (a',b',t')} \rightarrow x[b'; a]
  \end{equation*}
  composed of the unique map out of the initial object when $(a,b,t) \neq (a',b',t')$, and of $t \colon I \rightarrow x[b; a]$ when $(a,b,t) = (a',b',t')$.
\end{rex}

\subsection{Normal oplax monoidal structure}
The other main requirement we will impose on our base double category
is what in~\cite{Paper1} was termed \emph{normal oplax monoidal
  structure}. In this section, we recall the relevant definitions, and
extend these by introducing notions of \emph{closedness} and
\emph{symmetry} for normal oplax monoidal double categories, generalising
those for (pseudo) monoidal double categories.
%\footnote{Let us keep track of the simple fact, that any monoidal double category in the usual sense is of course an example of a normal oplax monoidal double category.}

In the following definition, we make use of the standard notions of
\emph{oplax double functor} and \emph{vertical transformation}; see,
for example,~\cite[Definitions.~3.1~\&~3.5]{Paper1}.

\begin{defi}\label{def:oplaxmonoidal}
Let $\dc{C}$ be a double category. An \emph{oplax monoidal structure} on $\dc{C}$ consists of:
\begin{itemize}
\item an oplax double functor $\bxt\colon\dc{C}\times\dc{C}\to\dc{C}$;
\item an oplax double functor $I\colon\one\to\dc{C}$; and
\item invertible vertical transformations $\alpha \co \mathord{\bxt}\circ(1\times\mathord{\bxt}) \Rightarrow \mathord{\bxt}\circ(\mathord{\bxt}\times1)$,
$\lambda \co \mathord{\bxt}\circ(I\times 1) \Rightarrow 1$ and $\rho \co \mathord{\bxt}\circ(1 \times I)\Rightarrow 1$,
\end{itemize}
where these data are subject to the usual  coherence axioms for $\alpha$, $\lambda$ and $\rho$.
This amounts to the following:
\begin{itemize}
 \item monoidal structures $(\bxt_0,I_0)$ and $(\bxt_1,I_1)$ on the categories $\dc{C}_0$ and $\dc{C}_1$;
 \item strict monoidality of $\mathfrak{s}, \mathfrak{t} \colon \dc{C}_1 \rightrightarrows \dc{C}_0$;
 \item globular 2-morphisms
 \begin{displaymath}
\begin{tikzcd}[column sep=.4in]
 A_1\bxt A_2\ar[rr,tick,"(y_1\circ x_1)\bxt(y_2\circ x_2)"]\ar[d,equal]\ar[drr,phantom,"\scriptstyle\Downarrow\xi"] && C_1\bxt C_2\ar[d,equal] \\
 A_1\bxt A_2\ar[r,tick,"x_1\bxt x_2"'] & B_1\bxt B_2\ar[r,tick,"y_1\bxt y_2"'] & C_1\bxt C_2
\end{tikzcd}\quad\text{and} \quad
\begin{tikzcd}[column sep= scriptsize]
 A_1\bxt
A_2\ar[rr,tick,"\hid_{A_1}\bxt\hid_{A_2}"]\ar[d,equal]\ar[drr,phantom,
"\scriptstyle\Downarrow\upsilon"] && A_1\bxt A_2\ar[d,equal] \\
 A_1 \bxt A_2 \ar[rr,tick,"\hid_{A_1\bxt A_2}"'] && A_1\bxt A_2 \mathrlap{,}
\end{tikzcd}
\end{displaymath}
\begin{displaymath}
\begin{tikzcd}
I_0\ar[d,equal]\ar[rr,tick,"I_1"]\ar[drr,phantom,
"\scriptstyle\Downarrow\delta"] && I_0\ar[d,equal] \\
 I_0\ar[r,tick,"I_1"'] & I_0\ar[r,tick,"I_1"'] & I_0
 \end{tikzcd}\quad\text{and} \quad
 \begin{tikzcd}
I_0\ar[d,equal]\ar[rr,tick,"I_1"]\ar[drr,phantom,
"\scriptstyle\Downarrow\iota"]  && I_0\ar[d,equal] \\
I_0\ar[rr,tick,"\hid_{I_0}"'] && I_0 \mathrlap{,}
 \end{tikzcd}
\end{displaymath}
subject to axioms that make $\bxt$ and $I$ into oplax double functors,
and make the associativity and unitality constraints of $\bxt_0$ and
$\bxt_1$ into the components of vertical transformations $\alpha$,
$\lambda$ and $\rho$.
\end{itemize}
%Equivalently, an oplax monoidal double category is a pseudomonoid in the cartesian monoidal 2-category of double categories, oplax double
%functors and vertical transformations.
\end{defi}

If $\dcC$ has an oplax monoidal structure $(\bxt, I)$, then it is
clear that the monoidal structure on $\dcC_1$ restricts back to a
monoidal structure on the subcategory $\EndC$: but due to the
oplaxity, this monoidal structure will \emph{not} lift to $\MndC$.
However, we can obtain a partial
lifting of $\bxt$ by assuming that the oplax monoidal structure is
\emph{normal} in the following sense (\cf~\cite[Definition~4.3]{Paper1}):

\begin{defi}
\label{def:normality}
An oplax monoidal double category $\dcC$ is said to be \emph{normal} if:
\begin{enumerate}[(i)]
\item \label{normal-i} $I \colon \mathbf{1} \rightarrow \dc{C}$ is a (pseudo) double functor;
\item \label{normal-ii} for all objects $A_1,A_2 \in \dc{C}$, the following restricted oplax double functors are (pseudo) double functors:
  \begin{equation*}
    %\label{eq:onevariablefunctors}
    \begin{aligned}
      A_1 \bxt (\thg) &\colon \dc{C} \cong \mathbf{1} \times \dc{C} \xrightarrow{A_1 \times \dc{C}} \dc{C} \times \dc{C} \xrightarrow{\bxt} \dc{C} \\[-0.3em]
      \text{and} \qquad 
      (\thg) \bxt A_2 &\colon \dc{C} \cong \dc{C} \times \mathbf{1}
      \xrightarrow{\dc{C} \times A_2} \dc{C} \times \dc{C} \xrightarrow{\bxt} \dc{C}\mathrlap{ .}
    \end{aligned}
  \end{equation*}
\end{enumerate}
Said another way, all three $2$-morphisms $\delta$, $\iota$ and $\upsilon$
from \cref{def:oplaxmonoidal} are invertible, while each~$\xi$ for which
$x_1 = y_1 = \hid_{A_1}$ or $x_2 = y_2 = \hid_{A_2}$ is also
invertible. 
\end{defi}

\begin{rex} \label{thm:running-6}
  If the $\ca{V}$ of our running example is \emph{symmetric} (or
  braided) monoidal, then the double category $\VMat$ is normal oplax
  monoidal, with unit the one-object set $1$, with tensor product
  $\bxt$ given on objects and vertical morphisms by cartesian product
  of sets, and with tensor of horizontal morphisms
  $x \colon A \tickar A'$ and $y \colon B \tickar B'$ given by
  \begin{equation*}
    (x \bxt y)\bigl[(a',b'); (a,b)\bigr] = x[a'; a] \otimes y[b'; b]\rlap{ ;}
  \end{equation*}
  the extension of this to $2$-morphisms is evident and left to the
  reader. In this case, it is obvious that there are invertible cells
  $\nu$, $\iota$ and $\delta$ as in~\cref{def:oplaxmonoidal}, and we
  can define $\xi$ using the symmetry isomorphisms of $\ca{V}$. In
  this case, every component of $\xi$ is invertible, so that we have
  not just a normal oplax monoidal double category, but a (pseudo)
  monoidal double category: see, for instance,
  \cite[Examples~3.2.2]{SweedlerTheoryDouble}.

  A more general situation is that $\ca{V}$ is a (non-symmetric,
  non-braided) normal duoidal category $(\ca{V}, \circ, I, \bxt, J)$
  wherein $\circ$ distributives over coproducts. Then we can use the
  $\circ$-monoidal structure to form $\VMat$, and can use the
  $\bxt$-monoidal structure to define a normal oplax monoidal
  structure on $\VMat$; so now
  $(x \bxt y)\bigl[(a',b'); (a,b)\bigr] = x[a'; a] \bxt y[b'; b]$. In this
  case, the maps $\xi$ are defined using the 
  interchangers of the duoidal category $\ca{V}$, which are not always
  invertible, and so we get a normal oplax monoidal double category
  which is \emph{not} (pseudo) monoidal.

  For the purposes of our running example, it will be clearer if we
  focus on the more familiar case of a symmetric monoidal $\ca{V}$,
  despite it not illustrating the full generality of our results;
  however, we will also comment on the normal duoidal case where it
  may add further insight.
\end{rex}

Recall from Remark~\ref{rmk:discrete} that $\disc A$ is
our notation for $(A, \hid_A)$, seen as an object 
of $\MndC$.

\begin{lem}
  \label{lem:tensoring-with-id-lifts}
  Assume the oplax monoidal structure $(\bxt, I)$ on the double category
  $\dcC$ is normal. Then for each $A \in \dcC$, the functor $(\thg) \bxt
  B \colon \dcC_0 \rightarrow \dcC_0$ lifts to a
  functor $(\thg) \bxt \disc B \colon \MndC \rightarrow \MndC$. Similarly, there is a functor $\disc A\bxt(\thg)\colon\MndC\to\MndC$.
\end{lem}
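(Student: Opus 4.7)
The plan is to reduce the claim to the general principle that a pseudo double functor preserves monads and monad morphisms. By Definition~\ref{def:normality}(ii), the normality hypothesis says precisely that the restricted oplax double functor $F \defeq (\thg)\bxt B \co \dcC \to \dcC$ is in fact pseudo; concretely, every interchange cell of the form
\[
\xi \co (y\hcomp x)\bxt\hid_B \Rightarrow (y\bxt\hid_B)\hcomp(x\bxt\hid_B)
\]
is invertible, as is the unit coherence $\upsilon \co \hid_{A\bxt B}\Rightarrow \hid_A\bxt\hid_B$.

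Given a monad $(A,a,\mu,\eta)$, I would define a monad structure on the horizontal map $a\bxt\hid_B \co A\bxt B \tickar A\bxt B$ by setting the multiplication and unit to be respectively the pastings
\[
(a\bxt\hid_B)\hcomp(a\bxt\hid_B)\ \xRightarrow{\ \xi^{-1}\ }\ (a\hcomp a)\bxt\hid_B \ \xRightarrow{\ \mu\bxt\hid_B\ }\ a\bxt\hid_B
\]
and
\[
\hid_{A\bxt B}\ \xRightarrow{\ \upsilon^{-1}\ }\ \hid_A\bxt\hid_B\ \xRightarrow{\ \eta\bxt\hid_B\ }\ a\bxt\hid_B.
\]
The associativity and unitality axioms for this structure follow from the associativity and unitality axioms for $(\mu,\eta)$ combined with the pseudo double functor coherence axioms satisfied by $\xi$ and $\upsilon$ (these are the axioms relating $\xi$ to itself, and $\xi$ to $\upsilon$, listed among the data in Definition~\ref{def:oplaxmonoidal}).

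For a monad map $(f,\phi) \co (A,a)\to(A',a')$, I would take its image to be the pair $(f\bxt B,\ \phi\bxt\hid_B)$. That this satisfies the multiplication- and unit-preservation axioms of~\eqref{eq:monad-map-axioms} for the new monad structures follows by pasting the corresponding axioms for $(f,\phi)$ with the naturality squares for $\xi$ and $\upsilon$. Functoriality in $(f,\phi)$ is immediate from functoriality of $\bxt$, and the symmetric functor $\disc A \bxt (\thg)$ is handled identically using the other family of invertible interchangers provided by normality.

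The main thing to be careful about is simply bookkeeping: one has to identify which of the coherence axioms of Definition~\ref{def:oplaxmonoidal} become the relevant associativity and unit laws after invertion of $\xi$ and $\upsilon$. But there is no real obstacle, since the entire argument is an instance of the well-known fact that pseudo double functors preserve monads and monad morphisms, applied to the particular pseudo double functor $(\thg)\bxt B$ picked out by normality.
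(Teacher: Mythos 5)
Your proposal is correct and follows essentially the same route as the paper: both invoke the fact that normality makes $(\thg)\bxt B$ a pseudo double functor (hence monad-preserving), and both write down the same explicit unit $(\eta\bxt\hid_B)\circ\upsilon^{-1}$ and multiplication via $\xi^{-1}$ followed by $\mu$. The only cosmetic difference is that the paper records the unit coherence constraint $\hid_B\circ\hid_B\cong\hid_B$ appearing in the codomain of $\xi^{-1}$, which you silently absorb into your arrow $(a\bxt\hid_B)\hcomp(a\bxt\hid_B)\Rightarrow(a\hcomp a)\bxt\hid_B$; this is harmless given the paper's own convention of suppressing coherence isomorphisms.
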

In our running example, the functor $(\thg) \bxt \disc{B}$ sends a
$\ca{V}$-category $\ca A$ to the coproduct of $B$ copies of $\ca A$; thus, it yields the $\ca{V}$-category with object-set $A \times B$ and hom-object
from $(a,b)$ to $(a',b')$ given by $\ca{A}(a,a')$ if $b = b'$ and $0$
otherwise.
\begin{proof}
  Since 
  $(\thg) \bxt B \colon \dcC \rightarrow \dcC$ is a
  \emph{pseudo} double functor by Definition~\ref{def:normality}\ref{normal-ii},
  it sends monads to monads, e.g.~\cite[Prop.~3.12]{VCocats}. Thus, if
  $(A, a)$ is a monad in $\dcC$, then we make $(A, a) \bxt \disc{B}  =
  (A \bxt B, a \bxt \hid_B)$ into a monad via the maps
  \begin{equation*}
    \begin{tikzcd}[column sep= scriptsize]
      A\bxt
      B\ar[rr,tick,"\hid_{A \bxt B}"]\ar[d,equal]\ar[drr,phantom,
      "\scriptstyle\Downarrow\upsilon^{-1}"] && A\bxt B\ar[d,equal] \\
      A\bxt B \ar[rr,tick,"\hid_{A}\bxt \hid_{B}"]\ar[d,equal]\ar[drr,phantom,
      "\scriptstyle\Downarrow\eta \bxt \hid_B"] &&A\bxt B \ar[d,equal]\\
      A\bxt B \ar[rr,tick,"a \bxt \hid_{B}"'] && A\bxt B
    \end{tikzcd} \qquad \qquad
    \begin{tikzcd}[column sep=.4in]
      A\bxt B\ar[r,tick,"a \bxt \hid_B"] 
      \ar[d,equal]\ar[drr,phantom,"\scriptstyle\Downarrow\xi^{-1}"] &
      A\bxt B\ar[r,tick,"a \bxt \hid_B"] &
      A\bxt B \ar[d,equal]\\
      A\bxt B\ar[d,equal]\ar[rr,tick,"(a\circ a)\bxt(\hid_B\circ \hid_B)"]
      \ar[drr,phantom,"\scriptstyle\Downarrow\mu \circ \mathfrak{l}"]&&
      A\bxt B \ar[d,equal]\\
      A\bxt B\ar[rr,tick,"a \bxt \hid_B"'] &&
      A\bxt B
\end{tikzcd}
  \end{equation*}
  where $\mathfrak l$ is a unit coherence constraint for the double
  category $\dcC$.
\end{proof}

We will also need a notion of \emph{closedness} for oplax monoidal
double categories. For (pseudo) monoidal double categories, this was
given in~\cite[Definition~3.2.9]{SweedlerTheoryDouble}, and the below is the
evident adaptation of this to the oplax monoidal case.

\begin{defi}\label{def:closeddouble}
  An oplax monoidal double category $(\dc{C}, \bxt, I)$ is said to be
  \emph{left closed} if
  the monoidal categories $(\dc{C}_0, \bxt_0, I_0)$ and $(\dc{C}_1,
  \bxt_1, I_1)$ admit left closures
  \begin{equation*}%\label{eq:closure}
    (\thg) \bxt_0 B \dashv \boxhom{B, \thg}_0 \qquad \text{and} \qquad 
    (\thg) \bxt_1 y \dashv \boxhom{y, \thg}_1
  \end{equation*}
  which are strictly preserved by source and target; \ie, we have a serially commuting diagram
  \begin{equation}
    \label{eq:hom-double-functor}
    \begin{tikzcd}
      \dcC_1^\mathrm{op} \times \dcC_1
      \ar[r,"\boxhom{\thg, \thg}_1"]
      \ar[d,shift left=1,"\mathfrak{t}"]
      \ar[d,shift right=1,"\mathfrak{s}"'] &
      \dcC_1
      \ar[d,shift left=1,"\mathfrak{t}"]
      \ar[d,shift right=1,"\mathfrak{s}"'] \\
      \dcC_0^\mathrm{op} \times \dcC_0
      \ar[r,"\boxhom{\thg, \thg}_0"] &
      \dcC_0
    \end{tikzcd}
  \end{equation}
  for which $\mathfrak{s}$ and $\mathfrak{t}$ furthermore send the
  evaluation maps $\ev_{y,z} \colon \boxhom{y, z}_1 \bxt_1 y \rightarrow z$ in
  $\dcC_1$ to the corresponding evaluation maps
  $\ev_{\mathfrak{s}(y), \mathfrak{s}(z)}$ and $\ev_{\mathfrak{t}(y), \mathfrak{t}(z)}$ in $\dcC_0$.
\end{defi}
\begin{rex} \label{thm:running-7}
  If $\ca{V}$ is a symmetric monoidal closed category with products,
  then $(\VMat, \bxt, 1)$ is left closed;
  see~\cite[Examples~3.2.12]{SweedlerTheoryDouble}. At the level of
  objects and vertical maps, this closure is simply the cartesian
  closure of $\Set$; we write $\boxhom{B,C}$ for the set of functions
  from $B$ and $C$ and $\lambda(f) \co A \to \boxhom{B, C}$ for the
  adjoint transpose of a function $f \co A \times B \to C$.

  At the level of horizontal maps, the internal hom of matrices
  $ y \co B \tickar B'$ and $z \co C \tickar C'$ is the matrix
  $\boxhom{y, z} \co \boxhom{B, C} \tickar \boxhom{B', C'}$ defined as follows; here, $[\thg, \thg]$ denotes the internal hom in $\ca{V}$.
  \begin{equation*}
    \boxhom{y,z}[g'; g] \defeq
    \prod_{b \in B, b'  \in B'}
    \Big[
    y[b'; b] , \
    z\big[g'(b'); g(b)\big] 
    \Big]\rlap{ .}
  \end{equation*}
  Note that all of this continues to work for a normal duoidal $(\ca{V}, \circ, I, \bxt, J)$---so long as we now assume that the $\bxt$-tensor product is left closed.
\end{rex}

When the oplax monoidal structure on $\dcC$ is closed, the internal
hom $\boxhom{\thg, \thg}$ of $\dcC_1$ clearly restricts
back to one on the subcategory $\EndC$; but once again, this internal
hom does \emph{not} lift to $\MndC$. Now, as explained
in~\cite[Theorem~3.2.7]{SweedlerTheoryDouble}, the oplax double
functoriality constraints of $\bxt$ transport under the mates
correspondence to \emph{lax} double functoriality constraints on the
morphism of $\Cat$-graphs~\cref{eq:hom-double-functor}. In other
words, we have a lax ``hom double functor''
$\boxhom{\thg, \thg} \colon \dcC^\mathrm{op} \times \dcC \rightarrow
\dcC$, and using this, we are able to obtain a \emph{partial} lifting of the
internal hom to monads, by analogy with
\cref{lem:tensoring-with-id-lifts}.

\begin{lem}
  \label{lem:homming-out-of-id-lifts}
  Let $(\dcC, \bxt, I)$ be an oplax monoidal double category.
  \begin{enumerate}[(i),itemsep=0.25\baselineskip]
  \item If
  $(\dcC, \bxt, I)$ is closed, then, for every $B \in \dcC$, the functor
  $\boxhom{ B, \thg } \colon \dcC_0 \rightarrow \dcC_0$ lifts to
  a functor $\boxhom{ \disc{B}, \thg } \co \MndC \rightarrow \MndC$.
\item If $(\dcC, \bxt, I)$ is also normal, then the
  adjunction
  $(\thg) \bxt B \dashv \boxhom{B, \thg} \colon \dcC_0 \rightarrow
  \dcC_0$ lifts to an adjunction
  \begin{equation*}
    (\thg) \bxt \disc{B} \dashv \boxhom{\disc{B}, \thg} \colon \MndC \rightarrow \MndC\rlap{ .}
  \end{equation*}
  \end{enumerate}
\end{lem}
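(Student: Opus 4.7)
For part (i), the plan is to exploit the lax ``hom double functor'' $\boxhom{\thg, \thg}\colon \dcC^{\mathrm{op}}\times \dcC \to \dcC$ noted in the preamble to the lemma, whose composition and unit laxators arise via the mates correspondence from the oplaxators $\xi$ and $\upsilon$ of $\bxt$. Any lax double functor $F$ takes monads to monads and monad maps to monad maps: a monad $(X,x,\mu,\eta)$ is sent to the endomap $F(x)$ equipped with the multiplication $F(x)\circ F(x)\to F(x\circ x)\xrightarrow{F\mu}F(x)$ and unit $\hid_{FX}\to F(\hid_X)\xrightarrow{F\eta}F(x)$, with the monad axioms being immediate from the lax double-functoriality coherences; the action on monad maps is similar. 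Since $(B,\hid_B)$ is the identity monad in $\dcC^{\mathrm{op}}$, the pair $(\disc{B},(A,a))$ is a monad in $\dcC^{\mathrm{op}}\times\dcC$, and applying this general principle delivers the desired lift $\boxhom{\disc{B},\thg}\colon\MndC\to\MndC$ lying strictly over $\boxhom{B,\thg}\colon\dcC_0\to\dcC_0$.

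For part (ii), with normality in hand, I would lift the unit and counit of the underlying $\dcC_0$-adjunction. The counit at $(C,c)\in\MndC$ should carry the vertical component $\ev_{B,C}\colon\boxhom{B,C}\bxt B\to C$ together with a horizontal $2$-morphism obtained from the $\dcC_1$-evaluation $\ev_{\hid_B,c}\colon\boxhom{\hid_B,c}\bxt\hid_B\to c$; on the source, the invertible $\upsilon$ supplied by normality identifies $\hid_{\boxhom{B,C}}\bxt\hid_B$ with $\hid_{\boxhom{B,C}\bxt B}$, as needed to match the monad structure of part~(i). Dually, the unit at $(A,a)$ carries the $\dcC_1$-adjunction unit $a\to\boxhom{\hid_B,a\bxt\hid_B}$ as horizontal component. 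Once these are shown to be monad maps, the triangle identities lift for free: their vertical parts are those of the $\dcC_0$-adjunction, and their horizontal components are uniquely determined by the universal property of the $\dcC_1$-closure applied to the transposes.

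The main obstacle, and where normality genuinely earns its keep, lies in verifying that these candidate unit and counit are actually monad maps, \ie satisfy the compatibility axioms in~\cref{eq:monad-map-axioms}. For the counit, the multiplication on $\boxhom{\hid_B,c}$ is constructed from the composition laxator of the hom (the mate of $\xi$), whereas the multiplication on $c\bxt\hid_B$ is constructed from $\xi^{-1}$ applied to one identity argument, which exists precisely by normality. Transposing the relevant monad-map axiom across the $\dcC_1$-adjunction $(\thg)\bxt_1\hid_B\dashv\boxhom{\hid_B,\thg}_1$, the two sides should collapse to the same expression in terms of $\xi$, $\xi^{-1}$, and the monad multiplication on $c$, yielding the equality after a routine coherence manipulation; the unit axiom is analogous, using $\upsilon$ and $\upsilon^{-1}$. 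I expect no serious content beyond this coherence bookkeeping, as the structural work has already been done by the mates construction in part~(i) combined with the strict lifting of $(\thg)\bxt\disc{B}$ in \cref{lem:tensoring-with-id-lifts}.
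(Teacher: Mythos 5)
Your part (i) is essentially the paper's argument: the paper composes the lax hom double functor with the pseudo double functor $\mathbf{1}\to\dcC$ picking out $B$ to get a lax double functor $\boxhom{B,\thg}\colon\dcC\to\dcC$, which therefore sends monads to monads; your variant of applying $\boxhom{\thg,\thg}$ to the monad $(\disc{B},(C,c))$ in $\dcC^{\mathrm{op}}\times\dcC$ amounts to the same thing and yields the same explicit monad structure (the paper spells this out via the transposed $2$-morphisms built from $\zeta$, the mate of $\xi$).

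For part (ii) you take a genuinely different, more hands-on route. The paper disposes of it in one line: by normality $(\thg)\bxt B$ is a \emph{pseudo} double functor, $\boxhom{B,\thg}$ is a lax double functor, these are adjoint in the $2$-category of double categories, lax double functors and vertical transformations, and taking monads is a $2$-functor out of that $2$-category, so the adjunction is transported wholesale. You instead lift the unit and counit explicitly and check the monad-map axioms by hand. This works: the counit's multiplication-preservation axiom is, after transposing, exactly the defining formula for the multiplication on $\boxhom{\hid_B,c}$ (the paper's $\zeta$ cancels against the $\xi^{-1}$ used in \cref{lem:tensoring-with-id-lifts} to equip $\boxhom{\disc{B},(C,c)}\bxt\disc{B}$ with its monad structure, which is where normality enters, as you say), and the triangle identities are free since being a monad map is a property and the identities already hold in $\dcC_1$. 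What the paper's route buys is precisely the avoidance of this coherence bookkeeping; what yours buys is an explicit description of the lifted unit and counit, at the cost of verifications you have only sketched. Both are sound; your sketch contains no gap, though you should be aware that the monad-map property of $\ev$ is true essentially by construction of the monad structure on the hom, so the "coherence manipulation" you anticipate is shorter than you suggest.
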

\begin{rex}
  \label{ex:running16}In the case of $\VMat$, the functor $\boxhom{\disc{B}, \thg}$ sends a $\ca{V}$-category $\ca{C}$ to the $\ca{V}$-category $\ca{C}^B$, whose objects are $B$-indexed families of objects of $\ca{C}$, and whose homs are given by $\ca{C}^B(\vec x, \vec y) = \prod_{b \in B} \ca{C}(x_b, y_b)$.
\end{rex}
\begin{proof}
  For (i), composing the lax double functor
  $\boxhom{\thg, \thg} \colon \dcC^\mathrm{op} \times \dcC \rightarrow
  \dcC$ with the (pseudo) double functor $\mathbf{1} \rightarrow \dcC$
  picking out the object $B$ yields a lax double functor
  $\boxhom{ B, \thg } \colon \dcC \rightarrow \dcC$, which thus sends
  monads to monads. More explicitly, if $(C,c)$ is a monad in $\dcC$,
  then we make
  $\boxhom{ \disc{B}, (C,c) } = \bigl( \boxhom{B,C}, \boxhom{ \hid_B,
    c} \bigr)$ into a monad via the transposes of the following
  $2$-morphisms under the adjunction
  $(\thg) \bxt \hid_B \dashv \boxhom{\hid_B, \thg} \colon
  \dcC_1 \rightarrow \dcC_1$:
  \begin{equation}
    \label{eq:mu-for-internal-hom}
    \begin{tikzcd}[column sep = 5em]
      \boxhom{B,C} \bxt B \ar[r,tick,"{\hid_{\boxhom{B,C}} \bxt \hid_B}"]  \ar[d,equal]
      \ar[dr,phantom,"\Two \upsilon"] &
      \boxhom{B,C} \bxt B \ar[d,equal] \\
      \boxhom{B,C} \bxt B \ar[r,tick,"{\hid_{\boxhom{B,C} \bxt B}}"]  \ar[d,"\ev"'] 
      \ar[dr,phantom,"\Two\hid_{\ev}"] &
      \boxhom{B,C} \bxt B \ar[d,"\ev"] \\
      C \ar[r,tick,"{\hid_C}"]  \ar[d,equal]
      \ar[dr,phantom, "\Two \eta"] &
      C \ar[d,equal] \\
      C \ar[r,tick,"{c}"'] &
      C
    \end{tikzcd}
\quad \qquad     \begin{tikzcd}[column sep = 10em]
      \boxhom{B,C} \bxt B \ar[r,tick,"{(\boxhom{\hid_B, c} \circ \boxhom{\hid_B, c}) \bxt \hid_B}"]  \ar[d,equal]
      \ar[dr,phantom,"\Two \zeta"] &
      \boxhom{B,C} \bxt B \ar[d,equal] \\
       \boxhom{B,C}  \bxt B \ar[r,tick,"{(\boxhom{\hid_B, c} \bxt \hid_B) \circ (\boxhom{\hid_B, c} \bxt \hid_B)}"]  \ar[d,"\ev"']
      \ar[dr,phantom, "\Two \ev \circ \ev"] &
       \boxhom{B,C}  \bxt B \ar[d,"\ev"] \\
      C \ar[r,tick,"{c \circ c}"]  \ar[d,equal]
      \ar[dr,phantom, "\Two \mu"] &
      C \ar[d,equal] \\
      C \ar[r,tick,"{c}"'] &
      C\rlap{ ,}
    \end{tikzcd} 
  \end{equation}
  where $\zeta$ to the right is the composite of 
  $(\boxhom{\hid_B, c} \circ \boxhom{\hid_B, c}) \bxt \hid_B \cong
  (\boxhom{\hid_B, c} \circ \boxhom{\hid_B, c}) \bxt (\hid_B \circ
  \hid_B)$ with the interchanger $\xi$.
  
  For (ii), note that the (pseudo) double functor $(\thg) \bxt B
  \colon \dcC \rightarrow \dcC$ and
  the lax double functor $\boxhom{B, \thg} \colon \dcC \rightarrow
  \dcC$ are adjoint in the $2$-category of double categories, lax
  double functors and vertical transformations. The process of taking
  monads is a $2$-functor from this $2$-category to the $2$-category
  of categories, and so we have $(\thg) \bxt \disc{B} \dashv
  \bxt{\disc{B}, \thg}$ as desired.
\end{proof}

Finally in this section, we define what it means for an oplax monoidal
double category to be \emph{braided} and
\emph{symmetric}---generalising, for example,
\cite{ConstrSymMonBicatsFun} to our more general oplax monoidal context.

\begin{defi}\label{def:symmetricoplaxmon}
Let $(\dcC,\bxt,I)$ be an oplax monoidal double category. A
\emph{braiding} on $\dcC$ is an invertible vertical transformation
$\beta\colon\bxt\Rightarrow\bxt\circ(\pi_2, \pi_1)$,
where $(\pi_2, \pi_1)\colon\dcC\times\dcC\to\dcC\times\dcC$ is the
double functor which interchanges its two arguments, satisfying the usual axioms.
More elementarily, this amounts to:
\begin{itemize}
\item braidings for the  monoidal structures on the categories $\dcC_0$ and $\dcC_1$;
\item preservation of the braiding by the strict monoidal functors
  $\mathfrak{s}, \mathfrak{t} \colon \dc{C}_1 \rightrightarrows \dc{C}_0$,
  which is to say that the components of the braiding for $\dcC_1$
  have those for $\dc{C}_0$ as their vertical components:
\begin{equation}\label{eq:braidingcomponents}
\begin{tikzcd}[column sep=.6in]
A_1\bxt A_2\ar[d,"\beta_{A_1,A_2}"']\ar[r,tick,"x_1\bxt x_2"]\ar[dr,phantom,"\Two\beta_{x_1,x_2}"] & B_1\bxt B_2\ar[d,"\beta_{B_1,B_2}"] \\
A_2\bxt A_1\ar[r,tick,"x_2\bxt x_1"'] & B_2\bxt B_1;
\end{tikzcd}
\end{equation}
\item the following axioms, expressing that the braiding maps $\beta$
  constitute a vertical natural transformation:
\begin{displaymath}
\begin{tikzcd}[column sep=.5in]
A_1\bxt A_2\ar[d,equal]\ar[rr,tick,"(y_1\circ x_1)\bxt(y_2\circ x_2)"]\ar[drr,phantom,"\scriptstyle\Downarrow\xi"] && C_1\bxt C_2\ar[d,equal] \\
A_1\bxt A_2\ar[r,tick,"x_1\bxt x_2"']\ar[d,"\beta"']\ar[dr,phantom,"\Two\beta"] & B_1\bxt B_2\ar[r,tick,"y_1\bxt y_2"']\ar[d,"\beta"']\ar[dr,phantom,"\Two\beta"] & C_1\bxt C_2\ar[d,"\beta"] \\
A_2\bxt A_1\ar[r,tick,"x_2\bxt x_1"'] & B_2\bxt B_1\ar[r,tick,"y_2\bxt y_1"'] & C_2\bxt C_1
\end{tikzcd}=
\begin{tikzcd}[column sep=.5in]
A_1\bxt A_2\ar[d,"\beta"']\ar[rr,tick,"(y_1\circ x_1)\bxt(y_2\circ x_2)"]\ar[drr,phantom,"\Two\beta"] && C_1\bxt C_2\ar[d,"\beta"] \\
A_2\bxt A_1\ar[d,equal]\ar[rr,tick,"(y_2\circ x_2)\bxt(y_1\circ x_1)"']\ar[drr,phantom,"\Two\xi"] && C_2\bxt C_1\ar[d,equal] \\
A_2\bxt A_1\ar[r,tick,"x_2\bxt x_1"'] & B_2\bxt B_1\ar[r,tick,"y_2\bxt y_1"'] & C_2\bxt C_1 \mathrlap{,}
\end{tikzcd}
\end{displaymath}
\begin{displaymath}
\begin{tikzcd}[column sep=.5in]
A\bxt B\ar[d,"\beta"']\ar[r,tick,"\id_A\bxt\id_B"]\ar[dr,phantom,"\Two\beta"] & A\bxt B\ar[d,"\beta"] \\
B\bxt A\ar[d,equal]\ar[r,tick,"\id_B\bxt\id_A"']\ar[dr,phantom,"\Two\eta"] & B\bxt A\ar[d,equal] \\
B\bxt A\ar[r,tick,"\id_{B\bxt A}"'] & B\bxt A
\end{tikzcd}=
\begin{tikzcd}[column sep=.5in]
A\bxt B\ar[d,equal]\ar[r,tick,"\id_A\bxt\id_B"]\ar[dr,phantom,"\Two\eta"] & A\bxt B\ar[d,equal] \\
A\bxt B\ar[r,tick,"\id_{A\bxt B}"']\ar[d,"\beta"']\ar[dr,phantom,"\Two\id_\beta"] & A\bxt B\ar[d,"\beta"] \\
B\bxt A\ar[r,tick,"\id_{B\bxt A}"'] & B\bxt A\rlap{ .}
\end{tikzcd}
\end{displaymath}
\end{itemize}

We say that the braiding $\beta$ is a \emph{symmetry} when
$\beta^{-1} = \beta \circ (\pi_2, \pi_1)$; equivalently, we may ask
that the braided monoidal structures on $\dcC_0$ and $\dcC_1$ are
symmetric.
%Equivalently, a braided/symmetric oplax monoidal double category is a braided/symmetric pseudomonoid in the cartesian monoidal 2-category of double categories, oplax double functors and vertical transformations, see e.g. \cite{Monoidalbicatshopfalgebroids}.
\end{defi}

\begin{rex} \label{thm:running-8}
  If $\ca{V}$ is a braided or symmetric monoidal category, then $\VMat$ will be braided or symmetric as a normal oplax double category, with the braiding maps given by those of $\Set$ at the vertical level, and those of $\ca{V}$ at the horizontal level. Of course, the situation here is slightly degenerate; for the more general case of a normal duoidal base $(\ca{V}, \circ, I, \bxt, J)$, the relevant assumption is that the $\bxt$-tensor product is braided or symmetric as appropriate.
\end{rex}

\subsection{Overview of assumptions} In the following sections, we will work with 
a symmetric normal oplax monoidal closed double category, as per \cref{def:oplaxmonoidal,def:normality,def:closeddouble,def:symmetricoplaxmon}.
We list the key assumptions on the (underlying) double category that we use for our 
main theorems in \cref{tab:assumptions} for the convenience of the readers. These
will be assumed progressively, so as to highlight where they are
needed.

\begin{table}[htb]
\caption{List of assumptions.}\label{tab:assumptions}
\fbox{
\begin{minipage}{12cm}
\begin{enumerate}[(I)]
\item $\dcC$ is fibrant.
\item Each $\MndCA$ has finite coproducts.
\item $\dcC_0$ has equalisers.
\item $\dcC$ has $1$-tabulators.  
\item Each $\MndCA \rightarrow \EndCA$ has a left adjoint.
\item Each $\MndCA$ has coequalisers.
\item $\dcC$ has local equalisers.
\item $\dcC$ has stable local reflexive coequalisers.
\end{enumerate} 
\end{minipage}}
\end{table}

In~\cref{sec:applications} we will show that, for a symmetric monoidal category $\ca{V}$ satisfying appropriate assumptions, 
 the double categories $\VMat$ of $\ca{V}$-matrices, and $\VCatSym$ of categorical symmetric sequences, 
 admit a symmetric oplax monoidal closed structure and satisfy the assumptions listed in~\cref{tab:assumptions}. This will enable us to apply our theory and obtain results about the 
 double categories $\VProf$ and $\VSMultProf$ of profunctors and symmetric multiprofunctors, respectively.

\section{The non-commuting tensor product of monads}
\label{sec:non-commuting}

In this section, we begin work towards our first main result, establishing the existence of a commuting monoidal structure on the category $\Mnd(\dc{C})$ of monads in a suitable oplax monoidal double category. This goal will only be achieved in the next section; in the present one, we instead establish the existence of a ``non-commuting'' monoidal structure on $\Mnd(\dc{C})$ which generalises of the so-called ``funny tensor product'' of categories. While this monoidal structure has some independent interest, for us it is largely a stepping-stone towards the commuting tensor product. This is true at a technical level---since the commuting monoidal structure will be constructed using the non-commuting one---but also at an expositional level. Indeed, the proof strategy in both sections is deliberately the same, so that the case of the non-commuting structure may serve as a useful warm-up to the reader for the more involved commuting one.

The main obstacle we face is that of constructing the associativity and unit coherences for the commuting tensor product. Rather than attempt this directly, we instead choose to make $\MndC$ into the underlying category of a symmetric multicategory, and to show that this is \emph{representable}; then, by general arguments, $\MndC$ becomes a symmetric monoidal category under the tensor product representing these multimorphisms. Even this is not entirely straightforward: the problematic aspect is that representability must be verified with parameters. Thus, again, we take a different approach: we verify a weaker form of representability without parameters, which we term \emph{pre-representability}, and then show that our multicategory is \emph{closed}. By an argument familiar from categorical logic and proof theory, this allows us to bootstrap representability with parameters from that without and so, finally, obtain the desired symmetric monoidal structure---which, in addition, will now also be known to be closed.

As stated, the above proof strategy will be deployed twice, once for the non-commuting and once for the commuting case, with the differentiating factor being the choice of multicategory structure imposed on 
monads. For the non-commuting case, it is not particularly hard to proceed in a much more direct way, and indeed, we sketch such an approach in \cref{rmk:directlymonoidal}. However, for the reasons adumbrated above, we prefer to mirror precisely the structure of the proof for the commuting case: as we now begin to do.

\subsection{The multicategory of monads and multimorphisms}
Let $(\dc{C},\bxt,I)$ be a symmetric normal oplax monoidal double category, as in \cref{def:oplaxmonoidal,def:normality,def:symmetricoplaxmon}. In this section, we will make $\MndC$ into the underlying category of a symmetric multicategory $\MultMndC$ which, in the case where $\dcC = \Mat_{\Set}$, will have as its $n$-ary multimorphisms the functors in $n$ variables in the sense described in the introduction.

In order to ease the notational burden, we introduce some
conventions for working with indices.

\begin{notat}\label{not:indexconventions}
  Given a list of objects $A_i, A_{i+1}, \dots, A_{j\mi 1},A_j$ in
  $\dcC_0$, we may write:
  \begin{itemize}
  \item $A_{[i, j]}$ as an abbreviation for
  $A_i \bxt A_{i+1} \bxt \cdots \bxt A_{j \mi 1} \bxt A_j$;
  \item $\tparen{1_A}_{[i,j]}$ as an abbreviation for
  $1_{A_i} \bxt \dots \bxt 1_{A_j}$;
\item $\tparen{\id_A}_{[i,j]}$ as an abbreviation for
  $\id_{A_i} \bxt \dots \bxt \id_{A_j}$;
\item $\tparen{1_{\id_A}}_{[i,j]}$ as an abbreviation for
  $1_{\id_{A_i}} \bxt \dots \bxt 1_{\id_{A_j}}$;
\item $\tparen{\disc{A}}_{[i,j]}$ as an abbreviation for
  $\disc{A_i} \bxt \dots \bxt \disc{A_j}$;
  \end{itemize}
  and so on.
  %Given a vertical map
  %$f \colon A_i \bxt \cdots \bxt A_j \rightarrow B$, we
  %have the $2$-morphism
  %$\id_f \colon \id_{A_i \bxt \dots \bxt A_j} \Rightarrow \id_B$; but we
  %will abusively write $\id_f$ also for the composite $2$-morphism
  %\begin{equation*}
  %  \tparen{\id_A}_{[i,j]} = \id_{A_i} \bxt \dots \bxt \id_{A_j} \xrightarrow{\cong}
  %  \id_{A_i \bxt \dots \bxt A_j} \xrightarrow{\id_f} \id_B\rlap{ .}
  %\end{equation*}
\end{notat}
For the next definition, recall from~\cref{rmk:discrete} that $\disc A$ denotes the discrete monad on an object $A \in \dcC_0$ and that, by \cref{lem:tensoring-with-id-lifts}, if $(B,b)$ is a monad then $\disc A \bxt (B,b) = (A\bxt B, \id_A\bxt b)$ is also a monad (and dually).

\begin{defi} \label{thm:monad-multimorphism}
Let $(A_1,a_1), \ldots, (A_n,a_n), (B,b)$ be monads in $\dcC$. A \emph{monad multimorphism}
$$(f, \vec \phi)  \co (A_1,a_1), \ldots, (A_n,a_n) \to (B,b)$$ is a vertical map $f \co A_1 \boxtimes \cdots \boxtimes A_n \to B$ together
with 2-morphisms
\begin{displaymath}
  \begin{tikzcd}[column sep = 6cm]
    A_{[1,n]} 
    \ar[r, tick,"\tparen{\id_A}_{[1,i \mi 1]}\bxt a_i \bxt \tparen{\id_A}_{[i{+}1,n]}"] 
    \ar[d, "f"']   \ar[dr,  phantom, "\Two \phi_i"] & 
    A_{[1,n]} \ar[d, "f"] \\
    B \ar[r, tick,"b"'] & B
  \end{tikzcd}
\end{displaymath}
for every $1 \leq i \leq n$, that provide the data for monad
maps
\begin{equation}\label{eq:multcomp}
  (f, \phi_i) \colon \disc{A_1} \bxt \cdots \bxt \disc{A_{i-1}} \bxt \,(A_i, a_i)\, \bxt \disc{A_{i+1}} \bxt \cdots \bxt \disc{A_n} \rightarrow (B,b)\rlap{ .}
\end{equation}
In particular, a unary monad multimorphism is the same as a monad
morphism $(A,a) \rightarrow (B,b)$; while a nullary monad
multimorphism is the same as a map $I \rightarrow B$ in $\dcC_0$.
\end{defi}
\begin{rex} \label{thm:running-9}
  In the case of $\VMat$, a binary monad multimorphism $(f, \phi, \psi) \colon (A,a), (B,b) \rightarrow (C,c)$ is the same as a $\ca{V}$-functor of two variables $F \colon \ca A, \ca B \rightarrow \ca C$ (or $\ca V$-sesquifunctor) in the sense of the introduction. Indeed:
  \begin{itemize}[itemsep=0.4\baselineskip]
  \item The function $f \colon A \times B \rightarrow C$ encodes an assignment on objects $(a,b) \mapsto F(a,b)$;
  \item The monad morphism $(f, \phi) \colon (A,a) \bxt \disc{B} \rightarrow (C,c)$ is equally a $\ca V$-functor $\sum_{b \in B} \ca A \rightarrow \ca C$ with action $f$ on objects, so equally a family of $\ca{V}$-functors $(F(\thg, b) \colon \ca A \rightarrow \ca C)_{a \in A}$ satisfying $F(\thg, b)(a) = F(a,b)$;
  \item Likewise, $(f, \psi) \colon \disc{A} \bxt (B,b) \rightarrow (C,c)$ is equally a family of $\ca{V}$-functors $(F(a, \thg) \colon \ca B \rightarrow \ca C)_{b \in B}$ satisfying $F(a, \thg)(b) = F(a,b)$.
  \end{itemize}
  Thus, a binary monad multimorphism comprises $\ca{V}$-functors $(F(\thg, b) \colon \ca A \rightarrow \ca C)_{a \in A}$ and $(F(\thg, b) \colon \ca A \rightarrow \ca C)_{a \in A}$ satisfying $F(a, \thg)(b) = F(\thg, b)(a)$ for all $a,b$: which is precisely a functor of two variables as in the introduction. The $n$-ary case is similar: for example, a ternary monad multimorphism corresponds to a functor of three variables $F \colon \ca{A}, \ca{B}, \ca{C} \rightarrow \ca{D}$ involving families of $\ca{V}$-functors $F(a, b, \thg)$ and $F(\thg, b, c)$ and $F(a, \thg, c)$ which agree on objects in the evident way.
\end{rex}

For the definition of a symmetric multicategory, see for example \cite{Leinster:2004a,yau2016colored}. Below we use the `circle-$i$' or `pseudo-operad' formulation of the definition, which is equivalent to the standard one (see e.g. \cite[Thm.~16.4.1]{yau2016colored}).

\begin{prop}\label{prop:MultMndmulticategory}
There is a symmetric multicategory $\MultMndC$ with monad as
objects and monad multimorphisms as maps.
\end{prop}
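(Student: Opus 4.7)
The plan is to specify the pseudo-operad data (identities, circle-$i$ composition and symmetric action) on the sets of monad multimorphisms of \cref{thm:monad-multimorphism}, and then verify the multicategory axioms. The unary identity at $(A,a)$ is the identity monad morphism $(1_A, 1_a)$, which is a unary monad multimorphism by inspection. The symmetric group action is induced by the symmetry $\beta$ of \cref{def:symmetricoplaxmon}: a permutation $\sigma \in S_n$ sends $(f, \vec{\phi}) \colon (A_1, a_1), \ldots, (A_n, a_n) \to (B, b)$ to the multimorphism with underlying vertical map $f \circ \beta_\sigma$, where $\beta_\sigma \colon A_{\sigma(1)} \bxt \cdots \bxt A_{\sigma(n)} \to A_{[1,n]}$ is the iterated braiding, and with each $\phi_i$ transported into its new position at $\sigma^{-1}(i)$ via the naturality square~\eqref{eq:braidingcomponents}. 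Since $\beta$ is an invertible vertical transformation whose components strictly preserve source and target, the transported cells remain valid monad maps.

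The main construction is the circle-$i$ composition. Given $(f, \vec{\phi}) \colon (A_1,a_1), \ldots, (A_n,a_n) \to (B,b)$ and $(g, \vec{\psi}) \colon (C_1, c_1), \ldots, (C_m, c_m) \to (A_i, a_i)$, the underlying vertical map of the composite is
\[
    f \circ \bigl(\tparen{1_A}_{[1, i-1]} \bxt g \bxt \tparen{1_A}_{[i+1, n]}\bigr) \colon A_{[1,i-1]} \bxt C_{[1,m]} \bxt A_{[i+1,n]} \to B \rlap{ .}
\]
To produce the $(n+m-1)$ required $2$-cells, I would invoke \cref{lem:tensoring-with-id-lifts} to conclude that every partial tensor functor of the form $\tparen{\disc{X}}_{[1,k-1]} \bxt (\thg) \bxt \tparen{\disc{X}}_{[k+1,\ell]}$ is a pseudo double functor and therefore sends monad maps to monad maps. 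Explicitly, at an ``inner'' position $i + j - 1$ corresponding to $(C_j, c_j)$, the $2$-cell is the vertical composite of $(f, \phi_i)$ with the image of the monad map $(g, \psi_j)$ under the functor $\tparen{\disc{A}}_{[1,i-1]} \bxt (\thg) \bxt \tparen{\disc{A}}_{[i+1,n]}$; at an ``outer'' position $k \ne i$ corresponding to $(A_k, a_k)$, the $2$-cell is the vertical composite of $(f, \phi_k)$ with the image under a suitable partial tensor functor of the monad map $\disc{g} \colon \disc{C_{[1,m]}} \to \disc{A_i}$ induced from the vertical map $g$ via the adjoint $\disc{}$ of \cref{rmk:discrete}. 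By construction each $2$-cell is a valid monad map.

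The main obstacle is the verification of the pseudo-operad axioms: the parallel ($i \ne j$) and sequential forms of associativity for $\circ_i$, unitality, and equivariance for the $S_n$-action. Each reduces to diagrammatic bookkeeping that combines (a) functoriality of the partial tensor products together with the fact that, by normality, they are pseudo and hence strictly compose; (b) the interchange $2$-cell $\xi$ and the globular coherences $\upsilon$, $\delta$ of \cref{def:oplaxmonoidal}; and (c) the naturality and hexagon axioms for the symmetry $\beta$. None of these verifications introduces any technical subtlety beyond careful index-tracking, since the decisive structural input---the compatibility of partial tensoring with the monad calculus---has already been isolated in \cref{lem:tensoring-with-id-lifts}. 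In particular, no hypothesis on $\dcC$ beyond symmetric normal oplax monoidal structure is required at this stage.
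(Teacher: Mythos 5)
Your construction coincides with the paper's: the same unary identities, the same vertical map and component $2$-cells for $\circ_i$ (your ``inner''/``outer'' cases are exactly the paper's two cases, with the roles of $f$ and $g$ swapped), and the same $\beta_\sigma$-induced symmetric action, with the axiom checks deferred to coherence and naturality bookkeeping just as the paper does. One small caveat: at the outer positions the fixed slot carries the non-discrete monad $a_k$, so \cref{lem:tensoring-with-id-lifts} does not literally cover that partial tensor functor — you additionally need that tensoring a fixed monad with $\disc{(\thg)}$ applied to a vertical map yields a monad map, which follows by the same normality/naturality-of-$\xi$ argument but deserves a separate remark.
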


\begin{proof}
  For every monad $(A,a)$, the (unary) identity multimorphism is given
  by the identity monad map $(1_A,\id_a)\colon (A,a)\to(A,a)$.
  Moreover, given monad multimorphisms
$$(g,\vec \theta)\colon(B_1,b_1),\ldots,(B_m,b_m)\to(C,c) \qquad \text{and} \qquad 
(f,\vec \phi)\colon(A_1,a_1),\ldots, (A_n,a_n)\to (B_i,b_i),$$ 
the composite $(g, \vec \theta) \circ_i (f, \vec \phi)$ is the monad multimorphism $$(B_1,b_1),\ldots,(B_{i\mi1},b_{i\mi1}),(A_1,a_1),\ldots(A_n,a_n),(B_{i+1},b_{i+1}),\ldots,(B_m,b_m)\to(C,c)$$
defined as follows. The vertical map is formed as
\begin{displaymath}
  B_{[1, i \mi 1]}\bxt A_{[1,n]}\bxt B_{[i+1, m]}\xrightarrow{\tparen{1_B}_{[1, i\mi 1]} \bxt \,f\,\bxt \tparen{1_B}_{[i+1, m]}}
  B_{[1, i \mi 1]}\bxt B_i \bxt B_{[i+1, m]}\xrightarrow{\ \ g\ \ } C
\end{displaymath}
while the 2-morphisms providing the monad map components split up into
two cases. The first case involves one of the $b_k$-monads in the
domain; we illustrate with the case $k = 1$, but there are analogous
diagrams for $k = 2, \dots, i - 1, i+1, \dots, m$:
\begin{displaymath}
\scalebox{0.9}{
\begin{tikzcd}[ampersand replacement=\&,column sep=1in]
  B_{[1,i\mi 1]}\bxt A_{[1,n]}\bxt B_{[i+1,m]}\ar[d,"\tparen{1_B}_{[1,i\mi1]}\bxt \,f\,\bxt\tparen{1_B}_{[i+1,m]}"']\ar[rr, tick,"b_1 \bxt \tparen{\id_B}_{[2,i\mi 1]} \bxt \tparen{\id_A}_{[1,n]} \bxt \tparen{\id_B}_{[i+1,m]}"]
  \ar[drr,phantom,"\Two 1_{b_1} \bxt \tparen{1_{\hid_B}}_{[2, i\mi 1]} \bxt \id_f \bxt \tparen{1_{\hid_B}}_{[i+1, m]}"] \&\& 
  B_{[1,i\mi 1]}\bxt A_{[1,n]}\bxt B_{[i+1,m]}\ar[d,"\tparen{1_B}_{[1,i\mi1]}\bxt \,f\,\bxt\tparen{1_B}_{[i+1,m]}"] \\
  B_{[1,m]}\ar[d,"g"']\ar[rr,tick,"b_1 \bxt \tparen{\id_B}_{[2,m]}"']\ar[drr,phantom,"\Two\theta_1"] \&\&
B_{[1,m]}\ar[d,"g"] \\
C\ar[rr,tick,"c"'] \&\& C\rlap{ .}
\end{tikzcd}}
\end{displaymath}
The second case involves one of the $a_j$-monads in the domain; again
we illustrate with the case $j = 1$, but there are analogous
diagrams for $j = 2, \dots, n$:

\begin{displaymath}
\scalebox{0.9}{
\begin{tikzcd}[ampersand replacement=\&,column sep=1in]
  B_{[1,i\mi 1]}\bxt A_{[1,n]}\bxt B_{[i+1,m]}
  \ar[d,"\tparen{1_B}_{[1,i\mi1]}\bxt \,f\,\bxt\tparen{1_B}_{[i+1,m]}"']
  \ar[rr, tick,"\tparen{\hid_B}_{[i,i\mi 1]} \bxt a_1 \bxt \tparen{\hid_A}_{[2,n]} \bxt \tparen{\hid_B}_{[i+1,m]}"]
  \ar[drr,phantom,"\Two \tparen{1_{\hid_B}}_{[1, i\mi 1]} \bxt \phi_1 \bxt \tparen{1_{\hid_B}}_{[i+1, m]}"] \&\& 
  B_{[1,i\mi 1]}\bxt A_{[1,n]}\bxt B_{[i+1,m]}
  \ar[d,"\tparen{1_B}_{[1,i\mi1]}\bxt \,f\,\bxt\tparen{1_B}_{[i+1,m]}"] \\
  B_{[1,m]}
  \ar[d,"g"']
  \ar[rr,tick,"\tparen{\hid_B}_{[1,i\mi 1]} \bxt b_i \bxt \tparen{\hid_B}_{[i+1,m]}"']
  \ar[drr,phantom,"\Two\theta_i"] \&\&
  B_{[1,m]}\ar[d,"g"] \\
  C\ar[rr,tick,"c"'] \&\& C\rlap{ .}
%\begin{tikzcd}[ampersand replacement=\&]
%B_1\bxt\ldots\bxt B_{i\mi 1}\bxt A_1\bxt\ldots \bxt A_n\bxt B_{i+1}\bxt\ldots\bxt B_m\ar[d,"1\bxt\ldots\bxt f\bxt\ldots\bxt1"']\ar[rr, tick,"\id\bxt a_1\bxt\ldots\bxt\id"]
%\ar[drr,phantom,"\Two\id\bxt\ldots\bxt\phi_1\bxt\ldots\bxt\id"] \&\& 
%B_1\bxt\ldots\bxt B_{i\mi 1}\bxt A_1\bxt\ldots \bxt A_n\bxt B_{i+1}\bxt\ldots\bxt B_m\ar[d,"1\bxt\ldots\bxt f\bxt\ldots\bxt1"] \\
%B_1\bxt\ldots\bxt B_i\bxt\ldots\bxt B_{i+1}\bxt\ldots\bxt B_m\ar[d,"g"']\ar[rr,tick,"\id\bxt\ldots\bxt b_i\bxt\ldots\bxt\id"']\ar[drr,phantom,pos=.35,"\Two\theta_i"] \&\&
%B_1\bxt\ldots\bxt B_i\bxt\ldots\bxt B_{i+1}\bxt\ldots\bxt B_m\ar[d,"g"] \\
%C\ar[rr,tick,"c"'] \&\& C
\end{tikzcd}}
\end{displaymath}
Finally, for any multimap $(f,\vec \phi)  \co (A_1,a_1), \ldots, (A_n,a_n) \to (B,b)$ and any permutation $\sigma \in \mathfrak{S}_n$, the action of $\sigma$ on $(f, \vec \varphi)$ yields the monad multimorphism
$$(A_{\sigma(1)},a_{\sigma(1)}),\ldots,(A_{\sigma(n)},a_{\sigma(n)})\to(B,b)$$ that consists of the vertical map $A_{\sigma(1)}\bxt\ldots\bxt A_{\sigma(n)}\xrightarrow{\beta_\sigma}A_1\bxt\ldots\bxt A_n\xrightarrow{f} B$---where here and below, $\beta_\sigma$ represents appropriate applications of the symmetry of $\dc{C}$ as in \cref{eq:braidingcomponents}---and the monad map components
\begin{displaymath}
\begin{tikzcd}[column sep = 6cm]
\tparen{A_\sigma}_{[1,n]} \ar[r,tick,"\tparen{\id_{A_\sigma}}_{[1, \sigma^{\mi 1}(i)\mi 1]}\bxt a_{i}\bxt\tparen{\id_{A_\sigma}}_{[\sigma^{\mi 1}(i)+1, n]}"]\ar[d,"\beta_\sigma"']
\ar[dr,phantom,"\Two\beta"] & \tparen{A_\sigma}_{[1,n]}
\ar[d,"\beta_\sigma"] \\
A_{[1,n]} \ar[d, "f"'] \ar[r, tick,"\tparen{\id_A}_{[1,i \mi 1]}\bxt a_i \bxt \tparen{\id_A}_{[i{+}1,n]}"] 
\ar[dr,  phantom, "\Two \phi_i"] & A_{[1,n]} \ar[d,"f"] \\
B \ar[r, tick,"b"'] & B\rlap{ .}
\end{tikzcd}
\end{displaymath}

It can now be verified that these data form monad maps, due to the axioms that $\beta_\sigma$ satisfies, and that this assignment constitutes an action of the symmetric group $\mathfrak{S}_n$ on $n$-ary monad multimorphisms. Furthermore, one can verify the associativity, unitality and equivariance axioms (found \eg in \cite[Def.~16.2.1]{yau2016colored}), thus showing that monads and monad multimorphisms form a symmetric multicategory.
\end{proof}

To illustrate the definition of composition in $\MultMndC$, it is perhaps useful to consider a concrete example. Given a ternary map
$(g,\theta_1,\theta_2,\theta_3)\colon
(B_1,b_1),(B_2,b_2),(B_3,b_3)\to(C,c)$ and a binary map
$(f,\phi_1,\phi_2)\colon (A_1,a_1),(A_2,a_2)\to(B_2,b_2)$ as in
\cref{thm:monad-multimorphism}, the composite monad multimorphism $(g, \vec \theta) \circ_2 (f, \vec \phi)$ is
a quaternary map $b_1,a_1,a_2,b_3\to c$, with first component the vertical map $$B_1\bxt
A_1\bxt A_2\bxt B_3\xrightarrow{1\bxt f\bxt 1} B_1\bxt B_2\bxt
B_3\xrightarrow{\ \ g\ \ } C$$ and with monad map components specified by the following four 2-morphisms 
%either of the left-hand side or the right-hand side type
\begin{displaymath}
\begin{tikzcd}[column sep=.9in]
B_1 \!\bxt\! A_1 \!\bxt\! A_2 \!\bxt\! B_3\ar[d,"1_{B_1} \bxt f \bxt 1_{B_3}" description]\ar[r,tick,"b_1 \bxt \id_{A_1} \bxt \id_{A_2} \bxt \id_{B_3}"]\ar[dr,phantom,"\Two 1_{b_1} \bxt \id_{f} \bxt 1_{\id_{B_3}}"] & B_1 \!\bxt\! A_1 \!\bxt\! A_2 \!\bxt\! B_3\ar[d,"1_{B_1} \bxt f \bxt 1_{B_3}" description] \\
B_1 \!\bxt\! B_2 \!\bxt\! B_3\ar[d,"g"']\ar[r,tick,"b_1 \bxt \id_{B_2} \bxt \id_{B_3}"']\ar[dr,phantom,"\Two\theta_1"] & B_1 \!\bxt\! B_2 \!\bxt\! B_3\ar[d,"g"] \\
C\ar[r,tick,"c"'] & C
\end{tikzcd}\quad
\begin{tikzcd}[column sep=.9in]
B_1 \!\bxt\! A_1 \!\bxt\! A_2 \!\bxt\! B_3\ar[d,"1_{B_1} \bxt f \bxt 1_{B_3}" description]\ar[r,tick,"\id_{B_1} \bxt \id_{A_1} \bxt \id_{A_2} \bxt b_3"]\ar[dr,phantom,"\Two 1_{\id_{B_1}} \bxt \id_{f} \bxt 1_{b_3}"] & B_1 \!\bxt\! A_1 \!\bxt\! A_2 \!\bxt\! B_3\ar[d,"1_{B_1} \bxt f \bxt 1_{B_3}" description] \\
B_1 \!\bxt\! B_2 \!\bxt\! B_3\ar[d,"g"']\ar[r,tick,"\id_{B_1} \bxt \id_{B_2} \bxt b_3"']\ar[dr,phantom,"\Two\theta_3"] & B_1 \!\bxt\! B_2 \!\bxt\! B_3\ar[d,"g"] \\
C\ar[r,tick,"c"'] & C
\end{tikzcd}
\end{displaymath}
\begin{displaymath}
\begin{tikzcd}[column sep=.9in]
B_1 \!\bxt\! A_1 \!\bxt\! A_2 \!\bxt\! B_3\ar[d,"1_{B_1} \bxt f \bxt 1_{B_3}" description]\ar[r,tick,"\id_{B_1} \bxt a_1 \bxt \id_{A_2} \bxt\id_{B_3}"]\ar[dr,phantom,"\Two 1_{\id_{B_1}} \bxt \phi_1 \bxt 1_{\id_{B_3}}"] & B_1 \!\bxt\! A_1 \!\bxt\! A_2 \!\bxt\! B_3\ar[d,"1_{B_1} \bxt f \bxt 1_{B_3}" description] \\
B_1 \!\bxt\! B_2 \!\bxt\! B_3\ar[d,"g"']\ar[r,tick,"\id_{B_1} \bxt b_2 \bxt \id_{B_3}"']\ar[dr,phantom,"\Two\theta_2"] & B_1 \!\bxt\! B_2 \!\bxt\! B_3\ar[d,"g"] \\
C\ar[r,tick,"c"'] & C
\end{tikzcd}\quad
\begin{tikzcd}[column sep=.9in]
B_1 \!\bxt\! A_1 \!\bxt\! A_2 \!\bxt\! B_3\ar[d,"1_{B_1} \bxt f \bxt 1_{B_3}" description]\ar[r,tick,"\id_{B_1} \bxt \id_{A_1} \bxt a_2 \bxt\id_{B_3}"]\ar[dr,phantom,"\Two 1_{\id_{B_1}} \bxt \phi_2 \bxt 1_{\id_{B_3}}"] & B_1 \!\bxt\! A_1 \!\bxt\! A_2 \!\bxt\! B_3\ar[d,"1_{B_1} \bxt f \bxt 1_{B_3}" description] \\
B_1 \!\bxt\! B_2 \!\bxt\! B_3\ar[d,"g"']\ar[r,tick,"\id_{B_1} \bxt b_2 \bxt \id_{B_3}"']\ar[dr,phantom,"\Two\theta_2"] & B_1 \!\bxt\! B_2 \!\bxt\! B_3\ar[d,"g"] \\
C\ar[r,tick,"c"'] & C\rlap{ .}
\end{tikzcd}
\end{displaymath}

To make this even more concrete, we continue our:
\begin{rex} \label{thm:running-10}
  Given a $\ca{V}$-functor of three variables $G \colon \ca{B}_1, \ca{B}_2, \ca{B}_3 \rightarrow \ca{C}$ and a $\ca{V}$-functor of two variables $F \colon \ca{A}_1, \ca{A}_2 \rightarrow \ca{B}_2$, the composite $G \circ_2 F$ is the $\ca{V}$-functor of four variables $\ca{B}_1, \ca{A}_1, \ca{A}_2, \ca{B}_3 \rightarrow \ca{C}$ with
  \begin{align*}
    (G \circ F)(\thg, a_1, a_2, b_3) &= G(\thg, F(a_1, a_2), b_3) &
    (G \circ F)(b_1, \thg, a_2, b_3) &= G(b_1, F(\thg, a_2), b_3) \\
    (G \circ F)(b_1, a_1, \thg, b_3) &= G(b_1, F(a_1, \thg), b_3) &
    (G \circ F)(b_1, a_1, a_2, \thg) &= G(b_1, F(a_1, a_2), \thg) \rlap{ .}
  \end{align*}
\end{rex}

\subsection{Pre-representability of $\MultMndC$}\label{sub:prerepMultMnd}

We now start building towards \cref{thm:nctensor-representable}, which
proves that the symmetric multicategory $\MultMndC$ of monads in
$\dcC$ is \emph{representable} in the sense of
\cite{RepresentableMulticats}. 

\begin{defi}
  \label{def:representable-multicategory} A symmetric multicategory
  $\ca{M}$ is \emph{representable} if for every $n$-tuple of objects
  $A_1\dots,A_n$, there is an object $A_1 \otimes \dots \otimes A_n$
  and multimorphism
  $A_1, \dots, A_n \rightarrow A_1 \otimes \dots \otimes A_n$ which is
  \emph{universal}, in the sense that composition with it induces
  bijections of multimorphism-sets
\begin{equation}\label{eq:representability}
  \begin{array}{@{\,\,}r@{\,}l@{\ }l@{\ }c@{\ }l@{\ }l@{\,\,}}
      f \colon & Z_1, \dots, Z_k, & A_1, \ \,\dots,\, \ A_n,&  Z_{k+1}, \dots, Z_n &\longrightarrow& C \\
      \noalign{\vskip 2pt}
      \hline
      \noalign{\vskip 2.5pt} 
      f^\sharp \colon & Z_1, \dots, Z_k, & A_1 \otimes \dots \otimes A_n,&  Z_{k+1}, \dots, Z_n &\longrightarrow& C
    \end{array}
\end{equation}
for all codomains $C$ and parameters $Z_1, \dots, Z_n$. 
\end{defi}
As explained in the introduction to this section, the presence of the
parameters complicates the task of checking universality. What is
easier is to establish the existence of \emph{pre-universal}
morphisms, which exhibit the bijections~\cref{eq:representability}
only when there are no parameters $Z_i$\footnote{Note that universal
  and pre-universal morphisms are referred to as strong universal and
  universal, respectively, in~\cite{RepresentableMulticats}; our nomenclature
  follows, \eg\cite[Def.~3.3.2]{Leinster:2004a}.}. We will do this for
$\MultMndC$, and deduce their full universality in the next section
by showing that $\MultMndC$ is a \emph{closed} multicategory.

In fact, in the presence of a closed structure, it suffices for
representability that we have \emph{binary} and \emph{nullary}
pre-universal morphisms; as such, it will be convenient for us to
adopt the---perhaps slightly misleading---term
\emph{pre-representable} for a multicategory in which just these
pre-universal morphisms exist.

In order to establish the pre-representability of $\MultMndC$, we make the following assumptions.

\begin{hyp} \label{hyp:fibrancy}  The double category $\dcC$ is fibrant, as in \cref{defi:fibrant}.
\end{hyp}

\begin{hyp} \label{hyp:coproducts-in-mnd}
Each category $\MndCA$ has finite coproducts.
\end{hyp}

In this context, the codomain of the pre-universal binary multimorphism $(A, a), (B,b) \rightarrow (A,a) \nctensor (B,b)$, which is the desired \emph{non-commuting tensor product} of $(A,a)$ and $(B,b)$, will be the monad $a \nctensor b$ on the object $A \bxt B$ obtained as the coproduct
\begin{equation}\label{eq:nctensor}
(A \bxt B, a \nctensor b) =(A,a) \bxt \disc{B}\,+\,\disc{A} \bxt (B,b)
\end{equation} 
in $\Mnd_{A\bxt B}(\dc{C})$%, where recall by \cref{rmk:discrete} that $\disc{A}$ is the identity monad $(A,\id_A)$
\footnote{Recall that colimits in the fibers $\dcC[A,A]$ of $\EndC$ and $\Mnd_A(\dcC)$ of $\MndC$ are not the same, hence writing the monad $a\nctensor b\colon A\bxt B\tickar A\bxt B$ as $a\bxt\id_B+\id_A\bxt b$ in $\Mnd_{A\bxt B}(\dcC)$ should be used with caution.}. Of course, like any coproduct, this can also be expressed as a pushout over the initial object, which, in the case of $\MndCA[A
\bxt B]$, is the discrete monad $\disc{A \bxt B}$. Thus, the above coproduct is equally well the pushout
\begin{equation}\label{eq:pushout}
\begin{tikzcd}
\disc{A \bxt B}\ar[r,"\eta_2"]\ar[d,"\eta_1"] & \disc{A}\bxt (B,b)\ar[d,"\iota_2"] \\
(A,a)\bxt\disc{B}\ar[r,"\iota_1"'] & (A \bxt B,a \nctensor b)\ar[ul,phantom,"\ulcorner", very near start]
\end{tikzcd}
\end{equation}
in $\Mnd_{A\bxt B}(\dc{C})$, which
by~\cref{lem:local-to-global-connected-limits} is also a pushout in
$\MndC$.

\begin{prop} \label{thm:nctensor-prerepresentable} 
The symmetric multicategory $\MultMndC$ is pre-representable.
\end{prop}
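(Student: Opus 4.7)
The plan is to exhibit explicit binary and nullary pre-universal multimorphisms in $\MultMndC$. The nullary case is immediate: take $\disc{I}$, the discrete monad on the unit object, with universal nullary multimorphism $1_I \colon I \to I$. Indeed, by definition a nullary multimorphism into $(C,c)$ is a vertical map $I \to C$, and such maps are classified by monad maps $\disc{I} \to (C,c)$ via the adjunction $\disc{} \dashv U \colon \MndC \to \dcC_0$ of~\cref{rmk:discrete}.

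For the binary case, I take as codomain the monad $(A \bxt B, a \nctensor b)$ of~\eqref{eq:nctensor}, which exists by \cref{hyp:coproducts-in-mnd}. The candidate pre-universal multimorphism
$$(1_{A\bxt B}, \phi^u, \psi^u) \colon (A,a), (B,b) \longrightarrow (A\bxt B, a\nctensor b)$$
has components obtained directly from the two coproduct injections $\iota_1 \colon (A,a) \bxt \disc{B} \to (A\bxt B, a\nctensor b)$ and $\iota_2 \colon \disc{A} \bxt (B,b) \to (A\bxt B, a\nctensor b)$ in the fibre $\Mnd_{A \bxt B}(\dcC)$: since both injections live in this fibre they have trivial vertical part $1_{A\bxt B}$, and their globular $2$-cells supply $\phi^u$ and $\psi^u$. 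To verify pre-universality I would invoke~\cref{lem:EndMndfib}: by \cref{hyp:fibrancy} the forgetful $\MndC \to \dcC_0$ is a Grothendieck fibration, and so a monad map $(A\bxt B, a\nctensor b) \to (C,c)$ is equivalently a vertical map $f \colon A \bxt B \to C$ together with a fibrewise map $(A \bxt B, a\nctensor b) \to f^{*}(C,c)$ in $\Mnd_{A\bxt B}(\dcC)$. The fibrewise universal property of the coproduct splits such a map into a pair of components, and unwinding the reindexing returns to the total category a pair of monad maps $(f, \phi) \colon (A,a) \bxt \disc{B} \to (C,c)$ and $(f, \psi) \colon \disc{A}\bxt (B,b) \to (C,c)$ sharing a common vertical $f$ -- that is, precisely the data of a binary monad multimorphism in the sense of~\cref{thm:monad-multimorphism}. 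Tracing definitions shows that this bijection is implemented by precomposition with $(1_{A \bxt B}, \phi^u, \psi^u)$.

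The only delicate point, and the main thing to be careful about, is the interaction between the fibrewise coproduct (\cref{hyp:coproducts-in-mnd}) and the fibration structure (\cref{hyp:fibrancy}): the coproduct itself only classifies monad maps lying over the identity of $A \bxt B$, and it is fibrancy that promotes this fibrewise statement to the parametric (in the vertical direction) classification required for multimorphisms into an arbitrary monad. I do not foresee a deeper obstacle; naturality and compatibility with the symmetric group action on multimorphisms follow automatically once the total-category bijection is in hand, using that the symmetry $\beta_{(12)} \colon A\bxt B \to B\bxt A$ is a vertical isomorphism under which the two coproduct injections are interchanged.
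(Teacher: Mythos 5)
Your proposal is correct and follows essentially the same route as the paper: the same nullary representative $\disc{I}$, and the same binary one given by the fibrewise coproduct $(A,a)\bxt\disc{B}+\disc{A}\bxt(B,b)$ in $\Mnd_{A\bxt B}(\dcC)$ with the coproduct injections as the universal multimorphism. The only (cosmetic) difference is in verifying universality: the paper rewrites the coproduct as a pushout over $\disc{A\bxt B}$ and promotes it to a pushout in $\MndC$ via the connected-colimits lemma, whereas you decompose $\MndC\bigl((A\bxt B,a\nctensor b),(C,c)\bigr)$ by reindexing along the fibration $\MndC\to\dcC_0$ -- which is precisely the computation underlying that lemma.
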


\begin{proof}
  First of all, since nullary monad multimorphisms into $(B,b)$ are
  the same as maps $f \colon I \rightarrow B$ in $\dcC_0$, and these
  are in turn the same as monad morphisms
  $\disc{I} \rightarrow (B,b)$, we see that there is a pre-universal
  nullary map given by $1_I \colon \ \longrightarrow \disc{I}$.
  
  As for the binary case, let $(A,a), (B,b) \in \MndC$. We show that there is a pre-universal binary multimorphism
  \begin{equation}
    \label{eq:universal-binary-multimorphism}
    (1_{A \bxt B}, \iota_1, \iota_2) \colon (A,a), (B,b) \rightarrow (A \bxt B, a \nctensor b)
  \end{equation}
where $(A,a) \nctensor (B,b)$ is defined as in \cref{eq:nctensor}, and $\iota_1$ and $\iota_2$ are the coproduct inclusions in the fibre $\MndC_{A\bxt B}$, as in~\eqref{eq:pushout}.
%\begin{displaymath}
%\begin{tikzcd}[column sep=.7in]
%A_1\bxt A_2\ar[r,tick,"a_1\bxt\id"]\ar[d,equal]\ar[dr,phantom,"\Two\iota_1"] & A_1\bxt A_2\ar[d,equal] \\
%A_1\bxt A_2\ar[r,tick,"a_1\bxt\id+\id\bxt a_2"'] & A_1\bxt A_2,
%\end{tikzcd}\qquad
%\begin{tikzcd}[column sep=.7in]
%A_1\bxt A_2\ar[r,tick,"\id\bxt a_2"]\ar[d,equal]\ar[dr,phantom,"\Two\iota_2"] & A_1\bxt A_2\ar[d,equal] \\
%A_1\bxt A_2\ar[r,tick,"a_1\bxt\id+\id\bxt a_2"'] & A_1\bxt A_2.
%\end{tikzcd}
%\end{displaymath}  
For this, we must show that composition with~\eqref{eq:universal-binary-multimorphism} induces for each monad $(C,c)$ a bijection
  \begin{equation}\label{eq:binarypre}
  \begin{split}
    (A,a),(B,b)&\to (C,c)\qquad\textrm{in }\MultMndC \\
  \noalign{\smallskip} \hline \noalign{\smallskip}
  (A \bxt B, a \nctensor b)&\to (C,c)\qquad\textrm{in }\MndC
  \end{split}
\end{equation}
or in other words, that each binary monad multimorphism
$(f, \phi, \psi) \colon (A,a), (B,b)\to(C,c)$ factors uniquely through
$(1,\iota_1,\iota_2)$. Now, given such an $(f, \phi, \psi)$, we have a
square in $\MndC$
\begin{equation*}
  \begin{tikzcd}
\disc{A \bxt B}\ar[r,"{(1,\eta_2)}"]\ar[d,"{(1,\eta_1)}"'] & \disc{A}\bxt (B,b)\ar[d,"{(f,\psi)}"] \\
(A,a)\bxt\disc{B}\ar[r,"{(f, \phi)}"'] & (C,c)
\end{tikzcd}
\end{equation*}
which evidently commutes at the level of objects and so, by the
adjointness of $\disc{(\thg)}$ to the forgetful functor
$\MndC \rightarrow \dcC_0$, commutes simpliciter.
Since~\eqref{eq:pushout} is a pushout, we induce a monad map
$(A \bxt B, a \nctensor b) \rightarrow (C,c)$ which is unique such that it
precomposes with $(1, \iota_1)$, respectively $(1, \iota_2)$, to yield
$(f, \phi)$, respectively $(f, \psi)$. Clearly, this is the desired
unique factorisation of $(f, \phi, \psi)$ through $(1, \iota_1, \iota_2)$.
\end{proof}
\begin{rex} \label{thm:running-11}
  $\VMat$ is always fibrant; and if $\ca{V}$ has colimits preserved by tensor in each variable, then we will also have finite coproducts in each $\MndCA$. Indeed, given monads $(A, a_1)$ and $(A, a_2)$, their coproduct is found by first forming the free monad on $(A, a_1 \circ a_2)$, and then taking a suitable coequaliser, formed in $\EndCA$ in the first instance, but after the fact lifting to $\MndCA$. In the case of $\ca{V} = \Set$, this can be described as the coequaliser which forces the equalities
  \begin{align*}
      \cdots x_{2k-2} \xrightarrow{f_k} x_{2k-1} \xrightarrow{\mathrm{id}} x_{2k} \xrightarrow{f_{k+1}} x_{2k+1} \xrightarrow{g_{k+1}} x_{2k+2} \cdots  \ \  
      &=  \ \   \cdots x_{2k-2} \xrightarrow{f_{k+1} f_k}  x_{2k+1} \xrightarrow{g_{k+1}} x_{2k+2} \cdots  \\
      \text{and }\cdots x_{2k-2} \xrightarrow{f_k} x_{2k-1} \xrightarrow{g_k} x_{2k} \xrightarrow{\mathrm{id}} x_{2k+1} \xrightarrow{g_{k+1}} x_{2k+2} \cdots  \ \  
    &=  \ \   \cdots x_{2k-2} \xrightarrow{f_k}  x_{2k+1} \xrightarrow{g_{k+1}g_k} x_{2k+2} \cdots \rlap{ .}
  \end{align*}
  The induced non-commuting tensor product $\nctensor$ is, in the case $\ca V = \Set$, the so-called \emph{funny tensor product}; for an explicit description, see, \eg,~\cite[\S 2]{Weber2013Free}.
\end{rex}

There is no obstacle to extending the argument of the previous
proposition to  $n$-ary pre-universal maps; for example,
the ternary case involves a coproduct
$(A,a) \bxt \disc{B} \bxt \disc{C} + \disc{A} \bxt (B,b) \bxt \disc{C}
+ \disc{A} \bxt \disc{B} \bxt (C,c)$ in $\MndCA[A\bxt B\bxt C]$.
However, as noted above, doing so is supererogatory in the presence of
closed structure on our multicategory---as we will now see.

\subsection{Closure and representability of $\MultMndC$}\label{sub:clMultMnd}

Let us recall (\cf\cite{LambekJ:dedscs,Closedmulticats}):
\begin{defi}
  \label{def:closed-multicategory} A symmetric multicategory $\ca{M}$
  is \emph{closed} if, for every $B, C \in \ca{M}$, there is an object
  $[B,C] \in \ca{M}$ and a binary multimorphism $\mathsf{ev} \colon
  [B,C], B \rightarrow C$, composition with which induces bijections
  between multimorphism-sets
  \begin{equation*}
    \begin{array}{@{\,\,}r@{\,}l@{\ }c@{\ }l@{\,\,}}
      f \colon & A_1, \dots, A_n, B &\longrightarrow& C \\
      \noalign{\vskip 2pt}
      \hline
      \noalign{\vskip 2.5pt} 
      \bar f \colon & A_1, \dots, A_n &\longrightarrow& [B,C] \mathrlap{.}
    \end{array}
  \end{equation*}
\end{defi}
The relevance of this for us is the following result, which appears to
be folklore in this generality---though it can be deduced as a special
case of~\cite[Prop.~4.8]{Skewmulticategories}.
\begin{prop}
  \label{prop:closed-preuniversal-universal} If a multicategory $\ca M$
  is pre-representable and closed, then it is representable.
\end{prop}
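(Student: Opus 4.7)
The plan is to leverage the closed structure of $\ca M$ to reduce the verification of representability, in which parameters may appear on either side of a pre-universal multimorphism, to the parameter-free case captured by pre-representability. The key observation is that in a closed symmetric multicategory, a multimorphism of the form $Z_1, \dots, Z_k, X, Z_{k+1}, \dots, Z_m \to C$ can be transposed, by permuting the $Z_i$ to one end via the symmetric-group action and then iteratively applying the closure adjunction, into a unary morphism $X \to D$ where $D$ is an iterated internal hom such as $[Z_{\sigma(1)},[Z_{\sigma(2)}, \dots,[Z_{\sigma(m)},C] \dots ]]$ for a suitable permutation $\sigma$.

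First I would treat the binary case with parameters. Given the pre-universal $u \colon A, B \to A \otimes B$ provided by pre-representability, I want to show that composition with $u$ induces a bijection between multimorphism-sets of the form $Z_1, \dots, Z_k, A, B, Z_{k+1}, \dots, Z_m \to C$ and $Z_1, \dots, Z_k, A \otimes B, Z_{k+1}, \dots, Z_m \to C$ for all parameter lists and codomains. Currying the $Z_i$ out on both sides via the closure adjunction, both sets become naturally isomorphic to the binary multimorphism-set $A, B \to D$ and the unary morphism-set $A \otimes B \to D$ respectively, with $D$ the iterated internal hom built above. Pre-universality of $u$ applied in the no-parameter case then yields a bijection between these transposed sets via composition with $u$, and by naturality of the closure transposition this bijection agrees with composition with $u$ on the original sets. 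The nullary pre-universal map $u_0 \colon \ \longrightarrow I$ is handled by an entirely analogous argument, transposing all parameters out and applying pre-universality at arity zero.

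Once binary and nullary universal multimorphisms \emph{with parameters} are in hand, I would construct the $n$-ary universal multimorphisms for $n \geq 3$ by iteration. Define $A_1 \otimes \dots \otimes A_n$ inductively as $(A_1 \otimes \dots \otimes A_{n-1}) \otimes A_n$, and take as candidate universal multimorphism the composite in $\ca M$ of the successive binary universal maps. Since each binary universal map enjoys universality with parameters, composition with it induces a bijection on multimorphism-sets regardless of which further objects appear in the list; iterating this fact shows that composition with the $n$-fold composite is a bijection, witnessing representability at arity $n$. Similarly the nullary universal map provides the unit of the induced monoidal structure.

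The main subtlety will lie in the first step: one must check carefully that the iterated closure transposition is natural with respect to precomposition by $u$ in the designated slot, which requires unpacking how the symmetric-group action interacts with the closure adjunction and with composition in $\ca M$. This is essentially routine bookkeeping, but it cannot be skipped, since the whole reduction to the pre-representable case rests on it.
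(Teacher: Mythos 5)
Your proposal is correct and follows essentially the same route as the paper: transpose the parameters away using closedness and symmetry, invoke pre-universality in the parameter-free case, and transpose back; then obtain $n$-ary universal morphisms by composing binary ones, which is exactly the "universal arrows are closed under composition" fact the paper cites from Hermida. The only cosmetic difference is that you spell out the iteration for higher arities where the paper defers to the standard argument for finite products.
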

\begin{proof}
  Since $\ca M$ is pre-representable, it has pre-universal binary
  morphisms $u_{A,B} \colon A,B \rightarrow A \otimes B$ for all
  objects $A,B$. We first show that these are universal. Indeed,
  given a multimorphism as on the first line in
  \begin{equation*}
    \begin{array}{@{\,\,}r@{\,}l@{\ }l@{\ }c@{\ }l@{\ }l@{\,\,}}
      f \colon & Z_1, \dots, Z_k, & A,\  B,&  Z_{k+1}, \dots, Z_n &\longrightarrow& C \\
      \noalign{\vskip 2pt}
      \hline
      \noalign{\vskip 2.5pt} 
      \bar f \colon && A,\ B &&\longrightarrow& [Z_1, [Z_2, \dots [Z_n, C]\cdots]\\
      \noalign{\vskip 2pt}
      \hline
      \noalign{\vskip 2.5pt} 
      {\bar f}^\sharp \colon && A\! \otimes\! B &&\longrightarrow& [Z_1, [Z_2, \dots [Z_n, C]\cdots]\\
      \noalign{\vskip 2pt}
      \hline
      \noalign{\vskip 2.5pt} 
      f^\sharp \colon & Z_1, \dots, Z_k, & A\! \otimes\! B,&  Z_{k+1}, \dots, Z_n &\longrightarrow& C
    \end{array}
  \end{equation*}
  we have a sequence of bijections as displayed. Here, from the first
  to the second line, we use a combination of closedness and symmetry
  isomorphisms; from the second to the third, we use
  pre-representability; and from the third to the fourth, we again use
  symmetry and closedness.

  An identical argument shows that the nullary pre-universal morphism
  $\ \ \rightarrow I$ is universal, so it remains to exhibit universal
  arrows of other arities. This is formally the same as the standard argument
  showing that nullary and binary products suffice for all finite
  products; the key fact needed is that universal arrows are closed
  under composition~\cite[Proposition~8.5(1)]{RepresentableMulticats}.
\end{proof}

Thus, if we can show that the multicategory of monads and monad
multimorphisms is closed, then we can conclude that it is also
representable. This is not the only way to proceed, and is not
necessarily even the simplest way to proceed; however, when it comes
to the commuting tensor product in the following section, this
approach definitively \emph{will} be the simplest one, and so we
match the form of the argument~here. Of course, the closure is
also an interesting fact in its own right. We outline the other approach in
\cref{rmk:directlymonoidal}.

To prove closedness, we will assume that the symmetric normal
oplax monoidal structure of the double category $\dcC$ is closed, as
in \cref{def:closeddouble}, and make the following two further
assumptions on $\dcC$. Note that, even if we were to prove
representability of $\MultMndC$ directly, some additional assumptions
would be needed; for example, it seems hard to avoid assuming that each
functor
$\disc{A} \bxt (\thg) \colon \MndCA[B] \rightarrow \MndCA[A \bxt B]$
preserves finite coproducts. As such, it is not unreasonable that we
should make additional assumptions at this juncture.

\begin{hyp}
\label{hyp:equalisers}
The category $\dcC_0$ has equalisers.
\end{hyp} 

\begin{hyp} \label{hyp:tabulators}
The double category $\dcC$ has $1$-tabulators, as in \cref{def:1tabulators}.
\end{hyp} 

To establish closedness, we will need three preparatory lemmas.

\begin{lem}\label{lemma1}
  The functor
  \begin{equation}
    \label{eq:dc0-action-on-endc}
    \begin{aligned}
      \dcC_0 \times \EndC &\rightarrow \EndC\\
      (A, (B,y)) & \mapsto (A,\id_A) \bxt (B,y)
    \end{aligned}
  \end{equation}
  has a parametrised right adjoint
  $\uEndC \colon \EndC^\mathrm{op} \times \EndC \rightarrow \dcC_0$.
\end{lem}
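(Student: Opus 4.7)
The plan is to construct the parametrised right adjoint explicitly as a composite of three more basic constructions: the closure of the monoidal structure at the level of $\dcC_1$, the $1$-tabulator, and equalisers in $\dcC_0$. Unpacking the definition of $\EndC$ as the pullback~\eqref{eq:endc-pullback}, a morphism $(A \bxt B, \id_A \bxt y) \to (C,z)$ in $\EndC$ consists of a vertical map $f \colon A \bxt B \to C$ together with a $2$-morphism $\phi \colon \id_A \bxt y \to z$ in $\dcC_1$ whose vertical source and target are both $f$. The plan is to transpose each of these two pieces of data using the assumed structure on $\dcC$.

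The first step uses the closed structure (\cref{def:closeddouble}). At the vertical level, $f \colon A \bxt B \to C$ corresponds to $g \defeq \lambda(f) \colon A \to \boxhom{B,C}_0$. At the horizontal level, the adjunction $(\thg) \bxt_1 y \dashv \boxhom{y, \thg}_1$ transposes $\phi$ to a $2$-morphism $\tilde\phi \colon \id_A \to \boxhom{y,z}_1$. Because the diagram~\eqref{eq:hom-double-functor} commutes serially, the vertical source and target of $\tilde\phi$ are exactly $\lambda$ applied to those of $\phi$, hence both equal $g$. Conversely, any such pair $(g, \tilde\phi)$ transposes back, giving a natural bijection between morphisms $(A \bxt B, \id_A \bxt y) \to (C,z)$ in $\EndC$ and pairs consisting of $g \colon A \to \boxhom{B,C}_0$ and a $2$-morphism $\tilde\phi \colon \id_A \to \boxhom{y,z}_1$ with vertical source and target both $g$.

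The second step combines the $1$-tabulator (\cref{hyp:tabulators}) with an equaliser in $\dcC_0$ (\cref{hyp:equalisers}). By the universal property of $T\boxhom{y,z}_1$, unrestricted $2$-morphisms $\tilde\phi \colon \id_A \to \boxhom{y,z}_1$ correspond naturally to maps $h \colon A \to T\boxhom{y,z}_1$, with the vertical source and target of $\tilde\phi$ recovered as $\pi_{\mathfrak{s}} \circ h$ and $\pi_{\mathfrak{t}} \circ h$. The constraint that these coincide is then precisely the condition that $h$ factors through the equaliser
\[
\uEndC((B,y),(C,z)) \defeq \mathrm{eq}\bigl(\pi_{\mathfrak{s}}, \pi_{\mathfrak{t}} \colon T\boxhom{y,z}_1 \rightrightarrows \boxhom{B,C}_0\bigr)\rlap{ .}
\]
Composing the two steps yields the natural bijection $\dcC_0(A, \uEndC((B,y),(C,z))) \cong \EndC((A \bxt B, \id_A \bxt y), (C,z))$, as required.

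Finally, parametrised functoriality in $(B,y)^{\op}$ and $(C,z)$ follows because each ingredient is functorial: $\boxhom{\thg, \thg}_1$ is a lax double functor, $T \colon \dcC_1 \to \dcC_0$ is the right adjoint to $\hid$ and thus a functor, and equalisers are functorial in their parallel pairs. The main technical subtlety, and the only place where any real care is needed, is the bookkeeping in the first step: one must verify that the adjoint transpose under closure interacts correctly with source and target so that the ``endpoints both equal $f$'' condition on $\phi$ translates exactly to the ``endpoints both equal $g$'' condition on $\tilde\phi$; this is ensured by the serial commutativity in~\eqref{eq:hom-double-functor}.
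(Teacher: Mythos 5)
Your proof is correct and follows essentially the same route as the paper's: both reduce a map $(A\bxt B,\id_A\bxt y)\to(C,z)$ in $\EndC$ to data transposed via the closed structure on $\dcC_1$, convert the globular part into a map into the $1$-tabulator $T\boxhom{y,z}$, and cut out the source-equals-target condition by the equaliser of $\pi_{\mathfrak s},\pi_{\mathfrak t}\colon T\boxhom{y,z}\rightrightarrows\boxhom{B,C}$ in $\dcC_0$, arriving at the identical formula for $\uEndC(y,z)$. The only difference is presentational — the paper phrases the argument as an equaliser of representable functors $\dcC_0^{\op}\to\Set$ being representable, whereas you construct the bijection elementwise — and your appeal to the serial commutativity of~\eqref{eq:hom-double-functor} (together with the compatibility of evaluation maps required by \cref{def:closeddouble}) is exactly what makes the transposition respect sources and targets.
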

It is easy to see that the above functor in fact gives a
monoidal action of $(\dcC_0, \bxt)$ on $\EndC$. So by, for
example,~\cite[Section~6]{Janelidze2001A-note}, the parametrised right
adjoint $\uEndC$ provides the hom-objects for an enrichment
of $\EndC$ in $(\dcC_0, \bxt)$---whence our notation.

\begin{proof}
  For every $(B,y)\in\EndC$, we must exhibit a right adjoint namely
  $\uEndC\bigl((B,y),\thg\bigr) \colon \EndC \rightarrow \dcC_0$ for
  the functor
  $\id_{(\thg)} \bxt (B,y) \colon \dcC_0 \rightarrow \EndC$. Equivalently, we must show that for all $(B,y), (C,z) \in \EndC$,
  the functor
  $\EndC(\id_{(\thg)} \bxt y, z) \colon \dcC_0^\mathrm{op} \rightarrow
  \Set$ is representable; the representing object will then be
  $\uEndC(y,z)$.
  
  Note that the values of this
  functor arise as the following equalisers in $\Set$:
  \begin{equation*}
    \begin{tikzcd}
      \EndC(\id_{A} \bxt y, z)
      \ar[r] &
      \dcC_1\bigl((A,\id_{A}) \bxt (B,y), (C,z)\bigr)
      \ar[r,shift left=1,"{\mathfrak{s}}"]
      \ar[r,shift right=1,"{\mathfrak{t}}"']
      &
      \dcC_0(A \bxt B, C)\rlap{ .}
    \end{tikzcd}
  \end{equation*}
  Thus, if we can prove representability of the functors
  $\dcC_1\bigl(\id_{(\thg)} \bxt (B,y), (C,z)\bigr)$ and
  $\dcC_0(\thg \bxt B, C)$, then $\EndC(\id_{(\thg)} \bxt y, z)$ will
  be a pointwise equaliser of representable functors
  $\dcC_0 \rightarrow \Set$ and so also representable, given that $\dcC_0$ by assumption has equalisers.

  But $\dcC_0(\thg \bxt B, C)$ is representable since $(\dcC_0, \bxt)$ is closed, while $\dcC_1\bigl(\id_{(\thg)} \bxt (B,y), (C,z)\bigr)$ is representable since
  \begin{equation*}
    \dcC_1(\id_{(\thg)} \bxt y, z) \cong
    \dcC_1(\id_{(\thg)}, \boxhom{y, z}) \cong \dcC_0(\thg, T\boxhom{y,z})
  \end{equation*}
  using closedness of $(\dcC_1, \bxt)$ and 1-tabulators in $\dcC$.
  Working this through, we find that the values of the parametrised
  right adjoint to~\cref{eq:dc0-action-on-endc} are given by the
  following equalisers in $\dcC_0$:
    \begin{equation*}
      %\label{eq:limit-defining-end-hom}
      \begin{tikzcd}
      \uEndC(y, z)
      \ar[r] &
      T\boxhom{y,z}
      \ar[r,shift left=1,"{\pi_\mathfrak{s}}"]
      \ar[r,shift right=1,"{\pi_\mathfrak{t}}"']
      &
      \boxhom{B,C}\rlap{ .}
      \end{tikzcd} \qedhere
    \end{equation*}
\end{proof}
\begin{rex} \label{thm:running-12}
  $\VMat$ always satisfies \cref{hyp:equalisers,hyp:tabulators}, and
  is normal oplax monoidal closed so long as $\ca{V}$ is monoidal
  closed; we assume this henceforth. In this case,
  $\uEndC\bigl((B,y), (C,z)\bigr)$ is simply the hom-set
  $\EndC\bigl((B,y), (C,z)\bigr)$, \ie, the set of $\ca{V}$-graph
  morphisms from $(B,y)$ to $(C,z)$.
\end{rex}

\begin{lem}
  \label{lem:mnd-hom}
 The lifting  of the functor~\cref{eq:dc0-action-on-endc}
  \begin{align*}
    \dcC_0 \times \MndC &\rightarrow \MndC\\
    (A, (B,b)) & \mapsto \disc{A} \bxt (B,b)
  \end{align*}
  obtained via \cref{lem:tensoring-with-id-lifts}, has a
  parametrised right adjoint
  $\uMndC \colon \MndC^\mathrm{op} \times \MndC
  \rightarrow \dcC_0$.
\end{lem}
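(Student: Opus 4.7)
The plan is to adapt the strategy of \cref{lemma1} to the monad setting, defining $\uMndC(b, c)$ as a subobject of $\uEndC(b, c)$ cutting out those endomap morphisms whose underlying data satisfy the two monad map axioms.

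First, I observe that under the natural bijection from \cref{lemma1}, a monad morphism $\disc{A} \bxt (B,b) \to (C,c)$ corresponds to a map $g \colon A \to \uEndC(b,c)$ whose associated endomap morphism $\psi \colon \id_A \bxt b \to c$ satisfies the multiplication and unit axioms \eqref{eq:monad-map-axioms}, with the monad structure on $\disc A \bxt (B,b)$ being the one described in \cref{lem:tensoring-with-id-lifts}.

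Next, I claim that each axiom can be expressed as an equaliser condition on $g$. Both sides of the multiplication axiom yield 2-morphisms $(\id_A \bxt b) \circ (\id_A \bxt b) \Rightarrow c$, which via the interchange isomorphism $\xi^{-1}$ may be viewed as elements of $\EndC(\id_A \bxt (b \circ b), c)$. Each side is natural in $A$, so by the Yoneda lemma combined with \cref{lemma1}, both arise as the composite of $g$ with a member of a parallel pair
\[
u_1, u_2 \colon \uEndC(b,c) \rightrightarrows \uEndC(b \circ b, c)
\]
in $\dcC_0$. An analogous analysis of the unit axiom produces a second parallel pair $v_1, v_2 \colon \uEndC(b,c) \rightrightarrows \uEndC(\hid_B, c)$. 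The monad map axioms for $\psi$ translate precisely into $g$ equalising both of these pairs.

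I then define $\uMndC(b, c)$ as the joint equaliser of $(u_1, u_2)$ and $(v_1, v_2)$, obtained as an iterated pair of equalisers using \cref{hyp:equalisers}. The resulting bijection $\dcC_0(A, \uMndC(b, c)) \cong \MndC(\disc A \bxt (B, b), (C, c))$, natural in $A$, provides the desired right adjoint to $\disc{(\thg)} \bxt (B,b)$; the functoriality of $\uMndC$ in both monad variables is then a formality from the general theory of parametrised adjoints.

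The technical crux is verifying that the assignment $\psi \mapsto \mu_c \circ (\psi \circ \psi)$---which is quadratic in $\psi$---is nonetheless natural in $A$ and so represented by a map in $\dcC_0$. This holds because reindexing along any $A' \to A$ commutes with horizontal composition of 2-morphisms by the interchange law in $\dcC$, so the reindex of $\psi \circ \psi$ agrees with the horizontal composite of the reindex of $\psi$ with itself. Similar care is required to handle the normality isomorphism $\upsilon$ when manipulating the unit axiom.
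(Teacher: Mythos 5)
Your proposal is correct and takes essentially the same approach as the paper: both express the set of monad maps out of $\disc{A}\bxt(B,b)$ as a joint equaliser of two parallel pairs encoding the multiplication- and unit-preservation axioms, represent each vertex via \cref{lemma1} together with the normality isomorphisms $(\id_A\bxt b)\circ(\id_A\bxt b)\cong\id_A\bxt(b\circ b)$ and $\id_{A\bxt B}\cong\id_A\bxt\id_B$, and then construct $\uMndC(b,c)$ as the corresponding iterated equaliser in $\dcC_0$ using \cref{hyp:equalisers}. Your closing remark on the naturality in $A$ of the quadratic assignment $\psi\mapsto\mu\circ(\psi\circ\psi)$ is a point the paper leaves implicit, and you resolve it correctly.
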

Again, the values of $\uMndC$ provide the
hom-objects for an enrichment of $\MndC$ in $\dcC_0$.
\begin{proof}
  Note first that, for any monads $(B,b)$ and $(C,c)$, the set
  $\MndC(b, c)$ is the limit of the diagram:
  \begin{equation}
    \label{eq:mnd-hom-diagram}
    \begin{tikzcd}[column sep=2.5em]
      \EndC\bigl((B,b), (C,c)\bigr)
      \ar[r,shift left=1,"\ell_1"]
      \ar[r,shift right=1,"r_1"']
      \ar[d,shift left=1,"{r_2}"]
      \ar[d,shift right=1,"{\ell_2}"']
      &
      \EndC\bigl((B,b \circ b), (C,c)\bigr) \\
      \EndC\bigl((B,\id_B), (C,c)\bigr) \mathrlap{,}
    \end{tikzcd}
  \end{equation}
  where $\ell_1$ and $r_1$, respectively $\ell_2$ and $r_2$, send a map $(f,\varphi)$ to
  the left and right sides of the first, respectively the second, equality
  in~\cref{eq:monad-map-axioms}. Moreover, this limit can be
  constructed from equalisers alone: we let $e$ be an equaliser of
  $\ell_1$ and $r_1$, and then form the equaliser of $\ell_2 e$ and
  $r_2 e$.

  Given $(B,b)$ and $(C,c) \in \MndC$, we now show that
  $\MndC\bigl(\disc{\thg} \bxt (B,b), (C,c)\bigr) \colon \dcC_0^\mathrm{op}
  \rightarrow \Set$ is representable. By the above, replacing $(B,b)$ by $(A\bxt B,\id_A\bxt b)$, the value of this
  functor at $A$ is given by the limit:
  \begin{equation*}
    \begin{tikzcd}[column sep=2.5em]
      \EndC\bigl((A\bxt B,\id_A\bxt b), (C,c)\bigr) \ar[r,shift left=1,"\ell_1"] \ar[r,shift
      right=1,"r_1"'] \ar[d,shift left=1,"{r_2}"] \ar[d,shift
      right=1,"{\ell_2}"'] &
      \EndC\bigl((A \bxt B, (\id_A \bxt b) \circ (\id_A \bxt b)), (C,c)\bigr) \\
      \EndC\bigl((A \bxt B,\id_{A \bxt B}), (C,c)\bigr) \mathrlap{,}
    \end{tikzcd}
  \end{equation*}
  and so  -- arguing as before --  it suffices to show
  that each vertex of this diagram is representable as a functor of
  its $A$-variable. For the top-left vertex, this follows immediately
  from the preceding lemma; for the other two vertices, it follows
  from the same lemma on noting that, by normality (\cref{def:normality}) of the oplax
  monoidal structure $\bxt$, we have natural isomorphisms
  $\id_{A \bxt B} \cong \id_{A} \bxt \id_{B}$ and
  $(\id_{A} \bxt b) \circ (\id_{A} \bxt b) \cong \id_{A} \bxt (b \circ
  b)$.
  Working this through, we find that $\uMndC(b,c)$ is the
  limit of a diagram
  \begin{equation*}
    %\label{eq:limit-defining-mnd-hom}
    \begin{tikzcd}[column sep=2.5em]
      \uEndC\bigl((B,b), (C,c)\bigr)
      \ar[r,shift left=1,"\underline{\ell_1}"]
      \ar[r,shift right=1,"\underline{r_1}"']
      \ar[d,shift left=1,"\underline{r_2}"]
      \ar[d,shift right=1,"\underline{\ell_2}"']
      &
      \uEndC\bigl((B,b \circ b), (C,c)\bigr) \\
      \uEndC\bigl((B,\id_{B}), (C,c)\bigr)\rlap{ ,}
    \end{tikzcd}
  \end{equation*}
  i.e., the ``enhancement'' of~\cref{eq:mnd-hom-diagram} from a
  diagram of sets to a diagram in $\dcC_0$.
\end{proof}

\begin{rex} \label{thm:running-13}
  In $\VMat$, the object
  $\uMndC\bigl((B,b), (C,c)\bigr)$ is simply the set
  $\MndC\bigl((B,b), (C,c)\bigr)$, \ie, the set of
  $\ca{V}$-functors $\ca{B} \rightarrow \ca{C}$ between the associated
  $\ca{V}$-categories of $(B,b)$ and $(C,c)$.
\end{rex}
\begin{lem}
  \label{cartesian-lifts-multimnd}
Let $(g, \gamma) \colon (B, b) \rightarrow (C,c)$  be a map in $\MndC$.
Assume that it is  is cartesian with respect to the Grothendieck fibration
  $\MndC \rightarrow \dcC_0$. The following ``enhanced
  cartesianness'' property with respect to the multicategory
  $\MultMndC$ holds: any monad multimorphism $(gh, \vec{\phi}) \colon (A_1, a_1), \dots, (A_n,
  a_n) \rightarrow (C,c)$ factorises uniquely through $(g,\gamma)$. More compactly, for any monads $(A_1, a_1), \dots, (A_n, a_n)$, the
  square
  \begin{equation*}
    \begin{tikzcd}[column sep=3.5em]
      \MultMndC\bigl((A_1, a_1), \dots, (A_n, a_n);\, (B,b)\bigr)
      \ar[r,"{(g, \gamma)} \circ_1 (\thg)"] \ar[d]&
      \MultMndC\bigl((A_1, a_1), \dots, (A_n, a_n);\, (C,c)\bigr) \ar[d]
      \\
      \dcC_0(A_1 \bxt \dots \bxt A_n, B) \ar[r,"g \circ (\thg)"] &
      \dcC_0(A_1 \bxt \dots \bxt A_n, C)
    \end{tikzcd}
  \end{equation*}
  is a pullback, where the vertical arrows project onto the object-components.
\end{lem}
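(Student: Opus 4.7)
The plan is to unpack what the pullback condition asserts and reduce it directly to the cartesianness of $(g,\gamma)$ as a monad map. Concretely, given an element $\bigl((f,\vec\phi),\,h\bigr)$ of the pullback, so a multimorphism $(f,\vec\phi)\colon(A_1,a_1),\dots,(A_n,a_n)\to(C,c)$ and a vertical map $h\colon A_1\bxt\cdots\bxt A_n\to B$ with $g\circ h=f$, I need to produce a unique multimorphism $(h,\vec\psi)\colon(A_1,a_1),\dots,(A_n,a_n)\to(B,b)$ whose underlying vertical map is $h$ and for which $(g,\gamma)\circ_1(h,\vec\psi)=(f,\vec\phi)$.

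First I would inspect the construction of composition in $\MultMndC$ from \cref{prop:MultMndmulticategory} in the present unary$\circ_1$$n$-ary case: there the vertical map is unambiguously $g\circ h$, and the $i$-th monad map component is obtained by pasting $\psi_i$ on top of $\gamma$ below. Consequently, the equation $(g,\gamma)\circ_1(h,\vec\psi)=(f,\vec\phi)$ is equivalent, for each $i=1,\dots,n$, to the requirement that $(h,\psi_i)\colon \disc{A_1}\bxt\cdots\bxt(A_i,a_i)\bxt\cdots\bxt\disc{A_n}\to(B,b)$ is a monad map lifting $h$ and satisfying $(g,\gamma)\circ(h,\psi_i)=(f,\phi_i)$ in $\MndC$. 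Here we use that, by \cref{lem:tensoring-with-id-lifts}, the source is a well-defined monad.

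At this point the result follows immediately from the assumed cartesianness of $(g,\gamma)$ with respect to $\MndC\to\dcC_0$: applied to the monad map $(f,\phi_i)$, whose underlying vertical map $f=g\circ h$ factors through $g$ via $h$, cartesianness provides a unique monad map $(h,\psi_i)$ with $(g,\gamma)\circ(h,\psi_i)=(f,\phi_i)$. Assembling these gives the required $(h,\vec\psi)$, and its uniqueness is forced componentwise by the uniqueness in each application of cartesianness.

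I do not anticipate a real obstacle here: the argument is essentially a tautology once the recipe for composition in $\MultMndC$ is written down, because that recipe decouples the $n$ components of a multimorphism and the unary composition with $(g,\gamma)$ acts componentwise. The only point requiring any care is bookkeeping: verifying that the source of $(h,\psi_i)$ really is the monad $\disc{A_1}\bxt\cdots\bxt(A_i,a_i)\bxt\cdots\bxt\disc{A_n}$, so that cartesianness in $\MndC\to\dcC_0$ is applicable, and that the pasting prescription of \cref{prop:MultMndmulticategory} indeed coincides with composition in $\MndC$ of $(g,\gamma)$ with $(h,\psi_i)$.
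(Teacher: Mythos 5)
Your proposal is correct and follows essentially the same route as the paper's proof: both reduce the statement to the ordinary cartesianness of $(g,\gamma)$ applied separately to each monad-map component $(gh,\phi_i)\colon \disc{A_1}\bxt\cdots\bxt(A_i,a_i)\bxt\cdots\bxt\disc{A_n}\to(C,c)$, noting that composition with a unary multimorphism acts componentwise. Your extra care in checking that the pasting prescription from the composition of $\MultMndC$ agrees with composition in $\MndC$ is exactly the point the paper leaves implicit.
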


\begin{proof}
%Let $(gh, \vec{\phi}) \colon (A_1, a_1), \dots, (A_n, a_n) \rightarrow (C,c)$ be a monad multimorphism (\cref{thm:monad-multimorphism});
We must show there is a unique monad multimorphism $(h, \vec{\phi}') \colon (A_1, a_1), \dots, (A_n, a_n) \rightarrow (B,b)$ which factors
$(gh, \vec{\phi})$ through $(g, \gamma)$. But
  the original monad multimorphism consists of monad maps 
  \begin{equation}
    \label{eq:cartesian-lifts-multimnd}
    (gh, \phi_i) \colon \disc{A_1} \bxt \cdots \bxt (A_i, a_i) \bxt \cdots \bxt \disc{A_n} \rightarrow (C,c)\rlap{ ,}
  \end{equation}
  so by the ordinary cartesianness of $(g, \gamma)$, there is a unique
  $(h, \phi'_i) \colon \disc{A_1} \bxt \cdots \bxt (A_i, a_i) \bxt
  \cdots \bxt \disc{A_n} \rightarrow (B,b)$ in $\MndC$ which
  factors~\cref{eq:cartesian-lifts-multimnd} through $(g, \gamma)$. It
  is clear that the $(h, \vec{\phi}')$ so defined is the
  unique monad multimorphism we seek.
\end{proof}

We now show that
the symmetric multicategory of monads and monad multimorphisms is closed. 

\begin{prop}
  \label{prop:multimorphisms-closed}
  The symmetric multicategory $\MultMndC$ is closed. More precisely,
  for any monads $(B,b), (C,c)$, there exists a monad
  $\nchom{(B,b), (C,c)}$
  and a binary
  multimorphism
  \begin{equation}
    \label{eq:counit-multimorphisms-closed}
    (e, \varepsilon_1, \varepsilon_2) \colon \nchom{(B,b), (C,c)}, (B, b) \rightarrow (C, c)
  \end{equation}
  composition with which induces bijections between multimorphism-sets
  \begin{equation}
    \label{eq:closed-adjunction-multimorphism}
    \begin{array}{r@{\,}l@{\ }c@{\ }l}
      (f, \phi_1, \dots, \phi_n, \psi) \colon& (A_1, a_1), \dots, (A_n, a_n), (B,b) &\longrightarrow& (C,c) \\
      \noalign{\vskip 3pt}
      \hline
      \noalign{\vskip 3pt} 
      (\bar f, \bar{\phi}_1, \dots, \bar{\phi}_n) \colon& (A_1, a_1), \dots, (A_n, a_n) &\longrightarrow& \nchom{(B,b),(C,c)}
    \end{array}
  \end{equation}
\end{prop}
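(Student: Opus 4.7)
The approach is to construct $(H,h) := \nchom{(B,b),(C,c)}$ as a monad in $\dcC$, equip it with the evaluation multimorphism $(e_0,\varepsilon_1,\varepsilon_2)$, and verify the required bijection uniformly in $n$. Much of the structure is determined by reasoning parallel to \cref{lemma1,lem:mnd-hom}.

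First, I would set $H := \uMndC\bigl((B,b),(C,c)\bigr)$ as constructed in \cref{lem:mnd-hom}. Its defining property furnishes a natural bijection $\dcC_0(A,H) \cong \MndC\bigl(\disc A \bxt (B,b),(C,c)\bigr)$ in $A$. Taking $A = H$ and transposing $1_H$ yields the canonical monad map $(e_0,\varepsilon_2) \colon \disc H \bxt (B,b) \to (C,c)$, whose vertical part $e_0 \colon H \bxt B \to C$ and 2-morphism $\varepsilon_2$ are two of the three pieces of the evaluation multimorphism.

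Next, I would construct the horizontal 1-cell $h \colon H \tickar H$ and the 2-morphism $\varepsilon_1$ by a representability argument in the spirit of \cref{lemma1,lem:mnd-hom}. The universal property required of $h$ is that, for every vertical $\bar f \colon A \to H$ and horizontal $x \colon A \tickar A$, globular 2-morphisms $\comp{\bar f} \circ x \Rightarrow h \circ \comp{\bar f}$ in $\dcC[A,H]$ should correspond bijectively to globular 2-morphisms $f \circ (x \bxt \id_B) \Rightarrow c \circ f$ in $\dcC[A \bxt B,C]$, where $f := e_0 \circ (\bar f \bxt \id_B)$, satisfying the compatibility with the monad structure of $c$ and $\varepsilon_2$ required to make $(f,\phi) \colon (A \bxt B, x \bxt \id_B) \to (C,c)$ a monad map when $x$ is equipped with monad structure. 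Such an $h$ is built by first using closedness of $(\dcC_1,\bxt_1)$ to absorb $(\thg) \bxt \id_B$, then using 1-tabulators of \cref{hyp:tabulators} to pass from $\dcC_1$ into $\dcC_0$, and finally using equalisers of \cref{hyp:equalisers} to cut out the subobject encoding the monad-map compatibility with $(c, \mu_c, \eta_c)$ and with $\varepsilon_2$. Instantiating the universal 2-morphism at $A = H, \bar f = \id_H, x = h$ gives $\varepsilon_1$.

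The monad structure on $h$ is then induced by the universal property: the unit $\eta_h \colon \id_H \Rightarrow h$ transposes to the identity 2-cell on $e_0$ (modulo the invertible normality cell $\upsilon$ identifying $\id_H \bxt \id_B$ with $\id_{H \bxt B}$), while the multiplication $\mu_h \colon h \circ h \Rightarrow h$ transposes to the pasting of two copies of $\varepsilon_1$ with $\mu_c$, mediated by the invertible normality interchange on the $\id_B$ slot. The monad axioms for $h$ descend from those of $c$ by uniqueness of universal transposes. Finally, to verify the general $n$-ary universal property, given a multimorphism $(f,\phi_1,\dots,\phi_n,\psi) \colon (A_1,a_1),\dots,(A_n,a_n),(B,b) \to (C,c)$, the pair $(f,\psi)$ transposes via the universal property of $H$ to a unique $\bar f \colon A_{[1,n]} \to H$; each $\phi_i$, taken with $x = \tparen{\id_A}_{[1,i-1]} \bxt a_i \bxt \tparen{\id_A}_{[i+1,n]}$, transposes via the universal property of $h$ to $\bar\phi_i$; and $(\bar f,\bar\phi_1,\dots,\bar\phi_n) \colon (A_1,a_1),\dots,(A_n,a_n) \to (H,h)$ is easily checked to be a monad multimorphism, the inverse bijection being composition with $(e_0,\varepsilon_1,\varepsilon_2)$. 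The main obstacle is the second step, namely constructing $h$ with a universal property parametric enough in both $\bar f$ and $x$ to support the extension to arbitrary arities: here the invertibility of the normality cells $\upsilon, \delta, \iota$ and of the special $\xi$ cells involving horizontal identities is exactly what is needed so that both the monad structure on $h$ and the transposition of the components $\phi_i$ for $i = 1, \dots, n$ go through coherently.
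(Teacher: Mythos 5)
Your opening and closing moves line up with the paper's: the underlying object of the hom is $\uMndC\bigl((B,b),(C,c)\bigr)$ from \cref{lem:mnd-hom}, the component $(e_0,\varepsilon_2)$ is obtained by transposing the identity, and the $n$-ary bijection is assembled by transposing the components $\phi_i$ one at a time. The gap is in the middle: your construction of the horizontal $1$-cell $h$ and of $\varepsilon_1$. The universal property you posit for $h$ is not well-posed, because the side conditions you want to impose by equalisers --- ``the compatibility \ldots required to make $(f,\phi)$ a monad map when $x$ is equipped with monad structure'' --- cannot be stated for a bare endomap $x$: the monad-map axioms \eqref{eq:monad-map-axioms} refer essentially to a multiplication and unit on $x$, which are not part of the data ranged over. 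Moreover, the construction you sketch (closedness of $(\dcC_1,\bxt_1)$, then $1$-tabulators, then equalisers in $\dcC_0$) is exactly the recipe of \cref{lemma1,lem:mnd-hom} for producing an \emph{object} of $\dcC_0$; after applying the tabulator you have left $\dcC_1$, so this cannot output a horizontal endo-$1$-cell on $H$. In fact no equaliser should be taken at the level of $h$ at all: the compatibility with the $b$-action is already encoded in the object $H = \uMndC(b,c)$, not in $h$.

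The paper closes this gap by splitting the construction in two. First it treats the discrete case $(B,b)=\disc{B}$, where the hom is simply $\boxhom{\disc{B},(C,c)} = \bigl(\boxhom{B,C},\boxhom{\hid_B,c}\bigr)$: the horizontal cell $\boxhom{\hid_B,c}$ comes for free from closedness of $(\dcC_1,\bxt_1)$, and its monad structure from \cref{lem:homming-out-of-id-lifts}; here the final component $\psi$ of any multimorphism into $(C,c)$ is forced by the unit, so the transposition is just the lifted adjunction $(\thg)\bxt\disc{B}\dashv\boxhom{\disc{B},\thg}$. Then, for general $(B,b)$, the Yoneda-induced comparison $i\colon\uMndC(b,c)\to\boxhom{B,C}$ is used together with fibrancy of $\dcC$ to define $h$ as the \emph{cartesian lift} of $\boxhom{\hid_B,c}$ along $i$; the monad structure and the universal property are then inherited, the latter via the enhanced cartesianness of \cref{cartesian-lifts-multimnd}. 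If you replace your bespoke representability argument for $h$ by this two-stage construction (discrete case plus cartesian lifting), the rest of your argument goes through.
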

Notice that due to symmetry, we do not differentiate between left and right closedness.

\begin{proof}
  Consider first the case where $(B,b)$ is the identity monad
  $\disc{B}$. In this case, it is easy to see that the final component \cref{eq:multcomp}
  of a multimorphism~\cref{eq:closed-adjunction-multimorphism}
  is forced to be the monad map
  \begin{equation}
    \label{eq:unique-final-component}
    (f,\eta) \colon \tparen{\disc{A}}_{[1,n]} \bxt \disc{B} \xrightarrow{\cong} \disc{A_1 \bxt \dots \bxt A_n} \xrightarrow{\disc{f}} \disc{C} \xrightarrow{(1_C, \eta)} (C,c)\rlap{ ,}
  \end{equation}
  Thus, giving~\cref{eq:closed-adjunction-multimorphism} is the same
  as giving a map
  $f \colon A_{[1,n]} \bxt B \rightarrow C$ in $\dcC_0$
  and monad maps
  \begin{equation}
    \label{eq:remaining-components}
    (f,\phi_i) \colon \tparen{\disc{A}}_{[1,i\mi1]} \bxt (A_i, a_i) \bxt \tparen{\disc{A}}_{[i+1,n]} \bxt \disc B \rightarrow (C,c)\rlap{ .}
  \end{equation}

  Now, since $\dcC$ is closed (\cref{def:closeddouble}), we may transpose $f$ to a map
  $\bar f \colon A_{[1,n]} \rightarrow \boxhom{B, C}$
  under the adjunction
  $(\thg) \bxt B \dashv \boxhom{B, \thg} \colon \dcC_0 \to \dcC_0$.
  But since by~\cref{lem:homming-out-of-id-lifts} this adjunction
  lifts to one
  $(\thg) \bxt \disc{B} \dashv \boxhom{\disc{B}, \thg} \colon \MndC
  \rightarrow \MndC$, we can also transpose
  each~\cref{eq:remaining-components} to a
  monad map
  \begin{equation*}
    (\bar f,{\bar \phi}_i) \colon {\disc{A}}_{[1,i\mi1]} \bxt (A_i, a_i) \bxt {\disc{A}}_{[i+1,n]} \rightarrow \boxhom{\disc{B},
      (C,c)}\rlap{ ,}
  \end{equation*}
  and so obtain a monad multimorphism
  $(\bar f, \bar{\phi}_1, \dots, \bar{\phi}_n) \colon (A_1, a_1),
  \dots, (A_n, a_n) \rightarrow \boxhom{\disc{B}, (C,c)}$. Thus,  when
  $(B,b) = \disc{B}$, the desired internal hom is the monad
  $\boxhom{\disc{B}, (C,c)}=(\boxhom{B,C},\boxhom{\id_B,c})$ 
  equipped with the binary
  multimorphism~\cref{eq:counit-multimorphisms-closed} whose:
  \begin{itemize}[itemsep=0.25\baselineskip]
  \item \textbf{Object-component} $e \colon \boxhom{B,C} \bxt B \rightarrow C$
    is the  counit at $C$ of the adjunction $(\thg) \bxt B \dashv \boxhom{B,
      \thg}$ in $\dcC_0$;
  \item \textbf{First morphism-component} $(e,\varepsilon_1) \colon \boxhom{\disc{B},
      (C,c)} \bxt \disc{B} \rightarrow (C,c)$ is the counit at $(C,c)$ of the
    lifted adjunction
    $(\thg) \bxt \disc{B} \dashv \boxhom{\disc{B}, \thg}$ in $\MndC$;
  \item \textbf{Second morphism-component} $(e, \varepsilon_2) \colon \disc{\boxhom{B,C}}
    \bxt \disc{B} \rightarrow (C,c)$ is determined uniquely as in~\cref{eq:unique-final-component}.
  \end{itemize}

  We now let $(B,b)$ be a general monad and construct the internal hom
  $\nchom{(B,b),(C,c)}$. Its underlying object $\nchom{b,c}_0$ in
  $\dcC_0$ will be the the object $\uMndC(b,c) \in \dcC_0$
  of~\cref{lem:mnd-hom}. As there, this object represents the functor
  $\MndC\bigl(\disc{(\thg)} \bxt (B,b), (C,c)\bigr) \colon \dcC_0^\op
  \rightarrow \Set$. On the other hand, the functor
  $\dcC_0(\thg \bxt B, C) \colon \dcC_0^\op \rightarrow \Set$ is
  represented by $\boxhom{B,C} \in \dcC_0$ since the latter is the internal hom of $\dcC$. Since, projecting to
  object-components determines a natural transformation
  \begin{equation*}
    \MndC\bigl(\disc{(\thg)} \bxt (B,b), (C,c)\bigr) \Rightarrow \dcC_0(\thg \bxt B, C)\rlap{ ,}
  \end{equation*}
  there is by the Yoneda lemma a map
  \begin{equation}
    \label{eq:representably-induced}
    i \colon \nchom{b,c}_0 \rightarrow \boxhom{B,C}
  \end{equation}
  in $\dcC_0$ with the property that factorisations of a map
  $\bar{f} \colon A \rightarrow \boxhom{B,C}$ of $\dcC_0$ through $i$ are in
  bijection with extensions of the exponential transpose
  $f \colon A \bxt B \rightarrow C$ of $\bar{f}$ to a monad morphism
  \begin{equation*}
    (f, \psi) \colon \disc{A} \bxt (B,b) \rightarrow (C,c)\rlap{ .}
  \end{equation*}
  Since $\dcC$ is fibrant, the forgetful functor
  $\MndC \rightarrow \dcC_0$ is a fibration; thus, we may reindex the
  monad $\boxhom{ \disc{B}, (C,c)} $ on $\boxhom{B,C}$
  along~\cref{eq:representably-induced} to obtain a monad
  $\nchom{b,c}$ on $\nchom{b,c}_0$ and a cartesian
  monad morphism
  \begin{equation}
    \label{eq:cartesian-monad-reindexing}
    (i, \tilde \imath) \colon (\nchom{b,c}_0, \nchom{b,c}) \rightarrow \boxhom{\disc{B}, (C,c)}\rlap{ .}
  \end{equation}

  We claim that $(\nchom{b,c}_0, \nchom{b,c})$ provides the
  desired internal hom. For indeed, giving a monad
  multimorphism~\cref{eq:closed-adjunction-multimorphism} is
  equivalent to giving the following two things:
  \begin{enumerate}[(i),itemsep=0.25\baselineskip]
  \item A monad
    multimorphism of the following form, where $\tilde f$ is uniquely determined by $f$ as
    in~\cref{eq:unique-final-component}:
    \begin{equation*}
      (f, \phi_1, \dots, \phi_n, \tilde f) \colon (A_1, a_1), \dots, (A_n, a_n), \disc{B} \rightarrow (C,c)\rlap{ ;}
    \end{equation*}
  \item A monad morphism of the following form, where \cref{not:indexconventions} is used:
    \begin{equation*}
      (f, \psi) \colon \tparen{\disc{A}}_{[1,n]} \bxt (B,b) \rightarrow (C,c)\rlap{ .}
    \end{equation*}
  \end{enumerate}
  Now by the case proved earlier, giving the monad multimorphism in
  (i) is equivalent to giving a multimorphism
  $(\bar f, \bar{\phi}_1, \dots, \bar{\phi}_n) \colon (A_1, a_1),
  \dots, (A_n, a_n) \rightarrow \boxhom{\disc{B}, (C,c)}$. On the
  other hand, it follows from our characterisation of $i$ that giving
  the monad morphism in (ii) is equivalent to giving a factorisation
  of $\bar f \colon A_{[1,n]} \rightarrow \boxhom{B,C}$
  through $i$ in $\dcC_0$. But by~\cref{cartesian-lifts-multimnd} applied to the
  cartesian monad map~\cref{eq:cartesian-monad-reindexing}, giving
  this factorisation of $\bar f$ through $i$ is equivalent to giving a
  factorisation of $(\bar f, \bar{\phi}_1, \dots, \bar{\phi}_n)$
  through $(i, \tilde \imath)$ in $\MultMndC$. 
  
  In this way, we have shown that there is a bijection between monad multimorphisms~\cref{eq:closed-adjunction-multimorphism} and ones
  $(A_1, a_1), \dots, (A_n, a_n) \rightarrow (\nchom{b,c}_0,
  \nchom{b,c})$, so exhibiting $(\nchom{b,c}_0, \nchom{b,c})$ as the desired internal hom.
 The evaluation map~\cref{eq:counit-multimorphisms-closed} is obtained by
  precomposing the evaluation map
  $\boxhom{\disc{B}, (C,c)}, \disc{B} \rightarrow (C,c)$ for
  $\disc{B}$ described above, by the cartesian monad map~\cref{eq:cartesian-monad-reindexing}.
\end{proof}
\begin{rex}\label{thm:running-14}
  In $\VMat$, the hom
  $\nchom{\ca B, \ca C} = (\nchom{b,c}_0, \nchom{b,c})$ between monads
  $\ca B = (B,b)$ and $\ca C = (C,c)$ is the $\ca{V}$-category given
  as follows. Its
  object-set is $\uMndC\bigl((B,b), (C,c)\bigr)$, which
  by~\cref{thm:running-13} is the set of $\ca{V}$-functors
  $\ca{B} \rightarrow \ca{C}$. As for its hom-objects, we have by
  cartesianness of~\eqref{eq:representably-induced} and the
  description of $\boxhom{\disc{B}, (C,c)}$ from \cref{ex:running16}
  that
  \begin{equation*}
    \nchom{\ca B, \ca C}(F, G) = \ca{C}^B\bigl((Fx)_{x \in B}, (Gx)_{x \in B}\bigr) = \textstyle\prod_{x \in B} \ca{C}(Fx, Gx)\rlap{ .}
  \end{equation*}
  In particular, when $\ca{V} = \Set$, this $\nchom{\ca{B}, \ca{C}}$ is the category of
  functors $\ca{B} \rightarrow \ca{C}$ and ``unnatural
  transformations'' $\alpha \colon F \Rightarrow_u G$, \ie, families
  of morphisms $\alpha_x \colon Fx \rightarrow Gx$ satisfying no
  further conditions.
\end{rex}
From this result, \cref{thm:nctensor-prerepresentable} and~\cref{prop:closed-preuniversal-universal}, we immediately obtain, under our running hypotheses:

\begin{cor}\label{thm:nctensor-representable}
The closed symmetric multicategory $\MultMndC$ of monads and monad multimorphisms is representable.
\end{cor}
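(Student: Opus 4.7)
The plan is to assemble the statement directly from the three preceding results, since all the substantive work has already been carried out. First, I would invoke \cref{thm:nctensor-prerepresentable}, which under \cref{hyp:fibrancy} and \cref{hyp:coproducts-in-mnd} supplies the pre-universal binary multimorphisms
\[
(1_{A\bxt B},\iota_1,\iota_2)\colon (A,a),(B,b)\to (A\bxt B,a\nctensor b)
\]
obtained from the pushout \eqref{eq:pushout} in $\MndCA[A\bxt B]$, as well as the pre-universal nullary map $1_I\colon\ \to\disc I$. Second, I would invoke \cref{prop:multimorphisms-closed}, established using in addition \cref{hyp:equalisers} and \cref{hyp:tabulators} together with closedness of the oplax monoidal structure on $\dcC$, to conclude that for every pair of monads $(B,b),(C,c)$ there is an internal hom object $\nchom{(B,b),(C,c)}$ with evaluation multimorphism satisfying the bijection~\eqref{eq:closed-adjunction-multimorphism}. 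Together these say that $\MultMndC$ is pre-representable and closed.

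The key step is then to feed these into \cref{prop:closed-preuniversal-universal}. That result promotes the binary pre-universal morphisms to genuinely universal ones by the displayed chain of bijections, which uses closedness and symmetry to shuffle any parameters $Z_1,\dots,Z_n$ to the end, absorb them into iterated internal homs, apply pre-representability without parameters, and then unwind the transpositions. The same argument applied with an empty list of inputs handles the nullary case. To produce universal multimorphisms of arbitrary arity, one then appeals to the standard fact, recorded in \cite[Proposition~8.5(1)]{RepresentableMulticats}, that universal multimorphisms are closed under composition in $\ca{M}$, so that iterated composites of the binary and nullary universal maps yield $n$-ary universal maps $(A_1,a_1),\dots,(A_n,a_n)\to (A_1,a_1)\nctensor\cdots\nctensor(A_n,a_n)$.

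There is no real obstacle to overcome: the content is entirely in \cref{thm:nctensor-prerepresentable} and \cref{prop:multimorphisms-closed}, which have already been proved, and in the formal \cref{prop:closed-preuniversal-universal}. The only point that warrants explicit mention is that the non-commuting tensor $\nctensor$ appearing in the corollary is precisely the one constructed in the proof of \cref{thm:nctensor-prerepresentable} as the coproduct \eqref{eq:nctensor}, so that the representing object announced by the corollary agrees with the concrete construction already in hand.
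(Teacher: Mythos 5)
Your proposal is correct and follows exactly the paper's route: the corollary is stated as an immediate consequence of \cref{thm:nctensor-prerepresentable} (pre-representability), \cref{prop:multimorphisms-closed} (closedness), and the general \cref{prop:closed-preuniversal-universal}. Your additional remarks on how the parameters are absorbed into iterated internal homs and on the closure of universal morphisms under composition simply restate the content of the paper's proof of \cref{prop:closed-preuniversal-universal}, so nothing is missing.
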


Finally, the next theorem accomplishes this section's goal, namely that 
the non-commuting tensor product of monads determines a 
a symmetric monoidal closed structure on the category $\Mnd(\dc{C})$.
We state explicitly all our assumptions for reference.

\begin{thm} \label{thm:nctensor-smc} Let $(\dcC, \bxt, I)$ be a
  symmetric normal oplax monoidal closed double category. Under
  \cref{hyp:fibrancy,hyp:equalisers,hyp:tabulators,hyp:coproducts-in-mnd},
  the category $\MndC$ of monads and monad morphisms in $\dcC$ admits
  the structure of a symmetric monoidal closed category, with tensor
  product $\nctensor$ given by the non-commuting tensor product of monads,
  and unit object given by the discrete monad $\disc{I}$ on the unit
  object of $\dcC$.
\end{thm}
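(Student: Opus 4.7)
The plan is to deduce this theorem as an essentially immediate consequence of \cref{thm:nctensor-representable}, which has already established that $\MultMndC$ is a closed representable symmetric multicategory. The bridge is Hermida's correspondence between representable symmetric multicategories and symmetric monoidal categories from~\cite{RepresentableMulticats}: any representable symmetric multicategory $\ca{M}$ determines a symmetric monoidal structure on its underlying category of unary morphisms, with the tensor product of two objects given by the codomain of the representing universal binary multimorphism, the unit object given by the codomain of the universal nullary multimorphism, and the associator, unitors and symmetry isomorphisms induced canonically by the universal properties of iterated tensors. Moreover, when $\ca{M}$ is closed as a multicategory, the induced symmetric monoidal structure is closed, with the same internal hom.

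Applied to our situation, all the pieces are already in place. The universal nullary multimorphism identified in the proof of \cref{thm:nctensor-prerepresentable} is $1_I \colon\; \longrightarrow \disc{I}$, so the unit object is $\disc{I}$ as claimed. The universal binary multimorphism
\begin{equation*}
(1_{A \bxt B}, \iota_1, \iota_2) \colon (A,a), (B,b) \longrightarrow (A\bxt B, a \nctensor b)
\end{equation*}
from the same proof, with $(A,a)\nctensor(B,b)$ defined as the coproduct $(A,a)\bxt\disc{B} + \disc{A}\bxt(B,b)$ in $\MndCA[A\bxt B]$, exhibits the binary tensor product. The internal hom $\nchom{(B,b),(C,c)}$ constructed in \cref{prop:multimorphisms-closed} then supplies the closed structure on $\MndC$.

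The only step that requires any care is confirming that the hypotheses needed to invoke Hermida are precisely what \cref{thm:nctensor-representable} delivers; but this is immediate, since \emph{representable} and \emph{closed} in Hermida's sense match our \cref{def:representable-multicategory,def:closed-multicategory}. There is no genuine obstacle at this stage — the work has already been expended in the preceding subsections, with the core technical content being the pre-representability argument (\cref{thm:nctensor-prerepresentable}) and the closure argument (\cref{prop:multimorphisms-closed}), together with the general bootstrapping from pre-representability to representability via closedness (\cref{prop:closed-preuniversal-universal}).

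If one preferred not to appeal to Hermida, one could alternatively assemble the coherence data by hand: the associator $(A,a)\nctensor((B,b)\nctensor(C,c)) \cong ((A,a)\nctensor(B,b))\nctensor(C,c)$ would arise from the canonical identification of both sides with the ternary coproduct $(A,a)\bxt\disc{B}\bxt\disc{C} + \disc{A}\bxt(B,b)\bxt\disc{C} + \disc{A}\bxt\disc{B}\bxt(C,c)$ in $\MndCA[A\bxt B\bxt C]$, transported along the associator of $\bxt$ on $\dcC$; the unitors from the corresponding identifications involving $\disc{I}$; and the symmetry from the braiding $\beta$ of the symmetric oplax monoidal structure. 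But tracking coherence axioms directly is precisely what the multicategorical detour was designed to avoid, and it is much cleaner to let Hermida's theorem do that work.
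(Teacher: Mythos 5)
Your proposal matches the paper's own proof: the theorem is deduced from \cref{thm:nctensor-representable} via Hermida's correspondence between representable symmetric multicategories and symmetric monoidal categories, with closedness of $\MndC$ following from closedness of the multicategory $\MultMndC$. Your alternative hand-built coherence construction is also essentially what the paper sketches in \cref{rmk:directlymonoidal}, so nothing is missing.
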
 

\begin{proof} 
The existence of the symmetric monoidal structure follows from \cref{thm:nctensor-representable} via 
the symmetric version of \cite[Def.~9.6]{RepresentableMulticats}; the closedness then follows from \cite[Prop.~4.3]{Closedmulticats}.
\end{proof} 

\begin{rmk}\label{rmk:directlymonoidal}
  As already mentioned, there is a more direct way of constructing the non-commuting monoidal structure, which we have avoided since it will be less useful for the work to come. However, it may be helpful to see how it goes. We start from the definition of $\nctensor$ as in~\eqref{eq:nctensor}, and then construct associativity and unitality isomorphisms by hand, using an assumption mentioned above: that each functor $\disc{A} \bxt (\thg) \colon \MndCA[B] \rightarrow \MndCA[A \bxt B]$ preserves coproducts of monads. This will certainly be the case if $\dcC$ is closed by virtue of~\cref{lem:homming-out-of-id-lifts}(ii).
In this situation the associativity constraint
\begin{displaymath}
\begin{tikzcd}
(A\bxt B)\bxt C\ar[rr,tick,"(a\nctensor b)\nctensor c"]\ar[d]\ar[drr,phantom,"\Two"] && (A\bxt B)\bxt C\ar[d] \\
A\bxt(B\bxt C)\ar[rr,tick,"a\nctensor(b\nctensor c)"'] && A\bxt(B\bxt C)
\end{tikzcd}
\end{displaymath}
has as vertical maps the associativity structure isomorphisms of $(\dc{C}_0,\bxt)$, and $2$-morphism data given by the composite of invertible $2$-morphisms
\begin{align*}
(a\nctensor b)\nctensor c&=(a\nctensor b)\bxt\disc{C}+\disc{A\bxt B}\bxt c=\left(a\bxt\disc{B}+\disc{A}\bxt b\right)\bxt\disc{C}+\disc{A\bxt B}\bxt c\\
&\cong a\bxt\disc B\bxt \disc C+(\disc A\bxt b)\bxt \disc C+(\disc A\bxt\disc B)\bxt c \\
&\cong a\bxt\disc B\bxt \disc C+\disc A\bxt (b\bxt \disc C)+\disc A\bxt(\disc B\bxt c) \\
&\cong a\bxt\disc {B\bxt C}+\disc A\bxt(b\bxt\disc C+\disc B\bxt c) \\
&= a\nctensor(b\nctensor c) \mathrlap{,}
\end{align*}
where \cref{lemma1} and normality of $\dc{C}$ are used. The construction of the unit isomorphisms is similar.
\end{rmk}

\section{The commuting tensor product of monads}
\label{sec:commuting}

Our objective in this section is to refine the non-commuting tensor
product of monads in $\dcC$ of the preceding section to a
\emph{commuting tensor product}. Once again, this tensor product will
underlie a symmetric monoidal closed structure on $\MndC$, and once
again, we will construct it by exhibiting a symmetric closed
multicategory, now of monads and \emph{commuting monad
  multimorphisms}.

\subsection{The multicategory of monads and commuting multimorphisms}
As the nomenclature suggests, commutativity will be a property of a monad
multimorphism (\cref{thm:monad-multimorphism}), which is most easily described in the binary case; we
thus begin with this. Much as in~\cite{GarnerLopezFranco}, a key role
will be played by the
following maps derived from the oplax monoidal structure of $(\dcC,\bxt)$: 
\begin{defi}\label{def:sigma-tau}
  Given horizontal cells $a_1 \colon A_1 \tor A_1'$ and
  $a_2 \colon A_2 \tor A_2$ in $\dcC$, we define the globular
  $2$-morphisms $\sigma_{a_1, a_2}$ and $\tau_{a_1, a_2}$ as to the
  left and right in:
  \begin{equation*}
    \begin{tikzcd}[column sep=1.5in]
 A_1 \bxt A_2 \ar[r,tick,"a_1 \bxt a_2"]\ar[d,equal]\ar[dr,phantom,"\Two \cong"] & A'_1 \bxt A'_2 \ar[d,equal]\\
 A_1 \bxt A_2 \ar[r,tick,"(a_1 \circ \hid_{A_1}) \bxt (\hid_{A_2} \circ a_2)"]\ar[d,equal]\ar[dr,phantom,"\Two \xi"] & A'_1 \bxt A'_2\ar[d,equal] \\
 A_1 \bxt A_2 \ar[r,tick,"(a_1 \bxt \hid_{A_2}) \circ (\hid_{A_1} \bxt a_2)"] & A'_1 \bxt A'_2
\end{tikzcd} \quad \text{and} \quad
    \begin{tikzcd}[column sep=1.5in]
 A_1 \bxt A_2 \ar[r,tick,"a_1 \bxt a_2"]\ar[d,equal]\ar[dr,phantom,"\Two \cong"] & A'_1 \bxt A'_2 \ar[d,equal] \\
 A_1 \bxt A_2 \ar[r,tick,"(\hid_{A_1} \circ a_1) \bxt (a_2 \circ \hid_{A_2})"]\ar[d,equal]\ar[dr,phantom,"\Two \xi"] & A'_1 \bxt A'_2\ar[d,equal] \\
 A_1 \bxt A_2 \ar[r,tick,"(\hid_{A_1} \bxt a_2) \circ (a_1 \bxt \hid_{A_2})"] & A'_1 \bxt A'_2\rlap{ .}
\end{tikzcd}
  \end{equation*}
\end{defi}

\begin{defi} \label{defi:commuting-binary-monad-multimorphism} We say
  that a binary monad multimorphism
  $(f, \phi_1, \phi_2) \colon (A_1, a_1), (A_2, a_2) \rightarrow
  (B,b)$ is \emph{commuting} if the following diagram commutes in
  $\EndC$:
  \begin{equation}
    \label{eq:binary-commutativity-hexagon}
    \begin{tikzcd}[column sep = 1em]
      &[-9em] (A_1 \bxt A_2, (a_1 \bxt \hid_{A_2}) \hcomp (\hid_{A_1} \bxt a_2)) \ar[r, "{(f, \phi_1 \hcomp \phi_2)}"] &[2em] (B, b \circ b)  \ar[dr, "{(1, \mu)}"] &[-0.5em] \\
      (A_1 \bxt A_2, a_1 \bxt a_2) \ar[ur, "{(1,\sigma)}"] \ar[dr, "{(1,\tau)}"'] & & & (B, b) \mathrlap{ .} \\
      & (A_1 \bxt A_2, (\hid_{A_1} \bxt a_2) \hcomp (a_1 \bxt
      \hid_{A_2})) \ar[r, "{(f,\phi_2 \hcomp \phi_1)}"'] & (B, b \circ b) \ar[ur, "{(1,
        \mu)}"'] &
    \end{tikzcd}
  \end{equation}
%Where there is no scope for confusion, we may also say that
%$(f, \vec \phi)$ is \emph{$(A_i, A_j)$-commuting}. If we say that
%$(f, \smash{\vec \phi})$ is simply \emph{commuting}, we mean that it is
%$(i,j)$-commuting for all $1 \leqslant i < j \leqslant n$.
\end{defi}
\begin{rex}\label{thm:running-27}
  For $\VMat$, under our standing assumption that $\ca{V}$ is symmetric
  monoidal closed with colimits, we have that:
  \begin{align*}
    (a_1 \bxt a_2)[(y_1, y_2); (x_1, x_2)] &= a_1[y_1; x_1] \otimes a_2[y_2; x_2] \\
    ((a_1 \bxt \hid_{A_2}) \hcomp (\hid_{A_1} \bxt a_2))[(y_1, y_2); (x_1, x_2)] &= \textstyle\sum_{(z_1, z_2)} a_1[y_1; z_1] \otimes \delta_{y_2z_2} \otimes \delta_{z_1x_1} \otimes a_2[z_2; x_2] \\
    ((\hid_{A_1} \bxt a_2) \hcomp (a_1 \bxt \hid_{A_2}))[(y_1, y_2); (x_1, x_2)] &= \textstyle\sum_{(z_1, z_2)} \delta_{y_1z_1} \otimes a_2[y_2; z_2] \otimes a_2[z_1; x_1] \otimes \delta_{z_2x_2}\rlap{ .}
  \end{align*}
  On summing over the $\delta$'s, the second and third of these
  are isomorphic to $a_1[y_1; x_1] \otimes a_2[y_2; x_2]$ and
  $a_2[y_2; x_2] \otimes a_1[y_1; x_1]$ respectively, and modulo these
  isomorphisms, the maps $\sigma$ and $\tau$ are just the identity and
  the symmetry $\beta$ of $\ca{V}$, respectively. Thus, we see that a
  $\ca{V}$-functor of two variables
  $F \colon \ca{A}_1, \ca{A}_2 \rightarrow \ca{B}$ is commuting
  precisely when each diagram of the form
  \begin{equation}
    \label{eq:v-bifunctor}
    \begin{tikzcd}[column sep = 1em]
      &[-8em] {\ca{A}_1}(y_1, x_1) \otimes {\ca{A}_2}(y_2, x_2) \ar[r, "{F(\thg, y_2) \otimes F(x_1, \thg)}"] &[5em] {\ca{B}}(F(y_1, y_2), F(x_1, y_2)) \otimes {\ca{B}}(F(x_1, y_2), F(x_1, x_2))  \ar[dr, "{\mu}"] &[-14em] \\
      {\ca{A}_1}(y_1, x_1) \otimes {\ca{A}_2}(y_2, x_2) \ar[ur, "{1}"] \ar[dr, "{\beta}"'] & & & {\ca{B}}(F(y_1, y_2)) \otimes {\ca{B}}(F(x_1, x_2))\mathrlap{ .} \\
      & {\ca{A}_2}(y_2, x_2) \otimes {\ca{A}_1}(y_1, x_1) \ar[r, "{F(y_1, \thg) \otimes F(\thg, x_2)}"'] & {\ca{B}}(F(y_1, y_2), F(y_1, x_2)) \otimes {\ca{B}}(F(y_1, x_2), F(x_1, x_2)) \ar[ur, "{
        \mu}"'] &
    \end{tikzcd}
  \end{equation}
  commutes in $\ca{V}$. As shown in~\cite[\S III.4]{Kelly1971Coherence}, this is precisely what is required for
  $F$ to extend to a bifunctor
  $\ca{A}_1 \otimes \ca{A}_2 \rightarrow \ca{B}$, where $\otimes$ 
  is the usual tensor product of $\ca{V}$-categories over a symmetric
  monoidal base $\ca{V}$.

  Note that in this case, the notion of commutativity reconstructs
  something with a simpler and more direct definition. The reason for
  this, as noted in~\cref{thm:running-6}, is that, when $\ca{V}$ is
  symmetric monoidal, $\VMat$ provides a slightly degenerate instance
  of our framework. To lift this degeneracy, we may consider a base
  $\ca{V}$ which is not symmetric monoidal, but only normal duoidal
  with respect to tensor products $\circ$ and $\bxt$. Then $\MultMndC$
  is the multicategory of $(\ca{V}, \circ)$-categories and
  $(\ca{V}, \circ)$-functors of several variables---but the notion of
  commutativity involves also the other tensor product $\bxt$. Indeed,
  the maps $\sigma$ and $\tau$ will then (to within
  codomain-isomorphism) have components of the form
  \begin{equation*}
    a_1[y_1; x_1] \bxt a_2[y_2; x_2] \xrightarrow{\sigma}
    a_1[y_1; x_1] \circ a_2[y_2; x_2] \quad \text{and} \quad 
    a_1[y_1; x_1] \bxt a_2[y_2; x_2] \xrightarrow{\tau}
    a_2[y_2; x_2] \circ a_1[y_1; x_1]\rlap{ ;}
  \end{equation*}
  and the appropriate analogue of~\eqref{eq:v-bifunctor} will feature
  ${\ca{A}_1}(y_1, x_1) \bxt {\ca{A}_2}(y_2, x_2)$ farthest to the
  left, and with each other $\otimes$ replaced by $\circ$. In this
  way, we re-find precisely Definition~10 of~\cite{GarnerLopezFranco},
  expressing the notion of $\ca{V}$-bifunctor over a normal duoidal
  base $\ca{V}$.
\end{rex}
We wish to extend this definition to arbitrary monad multimorphisms.
The idea is that a multimorphism
$(f, \vec \phi) \co (A_1, a_1), \dots, (A_n, a_n) \to (B, b)$ should
commute just when it commutes in each pair $(i,j)$ of distinct input
arguments. There are two ways to go about setting this up formally.
One involves defining ``$n$-ary interchanger'' $2$-morphisms
\begin{equation}\label{eq:nary-interchanger}
  \begin{tikzcd}[column sep=.4in]
    A_1\bxt \dots \bxt A_n \ar[rr,tick,"(y_1\circ x_1)\bxt\,\cdots\,\bxt(y_n\circ x_n)"]\ar[d,equal]\ar[drr,phantom,"\Two \xi^{(n)}"] && C_1\bxt \cdots \bxt C_n\ar[d,equal] \\
    A_1\bxt\cdots\bxt A_n\ar[r,tick,"x_1\bxt \cdots\bxt x_n"'] & B_1\bxt\cdots\bxt B_n\ar[r,tick,"y_1\bxt \cdots\bxt y_n"'] & C_1\bxt\cdots\bxt C_n
  \end{tikzcd}
\end{equation}
built recursively from the binary interchangers $\xi$, using these to
define $n$-ary analogues of the maps $\sigma$ and $\tau$
of~\cref{def:sigma-tau}, and then setting up a hexagon, similar
to~\cref{eq:binary-commutativity-hexagon} but more elaborate, which
expresses directly the $(i,j)$-commutativity of $(f, \vec \phi)$.
However, we prefer to avoid this, and adopt the following ``quick and
dirty'' definition, which uses \cref{not:indexconventions}:

\begin{defi} \label{defi:commuting-monad-multimorphism} Let
  $(f, \vec \phi) = (f, \phi_1, \dots, \phi_n) \co (A_1, a_1), \dots,
  (A_n, a_n) \to (B, b)$ be a monad multimorphism and 
  $1 \leqslant i < j \leqslant n$. We say that $(f, \vec \phi)$
  is \emph{$(i,j)$-commuting} if the binary multimorphism
\begin{equation*}
    (f, \phi_i, \phi_j) \colon \tparen{\disc A}_{[1, i\mi 1]} \bxt (A_i, a_i) \bxt \tparen{\disc A}_{[i+1, j\mi 1]},\,\, (A_j, a_j) \bxt \tparen{\disc A}_{[j+1, n]} \rightarrow (B,b)
  \end{equation*}
  is commuting. Where there is no scope for confusion, we may also say
  that $(f, \vec \phi)$ is \emph{$(A_i, A_j)$-commuting}. We say that
  $(f, \smash{\vec \phi})$ is \emph{commuting}, we mean that it is
  $(i,j)$-commuting for all $1 \leqslant i < j \leqslant n$.
\end{defi}
The reason we term this definition ``quick and dirty'', rather than
merely quick, is that we have chosen, in a slightly non-canonical way,
to attach the tensor of discrete monads
$\tparen{\disc A}_{[i+1, j\mi 1]}$ to the first input argument of
$(f, \phi_i, \phi_j)$; we could equally have attached it to the second
input argument, or indeed, have split it into two parts, one attached
to the first and one to the second. It is not hard to show that these
choices are irrelevant, for example by showing that each of them
yields the same notion of commutativity as the approach involving the
$n$-ary interchangers~\eqref{eq:nary-interchanger}.

The following lemma ensures that the symmetric multicategory structure of monads and monad multimorphisms (\cref{prop:MultMndmulticategory}) restricts to commuting monad multimorphisms in an appropriate way.

\begin{lem}
  \label{lem:properties-of-commutativity}
  Let $(f, \vec \phi) \colon (B_1, b_1), \dots,
  (B_n, b_n) \to (C, c)$ and $(g, \vec \psi) \colon (A_1, a_1), \dots,
  (A_m, a_m) \to (B_k, b_k)$ be monad multimorphisms.

  \begin{enumerate}[(i)]
\item If $(f, \vec \phi)$ is $(i,j)$-commuting, and $\sigma \in
  \mathfrak{S}_n$, then $(f, \vec \phi) \circ \sigma$ is
  $(\sigma(i),\sigma(j))$-commuting.
\item If $(f, \vec \phi)$ is $(B_i,B_j)$-commuting for $i,j \neq k$, then
  $(f, \vec \phi) \circ_k (g, \vec \psi)$ is $(B_i,B_j)$-commuting.
\item If $(f, \vec \phi)$ is $(B_i,B_k)$-commuting, then
  $(f, \vec \phi) \circ_k (g, \vec \psi)$ is $(B_i,A_j)$-commuting for
  all $1 \leqslant j \leqslant m$.
\item If $(g, \vec \psi)$ is $(A_i,A_j)$-commuting,
  then $(f, \vec \phi) \circ_k (g, \vec \psi)$ is
  $(A_i,A_j)$-commuting.
  \end{enumerate}
\end{lem}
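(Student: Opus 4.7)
The plan is to treat each of the four statements by unpacking the definition of $(i,j)$-commutativity as the commutativity of the hexagon in $\cref{eq:binary-commutativity-hexagon}$ applied to the appropriate binary sub-multimorphism, and in each case to exhibit this hexagon as a consequence of an already-assumed hexagon via whiskering and naturality. Concretely, the binary sub-multimorphism singled out by the nominated pair of indices can always be rewritten, using the explicit composition and symmetric-group action described in $\cref{prop:MultMndmulticategory}$, as a multicategorical combination of a sub-multimorphism that is known to commute together with data that is either invertible or a monad-map component, and the hexagon then transports under this rewriting.

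For (i), the action of $\sigma$ precomposes $f$ by the symmetry $\beta_\sigma$ and whiskers each $\phi_k$ by $\beta_\sigma$-cells. Because a braiding on an oplax monoidal double category is required in $\cref{def:symmetricoplaxmon}$ to be compatible with the interchangers $\xi$, and therefore with the globular cells $\sigma_{a_i,a_j},\tau_{a_i,a_j}$ of $\cref{def:sigma-tau}$, conjugating the $(i,j)$-hexagon of $(f,\vec\phi)$ by $\beta_\sigma$ produces the hexagon exhibiting the $\sigma$-image multimap as commuting in the relevant pair of slots. For (ii) with $i,j\neq k$, the $i$-th and $j$-th components of $(f,\vec\phi)\circ_k(g,\vec\psi)$ are obtained from $\phi_i,\phi_j$ by pre-whiskering by $1_{\id}$ in the $A$-slots and post-whiskering by $1_g$ in the $B_k$-slot, using the normality of the oplax monoidal structure ($\cref{def:normality}$) to identify $\hid_{A_1}\bxt\cdots\bxt\hid_{A_m}$ with $\hid_{A_{[1,m]}}$ via the invertible cell $\upsilon$; the hexagon for the composite is then just the original $(B_i,B_j)$-hexagon of $(f,\vec\phi)$ whiskered globally by $(g,\vec\psi)$, and the axioms of the oplax monoidal structure guarantee that the resulting whiskered $\xi$-cells reassemble into the $\sigma,\tau$ cells of $\cref{def:sigma-tau}$.

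For (iii), the binary $(B_i,A_j)$-restriction of $(f,\vec\phi)\circ_k(g,\vec\psi)$ equals the multicategorical composite $(f,\phi_i,\phi_k)\circ_2(g,\psi_j)$ (after tensoring with the appropriate discrete monads on the remaining inputs); to verify the $(B_i,A_j)$-hexagon it is enough to paste $\psi_j$ into the $A_j$-slot of the assumed $(B_i,B_k)$-hexagon of $(f,\vec\phi)$ and observe that the resulting pasted diagrams agree, using the monad-map axiom $\cref{eq:monad-map-axioms}$ for $\psi_j$ to reconcile the inner $b_k$-multiplications with $c$-multiplication via $\phi_k$. For (iv), the $(A_i,A_j)$-restriction of the composite is the post-composition of the $(A_i,A_j)$-restriction of $(g,\vec\psi)$ by the unary monad map $(f,\phi_k)$, and post-composition by a monad map preserves commutativity of any binary sub-multimorphism: each face of the hexagon for $(g,\psi_i,\psi_j)$ whiskered by $(f,\phi_k)$ yields the corresponding face of the composite, and the multiplication-preservation axiom for $\phi_k$ supplies the final equality of the two apex paths through $\mu_c$. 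The main obstacle across (ii)--(iv) is the bookkeeping with interchangers: one must repeatedly invoke normality of $\bxt$ to turn the not-always-invertible cells $\xi$ on identity $1$-cells into globular isomorphisms, so that the whiskered hexagons genuinely translate to the desired ones without contributing spurious non-invertible cells.
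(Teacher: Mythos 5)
Your proposal is correct and is essentially an elaboration of the paper's own proof, which consists of the single remark that the lemma follows by a straightforward diagram chase using the definitions of commutativity and of composition of monad multimorphisms, together with the irrelevance of the positioning of the medial discrete factors $\tparen{\disc A}_{[i+1,j \mi 1]}$ --- exactly the point you handle via normality of $\bxt$. The only minor inaccuracy is in (iii), where absorbing $\psi_j$ into the assumed $(B_i,B_k)$-hexagon needs only the naturality of the interchanger $\xi$ (hence of $\sigma$ and $\tau$) together with middle-four interchange in $\dcC$, not the multiplication axiom for the monad map carrying $\psi_j$; in (iv), by contrast, the multiplication-preservation axiom for $(f,\phi_k)$ is indeed the key ingredient, as you say.
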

\begin{proof}
  This is a straightforward diagram chase using the definitions of
  commutativity and composition of monad multimorphisms, and the
  irrelevance of the positioning of the medial elements
  $\tparen{\disc A}_{[i+1, j\mi 1]}$.
\end{proof}
It immediately follows that:
\begin{prop}\label{prop:CMultMnd}
  The monads and commuting monad multimorphisms form a sub-symmetric
  multicategory $\CommMultMndC$ of
  $\MultMndC$.
\end{prop}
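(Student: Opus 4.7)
The plan is to verify the three closure conditions that make $\CommMultMndC$ a sub-symmetric multicategory of $\MultMndC$: closure under identities, closure under the $\circ_k$ operations, and closure under the symmetric group actions. All three will reduce directly to the preceding \cref{lem:properties-of-commutativity}, so the bulk of the work has already been done.

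Closure under identities is immediate: every identity multimorphism is unary, and \cref{defi:commuting-monad-multimorphism} imposes a condition only for pairs $1 \leqslant i < j \leqslant n$, of which there are none when $n = 1$. For closure under the symmetric action, given a commuting $n$-ary multimorphism $(f, \vec\phi)$ and a permutation $\sigma \in \mathfrak{S}_n$, I will check $(k,l)$-commutativity of $(f, \vec\phi) \circ \sigma$ for every $1 \leqslant k < l \leqslant n$ by appealing to \cref{lem:properties-of-commutativity}(i); it suffices that $(f, \vec\phi)$ be $(\sigma^{-1}(k), \sigma^{-1}(l))$-commuting, with the indices reordered so that the smaller comes first if needed, and noting that the hexagon \cref{eq:binary-commutativity-hexagon} is manifestly symmetric in its two binary input arguments.

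For closure under composition, let $(f, \vec\phi) \colon (B_1,b_1), \dots, (B_n,b_n) \to (C,c)$ and $(g, \vec\psi) \colon (A_1,a_1), \dots, (A_m,a_m) \to (B_k, b_k)$ be commuting. The composite $(f, \vec\phi) \circ_k (g, \vec\psi)$ takes inputs $B_1, \dots, B_{k-1}, A_1, \dots, A_m, B_{k+1}, \dots, B_n$, and for each pair of distinct positions I verify commutativity by splitting into three cases. If both positions sit among the outer $B$'s, apply \cref{lem:properties-of-commutativity}(ii). If one is an outer $B_i$ (with $i \neq k$) and the other is an inner $A_j$, apply (iii), which consumes the $(B_i, B_k)$-commutativity of $(f, \vec\phi)$. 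If both are inner $A$'s, apply (iv), which consumes the commutativity of $(g, \vec\psi)$.

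I do not expect any substantive difficulty at this juncture: the proposition is a bookkeeping corollary of \cref{lem:properties-of-commutativity}, where the genuine content of how $\sigma$, $\tau$ and the interchange $\xi$ interact with multicategory composition has already been handled.
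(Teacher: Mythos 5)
Your proposal is correct and matches the paper's argument exactly: the paper derives \cref{prop:CMultMnd} as an immediate consequence of \cref{lem:properties-of-commutativity}, with the identity case being vacuous for unary multimorphisms and the composition and symmetry cases handled by parts (i)--(iv) of that lemma precisely as you describe. Your explicit case split for $\circ_k$ and the remark on the symmetry of the hexagon in its two arguments are just the bookkeeping the paper leaves implicit.
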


\subsection{Pre-representability of $\CommMultMndC$}

Our goal now is to show that the symmetric multicategory of commuting
multimorphisms in $\dcC$ is representable. We have already seen one
case where already know this is true; indeed, when $\dcC = \VMat$ for
a symmetric monoidal closed cocomplete $\ca{V}$, we argued
in~\cref{thm:running-27} that the universal binary commuting
multimorphism should be represented by the usual tensor product of
$\ca{V}$-categories. However, in general it is the case that further
work will be required.

As in \cref{sub:prerepMultMnd}, we begin this work by constructing
pre-universal nullary and binary commuting multimorphisms. For this,
we assume that $\dcC$ satisfies
\cref{hyp:fibrancy,hyp:coproducts-in-mnd,hyp:equalisers,hyp:tabulators}
of \cref{sec:non-commuting}, as well as the following two further
hypotheses:

\begin{hyp} \label{hyp:free-monad} Each forgetful functor
  $\MndCA \rightarrow \EndCA$ has a left adjoint
  $\freemnd \colon \EndCA \rightarrow \MndCA$.
\end{hyp}
    
\begin{hyp} \label{hyp:coequalisers} Each category $\MndCA$ has
  coequalisers.
\end{hyp}

Note that, since we have already assumed $\dcC$ to be
fibrant,~\cref{hyp:free-monad} together
with~\cref{prop:strongerfreemonads} tells us that $\dcC$ admits free
monads in the sense of~\cref{def:free-monads}, \ie, that the forgetful
functor $\MndC \rightarrow \EndC$ has a left adjoint over $\dcC_0$, denoted by $\freemnd(-)$.
Similarly, fibrancy,~\cref{hyp:coequalisers} and
\cref{thm:fibrancy-coequalizers}(ii) together tell us that $\MndC$ admits all
coequalisers of identity-on-objects parallel pairs. 

The following is the counterpart of \cref{thm:nctensor-prerepresentable}, establishing the existence of nullary and binary pre-universal multimorphisms.
\begin{prop}
  \label{prop:universal-commuting-binary}
  The multicategory $\CommMultMndC$ is pre-representable.
  \end{prop}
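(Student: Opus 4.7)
The plan is to construct pre-universal nullary and binary commuting multimorphisms explicitly, building directly on the pre-representability of $\MultMndC$ established in \cref{thm:nctensor-prerepresentable}.

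For the nullary case, the notion of commutativity from \cref{defi:commuting-monad-multimorphism} is vacuous for arities less than two, since there are no pairs of distinct inputs. Hence the pre-universal nullary multimorphism $1_I \colon \ \longrightarrow \disc I$ already serves as a pre-universal \emph{commuting} nullary multimorphism.

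For the binary case, fix monads $(A,a)$ and $(B,b)$. The idea is that the commuting tensor product $(A,a) \ctensor (B,b)$ should be a quotient of the non-commuting tensor product $(A,a) \nctensor (B,b)$ forcing the commutativity hexagon \eqref{eq:binary-commutativity-hexagon}. Consider the pre-universal non-commuting multimorphism $(1_{A \bxt B}, \iota_1, \iota_2) \colon (A,a), (B,b) \to (A \bxt B, a \nctensor b)$. Then the two composites
\begin{equation*}
\alpha \defeq \mu \hcomp (\iota_1 \hcomp \iota_2) \hcomp \sigma_{a,b}, \qquad \beta \defeq \mu \hcomp (\iota_2 \hcomp \iota_1) \hcomp \tau_{a,b}
\end{equation*}
are parallel morphisms $a \bxt b \rightrightarrows a \nctensor b$ in the fibre $\EndCA[A \bxt B]$, where $\mu$ denotes the multiplication of $a \nctensor b$. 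Applying the left adjoint $\freemnd \colon \EndCA[A \bxt B] \to \MndCA[A \bxt B]$ of \cref{hyp:free-monad} and then taking the coequaliser, which exists by \cref{hyp:coequalisers}, yields a reflective monad morphism $q \colon a \nctensor b \to a \ctensor b$ in $\MndCA[A \bxt B]$ with $q \hcomp \alpha = q \hcomp \beta$. Define the candidate pre-universal binary commuting multimorphism as the composite
\begin{equation*}
(1_{A \bxt B}, q \hcomp \iota_1, q \hcomp \iota_2) \colon (A,a), (B,b) \longrightarrow (A \bxt B, a \ctensor b)
\end{equation*}
in $\MultMndC$. Its commutativity is immediate from the defining equation $q \hcomp \alpha = q \hcomp \beta$, which is precisely the binary commutativity hexagon applied to this multimorphism.

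Pre-universality is then a standard diagram chase. Given a commuting binary multimorphism $(f, \phi, \psi) \colon (A,a), (B,b) \to (C,c)$, the pre-representability of $\MultMndC$ yields a unique monad morphism $\bar h \colon a \nctensor b \to (C,c)$ factoring $(f, \phi, \psi)$ through $(1, \iota_1, \iota_2)$. It then suffices to show that $\bar h$ factors uniquely through $q$, which by the universal property of the coequaliser and of the left adjoint $\freemnd$ reduces to checking that the two composites $a \bxt b \rightrightarrows a \nctensor b \to c$ in $\EndC$ covering $f$ coincide; but these composites are precisely the two sides of the commutativity hexagon \eqref{eq:binary-commutativity-hexagon} for $(f, \phi, \psi)$, which are equal by hypothesis. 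The main technical point to verify is that the coequaliser formed in the fibre $\MndCA[A \bxt B]$ really does compute the correct universal property in the total category $\MndC$: this is ensured by \cref{thm:fibrancy-coequalizers}(ii), using fibrancy of $\dcC$ (\cref{hyp:fibrancy}), and by the fact that both $\alpha$ and $\beta$ cover $1_{A \bxt B}$, so that any factorisation automatically preserves the vertical component.
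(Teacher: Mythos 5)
Your proposal is correct and follows essentially the same route as the paper: the nullary case is handled by observing commutativity is vacuous, and the binary case constructs $a\ctensor b$ as the coequaliser in $\MndC$ of the adjoint transposes (under $\freemnd\dashv U$) of the two sides of the commutativity hexagon for the universal non-commuting multimorphism, with the universal property verified by exactly the same factorisation argument through $\MultMndC$'s pre-universal map and the coequaliser. The only cosmetic differences are that you additionally spell out why the candidate multimorphism is itself commuting (which the paper leaves implicit), and your phrase ``applying the left adjoint and then taking the coequaliser'' should more precisely read ``taking adjoint transposes''.
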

\begin{proof}
  Since nullary multimorphisms always commute, the universal nullary
  multimorphism $\ \rightarrow \disc{I}$ is also a universal nullary
  commuting multimorphisms. For the binary case, we must show that,
  for all $(A_1,a_1), (A_2,a_2) \in \MndC$, there is a pre-universal
  binary commuting multimorphism
  \begin{equation}
    \label{eq:universal-commuting-binary}
    (1_{A_1 \bxt A_2}, \eta_1, \eta_2) \colon (A_1,a_1), (A_2,a_2) \rightarrow (A_1 \bxt A_2, a_1 \ctensor a_2)\rlap{ .}
  \end{equation}

  In order to construct this, we start from the universal (non-commuting) binary
  multimorphism
  $(1_{A_1 \bxt A_2}, \iota_1, \iota_2) \colon (A_1,a_1), (A_2,a_2)
  \rightarrow (A_1 \bxt A_2, a_1 \nctensor a_2)$
  of~\cref{eq:universal-binary-multimorphism}, and consider the
  hexagon~\cref{eq:universal-commutativity-hexagon} in $\EndC$ which
  would express its $(1,2)$-commutativity:
  \begin{equation}
    \label{eq:universal-commutativity-hexagon}
    \begin{tikzcd}[column sep = 1em]
      &[-9em] (A_1 \bxt A_2, (a_1 \bxt \hid_{A_2}) \hcomp (\hid_{A_1} \bxt a_2)) \ar[r, "{(1, \iota_1 \hcomp \iota_2)}"] &[2em] (A_1 \bxt A_2, (a_1\nctensor a_2) \circ (a_1\nctensor a_2))  \ar[dr, "{(1, \mu)}"] &[-8em] \\
      (A_1 \bxt A_2, a_1 \bxt a_2) \ar[ur, "{(1,\sigma)}"] \ar[dr, "{(1,\tau)}"'] & & & (A_1 \bxt A_2, a_1\nctensor a_2) \mathrlap{ .} \\
      & (A_1 \bxt A_2, (\hid_{A_1} \bxt a_2) \hcomp (a_1 \bxt \hid_{A_2}))  \ar[r, "{(1,\iota_2 \hcomp \iota_1)}"'] & (A_1 \bxt A_2, (a_1\nctensor a_2) \hcomp (a_1\nctensor a_2))  \ar[ur, "{(1, \mu)}"'] & 
    \end{tikzcd} 
  \end{equation}

  There is, of course, no reason to believe that this hexagon will
  commute in general, but we can quotient $a_1\nctensor a_2$ to bring this about.
  Since $a_1\nctensor a_2$ bears a monad structure, we may transpose the
  two sides of the hexagon above to obtain a parallel pair of maps in
  $\MndC$, as to the left in:
  \begin{equation}
    \label{eq:commuting-coequaliser}
    \begin{tikzcd}
      (A_1 \bxt A_2, \freemnd(a_1 \bxt a_2)) \ar[r, shift left = 0.3em, "{(\id, \upsilon_1)}"]
      \ar[r, shift right = 0.3em, "{(\id, \upsilon_2)}"'] &
      (A_1 \bxt A_2, a_1 \nctensor a_2) \ar[r, two heads, "{(\id, q)}"] &
      (A_1 \bxt A_2, a_1 \ctensor a_2)\rlap{ .}
    \end{tikzcd}
  \end{equation}
  Now by the remarks above, this identity-on-objects pair admits a
  coequaliser in $\MndC$ as to the right. We claim that the object
  $(A_1 \bxt A_2, a_1 \ctensor a_2)$ so obtained is pre-universal, when
  equipped with the multimorphism~\cref{eq:universal-commuting-binary}
  given by the composite
  \begin{equation*}
    (\id, q) \circ (1_{A_1 \bxt A_2}, \iota_1, \iota_2) \colon (A_1,a_1), (A_2,a_2) \rightarrow (A_1 \bxt A_2, a_1 \ctensor a_2)\rlap{ .}
  \end{equation*}

  To establish the claim, let
  $(f, \phi_1, \phi_2) \colon (A_1,a_1), (A_2,a_2) \rightarrow (B,b)$
  be a commuting monad multimorphism. Since $(f, \phi_1, \phi_2)$ is
  in particular a monad multimorphism, it factors uniquely through the
  universal binary
  multimorphism~\cref{eq:universal-binary-multimorphism}, as
  $(f, \bar \phi) \colon (A_1\bxt A_2, a_1 \nctensor a_2) \rightarrow
  (B,b)$ say. It thus suffices to show that this $(f, \bar \phi)$
  factors (necessarily uniquely) through the coequaliser
  $(\id, q) \colon a \nctensor b \twoheadrightarrow a \ctensor b$. In
  other words, we must show that composing with $(f,\bar \phi)$
  makes equal the maps $(\id, \upsilon_1)$ and $(\id, \upsilon_2)$
  in~\cref{eq:commuting-coequaliser}.
  Transposing under the free-forgetful adjunction, this is equally to
  ask that the hexagon~\cref{eq:universal-commutativity-hexagon}
  commutes upon postcomposition by $(f, \bar \phi)$. But since
  $(f, \bar \phi)$ is a monad morphism and since
  $(f, \bar \phi) \circ (\id, \iota_1, \iota_2) = (f, \phi_1,
  \phi_2)$, this postcomposition is precisely the hexagon expressing
  the $(1,2)$-commutativity of $(f, \phi_1, \phi_2)$.
\end{proof}
Like before, there is no obstacle to extending the above result to
exhibit universal $n$-ary commuting multimorphisms: we start from the
universal non-commuting $n$-ary multimorphism, and then take repeated
coequalisers to impose $(i,j)$-commutativity for all $i < j$. But,
like before, we do not need to give this argument in full, since it
will follow once we have established \emph{closedness} of our
multicategory.

\subsection{Closure and representability of $\CommMultMndC$}

In this section, we
build the internal homs which exhibit
$\CommMultMndC$ as a closed multicategory. We do so using the
corresponding internal homs of $\MultMndC$ (see
\cref{prop:multimorphisms-closed}), and as a first step we record how
these interact with commutativity.

\begin{lem}
  \label{lem:commutative-maps-into-nchom}
  A monad multimorphism
  $(f, \phi_1, \dots, \phi_n, \psi) \colon (A_1, a_1), \dots, (A_n,
  a_n), (B, b) \rightarrow (C,c)$ is $(A_i, A_j)$-commuting if and
  only if its transpose
  $(\bar f, \bar \phi_1, \dots, \bar \phi_n) \colon (A_1, a_1), \dots,
  (A_n, a_n) \rightarrow \nchom{(B,b), (C,c)}$ is
  $(A_i, A_j)$-commuting.
\end{lem}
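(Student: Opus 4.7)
The plan is to reduce the statement to a binary commutativity check and transport the defining hexagon under the transposition bijection.

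By \cref{defi:commuting-monad-multimorphism}, $(A_i, A_j)$-commutativity of either multimorphism is a property of its binary commutativity reduction, obtained by replacing every input monad outside positions $i, j$ with its discrete version. Unpacking this, the reduction of the original multimorphism is a binary $(f, \phi_i, \phi_j)$ whose second input absorbs a $\disc B$ factor (coming from the $B$-position) and whose codomain is $(C, c)$, whereas the reduction of the transpose is a binary $(\bar f, \bar \phi_i, \bar \phi_j)$ with no $\disc B$ factor and codomain $\nchom{(B, b), (C, c)}$. We thus reduce to the following special case: for a ternary $(f, \phi_1, \phi_2, \psi) \co \ca X, \ca Y, (B, b) \to (C, c)$, the binary $(f, \phi_1, \phi_2) \co \ca X, \ca Y \bxt \disc B \to (C, c)$ is commuting iff its transpose $(\bar f, \bar \phi_1, \bar \phi_2) \co \ca X, \ca Y \to \nchom{(B, b), (C, c)}$ is commuting.

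Each commutativity condition is expressed by a hexagon \eqref{eq:binary-commutativity-hexagon} in $\EndC$, whose two sides lift through the free monad on the top endomap to a pair of monad maps. Using the natural isomorphism $\freemnd(a_1 \bxt a_2 \bxt \hid_B) \cong \freemnd(a_1 \bxt a_2) \bxt \disc B$ (a consequence of the adjunction $(\thg) \bxt \disc B \dashv \nchom{\disc B, \thg}$ of \cref{lem:homming-out-of-id-lifts}(ii) together with the preservation of left adjoints by $\freemnd$), and the transposition bijection of \cref{prop:multimorphisms-closed}---itself built from that same adjunction combined with the cartesian lift of \cref{cartesian-lifts-multimnd}---these two pairs of monad maps correspond term-by-term, so equality on one side is equivalent to equality on the other.

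The main obstacle is verifying the claimed term-by-term correspondence under transposition. Each side of the hexagon is built from the monad-map components $\phi_i, \phi_j$, the interchangers $\sigma, \tau$ of \cref{def:sigma-tau}, and the multiplication on the codomain monad. The $\phi$-components transpose to the $\bar \phi$-components by construction; the compatibility for $\sigma, \tau$ and for the multiplication follows from the explicit formula \eqref{eq:mu-for-internal-hom} for the monad structure on $\nchom{\disc B, (C, c)}$, which is itself defined via the interchanger $\xi$ that underlies $\sigma$ and $\tau$. The extension from the discrete case to the general codomain $\nchom{(B, b), (C, c)}$ is then immediate by the cartesianness of the comparison map \eqref{eq:cartesian-monad-reindexing}.
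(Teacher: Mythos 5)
Your proposal is correct and follows essentially the same route as the paper: reduce to the binary case via the pairing of the two commutativity reductions under transpose, handle a general $(B,b)$ by cartesianness of the comparison map~\eqref{eq:cartesian-monad-reindexing}, and in the discrete case transpose the two sides of the hexagon using the explicit formula~\eqref{eq:mu-for-internal-hom} for the multiplication on the internal hom. The only differences are cosmetic: you route the transposition through free monads and the isomorphism $\freemnd(x \bxt \hid_B) \cong \freemnd(x) \bxt \disc{B}$ where the paper transposes the hexagon directly in $\EndC$, and you assert rather than carry out the oplax-coherence computation relating $\zeta$ to $\sigma$ and $\tau$, which is where the paper's proof spends most of its effort.
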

\begin{proof}
  Note that by \cref{defi:commuting-monad-multimorphism}, $(f, \vec \phi, \psi)$ is $(A_i, A_j)$-commuting if and
  only if
  \begin{equation}
    \label{eq:commutative-maps-into-nchom-1}
    (f, \phi_i, \phi_j, \psi) \colon \tparen{\disc A}_{[1, i\mi 1]} \bxt (A_i, a_i) \bxt \tparen{\disc A}_{[i+1, j\mi 1]},\,\, (A_j, a_j) \bxt \tparen{\disc A}_{[j+1, n]} \bxt (B,b) \rightarrow (C,c)
  \end{equation}
  is $(A_i, A_j)$-commuting, while its transpose is $(A_i,
  A_j)$-commuting if and only if
  \begin{equation}
    \label{eq:commutative-maps-into-nchom-2}
    (f, \bar \phi_i, \bar \phi_j) \colon \tparen{\disc A}_{[1, i\mi 1]} \bxt (A_i, a_i) \bxt \tparen{\disc A}_{[i+1, j\mi 1]},\,\, (A_j, a_j) \bxt \tparen{\disc A}_{[j+1, n]}  \rightarrow \nchom{(B,b), (C,c)}
  \end{equation}
  is $(A_i, A_j)$-commuting. It is easy to see
  that~\cref{eq:commutative-maps-into-nchom-1} and
  \cref{eq:commutative-maps-into-nchom-2} are themselves paired under
  transpose, so that without loss of generality we may reduce to the
  case $n = 2$.

  On doing so, to say that
  $(\bar f, \bar \phi_1, \bar \phi_2) \colon (A_1, a_1), (A_2, a_2)
  \rightarrow \nchom{(B,b), (C,c)}$ is $(A_1, A_2)$-commuting is to
  say that the following hexagon commutes:
  \begin{equation*}
    \begin{tikzcd}[column sep = 1em]
      &[-9em] (A_1 \bxt A_2, (a_1 \bxt \hid_{A_2}) \hcomp (\hid_{A_1} \bxt a_2)) \ar[r, "{(\bar f, {\bar \phi}_1 \hcomp {\bar \phi}_2)}"] &[2em] (\nchom{b,c}_0, \nchom{b,c} \circ \nchom{b,c})  \ar[dr, "{(1, \mu)}"] &[-5em] \\
      (A_1 \bxt A_2, a_1 \bxt a_2) \ar[ur, "{(1,\sigma)}"] \ar[dr, "{(1,\tau)}"'] & & & (\nchom{b,c}_0, \nchom{b,c}) \\
      & (A_1 \bxt A_2, (\hid_{A_1} \bxt a_2) \hcomp (a_1 \bxt
      \hid_{A_2})) \ar[r, "{(\bar f, {\bar \phi}_2 \hcomp {\bar \phi}_1)}"'] & (\nchom{b,c}_0, \nchom{b,c} \circ \nchom{b,c}) \ar[ur, "{(1,
        \mu)}"'] &
    \end{tikzcd}
  \end{equation*}
where the underlying object $\nchom{b,c}_0$ in $\dcC_0$ is the the object $\uMndC(b,c) \in \dcC_0$ of~\cref{lem:mnd-hom} like before. Clearly, we have commutativity at the level of underlying maps in
  $\dcC_0$; so it suffices to show commutativity after postcomposition
  by the map~\cref{eq:representably-induced}, which is cartesian with respect to both forgetful functors $\EndC \rightarrow \dcC_0$ and $\MndC \rightarrow \dcC_0$. This postcomposition is equally well the
  hexagon:
\begin{equation*}
    \begin{tikzcd}[column sep = 1em]
      &[-9em] (A_1 \bxt A_2, (a_1 \bxt \hid_{A_2}) \hcomp (\hid_{A_1} \bxt a_2)) \ar[r, "{(\tilde f, {\tilde \phi}_1 \hcomp {\tilde \phi}_2)}"] &[2em] (\boxhom{B,C}, \boxhom{\hid_B,c} \circ \boxhom{\hid_B,c})  \ar[dr, "{(1, \mu)}"] &[-6.5em] \\
      (A_1 \bxt A_2, a_1 \bxt a_2) \ar[ur, "{(1,\sigma)}"] \ar[dr, "{(1,\tau)}"'] & & & (\boxhom{B,C}, \boxhom{\hid_B,c}) \\
      & (A_1 \bxt A_2, (\hid_{A_1} \bxt a_2) \hcomp (a_1 \bxt
      \hid_{A_2})) \ar[r, "{(\tilde f, {\tilde \phi}_2 \hcomp {\tilde \phi}_1)}"'] & (\boxhom{B,C}, \boxhom{\hid_B,c} \circ \boxhom{\hid_B,c}) \ar[ur, "{(1,
        \mu)}"'] &
    \end{tikzcd}
  \end{equation*}
  where $\tilde f$, $\tilde \phi_1$, $\tilde \phi_2$ are transposes of
  $f$, $\phi_1$, $\phi_2$ under the adjunctions
  $(\thg) \bxt B \dashv \boxhom {B, \thg}$ and
  $(\thg) \bxt \hid_B \dashv \boxhom {\hid_B, \thg}$.

  Now, by the
  definition of the multiplication $\mu$ of the monad $\bigl( \boxhom{B,C}, \boxhom{ \hid_B,c} \bigr)$ in~\cref{eq:mu-for-internal-hom} together with
  the naturality of $\zeta$ therein, the transpose of the composite
  $(1, \mu)(\tilde f, \tilde \phi_1 \circ \tilde \phi_2)$ under
  $(\thg) \bxt (B, \hid_B) \dashv \boxhom {(B, \hid_B), \thg}$ is the composite
  \begin{equation*}
    \begin{tikzcd}[column sep = 4em]
      (A_1 \bxt A_2 \bxt B, ((a_1 \bxt \hid_{A_2}) \hcomp (\hid_{A_1} \bxt a_2)) \bxt \hid_B) \ar[d,"{(1, \zeta)}"] & (C,c)\\
      (A_1 \bxt A_2 \bxt B, (a_1 \bxt \hid_{A_2} \bxt \hid_B) \hcomp (\hid_{A_1} \bxt a_2 \bxt \hid_B)) \ar[r,"{(f, \phi_1 \circ \phi_2)}"'] & (C,c \circ c) \ar[u,"{(1, \mu)}"]
    \end{tikzcd}
  \end{equation*}
  and correspondingly for $(1, \mu)(\tilde f, \tilde \phi_2 \circ
  \tilde \phi_1)$. Thus, the hexagon above commutes just when the the outside
  of the diagram
  \begin{equation*}
    \begin{tikzcd}[column sep = 1em]
      &[-15em] |[xshift=-3em]| {(A_1\!\bxt A_2\!\bxt\!B, ((a_1 \bxt \hid) \!\circ\! (\hid \bxt a_2)) \bxt \hid)} \ar[rr, "{(1, \zeta)}"] &[1em]&[-15em] (A_1\!\bxt A_2\!\bxt\!B, (a_1 \bxt \hid \bxt \hid) \!\circ\! (\hid \bxt a_2 \bxt \hid)) \ar[r, "{(f, {\phi}_1 \hcomp {\phi}_2)}"] &[1.3em]
      (C, c \!\circ\! c)  \ar[dr, "{(1, \mu)}"] &[-3em] \\
      (A_1\!\bxt A_2\!\bxt\!B, (a_1 \bxt a_2) \bxt \hid) \ar[ur, "{(1,\sigma \bxt 1)}"] \ar[dr, "{(1,\tau \bxt 1)}"'] \ar[rr,"\cong"] & & {(A_1\!\bxt A_2\!\bxt\!B, (a_1 \bxt a_2) \bxt \hid)} \ar[ur,"{(1, \sigma)}"] \ar[dr,"{(1, \tau)}"] & & & (C,c) \mathrlap{ .} \\
      & |[xshift=-3em]| (A_1\!\bxt A_2\!\bxt\!B, ((\hid \bxt a_2) \!\circ\! (a_1 \bxt \hid)) \bxt \hid) \ar[rr, "{(1, \zeta)}"] && (A_1\!\bxt A_2\!\bxt\!B, (a_1 \bxt \hid \bxt \hid) \!\circ\! (\hid \bxt a_2 \bxt \hid)) \ar[r, "{(f, {\phi}_2 \hcomp {\phi}_1)}"'] & (C,c \!\circ\! c) \ar[ur, "{(1,
        \mu)}"'] &
    \end{tikzcd}
  \end{equation*}
  commutes. Since the left two regions of this diagram commute by the
  normal oplax monoidal coherence axioms, and since the associativity
  constraint centre left is invertible, we conclude that the outside
  commutes just when the right hexagon commutes. But this hexagon
  expresses precisely the $(A_1, A_2)$-commutativity of
  $(f, \phi_1, \phi_2, \psi)$. Thus, we have shown that
  $(\bar f, \bar \phi_1, \bar \phi_2) \colon (A_1, a_1), (A_2, a_2)
  \rightarrow \nchom{(B,b), (C,c)}$ is $(A_1, A_2)$-commuting just
  when $(f, \phi_1, \phi_2, \psi) \colon (A_1, a_1), (A_2, a_2), (B,b)
  \rightarrow (C,c)$ is so, as required.
\end{proof}

We will require one further result before exhibiting the closed
structure of $\CommMultMndC$; informally speaking, this establishes
that, for a binary multimorphism
$(A_1, a_1), (A_2, a_2) \rightarrow (B,b)$, the (generalised) elements
of $(A_1, a_1)$ which commute with $(A_2, a_2)$ are closed under
composition.

\begin{lem}
  \label{lem:compositionality-of-commutativity}
  Let $(f, \phi_1, \phi_2) \colon (A_1, a_1), (A_2, a_2) \rightarrow
  (B,b)$ be a monad multimorphism, and consider the
  hexagon~\eqref{eq:binary-commutativity-hexagon} which would express
  the commutativity of $f$.
  \begin{enumerate}[(i)]
  \item This hexagon always
    commutes on precomposition by the map
    \begin{equation*}
      (A_1 \bxt A_2, \hid_{A_1} \bxt
      a_2) \xrightarrow{(1, \eta \bxt 1)} (A_1 \bxt A_2, a_1 \bxt a_2)\rlap{ .}
    \end{equation*}
  \item If this hexagon 
    commutes on precomposition by maps
    \begin{equation*}
      (A_1 \bxt A_2, a_1' \bxt a_2) 
      \xrightarrow{(1, u \bxt 1)} (A_1 \bxt A_2, a_1 \bxt a_2) \quad \text{and} \quad 
       (A_1 \bxt A_2, a_1'' \bxt a_2)
      \xrightarrow{(1, v \bxt 1)} (A_1 \bxt A_2, a_1 \bxt a_2)
    \end{equation*}
    then it also commutes on precomposition by the map
    \begin{equation}
      \label{eq:uv-composite}
      (A_1 \bxt A_2, (a_1' \circ a_1'') \bxt a_2)
      \xrightarrow{(1, (u \circ v) \bxt 1)} (A_1 \bxt A_2, (a_1 \circ a_1) \bxt a_2) \xrightarrow{(1, \mu \bxt 1)} (A_1 \bxt A_2, a_1 \bxt a_2)\rlap{ .} \end{equation}
  \end{enumerate}
\end{lem}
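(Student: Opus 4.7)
Write $\alpha_L = \mu_b \cdot (\phi_1 \circ \phi_2) \cdot \sigma$ and $\alpha_R = \mu_b \cdot (\phi_2 \circ \phi_1) \cdot \tau$ for the two composite $2$-morphisms $a_1 \bxt a_2 \Rightarrow b$ whose equality~\eqref{eq:binary-commutativity-hexagon} expresses, using subscripts on $\mu$ and $\eta$ to identify the monad at stake. The proofs of both parts rest on the axioms~\eqref{eq:monad-map-axioms} for $(f, \phi_1)$ as a monad map with respect to the monad $(A_1 \bxt A_2, a_1 \bxt \hid_{A_2})$ from \cref{lem:tensoring-with-id-lifts}, whose unit and multiplication are respectively $(\eta_{a_1} \bxt 1_{\hid_{A_2}}) \cdot \upsilon^{-1}$ and $(\mu_{a_1} \bxt \mathfrak{l}) \cdot \xi^{-1}$.

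For (i), I will show that both $\alpha_L \cdot (\eta_{a_1} \bxt 1_{a_2})$ and $\alpha_R \cdot (\eta_{a_1} \bxt 1_{a_2})$ reduce to the same $2$-morphism. Pushing $\eta_{a_1} \bxt 1_{a_2}$ through $\sigma$ by naturality in the first argument, the $\phi_1$-factor becomes $\phi_1 \cdot (\eta_{a_1} \bxt 1_{\hid_{A_2}})$; by the unit axiom of~\eqref{eq:monad-map-axioms} this equals the unit $\eta_b$ of $b$ (modulo the normality iso $\upsilon$). The left-unit law for $\mu_b$ then collapses $\mu_b \cdot (\eta_b \circ \phi_2)$ to $\phi_2$ up to coherence. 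A symmetric computation, using naturality of $\tau$, the same unit axiom, and the right-unit law for $\mu_b$, reduces $\alpha_R \cdot (\eta_{a_1} \bxt 1_{a_2})$ to the same expression, establishing (i).

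For (ii), denote by $\phi_1^{(w)} := \phi_1 \cdot (w \bxt 1_{\hid_{A_2}})$ the ``$w$-twisted'' form of $\phi_1$ for a $2$-morphism $w$ into $a_1$; the commutativity hypothesis for $w$ unpacks via naturality of $\sigma, \tau$ as $\mu_b \cdot (\phi_1^{(w)} \circ \phi_2) \cdot \sigma_{\mathfrak{s}(w), a_2} = \mu_b \cdot (\phi_2 \circ \phi_1^{(w)}) \cdot \tau_{\mathfrak{s}(w), a_2}$. The key input is the multiplicativity axiom of~\eqref{eq:monad-map-axioms}, which, combined with naturality of $\xi$ and the coherence $\mu_{a_1} \bxt 1_{\hid_{A_2}} = (\mu_{a_1} \bxt \mathfrak{l}) \cdot (1 \bxt \mathfrak{l}^{-1})$, yields an identity of the form $\phi_1^{(\mu_{a_1} \cdot (u \circ v))} = \mu_b \cdot (\phi_1^{(u)} \circ \phi_1^{(v)}) \cdot \xi'$ for an appropriate interchanger $\xi'$. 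Using naturality of $\sigma$ to absorb $(\mu_{a_1} \bxt 1_{a_2}) \cdot ((u \circ v) \bxt 1_{a_2})$ into the $a_1$-factor and substituting this identity rewrites $\alpha_L \cdot (\mu_{a_1} \bxt 1_{a_2}) \cdot ((u \circ v) \bxt 1_{a_2})$ as $\mu_b \cdot (\mu_b \circ 1) \cdot (\phi_1^{(u)} \circ \phi_1^{(v)} \circ \phi_2) \cdot \sigma''$ for a coherence-adjusted $\sigma''$. Associativity of $\mu_b$ and interchange of horizontal and vertical composition regroup this as $\mu_b \cdot (\phi_1^{(u)} \circ (\mu_b \cdot (\phi_1^{(v)} \circ \phi_2) \cdot \sigma_{a_1'', a_2})) \cdot \sigma'''$, whose inner composite has exactly the shape of the LHS of the hypothesis for $v$. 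Applying this hypothesis swaps $\phi_2$ past $\phi_1^{(v)}$; regrouping once more and applying the hypothesis for $u$ swaps $\phi_2$ past $\phi_1^{(u)}$ as well. Running the multiplicativity identity in reverse recombines the two $\phi_1$'s, and naturality of $\tau$ reassembles the result as $\alpha_R \cdot (\mu_{a_1} \bxt 1_{a_2}) \cdot ((u \circ v) \bxt 1_{a_2})$.

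The principal obstacle is the meticulous bookkeeping of coherence $2$-morphisms throughout these pasting computations — unit and associativity isomorphisms of horizontal composition, the normality isos $\upsilon$ and $\mathfrak{l}$, associators of $\bxt$, and repeated applications of naturality of the interchanger $\xi$ — which must be arranged so that each invocation of a hypothesis or a monad map axiom lands on precisely the correct subexpression of the pasting diagram.
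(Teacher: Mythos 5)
Your proof is correct and follows essentially the same route as the paper's: naturality of $\sigma$ and $\tau$, the multiplicativity axiom for $(f,\phi_1)$ to split $\phi_1$ precomposed by $\mu\cdot(u\circ v)$ into $\mu_b\cdot(\phi_1^{(u)}\circ\phi_1^{(v)})$, oplax monoidal coherence to manage the ternary interchanger, and the two commutativity hypotheses to swap $\phi_2$ past the twisted copies of $\phi_1$. The only organisational difference is that you chain both hypotheses to carry the left path all the way to the right path, whereas the paper applies the hypothesis for $v$ to bring the top path to a symmetric normal form and then invokes a symmetric argument for the bottom path; both are valid.
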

\begin{proof}
  Both parts are proved in an identical fashion, so we concentrate on
  the more involved case of (ii).
  For this, note that the top path
  around~\eqref{eq:binary-commutativity-hexagon} becomes the composite
  below left on precomposition by~\eqref{eq:uv-composite}, which by
  naturality of $\sigma$, is equally the composite to the right:
  \begin{equation*}
    \begin{tikzcd}[column sep = 1.4em]
      A_1 \bxt A_2 \ar[rr,tick,"(a_1' \circ a_1'') \bxt a_2"] \ar[drr,phantom,"\Two (u \circ v) \bxt 1"] \ar[d,equal] &&
      A_1 \bxt A_2 \ar[d,equal] \\
      A_1 \bxt A_2 \ar[rr,tick,"(a_1 \circ a_1) \bxt a_2"] \ar[drr,phantom,"\Two \mu \bxt 1"] \ar[d,equal] &&
      A_1 \bxt A_2 \ar[d,equal] \\
      A_1 \bxt A_2 \ar[rr,tick,"a_1 \bxt a_2"] \ar[d,equal] \ar[drr,phantom,"\Two \sigma"]&&
      A_1 \bxt A_2 \ar[d,equal]\\
      A_1 \bxt A_2 \ar[r,tick,"\hid_{A_1} \bxt a_2"] \ar[d,"f"'] \ar[dr,phantom,"\Two \phi_2"] &
      A_1 \bxt A_2 \ar[r, tick,"a_1 \bxt \hid_{A_2}"] \ar[d,"f"] \ar[dr,phantom,"\Two \phi_1"] &
      A_1 \bxt A_2 \ar[d,"f"] \\
      B \ar[r,tick,"b"'] \ar[d,equal] \ar[drr,phantom,"\Two \mu"] &
      B \ar[r,tick,"b"'] &
      B \ar[d,equal] \\
      B \ar[rr,tick,"b"'] && B
    \end{tikzcd} = 
\begin{tikzcd}[column sep = 1.4em]
      A_1 \bxt A_2 \ar[rr,tick,"(a_1' \circ a_1'') \bxt a_2"] \ar[drr,phantom,"\Two \sigma"] \ar[d,equal] &&
      A_1 \bxt A_2 \ar[d,equal] \\
      A_1 \bxt A_2 \ar[r,tick,"\hid_{A_1} \bxt a_2"] \ar[d,equal]  &
      A_1 \bxt A_2 \ar[r, tick,"(a_1' \circ a_1'') \bxt \hid_{A_2}"] \ar[d,equal] \ar[dr,phantom,"\Two (u \circ v) \bxt 1"] &
      A_1 \bxt A_2 \ar[d,equal] \\
      A_1 \bxt A_2 \ar[r,tick,"\hid_{A_1} \bxt a_2"] \ar[d,equal]  &
      A_1 \bxt A_2 \ar[r, tick,"(a_1 \circ a_1) \bxt \hid_{A_2}"] \ar[d,equal] \ar[dr,phantom,"\Two \mu \bxt 1"] &
      A_1 \bxt A_2 \ar[d,equal] \\
      A_1 \bxt A_2 \ar[r,tick,"\hid_{A_1} \bxt a_2"] \ar[d,"f"'] \ar[dr,phantom,"\Two \phi_2"] &
      A_1 \bxt A_2 \ar[r, tick,"a_1 \bxt \hid_{A_2}"] \ar[d,"f"] \ar[dr,phantom,"\Two \phi_1"] &
      A_1 \bxt A_2 \ar[d,"f"] \\
      B \ar[r,tick,"b"'] \ar[d,equal] \ar[drr,phantom,"\Two \mu"] &
      B \ar[r,tick,"b"'] &
      B \ar[d,equal] \\
      B \ar[rr,tick,"b"'] && B \mathrlap{.} 
    \end{tikzcd}
  \end{equation*}
  Now using the fact that $(f,\phi_1)$ is a monad morphism, this
  pasting is equally the pasting to the left in:
  \begin{equation*}
    \begin{tikzcd}[column sep = 1.4em]
      A_1 \bxt A_2 \ar[rrr,tick,"(a_1' \circ a_1'') \bxt a_2"] \ar[drrr,phantom,"\Two \sigma"] \ar[d,equal] &&&
      A_1 \bxt A_2 \ar[d,equal] \\
      A_1 \bxt A_2 \ar[r,tick,"\hid_{A_1} \bxt a_2"] \ar[d,equal]  &
      A_1 \bxt A_2 \ar[rr, tick,"(a_1' \circ a_1'') \bxt \hid_{A_2}"] \ar[d,equal] \ar[drr,phantom,"\Two \cong"] &&
      A_1 \bxt A_2 \ar[d,equal] \\
      A_1 \bxt A_2 \ar[r,tick,"\hid_{A_1} \bxt a_2"] \ar[d,equal]  &
      A_1 \bxt A_2 \ar[rr, tick,"(a_1' \circ a_1'') \bxt (\hid_{A_2} \circ \hid_{A_2})"] \ar[d,equal] \ar[drr,phantom,"\Two \xi"] &&
      A_1 \bxt A_2 \ar[d,equal] \\
      A_1 \bxt A_2 \ar[r,tick,"\hid_{A_1} \bxt a_2"] \ar[d,equal]  &
      A_1 \bxt A_2 \ar[r, tick,"a_1'' \bxt \hid_{A_2}"] \ar[d,equal] \ar[dr,phantom,"\Two v \bxt 1"] &
      A_1 \bxt A_2 \ar[r, tick,"a_1' \bxt \hid_{A_2}"] \ar[d,equal] \ar[dr,phantom,"\Two u \bxt 1"] &
      A_1 \bxt A_2 \ar[d,equal] \\
      A_1 \bxt A_2 \ar[r,tick,"\hid_{A_1} \bxt a_2"] \ar[d,"f"'] \ar[dr,phantom,"\Two \phi_2"] &
      A_1 \bxt A_2 \ar[r, tick,"a_1 \bxt \hid_{A_2}"] \ar[d,"f"] \ar[dr,phantom,"\Two \phi_1"] &
      A_1 \bxt A_2 \ar[r, tick,"a_1 \bxt \hid_{A_2}"] \ar[d,"f"] \ar[dr,phantom,"\Two \phi_1"] &
      A_1 \bxt A_2 \ar[d,"f"] \\
      B \ar[r,tick,"b"] \ar[d,equal] &
      B \ar[r, tick,"b"] \ar[d,equal] \ar[drr,phantom,"\Two \mu"] &
      B \ar[r, tick,"b"] &
      B \ar[d,equal] \\
      B \ar[r,tick,"b"'] \ar[d,equal] \ar[drrr,phantom,"\Two \mu"] &
      B \ar[rr,tick,"b"'] &&
      B \ar[d,equal] \\
      B \ar[rrr,tick,"b"'] &&& B
    \end{tikzcd} = 
    \begin{tikzcd}[column sep = 1.4em]
      A_1 \bxt A_2 \ar[rrr,tick,"(a_1' \circ a_1'') \bxt a_2"] \ar[drrr,phantom,"\cong"] \ar[d,equal] &&&
      A_1 \bxt A_2 \ar[d,equal] \\
      A_1 \bxt A_2 \ar[rrr,tick,"(a_1' \circ a_1'') \bxt (\hid_{A_2} \circ a_2)"] \ar[drrr,phantom,"\Two \xi"] \ar[d,equal] &&&
      A_1 \bxt A_2 \ar[d,equal] \\
      A_1 \bxt A_2 \ar[rr,tick,"a_1' \bxt a_2"] \ar[d,equal]  \ar[drr,phantom, "\Two \sigma"] &&
      A_1 \bxt A_2 \ar[r, tick,"a_1' \bxt \hid_{A_2}"] \ar[d,equal]  &
      A_1 \bxt A_2 \ar[d,equal] \\
      A_1 \bxt A_2 \ar[r,tick,"\hid_{A_1} \bxt a_2"] \ar[d,equal]  &
      A_1 \bxt A_2 \ar[r, tick,"a_1'' \bxt \hid_{A_2}"] \ar[d,equal] \ar[dr,phantom,"\Two v \bxt 1"] &
      A_1 \bxt A_2 \ar[r, tick,"a_1' \bxt \hid_{A_2}"] \ar[d,equal] \ar[dr,phantom,"\Two u \bxt 1"] &
      A_1 \bxt A_2 \ar[d,equal] \\
      A_1 \bxt A_2 \ar[r,tick,"\hid_{A_1} \bxt a_2"] \ar[d,"f"'] \ar[dr,phantom,"\Two \phi_2"] &
      A_1 \bxt A_2 \ar[r, tick,"a_1 \bxt \hid_{A_2}"] \ar[d,"f"] \ar[dr,phantom,"\Two \phi_1"] &
      A_1 \bxt A_2 \ar[r, tick,"a_1 \bxt \hid_{A_2}"] \ar[d,"f"] \ar[dr,phantom,"\Two \phi_1"] &
      A_1 \bxt A_2 \ar[d,"f"] \\
      B \ar[r,tick,"b"] \ar[d,equal] \ar[drr,phantom,"\Two \mu"] &
      B \ar[r, tick,"b"]  &
      B \ar[r, tick,"b"] \ar[d,equal]&
      B \ar[d,equal] \\
      B \ar[rr,tick,"b"'] \ar[d,equal] \ar[drrr,phantom,"\Two \mu"] &&
      B \ar[r,tick,"b"'] &
      B \ar[d,equal] \\
      B \ar[rrr,tick,"b"'] &&& B
    \end{tikzcd}
  \end{equation*}
  Here, we have additionally used the naturality of $\xi$ and the unit
  constraints of $\dcC$ to commute $u \bxt 1$ and $v \bxt 1$ past
  them. Now, since $\sigma$ is built using the interchanger $\xi$ and
  coherences for $\dcC$, a straightforward coherence calculation shows
  that the first three rows to the left above are equally well the
  first three rows to the right; therein, we have also used
  associativity for the monad $(B,b)$ to rewrite the bottom two rows.

  Considering the pasting so obtained, note that by naturality of
  $\sigma$, we may move $v \bxt 1$ above $\sigma$, and on doing so,
  the resulting diagram contains a copy of the top path
  of~\eqref{eq:binary-commutativity-hexagon}, precomposed by
  $v \bxt 1$. By our assumption, this is equal to the bottom path
  of~\eqref{eq:binary-commutativity-hexagon} precomposed by
  $v \bxt 1$; so rewriting by this, and using naturality of $\sigma$
  again, we see that the pasting above right is equally the pasting to
  the left in:
  \begin{equation*}
    \begin{tikzcd}[column sep = 1.4em]
      A_1 \bxt A_2 \ar[rrr,tick,"(a_1' \circ a_1'') \bxt a_2"] \ar[drrr,phantom,"\cong"] \ar[d,equal] &&&
      A_1 \bxt A_2 \ar[d,equal] \\
      A_1 \bxt A_2 \ar[rrr,tick,"(a_1' \circ a_1'') \bxt (\hid_{A_2} \circ a_2)"] \ar[drrr,phantom,"\Two \xi"] \ar[d,equal] &&&
      A_1 \bxt A_2 \ar[d,equal] \\
      A_1 \bxt A_2 \ar[rr,tick,"a_1' \bxt a_2"] \ar[d,equal]  \ar[drr,phantom, "\Two \tau"] &&
      A_1 \bxt A_2 \ar[r, tick,"a_1' \bxt \hid_{A_2}"] \ar[d,equal]  &
      A_1 \bxt A_2 \ar[d,equal] \\
      A_1 \bxt A_2 \ar[r, tick,"a_1'' \bxt \hid_{A_2}"] \ar[d,equal] \ar[dr,phantom,"\Two v \bxt 1"] &
      A_1 \bxt A_2 \ar[r,tick,"\hid_{A_1} \bxt a_2"] \ar[d,equal]  &
      A_1 \bxt A_2 \ar[r, tick,"a_1' \bxt \hid_{A_2}"] \ar[d,equal] \ar[dr,phantom,"\Two u \bxt 1"] &
      A_1 \bxt A_2 \ar[d,equal] \\
      A_1 \bxt A_2 \ar[r, tick,"a_1 \bxt \hid_{A_2}"] \ar[d,"f"'] \ar[dr,phantom,"\Two \phi_1"] &
      A_1 \bxt A_2 \ar[r,tick,"\hid_{A_1} \bxt a_2"] \ar[d,"f"] \ar[dr,phantom,"\Two \phi_2"] &
      A_1 \bxt A_2 \ar[r, tick,"a_1 \bxt \hid_{A_2}"] \ar[d,"f"] \ar[dr,phantom,"\Two \phi_1"] &
      A_1 \bxt A_2 \ar[d,"f"] \\
      B \ar[r,tick,"b"] \ar[d,equal] \ar[drr,phantom,"\Two \mu"] &
      B \ar[r, tick,"b"]  &
      B \ar[r, tick,"b"] \ar[d,equal]&
      B \ar[d,equal] \\
      B \ar[rr,tick,"b"'] \ar[d,equal] \ar[drrr,phantom,"\Two \mu"] &&
      B \ar[r,tick,"b"'] &
      B \ar[d,equal] \\
      B \ar[rrr,tick,"b"'] &&& B
    \end{tikzcd} = \begin{tikzcd}[column sep = 1.4em]
      A_1 \bxt A_2 \ar[rrr,tick,"(a_1' \circ a_1'') \bxt a_2"] \ar[drrr,phantom,"\cong"] \ar[d,equal] &&&
      A_1 \bxt A_2 \ar[d,equal] \\
      A_1 \bxt A_2 \ar[rrr,tick,"(a_1' \circ \hid_{A_1} \circ a_1'') \bxt (\hid_{A_2} \circ a_2 \circ \hid_{A_2})"] \ar[drrr,phantom,"\Two \xi^{(3)}"] \ar[d,equal] &&&
      A_1 \bxt A_2 \ar[d,equal] \\
      A_1 \bxt A_2 \ar[r, tick,"a_1'' \bxt \hid_{A_2}"] \ar[d,equal] \ar[dr,phantom,"\Two v \bxt 1"] &
      A_1 \bxt A_2 \ar[r,tick,"\hid_{A_1} \bxt a_2"] \ar[d,equal]  &
      A_1 \bxt A_2 \ar[r, tick,"a_1' \bxt \hid_{A_2}"] \ar[d,equal] \ar[dr,phantom,"\Two u \bxt 1"] &
      A_1 \bxt A_2 \ar[d,equal] \\
      A_1 \bxt A_2 \ar[r, tick,"a_1 \bxt \hid_{A_2}"] \ar[d,"f"'] \ar[dr,phantom,"\Two \phi_1"] &
      A_1 \bxt A_2 \ar[r,tick,"\hid_{A_1} \bxt a_2"] \ar[d,"f"] \ar[dr,phantom,"\Two \phi_2"] &
      A_1 \bxt A_2 \ar[r, tick,"a_1 \bxt \hid_{A_2}"] \ar[d,"f"] \ar[dr,phantom,"\Two \phi_1"] &
      A_1 \bxt A_2 \ar[d,"f"] \\
      B \ar[r,tick,"b"] \ar[d,equal] \ar[drrr,phantom,"\Two \mu^{(3)}"] &
      B \ar[r, tick,"b"]  &
      B \ar[r, tick,"b"] &
      B \ar[d,equal] \\
      B \ar[rrr,tick,"b"'] &&& B \mathrlap{.}
    \end{tikzcd}
  \end{equation*}
  Finally, using oplax monoidal coherence, we can rewrite the top
  three rows as the top two rows to the right above; we also rewrite
  the bottom two rows using the evident notation
  $\mu^{(3)} = \mu(\mu \circ 1) = \mu(1 \circ \mu)$. Note that the
  resulting diagram is perfectly symmetrical; whence, by a
  symmetric argument, we may also reduce the lower side
  of~\eqref{eq:binary-commutativity-hexagon}, precomposed
  by~\eqref{eq:uv-composite}, to the same composite. This
  establishes~(ii).

  We leave the simpler case of (i) to the reader, who by following the
  same kind of argument as above, should have no difficulty in verifying that, on precomposition by $(1, \eta \bxt 1)$,
  both paths around~\eqref{eq:binary-commutativity-hexagon} are equal
  to
  $(f, \phi_2) \colon (A_1 \bxt A_2, \hid_{A_1} \bxt a_2) \rightarrow
  (B,b)$.
\end{proof}

We now proceed to exhibit the closed structure of $\CommMultMndC$. For
this, we will need, in addition to our previous assumptions:

\begin{hyp}
\label{hyp:equalisers-in-horizontal} 
$\dcC$ has local equalisers.
\end{hyp}
Since $\dcC_0$ has equalisers by~\cref{hyp:equalisers}, it follows
from this assumption and~\cref{lem:local-to-global-limits} that $\dcC$ has
parallel equalisers. Thus, by \cref{lem:limits-in-end-mnd}(i), so does  $\EndC$.

\begin{prop}
  \label{prop:commuting-multimorphisms-closed}
  The symmetric multicategory $\CommMultMndC$ is closed. That is, for all
  $(B,b), (C,c) \in \MndC$, there exists a hom-object $\chom{(B,b), (C,c)}$ and binary multimorphism
  \begin{equation*}
    (e^c, \varepsilon^c_1, \varepsilon^c_2) \colon \chom{(B,b), (C,c)}, (B, b) \rightarrow (C, c)
  \end{equation*}
  composition with which establishes a bijection of multimorphism-sets
  \begin{equation}\label{eq:commuting-internal-hom-bij}
    \begin{array}{r@{\,}l@{\ }c@{\ }ll}
      (f, \phi_1, \dots, \phi_n, \psi) \colon& (A_1, a_1), \dots,
      (A_n, a_n), (B,b) &\longrightarrow& (C,c) & \text{ commuting}\\
      \noalign{\vskip 3pt}
      \hline
      \noalign{\vskip 3pt} 
      (\bar f, \bar{\phi}_1, \dots, \bar{\phi}_n) \colon& (A_1, a_1), \dots, (A_n, a_n) &\longrightarrow& \chom{(B,b),(C,c)}& \text{ commuting}
    \end{array}
  \end{equation}
\end{prop}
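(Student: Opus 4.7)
The plan is to construct $\chom{(B,b),(C,c)}$ as a sub-monad of $\nchom{(B,b),(C,c)}$ carved out by a local equaliser that imposes commutativity on the non-commuting evaluation of \cref{prop:multimorphisms-closed}. Since every binary multimorphism of the form $\disc A, (B,b) \to (C,c)$ is automatically commuting---the hexagon~\eqref{eq:binary-commutativity-hexagon} collapses when the first argument is a discrete monad, as $\sigma$ and $\tau$ degenerate to coherence isomorphisms and $\phi_1$ is forced to be the unit of $c$---the underlying $\dcC_0$-object of $\chom{(B,b),(C,c)}$ will coincide with $\nchom{b,c}_0$, and only the horizontal endomorphism need be refined.

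Explicitly, the two legs of~\eqref{eq:binary-commutativity-hexagon} applied to the non-commuting evaluation $(e,\varepsilon_1,\varepsilon_2)$ yield parallel $2$-morphisms
\begin{equation*}
  \Theta_1, \Theta_2 \colon (\nchom{b,c}_0 \bxt B, \nchom{b,c} \bxt b) \to (C,c)
\end{equation*}
in $\EndC$ over the common vertical map $e$. Transposing under the closed adjunction $(\thg) \bxt b \dashv \boxhom{b,\thg}$ in $\dcC_1$ and using the cartesianness of the comparison~\eqref{eq:cartesian-monad-reindexing} $\nchom{b,c} \to \boxhom{b,c}$ over $i$, I obtain parallel globular $2$-morphisms $\hat\Theta_1, \hat\Theta_2 \colon \nchom{b,c} \to \nchom{b,c}$ in the hom-category $\dcC[\nchom{b,c}_0,\nchom{b,c}_0]$. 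I then define $\chom{b,c}$ as the local equaliser of this pair---available by~\cref{hyp:equalisers-in-horizontal}---with inclusion $\iota \colon \chom{b,c} \hookrightarrow \nchom{b,c}$.

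To lift $\chom{b,c}$ to a monad making $\iota$ a monad map, I invoke \cref{lem:compositionality-of-commutativity}: translated under our transpositions, part~(i) says the unit of $\nchom{b,c}$ equalises $\hat\Theta_1,\hat\Theta_2$ and so factors uniquely through $\iota$, yielding a unit for $\chom{b,c}$; while part~(ii) applied with $u=v=\iota$ shows that $\mu^{\nchom{b,c}} \circ (\iota \hcomp \iota) \colon \chom{b,c} \hcomp \chom{b,c} \to \nchom{b,c}$ likewise equalises $\hat\Theta_1,\hat\Theta_2$ and factors through $\iota$, yielding the multiplication. The monad axioms descend from those of $\nchom{b,c}$ because $\iota$, being an equaliser, is monic in the relevant hom-categories, and this monicity is preserved under fibrant reindexing. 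Setting
\begin{equation*}
  (e^c, \varepsilon^c_1, \varepsilon^c_2) \defeq (e,\varepsilon_1,\varepsilon_2) \circ_1 \iota \colon \chom{(B,b),(C,c)}, (B,b) \to (C,c)
\end{equation*}
then yields a binary multimorphism which, by the very definition of $\iota$, satisfies the commutativity hexagon.

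For the universal property~\eqref{eq:commuting-internal-hom-bij}, the bijection of \cref{prop:multimorphisms-closed} gives, for each commuting $(f,\vec\phi,\psi)$, a unique transpose $(\bar f,\vec{\bar\phi}) \colon (A_1,a_1),\ldots,(A_n,a_n) \to \nchom{(B,b),(C,c)}$ in $\MultMndC$. For each $i$, the $(A_i,B)$-commutativity of the original is precisely the commutativity of the binary sub-multimorphism on $\tparen{\disc A}_{[1,i-1]} \bxt (A_i,a_i) \bxt \tparen{\disc A}_{[i+1,n]}, (B,b)$; unwinding the transpositions defining $\iota$, together with fibrant reindexing of the local equaliser along the object-component of the $i$-th transpose, translates this condition into the statement that the $i$-th monad-map component of $(\bar f,\vec{\bar\phi})$ factors through $\iota$. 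These factorisations assemble to a multimorphism into $\chom{(B,b),(C,c)}$, whose $(A_i,A_j)$-commutativity is inherited from that of $(f,\vec\phi,\psi)$ via \cref{lem:commutative-maps-into-nchom} combined with the fact that $\iota$, as an equaliser, reflects equalities of parallel $2$-morphisms into $\chom{b,c}$ after fibrant reindexing. The main obstacle will be this final translation: to show rigorously that the $(A_i,B)$-commutativity of the original multimorphism corresponds exactly to the equaliser condition defining $\iota$, one must carefully thread the closed-structure transpositions through the cartesian-lift construction underlying~\cref{prop:multimorphisms-closed} and the fibration $\MndC \to \dcC_0$.
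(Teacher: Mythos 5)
Your construction is essentially the paper's: it too defines $\chom{(B,b),(C,c)}$ by equalising (in $\EndC$) the transposes of the two legs of the commutativity hexagon for the non-commuting evaluation $(e,\varepsilon_1,\varepsilon_2)$, obtains the submonad structure from \cref{lem:compositionality-of-commutativity}(i) and (ii) exactly as you propose, and proves the universal property by combining \cref{lem:commutative-maps-into-nchom} with the fact that factoring through the equaliser is precisely $(A_i,B)$-commutativity. One local slip: the transposes of the two legs land in $\boxhom{(B,b),(C,c)} = (\boxhom{B,C},\boxhom{b,c})$, whereas the cartesian comparison \cref{eq:cartesian-monad-reindexing} has codomain $(\boxhom{B,C},\boxhom{\hid_B,c})$, so they do not factor through it to yield endomorphisms of $\nchom{b,c}$ as you claim; the paper avoids this by simply equalising the parallel pair as it stands in $\EndC$ (the equaliser has identity object-component anyway), which gives the same subobject. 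The ``final translation'' you flag as the main obstacle is carried out in the paper by reducing to the case $n=1$ and unwinding the transpositions via naturality of $\sigma$ and $\tau$, so your plan goes through.
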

\begin{proof}
  Consider the binary multimorphism
  $(e, \varepsilon_1, \varepsilon_2) \colon \nchom{(B,b), (C,c)},
  (B, b) \rightarrow (C, c)$ providing, as
  in~\cref{eq:counit-multimorphisms-closed}, the evaluation map of the
  non-commuting internal hom. The following is the hexagon which would
  express that this is a commuting binary multimorphism:
  \begin{equation}
    \label{eq:internal-hom-hexagon}
    \begin{tikzcd}[column sep = 0.6cm, row sep = 0.6cm]
      &[-11em] (\nchom{b,c}_0 \bxt B,(\nchom{b,c} \bxt \hid_{B}) \hcomp (\hid_{\nchom{b,c}_0} \bxt b)) \ar[r, "{(e, \varepsilon_1 \hcomp \varepsilon_2)}"] &[2em] (C,c \circ c)  \ar[dr, "{(1, \mu)}"] &[-1em] \\
      (\nchom{b,c}_0 \bxt B, \nchom{b,c} \bxt b) \ar[ur, "{(1,\sigma)}"] \ar[dr, "{(1,\tau)}"'] & & & (C,c) \mathrlap{ .} \\
      & (\nchom{b,c}_0 \bxt B, (\hid_{\nchom{b,c}_0} \bxt b) \hcomp (\nchom{b,c} \bxt \hid_{B}))   \ar[r, "{(e,\varepsilon_2 \hcomp \varepsilon_1)}"'] & (C,c \hcomp c)  \ar[ur, "{(1, \mu)}"'] & 
    \end{tikzcd} 
  \end{equation}
  Like before, there is no reason to expect this diagram to commute in general.
  However, the composites around the top and bottom sides constitute a
  parallel pair of maps $(\nchom{b,c}_0, \nchom{b,c}) \bxt (B,b)
  \rightarrow (C,c)$, and taking their transposes with respect to
  the closed structure of $\EndC$, we obtain a parallel pair as to the
  right in:
  \begin{equation}
    \label{eq:commuting-hom-equaliser}
    \begin{tikzcd}
      (\chom{b,c}_0, \chom{b,c})  \ar[r, tail arrow, "{(1, \kappa)}"] &
      (\nchom{b,c}_0, \nchom{b,c}) \ar[r, shift left = 0.3em, "{(i, \nu_1)}"]
      \ar[r, shift right = 0.3em, "{{(i, \nu_2)}}"'] &
      \boxhom{(B,b),(C,c)} \rlap{ ,}
    \end{tikzcd}
  \end{equation}
  where $i \colon \nchom{b,c}_0 \rightarrow \boxhom{B,C}$ is as
  in~\cref{eq:representably-induced}.
  By~\cref{hyp:equalisers-in-horizontal} and the remarks following,
  this pair admits an equaliser in $\EndC$ as displayed. We will show
  that the subobject of $(\nchom{b,c}_0, \nchom{b,c})$ so obtained is
  in fact a submonad, and that it represents the desired internal hom
  in $\CommMultMndC$.

  Towards the first of these goals, observe that a map
  $(f, \phi) \colon (A, x) \rightarrow (\nchom{b,c}_0, \nchom{b,c})$
  in $\EndC$ factors through $(1, \kappa)$ precisely when
  $(\thg) \circ (f, \phi)$ renders the two maps $(i, \nu_1)$ and
  $(i, \nu_2)$ equal; which, by transposing under the adjunction
  $(\thg) \bxt (B,b) \dashv \boxhom{(B,b), \thg}$ on $\EndC$, happens
  just when the hexagon~\eqref{eq:internal-hom-hexagon} commutes on
  precomposition by $(f \bxt 1_B, \phi \bxt 1_B)$. In particular,
  \eqref{eq:internal-hom-hexagon} will commute on precomposition by
  $(1, \kappa \bxt 1)$. Using these observations, we can now show that
  $(\chom{b,c}_0, \chom{b,c})$ is a submonad of
  $(\nchom{b,c}_0, \nchom{b,c})$. It is enough to show that
  there are factorisations as to the left and right in:
  \begin{equation*}
   % \label{eq:chom-submonad-factorisations}
    \begin{tikzcd}
      (\chom{b,c}_0, \hid_{\chom{b,c}_0}) \ar[r,dashed]
      \ar[d,equal] &
      (\chom{b,c}_0, \chom{b,c}) \ar[d,"{(1,\kappa)}"] \\
      (\nchom{b,c}_0, \hid_{\nchom{b,c}_0}) \ar[r,"{(1, \eta)}"] &
      (\nchom{b,c}_0, \nchom{b,c})
    \end{tikzcd} \qquad \quad 
    \begin{tikzcd}
      (\chom{b,c}_0, \chom{b,c} \circ \chom{b,c}) \ar[r,dashed]
      \ar[d,"{(1, \kappa \circ \kappa)}"] &
      (\chom{b,c}_0, \chom{b,c}) \ar[d,"{(1,\kappa)}"] \\
      (\nchom{b,c}_0, \nchom{b,c} \circ \nchom{b,c}) \ar[r,"{(1, \mu)}"] &
      (\nchom{b,c}_0, \nchom{b,c})
    \end{tikzcd}
  \end{equation*}
  where $\eta$ and $\mu$ are the maps exhibiting the monad structure
  of $\nchom{b,c}$. But to say that $(1, \eta)$ factors through
  $(1, \kappa)$ is equivalent to saying that~\eqref{eq:internal-hom-hexagon}
  commutes on precomposition by $(1, \eta \bxt 1)$, which is indeed the case
  by~\cref{lem:compositionality-of-commutativity}(i). On the other
  hand,   to say
  that $(1, \mu)(1, \kappa \circ \kappa)$ factors through
  $(1, \kappa)$ is to say that~\eqref{eq:internal-hom-hexagon}
  commutes on precomposition by 
  \begin{equation*}
      (\chom{b,c}_0 \bxt B, (\chom{b,c} \circ \chom{b,c}) \bxt B)
      \xrightarrow{(1, (\kappa \circ \kappa) \bxt 1)} (\nchom{b,c}_0 \bxt B, (\nchom{b,c} \circ \nchom{b,c}) \bxt B) \xrightarrow{(1, \mu \bxt 1)} (\nchom{b,c}_0 \bxt B, \nchom{b,c} \bxt B)\rlap{ .}
  \end{equation*}
  But we observed above that~\eqref{eq:internal-hom-hexagon} commutes
  on precomposition by $(1, \kappa \bxt 1)$; whence this follows
  by~\cref{lem:compositionality-of-commutativity}(ii). Thus
  $(\chom{b,c}_0, \chom{b,c})$ is a submonad of
  $(\nchom{b,c}_0, \nchom{b,c})$; it remains to prove that it provides
  an internal hom in $\CommMultMndC$.

  To this end, we show the following: given
  $(f, \vec \phi, \psi) \colon (A_1, a_1), \dots, (A_n, a_n), (B,b)
  \rightarrow (C,c)$ a monad multimorphism, its transpose
  $(\bar f, \bar{\phi}_1, \dots, \bar{\phi}_n) \colon (A_1, a_1),
  \dots, (A_n, a_n) \rightarrow (\nchom{b,c}_0, \nchom{b,c})$ factors
  through the subobject
  $(\chom{b,c}_0, \chom{b,c}) \rightarrowtail (\nchom{b,c}_0,
  \nchom{b,c})$ just when $(f, \vec \phi, \psi)$ is
  $(A_i, B)$-commuting for all $i$. By arguing as at the start of the
  proof of~\cref{lem:commutative-maps-into-nchom}, we may immediately
  reduce to the case $n = 1$. Thus, suppose that
  $(f, \phi, \psi) \colon (A,a), (B,b) \rightarrow (C,c)$. To say that
  $(\bar f, \bar\phi) \colon (A,a) \rightarrow (\nchom{b,c}_0,
  \nchom{b,c})$ factors through $(\chom{b,c}_0, \chom{b,c})$ is
  precisely to say that~\cref{eq:internal-hom-hexagon} commutes on
  precomposition by $(\bar f, \bar \phi)$. But by naturality of
  $\sigma$ and $\tau$ and definition of transpose, this is precisely
  to say that the following hexagon commutes:
  \begin{equation*}
    \begin{tikzcd}[column sep = 1em]
      &[-7em] (A \bxt B, (a \bxt \hid_{B}) \hcomp (\hid_{A} \bxt b)) \ar[r, "{(f, \phi_1 \hcomp \phi_2)}"] &[2em] (C, c \circ c)  \ar[dr, "{(1, \mu)}"] &[-1em] \\
      (A \bxt B, a \bxt b) \ar[ur, "{(1,\sigma)}"] \ar[dr, "{(1,\tau)}"'] & & & (C, c) \mathrlap{ ,} \\
      & (A \bxt B, (\hid_{A} \bxt b) \hcomp (a \bxt
      \hid_{B})) \ar[r, "{(f,\phi_2 \hcomp \phi_1)}"'] & (C, c \circ c) \ar[ur, "{(1,
        \mu)}"'] &
    \end{tikzcd}
  \end{equation*}
  on in other words, that $(f, \phi,\psi)$ is $(A,B)$-commuting.

  We now establish that $(\chom{b,c}_0, \chom{b,c})$ is the internal
  hom of $(B,b)$ and $(C,c)$ in $\CommMultMndC$. Indeed, by~\cref{prop:multimorphisms-closed}, we have bijections between
  sets of (not necessarily commuting) monad multimorphisms of the form:
  \begin{equation*}
    \begin{array}{r@{\,}l@{\ }c@{\ }l}
      (f, \phi_1, \dots, \phi_n, \psi) \colon& (A_1, a_1), \dots, (A_n, a_n), (B,b) &\longrightarrow& (C,c) \\
      \noalign{\vskip 3pt}
      \hline
      \noalign{\vskip 3pt} 
      (\bar f, \bar{\phi}_1, \dots, \bar{\phi}_n) \colon& (A_1, a_1), \dots, (A_n, a_n) &\longrightarrow& \nchom{(B,b),(C,c)}
    \end{array}
  \end{equation*}
  By~\cref{lem:commutative-maps-into-nchom}, a multimorphism as to the
  top is $(A_i, A_j)$-commuting for all $i<j$ just when the
  corresponding multimorphism as below is commuting. By the result
  just established, a multimorphism as to the top is $(A_i,
  B)$-commuting for all $i$ just when the corresponding multimorphism
  as below factors through $\chom{(B,b), (C,c)}$. Thus, the above
  bijection restricts to a
  bijection~\eqref{eq:commuting-internal-hom-bij} as desired. 
\end{proof}
\begin{rex}\label{thm:running-15}
  We saw in \cref{thm:running-14} that, when $\ca{V}$ is symmetric
  monoidal closed with products, the non-commuting hom
  $(\nchom{b,c}_0, \nchom{b,c})$ between monads $\ca B = (B,b)$ and
  $\ca C = (C,c)$ is the $\ca{V}$-category $\nchom{\ca{B}, \ca{C}}$
  with as objects, the $\ca{V}$-functors $\ca{B} \rightarrow \ca{C}$,
  and with homs given by the $\ca{V}$-objects of ``unnatural
  $\ca{V}$-transformations'', \ie
  $\nchom{\ca{B}, \ca{C}}(F, G) = \textstyle\prod_{x \in B} \ca{C}(Fx,
  Gx)$.

  We now describe the corresponding commuting hom
  $\chom{\ca{B}, \ca{C}}$. This is defined from the non-commuting one
  by the equaliser~\eqref{eq:commuting-hom-equaliser}. The objects of
  this equaliser are the same, namely, the $\ca{V}$-functors
  $\ca{B} \rightarrow \ca{C}$. As for the homs, these are given
  componentwise by the equaliser
  \begin{equation*}
    \chom{\ca B, \ca C}(F,G) \xrightarrow{} \textstyle\prod_{x \in B}\ca{C}(Fx,Gx) \rightrightarrows \prod_{y,y' \in B} [\ca{B}(y,y'),\ca{C}(Fy, Gy')]
  \end{equation*}
  in $\ca{V}$, where the parallel maps seen here have respective
  components at $(y,y') \in B \times B$ given by
  \begin{gather*}
    \textstyle\prod_{x \in B} \ca{C}(Fx,Gx) \xrightarrow{\pi_{y}} \ca{C}(Fy,Gy) \xrightarrow{\ \bar \circ\ } [\ca{C}(Gy,Gy'), \ca{C}(Fy,Gy')] \xrightarrow{[G_{yy'}, 1]} [\ca{B}(y,y'), \ca{C}(Fy,Gy')]\\
    \textstyle\prod_{x \in B} \ca{C}(Fx,Gx) \xrightarrow{\pi_{y'}} \ca{C}(Fy',Gy') \xrightarrow{\ \bar \circ\ } [\ca{C}(Fy',Gy'), \ca{C}(Fy,Fy')] \xrightarrow{[1, F_{yy'}]} [\ca{B}(y,y'), \ca{C}(Fy,Gy')]\rlap{ .}
  \end{gather*}
  This equaliser is precisely the $\ca{V}$-enriched end
  $\int_{x \in \ca{B}}\ca{C}(Fx,Gx)$ giving the $\ca{V}$-object of
  $\ca{V}$-natural transformations from $F$ to
  $G$~\cite[\S
  2.2]{Kelly}, so that $\chom{\ca{B},\ca{C}}$ is
    the usual functor $\ca{V}$-category.
\end{rex}

Copying the proof of~\cref{thm:nctensor-representable} mutatis
mutandis we conclude that:
\begin{prop} \label{prop:CMultMndCrep}
The symmetric multicategory $\CommMultMndC$ of monads and commuting monad
multimorphisms is representable.
\end{prop}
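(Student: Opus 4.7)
The plan is to mirror exactly the proof of \cref{thm:nctensor-representable}: combine pre-representability with closedness, and invoke \cref{prop:closed-preuniversal-universal}. Since both halves have already been established, the proof will essentially reduce to a short citation of prior results.

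First, I would observe that \cref{prop:universal-commuting-binary} gives pre-representability of $\CommMultMndC$. Indeed, that proposition exhibits a pre-universal nullary commuting multimorphism $\ \to \disc{I}$ (every nullary multimorphism is vacuously commuting, so the pre-universal nullary map in $\MultMndC$ remains pre-universal in $\CommMultMndC$) and a pre-universal binary commuting multimorphism $(A_1,a_1),(A_2,a_2)\to(A_1\bxt A_2, a_1 \ctensor a_2)$, where $a_1 \ctensor a_2$ is the quotient of $a_1\nctensor a_2$ by the coequaliser~\eqref{eq:commuting-coequaliser} that forces the commutativity hexagon to hold.

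Next, \cref{prop:commuting-multimorphisms-closed} supplies the closed structure on $\CommMultMndC$: the internal hom $\chom{(B,b),(C,c)}$ is constructed as a submonad of the non-commuting internal hom $\nchom{(B,b),(C,c)}$, and is characterised by the universal property that multimorphisms into it are precisely the commuting multimorphisms whose transposes are commuting.

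Finally, I would apply \cref{prop:closed-preuniversal-universal}, which says that any pre-representable closed symmetric multicategory is representable: binary and nullary pre-universal morphisms become fully universal in the presence of closed structure (by transposing through the internal homs to eliminate parameters), and the existence of universal $n$-ary morphisms for $n\geqslant 3$ then follows by iterated composition, since universal arrows compose. Applying this fact to $\CommMultMndC$ yields the desired representability. There is no real obstacle remaining: all the substantive technical work---constructing the coequaliser defining $a_1 \ctensor a_2$, verifying that the equaliser defining $\chom{(B,b),(C,c)}$ underlies a submonad via \cref{lem:compositionality-of-commutativity}, and checking that the restriction to commuting multimorphisms interacts correctly with the transpose bijections via \cref{lem:commutative-maps-into-nchom}---has been carried out in the preceding two propositions.
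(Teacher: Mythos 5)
Your proposal is correct and is exactly the paper's argument: the paper obtains \cref{prop:CMultMndCrep} by combining the pre-representability of \cref{prop:universal-commuting-binary} with the closedness of \cref{prop:commuting-multimorphisms-closed} and invoking \cref{prop:closed-preuniversal-universal}, mutatis mutandis from the proof of \cref{thm:nctensor-representable}. Nothing is missing.
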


Now arguing as before and in analogy to \cref{thm:nctensor-smc}, we obtain the main result of this section. See \cref{tab:assumptions} for the list of assumptions.

\begin{thm} \label{thm:ctensor-monads} \label{thm:mnd-c-has-comm-tensor}
  Let $(\dcC, \bxt, I)$ be a symmetric normal oplax monoidal closed
  double category. Under
  \cref{hyp:fibrancy,hyp:coproducts-in-mnd,hyp:equalisers,hyp:tabulators,hyp:free-monad,hyp:coequalisers,hyp:equalisers,hyp:equalisers-in-horizontal}, 
  the category $\MndC$ of monads and monad morphisms admits the
  structure of a symmetric monoidal closed category, with tensor
  product $\ctensor$ given by the commuting tensor product of monads,
  and unit object given by the discrete monad $\disc{I}$ on the unit
  object $I$ of $\dcC$.
\end{thm}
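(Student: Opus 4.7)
The plan is to follow exactly the strategy already deployed for the non-commuting tensor product in \cref{thm:nctensor-smc}, simply substituting the closed representable symmetric multicategory $\CommMultMndC$ for its non-commuting counterpart $\MultMndC$. All of the real technical work has been done already: \cref{prop:commuting-multimorphisms-closed} established closedness of $\CommMultMndC$, and \cref{prop:CMultMndCrep} established representability. It only remains to transport these two facts into the language of symmetric monoidal closed categories.

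More concretely, first I would invoke \cref{prop:CMultMndCrep} together with the symmetric version of Hermida's theorem \cite[Def.~9.6]{RepresentableMulticats}: a representable symmetric multicategory is the underlying multicategory of a symmetric monoidal category, whose tensor product at objects $(A_1,a_1),\ldots,(A_n,a_n)$ is a chosen representing object for the universal $n$-ary multimorphism. Since by \cref{prop:universal-commuting-binary} the binary representing object is $(A_1\bxt A_2, a_1\ctensor a_2)$ and the nullary representing object is $\disc I$, this exhibits $\MndC$ as a symmetric monoidal category with tensor product $\ctensor$ and unit $\disc I$, as claimed. The associativity, unit, and symmetry constraints, and the coherence axioms they satisfy, are supplied abstractly by the general theorem and need not be verified by hand.

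Next, to obtain closedness of this symmetric monoidal structure, I would apply \cite[Prop.~4.3]{Closedmulticats}, which asserts that a representable symmetric multicategory which is also closed as a multicategory is closed as a symmetric monoidal category. The closed structure of $\CommMultMndC$ provided by \cref{prop:commuting-multimorphisms-closed} furnishes internal hom objects $\chom{(B,b),(C,c)}$ together with evaluation maps, and the natural bijection~\eqref{eq:commuting-internal-hom-bij} restricted to the unary case yields the required adjunction $(\thg)\ctensor (B,b)\dashv \chom{(B,b),\thg}$ in $\MndC$.

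There is no real obstacle here beyond citing the right general results; the entire difficulty of the theorem has been absorbed into the preceding propositions, in particular into the construction of the commuting internal hom via the equaliser~\eqref{eq:commuting-hom-equaliser} and the verification (using \cref{lem:compositionality-of-commutativity}) that it inherits a monad structure. The only subtlety worth noting is that the unit object is $\disc I$ rather than some more elaborate construction: this is forced by the pre-representability argument of \cref{prop:universal-commuting-binary}, since nullary monad multimorphisms automatically commute and so the nullary representing object coincides with that of $\MultMndC$.
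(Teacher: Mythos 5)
Your proposal is correct and matches the paper's argument exactly: the paper likewise deduces the symmetric monoidal structure from the representability of $\CommMultMndC$ via the symmetric version of \cite[Def.~9.6]{RepresentableMulticats}, and the closedness from \cite[Prop.~4.3]{Closedmulticats}, in direct analogy with \cref{thm:nctensor-smc}. Your observation that the unit is $\disc I$ because nullary multimorphisms automatically commute is also exactly the point made in \cref{prop:universal-commuting-binary}.
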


\section{The commuting tensor product of bimodules}
\label{sec:bimodules}

Our aim in this section is to extend the commuting tensor
product of monads and monad morphisms constructed in \cref{sec:commuting} to
monad bimodules (\cf \cref{defi:bimodules}).
In our applications in \cref{sec:applications}, this will yield the notion of commuting
tensor product of profunctors (between categories) and of symmetric multiprofunctors
(between symmetric multicategories). 

We continue to work with a symmetric normal oplax monoidal  double
category $(\dcC, \bxt, I)$ whose monoidal structure is closed, and for
which $\dcC$ satisfies
\cref{hyp:fibrancy,hyp:coproducts-in-mnd,hyp:equalisers,hyp:tabulators,hyp:free-monad,hyp:coequalisers,hyp:equalisers-in-horizontal}.
As we have seen (\cref{thm:ctensor-monads}), these conditions are sufficient for there to be a
``commuting'' monoidal structure on the category of monads and monad
morphisms in $\dc C$. In order to extend this to bimodules, we make the following additional
assumption. 

\begin{hyp} \label{hyp:refl-coequalisers}
The double category $\dcC$ has stable local reflexive coequalisers, as in \cref{defi:parallel_local}.
\end{hyp}
Thanks to this, we can apply \cref{thm:bim-c-fibrant} in order to
assemble the monads, monad morphisms and monad bimodules in $\dcC$
into a double category written $\BimC$. As we are assuming $\dcC$ is
fibrant (\cref{hyp:fibrancy}), the double category $\BimC$ is fibrant
by \cref{thm:bim-c-fibrant}.

We recall also some notation from \cref{sec:bimodules-defn}. Given a
bimodule $p \co (A, a) \tickar (B,b)$, its left and right
actions are notated as $\lambda \colon b \circ p
\Rightarrow p$ and $\rho \colon p \circ a \Rightarrow p$, but, as in
\cref{thm:action-notation}, we will often combine these into the
action $2$-morphism
$\alpha \colon b \circ p \circ a \Rightarrow p$ given by $\rho
(\lambda 1) = \lambda(1 \rho)$.
Finally, given another bimodule $q \co (B,b) \tickar (C,c)$,
we write $q \bimcomp p \co (A,a) \tickar (C,c)$ for their composite, defined via the
reflexive coequaliser in~\eqref{equ:bim-comp}. 

\subsection{The multicategory of bimodules and commuting bimodule multimorphisms}
We now embark on constructing the ``commuting'' monoidal structure on
$\BimC$. For the vertical category $\BimC_0$, this is a mere rephrasing of 
\cref{thm:mnd-c-has-comm-tensor}.

\begin{prop}   \label{thm:ctensor-monoidal-vertical} \leavevmode
 The vertical category of $\BimC$ admits a symmetric monoidal closed structure given on objects by the commuting tensor product of monads.
\end{prop}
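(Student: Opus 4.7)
The plan is essentially to observe that the vertical category of $\BimC$ is, by definition, the category $\MndC$ of monads and monad morphisms in $\dcC$: for under \cref{hyp:refl-coequalisers}, \cref{thm:bim-c} produces the double category $\BimC$, whose objects are monads in $\dcC$ and whose vertical maps are monad morphisms. Thus $\BimC_0 = \MndC$ as ordinary categories, and the statement to be proved is simply the assertion that $\MndC$ admits a symmetric monoidal closed structure whose tensor is the commuting tensor product of monads.

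First, I would verify that all the hypotheses of \cref{thm:ctensor-monads} (equivalently, \cref{thm:mnd-c-has-comm-tensor}) are in force: namely that $(\dcC, \bxt, I)$ is a symmetric normal oplax monoidal closed double category satisfying \cref{hyp:fibrancy,hyp:coproducts-in-mnd,hyp:equalisers,hyp:tabulators,hyp:free-monad,hyp:coequalisers,hyp:equalisers-in-horizontal}. These are all part of the standing hypotheses of the present section, explicitly assumed in the paragraph preceding the proposition. The additional assumption \cref{hyp:refl-coequalisers} is exactly what is needed to form $\BimC$ but is not required for the vertical-level result.

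Then I would invoke \cref{thm:ctensor-monads} directly: it produces a symmetric monoidal closed structure on $\MndC$ with tensor product $\ctensor$ acting as the commuting tensor of monads, and with unit the discrete monad $\disc{I}$ on the monoidal unit $I$ of $\dcC$. Since the vertical category of $\BimC$ coincides with $\MndC$ (and the action on objects and vertical maps is the same), the same symmetric monoidal closed structure serves as the required structure on $\BimC_0$.

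There is no real obstacle here, as the proposition is a repackaging of \cref{thm:ctensor-monads} in the language of $\BimC$; the only thing worth emphasising in the proof is the identification $\BimC_0 = \MndC$ and the fact that the stable local reflexive coequalisers assumed in \cref{hyp:refl-coequalisers} play no role in the vertical structure beyond allowing $\BimC$ itself to be formed. The real work, namely extending this monoidal structure to the horizontal direction (to bimodules) so as to obtain a symmetric oplax monoidal structure on the full double category $\BimC$, will be carried out in the subsequent results of this section.
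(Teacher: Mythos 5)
Your proposal is correct and coincides with the paper's own argument: the vertical category $\BimC_0$ is identified with $\MndC$, and the result is then an immediate application of \cref{thm:mnd-c-has-comm-tensor}. The extra remarks about which hypotheses are actually needed (and that \cref{hyp:refl-coequalisers} only serves to form $\BimC$ itself) are accurate but not required.
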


\begin{proof} The vertical category $\BimC_0$ is $\Mnd(\dc{C})$, the category
of monads and monad morphisms in $\dcC$, so the claim 
follows from  \cref{thm:mnd-c-has-comm-tensor}.
\end{proof}

In order to extend the commuting tensor product to the category
$\BimC_1$, \ie the category of bimodules and bimodule morphisms, we
follow a similar pattern to that followed in
\cref{sec:non-commuting,sec:commuting}: we show that the category
$\BimC_1$ is the underlying category of a representable symmetric multicategory and
therefore admits a symmetric monoidal structure. Besides giving us the symmetric
monoidal structure, this also allows us to characterise the commuting tensor product of bimodules
by a universal property. For this, we now introduce the notion of
commuting multimorphism of bimodules (cf.~\cref{defi:commuting-monad-multimorphism}).

\begin{defi} \label{def:bimodule-multimorphism}
Let $p_1 \co (A_1, a_1) \tickar (B_1, b_1), \ldots, p_n \co (A_n, a_n) \tickar (B_n, b_n)$
and $q \co (C, c) \tickar (D, d)$ be bimodules. A \emph{commuting bimodule multimorphism}
\[
(f, \vec{\phi}, g, \vec{\psi}, \theta) \co (p_1, \ldots, p_n) \rightarrow q
\]
consists of:
\begin{enumerate}[(i)] 
\item \label{item:comm-bim-1} a commuting monad multimorphism $(f, \vec{\phi}) \co (A_1, a_1), \ldots, (A_n, a_n) \to (C,c)$;
\item \label{item:comm-bim-2} a commuting monad multimorphism $(g, \vec{\psi}) \co (B_1, b_1),
  \ldots, (B_n, b_n) \to (D,d)$; and
\item \label{item:comm-bim-3} a 2-morphism
\[
\begin{tikzcd}[column sep = huge]
A_1 \bxt \ldots \bxt A_n \ar[r,tick, "p_1 \bxt \ldots \bxt p_n"] \ar[d, "f"'] 
\ar[dr, phantom, pos=0.4, "\Two \theta"]
& B_1 \bxt \ldots \bxt B_n \ar[d, "g"] \\
C \ar[r,tick, "q"'] & D\rlap{ ,}
\end{tikzcd}
\]
\end{enumerate}
such that for each $1 \leqslant i \leqslant n$, we have the following
condition expressing compatibility with the actions of the bimodules:% , which we state 
 %only for $n = 2$ for brevity
% Pattern
%\begin{tikzcd}
%A_1 \bxt A_2 \ar[r] \ar[d,equal] & B_1 \bxt B_2 \ar[d, equal] \\
%A_ 1 \bxt A_2 \ar[r, "p_1 \bxt p_2"] \ar[d, "f"'] \ar[dr, phantom, "\Two \theta"] 
%& B_1 \bxt B_2 \ar[d, "g"]  \\
%C \ar[r, "q"'] & D 
%\end{tikzcd} = 
%\begin{tikzcd}
%A_1 \bxt A_2 \ar[rrr] \ar[d,equal] & &  & B_1 \bxt B_2 \ar[d,equal] \\
%A_1 \bxt A_2 \ar[r] \ar[d, "f"']  & A_1 \bxt A_2 \ar[r, "p_1 \bxt p_2"]  \ar[d,"f"] \ar[dr, phantom, "\Two \theta"] &B_1 \bxt B_2 \ar[r]  \ar[d,"g"]  &  B_1 \bxt B_2 \ar[d, "g"] \\
%C \ar[r] \ar[d,equal]  & C \ar[r, "q"'] & D \ar[r] & D  \ar[d,equal]  \\
%C \ar[rrr, "q"'] &  &  & D \mathrlap{,}
%\end{tikzcd}

% Right action on p_1
\begin{equation*}
  \begin{gathered}
    \begin{tikzcd}[row sep = 1.2cm, column sep = 7em]
      A_{[1,n]}
      \ar[r,tick, "p_{[1, i\mi1]} \bxt (b_i \circ p_i \circ a_i) \bxt p_{[i+1, n]}" {yshift=3pt}] 
      \ar[d,equal] \ar[dr, phantom, "\Two 1 \bxt \alpha_i \bxt 1"]  & 
      B_{[1,n]}
      \ar[d, equal] \\
      A_{[1,n]} 
      \ar[r,tick, "p_{[1,n]}"] \ar[d, "f"'] 
      \ar[dr, phantom, "\Two \theta"]  & 
      B_{[1,n]}
      \ar[d, "g"]  \\
      C 
      \ar[r,tick, "q"'] & 
      D 
    \end{tikzcd} 
  \end{gathered} = 
  \begin{gathered}
    \begin{tikzcd}[row sep = 0.8cm,column sep = 2em]
      A_{[1,n]}
      \ar[rrr,tick, "p_{[1, i\mi1]} \bxt (b_i \circ p_i \circ a_i) \bxt p_{[i+1, n]}"] 
      \ar[d,equal] 
      \ar[drrr, phantom, "\cong"]  &[5.7em] &  &[5.7em] 
      B_1 \bxt B_2 
      \ar[d,equal] \\
      A_{[1,n]}
      \ar[rrr,tick, "\tparen{\hid \circ p \circ \hid}_{[1, i\mi1]} \bxt (b_i \circ p_i \circ a_i) \bxt \tparen{\hid \circ p \circ \hid}_{[i+1, n]}"] 
      \ar[d,equal] 
      \ar[drrr, phantom, "\Two \xi"]  & &  &
      B_1 \bxt B_2 
      \ar[d,equal] \\
      A_{[1,n]}
      \ar[r,tick, "\tparen{\hid_B}_{[1,i \mi 1]} \bxt a_i \bxt \tparen{\hid_B}_{[i + 1, n]}" {yshift=2pt}] 
      \ar[d, "f"']   
      \ar[dr, phantom, "\Two \phi_i"] & 
      A_{[1,n]}
      \ar[r,tick, "p_{[1,n]}"] 
      \ar[d,"f"] 
      \ar[dr, phantom, "\Two \theta"] &
      B_{[1,n]}
      \ar[r,tick, "\tparen{\hid_B}_{[1,i \mi 1]} \bxt b_i \bxt \tparen{\hid_B}_{[i + 1, n]}" {yshift=2pt}]  
      \ar[d,"g"]
      \ar[dr, phantom, "\Two \psi_i"]&  
      B_{[1,n]}
      \ar[d, "g"] \\
      C 
      \ar[r,tick, "c"'] 
      \ar[d,equal] 
      \ar[drrr, phantom, "\Two \alpha"]   & 
      C 
      \ar[r,tick, "q"'] & 
      D 
      \ar[r,tick, "d"'] & 
      D  
      \ar[d,equal]  \\
      C 
      \ar[rrr,tick, "q"'] &  &  & 
      D \mathrlap{.}
    \end{tikzcd}\!\!\!\!
  \end{gathered}
\end{equation*}
Here, the interchanger $2$-morphisms labelled as $\xi$ are built inductively from
the binary interchangers of \cref{def:oplaxmonoidal} in the evident manner.
\end{defi}

\begin{prop} \label{prop:CMultBimC}
 There is a symmetric multicategory $\CommMultBimC$ with bimodules as objects and 
 commuting bimodule multimorphisms as maps.
\end{prop}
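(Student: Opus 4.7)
The plan is to construct the symmetric multicategory $\CommMultBimC$ by extending the structure of $\CommMultMndC$ from \cref{prop:CMultMndmulticategory}/\cref{prop:CMultMnd}. The objects are bimodules, and the additional data of a commuting bimodule multimorphism, beyond two commuting monad multimorphisms on sources and targets, is the $2$-morphism $\theta$ subject to the action-compatibility condition. Thus, most of the multicategorical bookkeeping reduces to operations on the monad-multimorphism components, with only the $\theta$-component requiring new pasting arguments.

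First I would define the identity on a bimodule $p\colon(A,a)\tickar(B,b)$ as the tuple $(1_A,\id_a,1_B,\id_b,1_p)$; the action-compatibility condition then holds trivially from the associativity/unitality of the action $\alpha\colon b\hcomp p\hcomp a\Rightarrow p$. For composition $\circ_i$, given bimodule multimorphisms
\[
\Xi=(F,\vec\Phi,G,\vec\Psi,\Theta)\colon(p_1,\dots,p_n)\to q \quad\text{and}\quad \xi=(f,\vec\phi,g,\vec\psi,\theta)\colon(p'_1,\dots,p'_m)\to p_i\mathrlap{,}
\]
I would take as monad-multimorphism components $(F,\vec\Phi)\circ_i(f,\vec\phi)$ and $(G,\vec\Psi)\circ_i(g,\vec\psi)$, using composition in $\CommMultMndC$; these remain commuting by \cref{lem:properties-of-commutativity} (applied in the closed sub-multicategory of commuting monad multimorphisms of \cref{prop:CMultMnd}). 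The $2$-morphism component is the pasting of $\Theta$ with $1_{p_{[1,i\mi1]}}\bxt\theta\bxt 1_{p_{[i+1,n]}}$, where the latter uses the oplax-functoriality constraints $\xi$ of $\bxt$ to insert $\theta$ between the identities on the surrounding $p_j$'s. The symmetric group action on an $n$-ary multimorphism permutes the inputs and conjugates $\Theta$ by the symmetries $\beta_\sigma$ of $\dcC$, exactly as in the proof of \cref{prop:MultMndmulticategory}.

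The main obstacle is verifying that the composite satisfies the action-compatibility axiom of \cref{def:bimodule-multimorphism}; this is a pasting diagram chase analogous to (but lengthier than) the one in the proof of \cref{prop:MultMndmulticategory}. The strategy is to start from the left-hand side, which inserts an action $1\bxt\alpha_j\bxt 1$ at the $j$-th position of $p_{[1,i\mi1]}\bxt p'_{[1,m]}\bxt p_{[i+1,n]}$, split into two cases: either the position $j$ lies in the ``outer'' indices $[1,i\mi1]\cup[i+1,n]$, in which case one invokes the action-compatibility of $\Theta$ directly after using interchangers to slide $1\bxt\theta\bxt1$ past the action; or $j$ lies in the ``inner'' indices $[1,m]$, in which case one first applies the action-compatibility of $\theta$ to rewrite $1\bxt\alpha'_j\bxt1$ in terms of $\phi_j$, $\psi_j$ and the action of $p_i$, and then applies the action-compatibility of $\Theta$. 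In both cases the rewriting uses the naturality of the interchangers $\xi$ from \cref{def:oplaxmonoidal} together with the coherence of the normal oplax monoidal structure, and it mirrors the manipulations carried out in the proofs of \cref{prop:MultMndmulticategory} and \cref{lem:compositionality-of-commutativity}.

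Finally, the associativity, unitality, and equivariance axioms of a symmetric multicategory follow from the corresponding axioms for $\CommMultMndC$ on the monad-multimorphism components, together with the coherence axioms of the oplax monoidal double category $(\dcC,\bxt,I)$ applied to the $2$-morphism components. These verifications are routine---they amount to showing that the pasting scheme defining $\Theta\circ_i\theta$ is strictly associative and unital modulo the coherence constraints of $\bxt$---but somewhat tedious, so I would indicate that these checks proceed exactly as in \cref{prop:MultMndmulticategory} with the extra $2$-morphism data carried along through the pastings.
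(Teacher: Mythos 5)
Your proposal is correct and follows essentially the same route as the paper: the monad-multimorphism components are composed in $\CommMultMndC$ (whose sub-multicategory structure is \cref{prop:CMultMnd}), the $2$-morphism component is composed as in the underlying multicategory of the monoidal category $(\dcC_1,\bxt)$ via the pasting $\Theta\circ(1\bxt\theta\bxt 1)$, and the remaining work is the routine verification that the action-compatibility condition is stable under this composition. Your outer/inner case split fills in exactly the detail the paper leaves as ``routine''; the only quibble is that at the point of \cref{prop:CMultMnd} one has a sub-symmetric multicategory, not yet a \emph{closed} one, though nothing in your argument uses closedness.
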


\begin{proof}
  Consider conditions \labelcref{item:comm-bim-1,item:comm-bim-2,item:comm-bim-3} for a
  commuting bimodule multimorphism. 
  \cref{item:comm-bim-1,item:comm-bim-2} compose as they do in
  $\CommMultMnd$, while \lcnamecref{item:comm-bim-3}~\labelcref{item:comm-bim-3} composes as
  in (the underlying multicategory of) the monoidal category $\dcC_1$.
  It is routine to verify the bimodule compatibility conditions of a
  multimorphism are stable under this composition.
\end{proof}
Note that there is no obstruction to defining a corresponding
multicategory $\mathsf{MultBim}(\mathbb{C})$ of
not-necessarily-commuting bimodule multimorphisms. However, our
primary interest is in the commuting case and---by contrast with the
situation of the preceding two sections---the argument for the case of
commuting bimodule multimorphisms can be given without developing the
non-commuting case first. As such, we leave further consideration of
$\mathsf{MultBim}(\mathbb{C})$ to the reader.

\subsection{Pre-representability of $\CommMultBimC$}\label{sub:prerepmultibim}

Our aim is to show that the multicategory $\CommMultBimC$ of
bimodules and commuting bimodule multimorphisms in $\dcC$ is
representable. Our strategy for doing so will diverge from that of the
previous sections, but will at least begin in the same way: by establishing
pre-representability.

We first prove a lemma that rephrases the conditions to be a binary
module multimorphism as in \cref{def:bimodule-multimorphism}. Recall
the universal commuting binary
multimorphism~\eqref{eq:universal-commuting-binary} associated to
monads $(A_1, a_1)$ and $(A_2, a_2)$, which has the form
\begin{equation*}
  (1_{A_1 \bxt A_2}, \eta_1, \eta_2) \colon (A_1, a_1), (A_2, a_2) \rightarrow (A_1 \bxt A_2, a_1 \ctensor a_2)\rlap{ .}
\end{equation*}
By construction, the two sides of the
hexagon~\eqref{eq:binary-commutativity-hexagon} expressing
commutativity of this multimorphism are equal; let us write their
common composite in $\End(\dcC)$ as:
\begin{equation}
\begin{tikzcd}
\label{equ:bxt-to-ctensor}
(A_1 \bxt A_2, a_1 \boxtimes a_2) \ar[r, "{(1,\pi)}"] & (A_1 \bxt A_2, a_1 \ctensor a_2)\rlap{ .}
\end{tikzcd}
\end{equation}

\begin{lem} \label{thm:compact-bim-multimorphism}
Let $p_1 \co (A_1, a_1) \tickar (B_1, b_1)$,  $p_2 \co (A_2, a_2) \tickar (B_2, b_2)$
and $q \co (C, c) \tickar (D, d)$ be bimodules. Let $(f, \phi_1,  \phi_2) \co (A_1, a_1),  (A_2, a_2) \to (C,c)$
and $(g, \psi_1,  \psi_2) \co (B_1, b_1), (B_2, b_2) \to (D,d)$ be  commuting monad multimorphisms.
A 2-morphism 
\[
\begin{tikzcd}[column sep = large]
A_1 \bxt A_2 \ar[r,tick, "p_1 \bxt p_2"] \ar[d, "f"'] 
\ar[dr, phantom, "\Two \theta"]
& B_1 \bxt B_2 \ar[d, "g"] \\
C \ar[r,tick, "q"'] & D 
\end{tikzcd}
\]
determines  a commuting bimodule multimorphism $(f, \phi_1, \phi_2, g,
\psi_1, \psi_2, \theta) \co p_1, p_2 \rightarrow q$ if and only if
\begin{equation} 
\label{equ:compact-bim-multimorphims}
\begin{tikzcd}[column sep = 2.7cm, row sep = 1.6cm]
A_1  \bxt A_2
	\ar[r,tick, "(b_1 \circ p_1 \circ a_1) \bxt  (b_2 \circ p_2 \circ a_2)"] 
	\ar[d,equal] 
	\ar[dr, phantom, "\Two \alpha_1 \bxt  \alpha_2"] & 
B_1 \bxt  B_2 \ar[d, equal] \\
A_1 \bxt  A_2  \ar[d, "f"'] 
	\ar[r,tick, "p_1 \bxt p_2"'] 
	\ar[dr, phantom, "\Two \theta"] & 
B_1 \bxt  B_2 
	\ar[d, "g"]  \\
C 
	\ar[r,tick, "q"'] & 
D  
\end{tikzcd}  = 
\begin{tikzcd}
A_1 \bxt A_2  
	\ar[rrr,tick, "(b_1 \circ p_1 \circ a_1) \bxt  (b_2 \circ p_2 \circ a_2)"] 
	\ar[d,equal] 
	\ar[drrr, phantom, "\Two \xi"] &
 & & 
 B_1 \bxt  B_2  \ar[d, equal] \\
  A_1 \bxt  A_2  \ar[r,tick, "a_1 \bxt a_2"]  \ar[d, equal] \ar[dr, phantom, "\Two \pi"] & 
  A_1 \bxt  A_2  \ar[r,tick, "p_1 \bxt p_2"] \ar[d, equal] & 
  B_1 \bxt  B_2 \ar[r,tick, "b_1 \bxt b_2"] \ar[d, equal]  \ar[dr, phantom, "\Two \pi"] &
  B_1 \bxt  B_2   \ar[d, equal] \\
  A_1 \bxt  A_2  \ar[r,tick, "a_1 \ctensor a_2"] \ar[d, "f"']  \ar[dr, phantom, "\Two \bar{\phi}"]  & 
  A_1 \bxt  A_2  \ar[r,tick, "p_1 \bxt p_2"] \ar[d, "f"'] \ar[dr, phantom, "\Two \theta"]   & 
  B_1 \bxt  B_2 \ar[r,tick, "b_1 \ctensor b_2"]  \ar[d, "g"]  \ar[dr, phantom, "\Two \bar{\psi}"]  &
  B_1 \bxt  B_2  \ar[d, "g"]  \\
  C \ar[r,tick, "c"']   \ar[d, equal] \ar[drrr, phantom, "\Two \alpha"]  &
  C \ar[r,tick, "q"'] &
  D \ar[r,tick, "d"'] &
  D \ar[d, equal] \\
  C \ar[rrr,tick, "q"'] & & & D \mathrlap{,}
  \end{tikzcd}
\end{equation} 
where $(f, \bar{\phi})$ and $(g, \bar{\psi})$ are the (unary) monad
maps induced from $(f, \phi_1, \phi_2)$ and $(g, \psi_1, \psi_2)$ by
the universality of the commuting tensor product of monads.
\end{lem}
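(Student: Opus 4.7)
The plan is to prove both directions of the equivalence by exploiting the universal property defining $\bar\phi$ and $\bar\psi$, together with the description of the map $\pi$ from~\eqref{equ:bxt-to-ctensor}. The key preliminary observation is that, by construction of $(A_1\bxt A_2, a_1\ctensor a_2)$ as a quotient of $(A_1,a_1)\nctensor(A_2,a_2)$, the map $\pi \co a_1 \bxt a_2 \to a_1 \ctensor a_2$ of~\eqref{equ:bxt-to-ctensor} satisfies $\bar\phi \circ \pi = \mu_c \circ (\phi_1 \hcomp \phi_2) \circ \sigma$, where $\sigma$ is as in \cref{def:sigma-tau}; and there is an analogous identity for $\bar\psi$. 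This expresses $\bar\phi$ applied to $\pi$ as the "sequential monad-map component action" of $\phi_1$ followed by $\phi_2$, glued by the monad multiplication.

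For the forward direction, I would assume the two pointwise compatibility conditions (the $i=1$ and $i=2$ axioms from \cref{def:bimodule-multimorphism}) and transform the right-hand side of~\eqref{equ:compact-bim-multimorphims} into the left-hand side. The first step is to substitute $\bar\phi \circ \pi$ and $\bar\psi \circ \pi$ by the preliminary identity just discussed, so that the middle two rows of the right-hand side become successive copies of $\phi_1, \phi_2$ (resp.\ $\psi_1, \psi_2$) separated by the corresponding $\sigma$-interchangers. Using associativity of the monad actions $\mu_c$ and $\mu_d$ together with the interchanger coherences of the normal oplax monoidal structure, one can reorganise this into two successive applications of the $i=1$ and $i=2$ axioms of \cref{def:bimodule-multimorphism}. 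Applying the $i=1$ axiom once eliminates one pair $(\phi_1,\psi_1)$ and replaces an $(a_1,b_1)$-action instance by a bare occurrence of $\theta$; applying the $i=2$ axiom does likewise for $(\phi_2,\psi_2)$. After cleaning up with interchanger coherence, the result is precisely $\theta$ precomposed by $\alpha_1 \bxt \alpha_2$, i.e.\ the left-hand side.

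For the backward direction, I would recover each $i$-th compatibility axiom by precomposing~\eqref{equ:compact-bim-multimorphims} with unit $2$-morphisms on the "other" factor. For the $i=1$ axiom, precompose both sides with $\id_{a_1} \bxt \eta_{a_2}$ (and dually $\id_{b_1} \bxt \eta_{b_2}$) so that $a_2$ and $b_2$ are replaced by $\hid_{A_2}$ and $\hid_{B_2}$. On the left, unitality of the bimodule action collapses $\alpha_2 \circ (\eta_{b_2} \circ 1 \circ \eta_{a_2})$ to the identity on $p_2$, leaving exactly $\theta$ precomposed by $\alpha_1 \bxt 1$. On the right, the composite $\pi \circ (\id_{a_1} \bxt \eta_{a_2})$ equals the coproduct inclusion $\iota_1$ in $\Mnd_{A_1\bxt A_2}(\dcC)$, whence $\bar\phi \circ \iota_1 = \phi_1$ by the defining universal property of $\bar\phi$; likewise $\bar\psi \circ \iota_1 = \psi_1$. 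After applying these cancellations and tidying with unit coherence, the right-hand side reduces to exactly the $i=1$ axiom of \cref{def:bimodule-multimorphism}. A symmetric precomposition with $\eta_{a_1} \bxt \id_{a_2}$ on the $a_1$ and $b_1$ factors gives the $i=2$ axiom.

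The main obstacle is purely notational: keeping clear bookkeeping of the many interchanger $2$-morphisms $\xi$, $\sigma$, and $\tau$ and their interaction with the monad structure maps $\mu$, $\eta$, as these are suppressed or conflated in the diagrams. In particular, the forward direction requires verifying that the reorganisation of the pasting diagram via $n$-ary interchangers really does split the combined condition into two sequential applications of the binary $i$-conditions; this is a finite but fiddly coherence calculation using the axioms for a normal oplax monoidal double category from \cref{def:oplaxmonoidal,def:normality} together with the associativity of the monad actions on $q$.
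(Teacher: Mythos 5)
Your proposal is correct and follows essentially the same route as the paper: the easy direction precomposes both sides with the bimodule unit $\eta\circ p_2\circ\eta$ (resp.\ $\eta\circ p_1\circ\eta$) in the other factor and uses unitality of $\alpha$ together with $\bar\phi\circ\eta_1=\phi_1$, while the hard direction rewrites $\bar\phi\circ\pi$ and $\bar\psi\circ\pi$ via the commutativity hexagons of $(f,\vec\phi)$ and $(g,\vec\psi)$ and then applies the two binary compatibility axioms in sequence, tidying with interchanger coherence. The only quibble is notational: the map you call $\iota_1$ is really the component $\eta_1$ of the universal commuting multimorphism into $a_1\ctensor a_2$, not the coproduct inclusion into $a_1\nctensor a_2$, but this does not affect the argument.
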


\begin{proof} The equation in the statement implies the conditions in
  \cref{def:bimodule-multimorphism} for $i = 1,2$ on precomposing both
  sides of the equation in \eqref{equ:compact-bim-multimorphims} with
  the unit map $\eta \circ p_i \circ \eta \colon p_i \rightarrow b_i
  \circ p_i \circ a_i$. In the converse direction, we start from the
  right hand side of~\eqref{equ:compact-bim-multimorphims}. The
  composites $\bar \phi \circ \pi$, resp.~$\bar \psi \circ \pi$ can be
  rewritten as the bottom, resp., upper side of the hexagon expressing commutativity of
  $\phi$, resp.~$\psi$, as follows:
  \begin{equation}
    \label{eq:bimodule-binary-reformulation}
    \begin{tikzcd}
      A_1 \bxt A_2  
      \ar[rrrrr,tick, "(b_1 \circ p_1 \circ a_1) \bxt  (b_2 \circ p_2 \circ a_2)"] 
      \ar[d,equal] 
      \ar[drrrrr, phantom, "\Two \xi"] &
      & & & &
      B_1 \bxt  B_2  \ar[d, equal] \\
      A_1 \bxt  A_2  \ar[rr,tick, "a_1 \bxt a_2"]  \ar[d, equal] \ar[drr, phantom, "\Two \tau"] & & 
      A_1 \bxt  A_2  \ar[r,tick, "p_1 \bxt p_2"] \ar[d, equal] & 
      B_1 \bxt  B_2 \ar[rr,tick, "b_1 \bxt b_2"] \ar[d, equal]  \ar[drr, phantom, "\Two \sigma"] & &
      B_1 \bxt  B_2   \ar[d, equal] \\
      A_1 \bxt  A_2  \ar[r,tick, "a_1 \bxt \hid"] \ar[d, "f"']  \ar[dr, phantom, "\Two \phi_1"]  & 
      A_1 \bxt  A_2  \ar[r,tick, "\hid \bxt a_2"] \ar[d, "f"']  \ar[dr, phantom, "\Two \phi_2"]  & 
      A_1 \bxt  A_2  \ar[r,tick, "p_1 \bxt p_2"] \ar[d, "f"'] \ar[dr, phantom, "\Two \theta"]   & 
      B_1 \bxt  B_2 \ar[r,tick, "\hid \bxt b_2"]  \ar[d, "g"]  \ar[dr, phantom, "\Two \psi_2"]  &
      B_1 \bxt  B_2 \ar[r,tick, "b_1 \bxt \hid"]  \ar[d, "g"]  \ar[dr, phantom, "\Two \psi_1"]  &
      B_1 \bxt  B_2  \ar[d, "g"]  \\
      C \ar[r,tick, "c"']   \ar[d, equal] \ar[drr, phantom, "\Two \mu"]  &
      C \ar[r,tick, "c"']      &
      C \ar[r,tick, "q"'] \ar[d,equal] &
      D \ar[r,tick, "d"']    \ar[d, equal] \ar[drr, phantom, "\Two \mu"] &
      D \ar[r,tick, "d"']&
      D \ar[d, equal] \\
      C \ar[rr,tick, "c"']   \ar[d, equal] \ar[drrrrr, phantom, "\Two \alpha"]  &&
      C \ar[r,tick, "q"'] &
      D \ar[rr,tick, "d"'] &&
      D \ar[d, equal] \\
      C \ar[rrrrr,tick, "q"'] & & & & & D \mathrlap{.}
    \end{tikzcd}
  \end{equation}
  The top two rows may now be rewritten as:
  \begin{equation*}
    \begin{tikzcd}
      A_1 \bxt A_2  
      \ar[rrrrr,tick, "(b_1 \circ p_1 \circ a_1) \bxt  (b_2 \circ p_2 \circ a_2)"] 
      \ar[d,equal] 
      \ar[drrrrr, phantom, "\cong"] &
      & & & &
      B_1 \bxt  B_2  \ar[d, equal] \\
      A_1 \bxt A_2  
      \ar[rrrrr,tick, "(b_1 \circ p_1 \circ a_1) \bxt  (\hid \circ (b_2 \circ p_2 \circ a_2) \circ \hid)"] 
      \ar[d,equal] 
      \ar[drrrrr, phantom, "\Two \xi"] &
      & & & &
      B_1 \bxt  B_2  \ar[d, equal] \\
      A_1 \bxt  A_2  \ar[r,tick, "a_1 \bxt \hid"] \ar[d, equal]  & 
      A_1 \bxt  A_2  \ar[rrr,tick, "p_1 \bxt (b_2 \circ p_2 \circ a_2)"] \ar[d, equal]  \ar[drrr, phantom, "\cong"]  & & & 
      B_1 \bxt  B_2  \ar[r,tick, "b_1 \bxt \hid"] \ar[d, equal] & 
      B_1 \bxt  B_2  \ar[d, equal]  \\
      A_1 \bxt  A_2  \ar[r,tick, "a_1 \bxt \hid"] \ar[d, equal]  & 
      A_1 \bxt  A_2  \ar[rrr,tick, "(\hid \circ p_1 \circ \hid) \bxt (b_2 \circ p_2 \circ a_2)"] \ar[d, equal]  \ar[drrr, phantom, "\Two \xi"]  & & & 
      B_1 \bxt  B_2  \ar[r,tick, "b_1 \bxt \hid"] \ar[d, equal] & 
      B_1 \bxt  B_2  \ar[d, equal]  \\
      A_1 \bxt  A_2  \ar[r,tick, "a_1 \bxt \hid"] & 
      A_1 \bxt  A_2  \ar[r,tick, "\hid \bxt a_2"] & 
      A_1 \bxt  A_2  \ar[r,tick, "p_1 \bxt p_2"] & 
      B_1 \bxt  B_2 \ar[r,tick, "\hid \bxt b_2"]  &
      B_1 \bxt  B_2 \ar[r,tick, "b_1 \bxt \hid"]  &
      B_1 \bxt  B_2 
    \end{tikzcd}
  \end{equation*}
  At the same time, we can rewrite the bottom two rows
  of~\eqref{eq:bimodule-binary-reformulation} via the bimodule axioms
  to become two instances of $\alpha$. On doing so, we recognise the
  middle of the resulting diagram as being the right-hand side of the
  bimodule multimorphism axiom for $i = 2$. Rewriting by this, we
  obtain the diagram:
  \begin{equation*}
    \begin{tikzcd}
      A_1 \bxt A_2  
      \ar[rrr,tick, "(b_1 \circ p_1 \circ a_1) \bxt  (b_2 \circ p_2 \circ a_2)"] 
      \ar[d,equal] 
      \ar[drrr, phantom, "\cong"] &[-0.5em]
      &[1em] &[-0.5em]
      B_1 \bxt  B_2  \ar[d, equal] \\
      A_1 \bxt A_2  
      \ar[rrr,tick, "(b_1 \circ p_1 \circ a_1) \bxt  (\hid \circ (b_2 \circ p_2 \circ a_2) \circ \hid)"] 
      \ar[d,equal] 
      \ar[drrr, phantom, "\Two \xi"] 
      & & &
      B_1 \bxt  B_2  \ar[d, equal] \\
      A_1 \bxt  A_2  \ar[r,tick, "a_1 \bxt \hid"] \ar[d, equal]  & 
      A_1 \bxt  A_2  \ar[r,tick, "p_1 \bxt (b_2 \circ p_2 \circ a_2)" {yshift=2pt}] \ar[d, equal]  \ar[dr, phantom, "\Two 1 \bxt \alpha_2"]  & 
      B_1 \bxt  B_2  \ar[r,tick, "b_1 \bxt \hid"] \ar[d, equal] & 
      B_1 \bxt  B_2  \ar[d, equal]  \\
      A_1 \bxt  A_2  \ar[r,tick, "a_1 \bxt \hid"'] \ar[d, "f"']  \ar[dr,phantom,"\Two \phi_1"]& 
      A_1 \bxt  A_2  \ar[r,tick, "p_1 \bxt p_2"'] \ar[d, "f"']  \ar[dr, phantom, "\Two \theta"]  & 
      B_1 \bxt  B_2  \ar[r,tick, "b_1 \bxt \hid"'] \ar[d, "g"'] \ar[dr,phantom,"\Two \psi_1"] & 
      B_1 \bxt  B_2  \ar[d, "g"]  \\
      C  \ar[r,tick, "c"'] \ar[d, equal] \ar[drrr, phantom, "\Two \alpha"] & 
      C  \ar[r,tick, "q"']     & 
      D  \ar[r,tick, "d"']  & 
      D  \ar[d, "g"]  \\
      C \ar[rrr,tick, "q"'] & & & D \mathrlap{.}
    \end{tikzcd}
  \end{equation*}
  Now using naturality of $\xi$ and the identity constraints of $\dc
  C$, we can pull $1 \bxt \alpha_2$ to the top, and rewrite again
  using the bimodule multimorphism axiom for $i = 1$, so obtaining the
  left-hand side of~\eqref{equ:compact-bim-multimorphims} as desired.
\end{proof} 

\begin{prop}\label{prop:commuting-bimodules-prerepresentable}
The symmetric multicategory $\CommMultBimC$ of bimodules and commuting bimodule morphisms
is pre-representable.
\end{prop}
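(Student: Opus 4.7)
The plan is to mirror the two-stage coequaliser construction used for commuting monad multimorphisms in \cref{prop:universal-commuting-binary}, but now carried out in the category of bimodules rather than of monads. The nullary case is essentially trivial: since a nullary commuting bimodule multimorphism into $q \colon (C,c) \tickar (D,d)$ amounts by \cref{def:bimodule-multimorphism} to a pair of maps $I \to C$ and $I \to D$ in $\dcC_0$ together with a globular $2$-morphism $\hid_I \to q$, it is classified by the identity bimodule $\hid_{\disc I} \colon \disc I \tickar \disc I$.

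For the binary case, given $p_1 \colon (A_1,a_1)\tickar(B_1,b_1)$ and $p_2 \colon (A_2,a_2)\tickar(B_2,b_2)$, I will construct $p_1 \ctensor p_2$ as a $(b_1\ctensor b_2,\, a_1\ctensor a_2)$-bimodule obtained from the free bimodule $F(p_1\bxt p_2)=(b_1\ctensor b_2)\circ(p_1\bxt p_2)\circ(a_1\ctensor a_2)$ by a reflexive coequaliser that forces the compact-form identity \cref{equ:compact-bim-multimorphims}. Explicitly, the two sides of that identity, viewed as $2$-morphisms $(b_1\circ p_1\circ a_1)\bxt(b_2\circ p_2\circ a_2)\Rightarrow p_1\bxt p_2$ in $\dcC$ (erasing the $\theta$ and replacing the action of $q$ by the action of the free bimodule), transpose under \cref{thm:free-bimodule} to a parallel pair of bimodule morphisms
\[
F\bigl((b_1\circ p_1\circ a_1)\bxt(b_2\circ p_2\circ a_2)\bigr) \rightrightarrows F(p_1\bxt p_2)
\]
in the fibre $\BimC[(A_1\bxt A_2, a_1\ctensor a_2),\,(B_1\bxt B_2, b_1\ctensor b_2)]$. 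This pair is reflexive, with common section induced by the $2$-morphisms $\eta\circ 1\circ\eta$ inserting the monad units of $b_i$ and $a_i$. By \cref{hyp:refl-coequalisers,thm:bim-c}, such reflexive coequalisers exist, producing the desired $p_1\ctensor p_2$; and composing the quotient map $F(p_1\bxt p_2) \twoheadrightarrow p_1\ctensor p_2$ with the unit $p_1\bxt p_2 \to F(p_1\bxt p_2)$ gives the candidate universal multimorphism, paired with the universal commuting monad multimorphisms $(1,\eta_1,\eta_2)$ on source and target monads from \cref{eq:universal-commuting-binary}.

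For the universal property, I will invoke the compact characterisation \cref{thm:compact-bim-multimorphism}: a commuting bimodule multimorphism $p_1,p_2\to q$ amounts to commuting monad multimorphisms at source and target (equivalently, by \cref{prop:CMultMndCrep}, monad maps $\bar f \colon a_1\ctensor a_2 \to c$ and $\bar g \colon b_1\ctensor b_2 \to d$) together with a $2$-morphism $\theta\colon p_1\bxt p_2 \Rightarrow q$ with base $(\bar f, \bar g)$ subject to \cref{equ:compact-bim-multimorphims}. Using \cref{thm:extra-fibred-free-bimod}, the $2$-morphism $\theta$ transposes uniquely to a bimodule morphism $F(p_1\bxt p_2)\to q$ lying over $(\bar f,\bar g)$; and the defining relation \cref{equ:compact-bim-multimorphims} is precisely the equation stating that this bimodule morphism coequalises the parallel pair above. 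Hence it descends uniquely to a bimodule morphism $p_1\ctensor p_2\to q$ lying over $(\bar f,\bar g)$, and conversely every such bimodule morphism, precomposed with the candidate universal multimorphism, yields a commuting bimodule multimorphism.

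The main technical obstacle will be verifying that the candidate is genuinely a \emph{commuting} bimodule multimorphism—\ie that the quotient map $F(p_1\bxt p_2)\twoheadrightarrow p_1\ctensor p_2$ satisfies the bimodule multimorphism axioms of \cref{def:bimodule-multimorphism}, and that the two underlying monad components are commuting. The first follows tautologically from the coequaliser relation read through \cref{thm:compact-bim-multimorphism}, while the second is immediate from the construction of the commuting tensor of monads. The bijection then assembles by standard adjunction yoga, threading between the free-bimodule adjunction of \cref{thm:extra-fibred-free-bimod} and the universal property of the commuting tensor of monads from \cref{sec:commuting}.
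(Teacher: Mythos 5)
Your proposal is correct and follows essentially the same route as the paper's proof: the same nullary classifier, the same reflexive coequaliser of free bimodules $F\bigl(UF(p_1)\bxt UF(p_2)\bigr)\rightrightarrows F(p_1\bxt p_2)$ (one leg being $F$ applied to the tensored actions, the other the adjoint transpose of the comparison through $a_1\ctensor a_2$ and $b_1\ctensor b_2$), and the same deduction of the universal property from \cref{thm:compact-bim-multimorphism} together with the enhanced freeness of \cref{thm:extra-fibred-free-bimod}. The only point worth tightening is that, since the coequaliser is formed in the fibre over $\bigl((A_1\bxt A_2,a_1\ctensor a_2),(B_1\bxt B_2,b_1\ctensor b_2)\bigr)$ but must classify multimorphisms over arbitrary base maps $(\bar f,\bar g)$, you should also invoke \cref{thm:fibrancy-coequalizers}(i) to see that this fibrewise reflexive coequaliser is a coequaliser in all of $\BimC_1$.
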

\begin{proof}
  The nullary case is immediate: the representing object is the
  identity bimodule $I$ on the discrete monad $\nabla{I}$. For the
  binary case, consider the upper parts of the two sides of the
  equality~\eqref{equ:compact-bim-multimorphims}:
  \begin{equation*}
    \begin{tikzcd}[column sep = 2.7cm, row sep = 1.6cm]
      A_1  \bxt A_2
      \ar[r,tick, "(b_1 \circ p_1 \circ a_1) \bxt  (b_2 \circ p_2 \circ a_2)"] 
      \ar[d,equal] 
      \ar[dr, phantom, "\Two \alpha_1 \bxt  \alpha_2"] & 
      B_1 \bxt  B_2 \ar[d, equal] \\
      A_1 \bxt  A_2  
      \ar[r,tick, "p_1 \bxt p_2"'] &
      B_1 \bxt  B_2 
    \end{tikzcd}  \quad \quad 
    \begin{tikzcd}
      A_1 \bxt A_2  
      \ar[rrr,tick, "(b_1 \circ p_1 \circ a_1) \bxt  (b_2 \circ p_2 \circ a_2)"] 
      \ar[d,equal] 
      \ar[drrr, phantom, "\Two \xi"] &
      & & 
      B_1 \bxt  B_2  \ar[d, equal] \\
      A_1 \bxt  A_2  \ar[r,tick, "a_1 \bxt a_2"]  \ar[d, equal] \ar[dr, phantom, "\Two \pi"] & 
      A_1 \bxt  A_2  \ar[r,tick, "p_1 \bxt p_2"] \ar[d, equal] & 
      B_1 \bxt  B_2 \ar[r,tick, "b_1 \bxt b_2"] \ar[d, equal]  \ar[dr, phantom, "\Two \pi"] &
      B_1 \bxt  B_2   \ar[d, equal] \\
      A_1 \bxt  A_2  \ar[r,tick, "a_1 \ctensor a_2"]   & 
      A_1 \bxt  A_2  \ar[r,tick, "p_1 \bxt p_2"]    & 
      B_1 \bxt  B_2 \ar[r,tick, "b_1 \ctensor b_2"]   &
      B_1 \bxt  B_2  
    \end{tikzcd}
  \end{equation*}
  These are, respectively, maps
  \begin{align*}
    u \colon F_{a_1, b_1}(p_1) \bxt F_{a_2,
      b_2}(p_2) & \rightarrow p_1 \bxt p_2 \\
    \text{and}\ \ v \colon F_{a_1, b_1}(p_1) \bxt F_{a_2,
      b_2}(p_2) & \rightarrow F_{a_1 \ctensor a_2, b_1 \ctensor b_2}(p_1
    \bxt p_2)
  \end{align*}
  in $\dcC[A_1 \bxt A_2, B_1
  \bxt B_2]$, and so we obtain a parallel pair of bimodule maps as to
  the left in:
  \begin{equation} 
    \label{equ:key-for-bim-representable}
    \begin{tikzcd}[column sep = large]
      F_{a_1 \ctensor a_2, b_1 \ctensor b_2} \big( F_{a_1, b_1}(p_1) \bxt F_{a_2, b_2}(p_2) \big) 
      \ar[r, shift right, "\bar v"'] 
      \ar[r, shift left, "F_{a_1 \ctensor a_2, b_1 \ctensor b_2}(u)"] &[2em] 
      F_{a_1 \ctensor a_2, b_1 \ctensor b_2} (p_1 \bxt p_2) 
      \ar[r, two heads]  &
      p_1 \ctensor p_2\rlap{ .}
    \end{tikzcd}
  \end{equation}
  Here, the upper arrow is the free bimodule morphism on $u$, while
  $\bar v$ is the unique extension of $v$ to a bimodule morphism. It
  is easy to see that this
  parallel pair is reflexive, with common splitting given by $F_{a_1
    \ctensor a_2, b_1 \ctensor b_2}(\eta, \eta)$. Thus, since
  $\BimC[(A_1 \bxt A_2, a_1 \ctensor a_2), (B_1 \bxt B_2, b_1 \ctensor
  b_2)]$ has reflexive coequalisers, we can form the coequaliser of
  this pair, as to the right.

  By~\cref{thm:fibrancy-coequalizers}(i), this coequaliser is also a
  coequaliser in $\BimC_1$. Using this and the enhanced freeness
  property of free bimodules described
  in~\cref{thm:extra-fibred-free-bimod}, we see that a map out of
  $p_1 \ctensor p_2$ in $\BimC_1$ into an object
  $q \colon (C, c) \tor (D,d)$ is equally well given by:
  \begin{enumerate}[(i)]
  \item a map $(A_1, a_1) \ctensor (A_2, a_2) \rightarrow (C,c)$;
  \item a map $(B_1, b_1) \ctensor (B_2, b_2) \rightarrow (D,d)$; and
  \item a $2$-morphism
    \[
      \begin{tikzcd}[column sep = huge]
        A_1 \bxt \ldots \bxt A_n \ar[r,tick, "p_1 \bxt \ldots \bxt p_n"] \ar[d, "f"'] 
        \ar[dr, phantom, pos=0.4, "\Two \theta"]
        & B_1 \bxt \ldots \bxt B_n \ar[d, "g"] \\
        C \ar[r,tick, "q"'] & D\rlap{ ,}
      \end{tikzcd}
    \]
  \end{enumerate}
  such that the induced map
  $F_{a_1 \ctensor a_2, b_1 \ctensor b_2} (p_1 \bxt p_2)
  \xrightarrow{\bar \theta} q$ equalises the parallel pair
  in~\eqref{equ:key-for-bim-representable}. Clearly, the data
  (i)--(iii) above can be equated with the data (i)--(iii)
  of~\cref{def:bimodule-multimorphism}; and a short argument with
  adjointness shows that the equalising property is equivalent to the
  equality~\eqref{equ:compact-bim-multimorphims}, which,
  by~\cref{thm:compact-bim-multimorphism} is equally to say that these
  data describe a commuting bimodule bimorphism.
\end{proof}

\subsection{Representability of $\CommMultBimC$}\label{sub:repmultibim}

At this point, our development proceeds in a different direction to
previously. Rather than establish closedness of the multicategory
$\CommMultBimC$, and conclude
using~\cref{prop:closed-preuniversal-universal}, we will instead show
existence of \emph{all} pre-universal multimorphisms (not just nullary
and binary ones), and then show these are closed under composition;
this suffices for representability
by~\cite[Proposition~8.5]{RepresentableMulticats}.

We begin with the $n$-ary analogue of~\cref{thm:compact-bim-multimorphism}.

\begin{lem} \label{thm:compact-bim-multimorphism-general} Let
  $p_i \co (A_i, a_i) \tickar (B_i, b_i)$ (for
  $1 \leqslant i \leqslant n$) and $q \co (C, c) \tickar (D, d)$ be
  bimodules. Let
  $(f, \vec \phi) \co (A_1, a_1), \dots, (A_n, a_n) \to (C,c)$ and
  $(g, \vec \psi) \co (B_1, b_1), \dots, (B_n, b_n) \to (D,d)$ be
  commuting monad multimorphisms. A 2-morphism
\[
\begin{tikzcd}[column sep = large]
A_{[1,n]} \ar[r,tick, "p_{[1,n]}"] \ar[d, "f"'] 
\ar[dr, phantom, "\Two \theta"]
& B_{[1,n]} \ar[d, "g"] \\
C \ar[r,tick, "q"'] & D 
\end{tikzcd}
\]
determines  a commuting bimodule multimorphism $(f, \vec \phi, g,
\vec \psi, \theta) \co p_1, \dots, p_n \rightarrow q$ if and only if
\begin{equation} 
\label{equ:compact-bim-multimorphims-general}
\begin{tikzcd}[column sep = 2.7cm, row sep = 1.6cm]
A_{[1,n]}
	\ar[r,tick, "\tparen{b \circ p \circ a}_{[1,n]}"] 
	\ar[d,equal] 
	\ar[dr, phantom, "\Two \alpha_{[1,n]}"] & 
B_{[1,n]} \ar[d, equal] \\
A_{[1,n]}  \ar[d, "f"'] 
	\ar[r,tick, "p_{[1,n]}"'] 
	\ar[dr, phantom, "\Two \theta"] & 
B_{[1,n]} 
	\ar[d, "g"]  \\
C 
	\ar[r,tick, "q"'] & 
D  
\end{tikzcd}  = 
\begin{tikzcd}
A_{[1,n]}
	\ar[rrr,tick, "\tparen{b \circ p \circ a}_{[1,n]}"] 
	\ar[d,equal] 
	\ar[drrr, phantom, "\Two \xi"] &
 & & 
 B_{[1,n]}  \ar[d, equal] \\
  A_{[1,n]}  \ar[r,tick, "a_{[1,n]}"]  \ar[d, equal] \ar[dr, phantom, "\Two \pi"] & 
  A_{[1,n]}  \ar[r,tick, "p_{[1,n]}"] \ar[d, equal] & 
  B_{[1,n]} \ar[r,tick, "b_{[1,n]}"] \ar[d, equal]  \ar[dr, phantom, "\Two \pi"] &
  B_{[1,n]}   \ar[d, equal] \\
  A_{[1,n]}  \ar[r,tick, "a_1 \ctensor \dots \ctensor a_n"] \ar[d, "f"']  \ar[dr, phantom, "\Two \bar{\phi}"]  & 
  A_{[1,n]}  \ar[r,tick, "p_{[1,n]}"] \ar[d, "f"'] \ar[dr, phantom, "\Two \theta"]   & 
  B_1 \bxt  B_2 \ar[r,tick, "b_1 \ctensor \dots \ctensor b_n"]  \ar[d, "g"]  \ar[dr, phantom, "\Two \bar{\psi}"]  &
  B_{[1,n]}  \ar[d, "g"]  \\
  C \ar[r,tick, "c"']   \ar[d, equal] \ar[drrr, phantom, "\Two \alpha"]  &
  C \ar[r,tick, "q"'] &
  D \ar[r,tick, "d"'] &
  D \ar[d, equal] \\
  C \ar[rrr,tick, "q"'] & & & D \mathrlap{,}
  \end{tikzcd}
\end{equation} 
where $(f, \bar{\phi})$ and $(g, \bar{\psi})$ are the (unary) monad
maps induced from $(f, \phi_1, \phi_2)$ and $(g, \psi_1, \psi_2)$ by
the universality of the commuting tensor product of monads.
\end{lem}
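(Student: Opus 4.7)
The proof will proceed by the same two-implication strategy as in \cref{thm:compact-bim-multimorphism}, with the forward direction being routine and the reverse direction being the main obstacle. For the forward direction, I will derive the bimodule multimorphism axiom of \cref{def:bimodule-multimorphism} at each index $i \in \{1,\dots,n\}$ from \eqref{equ:compact-bim-multimorphims-general}. This is done by precomposing both sides of \eqref{equ:compact-bim-multimorphims-general} with the $2$-morphism obtained by tensoring the identity $b_i \circ p_i \circ a_i \Rightarrow b_i \circ p_i \circ a_i$ at position $i$ with the unit maps $\eta \circ p_j \circ \eta \colon p_j \Rightarrow b_j \circ p_j \circ a_j$ at every $j \ne i$. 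On the left-hand side, this immediately produces the desired instance of the bimodule multimorphism axiom. On the right-hand side, using normality, the unit axioms for the monads $a_j, b_j$ with $j \ne i$, the unit axioms of $\bar\phi$ and $\bar\psi$, and the description of $\pi$ as built from $\xi$, all of the ``extraneous'' factors collapse, and the diagram reduces to the required form.

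For the reverse direction, my primary plan is induction on $n$, with the base case $n=2$ being \cref{thm:compact-bim-multimorphism}. For the inductive step at arity $n$, I will group the first $n-1$ arguments together: regarding $(p_1, \dots, p_{n-1})$ as a single bimodule of the form $p_1 \bxt \cdots \bxt p_{n-1}$ between the commuting tensor monads, and applying the inductive hypothesis to produce a binary presentation of the $n$-ary data. This reduces the $n$-ary identity \eqref{equ:compact-bim-multimorphims-general} to an instance of the binary identity for the pair
\[
  p_1 \bxt \cdots \bxt p_{n-1} \co (A_{[1,n-1]}, a_1 \ctensor \cdots \ctensor a_{n-1}) \tickar (B_{[1,n-1]}, b_1 \ctensor \cdots \ctensor b_{n-1}), \quad p_n \co (A_n, a_n) \tickar (B_n, b_n),
\]
which is handled by \cref{thm:compact-bim-multimorphism}. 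For this to work, I will need to verify that the $n$-ary $\pi$ and $\xi$ factor through their binary counterparts consistently with the associativity of $\ctensor$ established in \cref{thm:ctensor-monads}; this is a coherence check using the defining coequalisers \eqref{eq:commuting-coequaliser} and the interchange coherence of \cref{def:oplaxmonoidal}.

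The main obstacle is precisely the bookkeeping for this coherence check: one must be confident that the $n$-ary interchanger $\xi^{(n)}$ of \eqref{eq:nary-interchanger}, and the induced comparison $\pi \colon a_1 \bxt \cdots \bxt a_n \Rightarrow a_1 \ctensor \cdots \ctensor a_n$, are built from binary $\xi$'s and binary $\pi$'s in a way compatible with the canonical isomorphism $(a_1 \ctensor \cdots \ctensor a_{n-1}) \ctensor a_n \cong a_1 \ctensor \cdots \ctensor a_n$. Once this is granted, the diagram chase of \cref{thm:compact-bim-multimorphism} transports verbatim. As a fallback, should the inductive regrouping prove awkward, I would instead mirror the direct diagram chase of \cref{thm:compact-bim-multimorphism}: starting from the right-hand side of \eqref{equ:compact-bim-multimorphims-general}, expand each $\pi$ into the appropriate composite of binary $\sigma, \tau$ and interchange cells, then apply the $n$ instances of the bimodule multimorphism axiom in turn (interleaved with naturality of $\xi$ and associativity of the actions $\alpha$), finally collapsing the resulting pasting via monad associativity and the oplax monoidal coherence axioms. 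This direct route avoids any reliance on the associativity coherence of $\ctensor$ but requires a more intricate single computation.
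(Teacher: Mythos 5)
Your forward direction is exactly right and matches the binary case. The problem is with your primary plan for the converse. The object $p_1 \bxt \cdots \bxt p_{n-1}$ is merely a horizontal $1$-cell of $\dcC$: it carries no actions of the monads $a_1 \ctensor \cdots \ctensor a_{n-1}$ and $b_1 \ctensor \cdots \ctensor b_{n-1}$, so the pair $(p_1 \bxt \cdots \bxt p_{n-1},\, p_n)$ is not a pair of bimodules and \cref{thm:compact-bim-multimorphism} cannot be applied to it as stated. The genuine bimodule between the commuting tensor monads is the coequaliser $p_1 \ctensor \cdots \ctensor p_{n-1}$ of \cref{equ:key-for-bim-representable}; but replacing the grouped factor by that quotient would require knowing that $n$-ary commuting bimodule multimorphisms out of $(p_1, \dots, p_n)$ correspond to binary ones out of $(p_1 \ctensor \cdots \ctensor p_{n-1},\, p_n)$, \ie universality \emph{with parameters} of the binary pre-universal map --- which is exactly the content of \cref{thm:commuting-bimodules-representable}, proved downstream of the present lemma. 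So the ``coherence check'' you flag for $\pi$ and $\xi^{(n)}$ is not the real obstacle: the reduction itself is not well-posed, and fixing it would be circular.

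Your fallback, on the other hand, is sound and is essentially the paper's proof: one argues by induction on $n$ (with trivial base case $n=1$ rather than $n=2$), and in the inductive step re-runs the diagram chase of \cref{thm:compact-bim-multimorphism} mutatis mutandis --- expanding each $\pi$ through the $n$-ary interchanger and the commutativity hexagons, peeling off one index at a time via the corresponding axiom of \cref{def:bimodule-multimorphism}, and collapsing the pasting using naturality of $\xi$, the bimodule axioms and oplax monoidal coherence. You should promote the fallback to the main argument and drop the grouping step.
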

\begin{proof}
  There is nothing to do for $n = 0$. For $n \geqslant 1$, we proceed
  by induction on $n$. The base case is trivial; for the inductive
  step, we follow the argument
  of~\cref{thm:compact-bim-multimorphism}, mutatis mutandis.
\end{proof}

\begin{prop}\label{prop:commuting-bimodules-all-pre-universal}
The symmetric multicategory $\CommMultBimC$ of bimodules and commuting bimodule morphisms
admits all pre-universal multimorphisms.
\end{prop}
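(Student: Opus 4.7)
The plan is to follow exactly the same pattern as in the binary case of \cref{prop:commuting-bimodules-prerepresentable}, now using the $n$-ary characterisation of commuting bimodule multimorphisms from \cref{thm:compact-bim-multimorphism-general}. The nullary case is already handled, so fix $n \geqslant 1$ and bimodules $p_i \colon (A_i, a_i) \tickar (B_i, b_i)$ for $1 \leqslant i \leqslant n$. The candidate representing object $p_1 \ctensor \cdots \ctensor p_n$ will live in $\BimC[(A_{[1,n]}, a_1 \ctensor \cdots \ctensor a_n), (B_{[1,n]}, b_1 \ctensor \cdots \ctensor b_n)]$, where we use that the commuting tensor of monads already exists by \cref{thm:ctensor-monads}.

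To define it, I would inspect equation~\eqref{equ:compact-bim-multimorphims-general} with the downstairs data $(f,\vec\phi,g,\vec\psi,\theta)$ ``stripped off''. What remains on the left- and right-hand sides are two parallel $2$-morphisms
\begin{equation*}
u, v \colon F_{a_1,b_1}(p_1) \bxt \cdots \bxt F_{a_n,b_n}(p_n) \longrightarrow p_1 \bxt \cdots \bxt p_n \quad \text{and} \quad \cdots \longrightarrow F_{a_1\ctensor\cdots\ctensor a_n,\, b_1\ctensor\cdots\ctensor b_n}(p_1 \bxt \cdots \bxt p_n)
\end{equation*}
in $\dcC[A_{[1,n]}, B_{[1,n]}]$, built respectively from the $n$-fold tensors of the actions $\alpha_i$, and from the interchangers $\xi$ together with the universal $2$-morphisms~\eqref{equ:bxt-to-ctensor}. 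Applying the free bimodule functor $F = F_{a_1\ctensor\cdots\ctensor a_n,\,b_1\ctensor\cdots\ctensor b_n}$ to $u$ and extending $v$ to a bimodule map $\bar v$ gives a parallel pair of bimodule morphisms analogous to~\eqref{equ:key-for-bim-representable}, which is reflexive via $F(\eta \bxt \cdots \bxt \eta)$ by the same calculation as in the binary case. We then define $p_1 \ctensor \cdots \ctensor p_n$ as its coequaliser in $\BimC[(A_{[1,n]}, a_1 \ctensor \cdots \ctensor a_n), (B_{[1,n]}, b_1 \ctensor \cdots \ctensor b_n)]$, which exists by~\cref{hyp:refl-coequalisers}.

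For the pre-universal property, let $q \colon (C,c) \tickar (D,d)$ be any bimodule. By \cref{thm:fibrancy-coequalizers}(i) the above is also a coequaliser in $\BimC_1$. Combining this with the enhanced fibred freeness of \cref{thm:extra-fibred-free-bimod}, bimodule morphisms $p_1 \ctensor \cdots \ctensor p_n \to q$ are identified with triples consisting of a monad morphism $(A_{[1,n]}, a_1 \ctensor \cdots \ctensor a_n) \to (C,c)$, a monad morphism $(B_{[1,n]}, b_1 \ctensor \cdots \ctensor b_n) \to (D,d)$, and a $2$-morphism $\theta$ as in~\cref{def:bimodule-multimorphism}, such that the induced map $F(p_1 \bxt \cdots \bxt p_n) \to q$ coequalises the parallel pair. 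Transposing along $U'$, the coequalising condition becomes precisely equation~\eqref{equ:compact-bim-multimorphims-general}; and by the universal property of $\ctensor$ on monads, the first two pieces of data correspond bijectively to a pair of commuting monad multimorphisms $(f,\vec\phi)$ and $(g,\vec\psi)$ of the required shapes. By \cref{thm:compact-bim-multimorphism-general}, the overall data are thus in bijection with commuting bimodule multimorphisms $p_1,\ldots,p_n \to q$, which is the desired pre-universality.

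The one step that requires care is verifying that the parallel pair is genuinely reflexive and that the extension $\bar v$ is well-defined as a bimodule map; the former follows by the same splitting argument as in the binary case, the latter by inspection of $v$, whose construction out of $\xi$, $\pi$ and the actions of the monads $a_1 \ctensor \cdots \ctensor a_n$ and $b_1 \ctensor \cdots \ctensor b_n$ is manifestly equivariant. I do not expect any serious new obstacle beyond notational bookkeeping, since \cref{thm:compact-bim-multimorphism-general} isolates exactly the $n$-ary equation that needs to be imposed, and all the relevant ambient (co)completeness is guaranteed by our running hypotheses.
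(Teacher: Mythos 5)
Your proposal is correct and is essentially the paper's own proof: the paper simply says to repeat the argument of \cref{prop:commuting-bimodules-prerepresentable} verbatim, substituting \cref{thm:compact-bim-multimorphism-general} for \cref{thm:compact-bim-multimorphism}, which is exactly the construction you carry out (the $n$-ary parallel pair of free-bimodule maps from the two sides of \eqref{equ:compact-bim-multimorphims-general}, its reflexive coequaliser, and the identification of maps out of it via \cref{thm:fibrancy-coequalizers}(i) and \cref{thm:extra-fibred-free-bimod}). No gaps; you have just written out in full what the paper leaves implicit.
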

\begin{proof}
  Exactly like \cref{prop:commuting-bimodules-prerepresentable},
  using~\cref{thm:compact-bim-multimorphism-general} in place
  of~\cref{thm:compact-bim-multimorphism}.
\end{proof}

The universal property of the $n$-fold commuting tensor product of
bimodules implies that the construction extends to a functor. The
following fact shall be used repeatedly in what follows.

\begin{lem} \label{thm:ctensor-preserves-coequalisers} The commuting tensor product functor
\[
\begin{tikzcd}
\prod_i \BimC[ (A_i, a_i), (B_i, b_i) ] 
	\ar[r, "\scriptstyle{\ctensor}"] & 
\BimC[ (A_1 \bxt \cdots \bxt A_n, a_1 \ctensor \dots \ctensor a_n), (B_1 \bxt \cdots \bxt B_n, b_1 \ctensor\dots \ctensor b_n)]
\end{tikzcd}
\]
preserves reflexive coequalisers.
\end{lem}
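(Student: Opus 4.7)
The plan is to exploit the explicit construction of the commuting tensor product given in the proof of \cref{prop:commuting-bimodules-all-pre-universal}: for bimodules $q_i \co (A_i, a_i) \tickar (B_i, b_i)$ we have
\[
q_1 \ctensor \cdots \ctensor q_n \;=\; \mathrm{coeq}\bigl( F(F(q_1) \bxt \cdots \bxt F(q_n)) \rightrightarrows F(q_1 \bxt \cdots \bxt q_n) \bigr),
\]
where $F = F_{a_1 \ctensor \cdots \ctensor a_n,\, b_1 \ctensor \cdots \ctensor b_n}$ is the free bimodule functor from \cref{thm:free-bimodule}. Since a multivariable functor preserves reflexive coequalisers (in the pointwise sense) if and only if it does so in each variable separately, I reduce to fixing $p_2, \dots, p_n$ and showing that the unary functor $G(\thg) \defeq (\thg) \ctensor p_2 \ctensor \cdots \ctensor p_n$ preserves reflexive coequalisers. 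This $G$ is a pointwise coequaliser of the two functors $G_1(x) = F(F(x) \bxt F(p_2) \bxt \cdots)$ and $G_2(x) = F(x \bxt p_2 \bxt \cdots)$.

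The first step is to show that $G_1$ and $G_2$ individually preserve reflexive coequalisers. This decomposes into two ingredients. First, the free bimodule functor $F_{a,b}$ is a left adjoint by \cref{thm:free-bimodule}, and so preserves all colimits. Second, the oplax tensor functor $(\thg) \bxt y$ preserves reflexive coequalisers in the fibre $\dcC[A_1, B_1]$: by closedness of $(\dcC_1, \bxt_1)$ (\cref{def:closeddouble}), $(\thg) \bxt y$ has a right adjoint $\boxhom{y, \thg}_1$ on $\dcC_1$ and hence preserves all colimits there; and by \cref{lem:local-to-global-connected-limits}(i) a reflexive coequaliser in $\dcC[A_1, B_1]$ is a (connected) colimit in $\dcC_1$, whose image under $(\thg) \bxt y$ is a connected colimit sitting in the fibre $\dcC[A_1 \bxt A_2 \bxt \cdots, B_1 \bxt B_2 \bxt \cdots]$, and so is a reflexive coequaliser there. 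Combining (i) and (ii) gives preservation for $G_1$ and $G_2$.

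The second step is a standard interchange-of-colimits argument. Given a reflexive coequaliser $u, v \colon s \rightrightarrows r \twoheadrightarrow p_1$ in $\BimC[(A_1, a_1), (B_1, b_1)]$, one considers the $3 \times 3$ diagram whose three rows are the defining coequalisers $G_1(x) \rightrightarrows G_2(x) \twoheadrightarrow G(x)$ for $x \in \{s, r, p_1\}$, and whose left and middle columns are the reflexive coequalisers obtained by applying $G_1$ and $G_2$ to $s \rightrightarrows r \twoheadrightarrow p_1$ (using Step 1). Taking colimits of rows and then columns, or columns and then rows, computes the same colimit of the whole diagram. The first order yields the coequaliser of $G(u)$ and $G(v)$; the second yields $G(p_1)$. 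These therefore coincide, giving the desired preservation.

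The main obstacle is establishing the preservation property Step 1(ii) for $\bxt$ on fibres, since closedness is a statement about $\dcC_1$ globally while what we need concerns the hom-categories $\dcC[A, B]$. The bridge is precisely \cref{lem:local-to-global-connected-limits}(i), which is why the restriction to \emph{reflexive} (in particular, connected) coequalisers is essential: for a general class of colimits, preservation in $\dcC_1$ would not transfer to preservation in the fibre. The remaining work---the adjoint- and colimit-interchange arguments---is entirely routine once Step 1 is in place.
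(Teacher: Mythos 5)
Your proposal is correct and is essentially a detailed expansion of the paper's own (one-sentence) proof, which likewise rests on the two facts that every functor appearing in the defining coequaliser \eqref{equ:key-for-bim-representable} (the free bimodule functors, as left adjoints, and $\bxt$ on fibres, via closedness and \cref{lem:local-to-global-connected-limits}) preserves reflexive coequalisers, and that colimits commute with colimits. The only point worth tightening is at the end of your Step 1(ii): a connected colimit in $\dcC_1$ lying in a fibre is a colimit in that fibre because the fibre itself has the reflexive coequaliser (by \cref{hyp:refl-coequalisers}) and that one is also a $\dcC_1$-colimit, so the canonical comparison is invertible.
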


\begin{proof} Colimits commute with colimits and all the functors involved in
the definition of the commuting tensor product in~\eqref{equ:key-for-bim-representable} 
preserve reflexive coequalisers.
\end{proof}

To show closure of pre-universal morphisms under composition, we will
need one further result showing that the commuting tensor product of
free bimodules is free. First we construct the universal morphism
witnessing this fact; the construction is our counterpart of
\cite[Theorem~1.22]{DwyerW:BoardmanVtpo}. Here, $U$ is the forgetful
functor of \cref{thm:free-bimodule} with left adjoint $F$ as in
\cref{equ:free-bimodule}, sometimes suppressing the obvious indices
for simplicity.

\begin{lem} \label{thm:thm-1-22-dwyer-hess} Let $F_{a_i, b_i}(x_i) \co
  (A_i, a_i) \tickar (B_i, b_i)$ be the free bimodules on horizontal 
maps 
$x_i \co A_i \tickar B_i$, for each $1 \leqslant i
  \leqslant n$. There is a globular 2-morphism in $\dcC$
\[
\begin{tikzcd}[column sep = 4cm]
A_{[1,n]} \ar[r, tick,"UF_{a_1,b_1}(x_1) \bxt \dots \bxt UF_{a_n, b_n}(x_n)"] \ar[d, equal] \ar[dr, phantom, "\Two \omega"]  & B_1 \bxt B_2  \ar[d, equal] \\
A_{[1,n]} \ar[r, tick,"UF_{\ctensor_{i} a_i, \ctensor_i b_i}(x_1 \bxt \dots \bxt x_n)"'] & B_{[1, n]} \mathrlap{.}
\end{tikzcd}
\]
\end{lem}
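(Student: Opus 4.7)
The plan is to construct $\omega$ as a two-stage composite using the oplax interchanger $\xi$ followed by the canonical comparison map $\pi$ from \cref{equ:bxt-to-ctensor}. Recall that $U F_{a,b}(x) = b \circ x \circ a$, so the source and target of the desired 2-morphism are
\[
(b_1 \circ x_1 \circ a_1) \bxt \cdots \bxt (b_n \circ x_n \circ a_n) \quad \text{and} \quad (b_1 \ctensor \cdots \ctensor b_n) \circ (x_1 \bxt \cdots \bxt x_n) \circ (a_1 \ctensor \cdots \ctensor a_n)
\]
respectively. The first stage of $\omega$ will use the $n$-ary interchanger 2-morphism $\xi^{(n)}$ of \cref{eq:nary-interchanger}, extended to a three-factor composite in the horizontal direction by iterating the binary interchanger twice; this yields a globular 2-morphism
\[
(b_1 \circ x_1 \circ a_1) \bxt \cdots \bxt (b_n \circ x_n \circ a_n) \Longrightarrow (b_1 \bxt \cdots \bxt b_n) \circ (x_1 \bxt \cdots \bxt x_n) \circ (a_1 \bxt \cdots \bxt a_n).
\]

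The second stage will postcompose with the two instances of the map $\pi$ coming from \cref{equ:bxt-to-ctensor}, one on each side of the middle factor. Explicitly, the maps
\[
(1,\pi)\colon (A_{[1,n]}, a_1 \bxt \cdots \bxt a_n) \longrightarrow (A_{[1,n]}, a_1 \ctensor \cdots \ctensor a_n)
\]
and the analogous one on the $b$-side---which are well-defined by iterating the binary commuting tensor product construction underlying \cref{prop:universal-commuting-binary}, or equivalently by appealing to the $n$-ary version established in the paragraph following that proposition---combine with the $\xi^{(n)}$ obtained above, via horizontal pasting $\pi \circ (x_1 \bxt \cdots \bxt x_n) \circ \pi$, to produce the desired 2-morphism $\omega$ into $(b_1 \ctensor \cdots \ctensor b_n) \circ (x_1 \bxt \cdots \bxt x_n) \circ (a_1 \ctensor \cdots \ctensor a_n)$.

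There is no real obstacle here since both ingredients are already available: $\xi^{(n)}$ follows inductively from the binary interchanger of \cref{def:oplaxmonoidal}, while $\pi$ was constructed in \cref{sec:commuting}. The only care needed is bookkeeping of associativity coherences for horizontal composition in $\dcC$ when writing the triple composite $b \circ x \circ a$, which we suppress as is standard. The resulting $\omega$ is globular because both $\xi^{(n)}$ and $\pi$ are globular by construction.
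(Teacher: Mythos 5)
Your construction is exactly the paper's: the authors also define $\omega$ as the pasting of the $n$-ary interchanger $\xi$ (turning $(b_1 \circ x_1 \circ a_1) \bxt \cdots \bxt (b_n \circ x_n \circ a_n)$ into $b_{[1,n]} \circ x_{[1,n]} \circ a_{[1,n]}$) with the $n$-ary analogue of the comparison $2$-morphism $\pi$ of \cref{equ:bxt-to-ctensor} applied on each outer factor. The proposal is correct and matches the paper's proof in both structure and detail.
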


\begin{proof} % By the definition of free bimodules in~\cref{equ:free-bimodule}, the required globular 2-morphism has the form
%\[
%\begin{tikzcd}[column sep = large]
%A_{[1,n]}  \ar[rrr, tick,"(b_1 \hcomp x_1 \hcomp a_1) \bxt (b_2 \hcomp x_2 \hcomp a_2) "]  \ar[d, equal] \ar[drrr, phantom, "\Two \omega"] & & & 
%B_{[1,n]}   \ar[d, equal]  \\
%A_{[1,n]} \ar[r, tick, "a_1 \ctensor a_2"' ] &
%A_{[1,n]} \ar[r,tick, "x_1 \bxt x_2 "' ]&
%B_{[1,n]} \ar[r,tick, "b_1 \ctensor b_2"'] &
%B_1 \bxt B_2 \mathrlap{.}
%\end{tikzcd} 
%\]
We define $\omega$ as the composite 2-morphism:
\[
\begin{tikzcd}[column sep = large]
A_{[1,n]}  \ar[rrr,tick, "(b_1 \hcomp x_1 \hcomp a_1) \bxt \dots \bxt (b_n \hcomp x_n \hcomp a_n) "]  \ar[d, equal] \ar[drrr, phantom,bend left=3, "\Two \xi"] 
	& 
	& 
	& 
B_{[1,n]}   \ar[d, equal]  \\
A_{[1,n]} \ar[r,tick, "a_{[1,n]}" ] \ar[d, equal] \ar[dr, phantom, "\Two \pi"]  & 
A_{[1,n]} \ar[r,tick, "x_{[1,n]}" ] \ar[d, equal] & 
B_{[1,n]} \ar[r,tick, "b_{[1,n]}"] \ar[d, equal] \ar[dr, phantom, "\Two \pi"]  &
B_{[1,n]}  \ar[d, equal] \\
A_{[1,n]} \ar[r,tick, "a_1 \ctensor \dots \ctensor a_n"'] &
A_{[1,n]} \ar[r,tick, "x_{[1,n]}"' ]&
B_{[1,n]} \ar[r,tick, "b_1 \ctensor \cdots \ctensor b_n"'] &
B_{[1,n]}  \mathrlap{,}
\end{tikzcd}
\]
where $\xi$ is formed using the interchange law of the oplax monoidal
double $\dc{C}$ and $\pi$ is the ($n$-ary analogue of) the 2-morphism
in~\cref{equ:bxt-to-ctensor}.
\end{proof}

\begin{prop}\label{prop:free-tensor-bimodules-is-free}
  Let $F_{a_i, b_i}(x_i) \co (A_i, a_i) \tickar (B_i, b_i)$ be the
  free bimodules on horizontal maps $x_i \co A_i \tickar B_i$, for
  each $1 \leqslant i \leqslant n$. The $2$-morphism $\omega$
  of~\cref{thm:thm-1-22-dwyer-hess}, together with the universal
  $n$-ary commuting monad multimorphisms on the domain and codomain
  monads, exhibits $F_{\otimes_i a_i, \otimes_i b_i}(x_1 \bxt \dots \bxt x_n)$ as the
  bimodule commuting tensor product of the $F_{a_i, b_i}(x_i)$'s.
\end{prop}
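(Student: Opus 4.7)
My plan is to verify the universal property of $F_{\ctensor_i a_i, \ctensor_i b_i}(x_1 \bxt \cdots \bxt x_n)$ directly, rather than computing the coequaliser constructed in the proof of \cref{prop:commuting-bimodules-prerepresentable}. Fix a bimodule $q \co (C,c) \tickar (D,d)$ together with commuting monad multimorphisms $(f, \vec\phi) \co (A_1, a_1), \ldots, (A_n, a_n) \to (C,c)$ and $(g, \vec\psi) \co (B_1, b_1), \ldots, (B_n, b_n) \to (D,d)$, and write $(f, \bar\phi)$ and $(g, \bar\psi)$ for the induced monad morphisms on the commuting tensor products. I would exhibit a natural bijection between commuting bimodule multimorphisms $F(x_1), \ldots, F(x_n) \to q$ with these vertical data and bimodule maps $F_{\ctensor_i a_i, \ctensor_i b_i}(x_1 \bxt \cdots \bxt x_n) \to q$ over $(f, \bar\phi), (g, \bar\psi)$.

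By \cref{thm:extra-fibred-free-bimod}, the latter set is in bijection with $2$-morphisms $x_1 \bxt \cdots \bxt x_n \to q$ in $\dcC$ over $f, g$, so the task reduces to establishing a bijection between such $2$-morphisms and the $2$-cell component $\theta \co F(x_1) \bxt \cdots \bxt F(x_n) \to q$ of a commuting bimodule multimorphism (subject to the compatibility of \cref{thm:compact-bim-multimorphism-general}). In one direction, I would restrict $\theta$ along the unit $2$-morphisms $\eta \circ 1 \circ \eta$ of the free bimodules $F(x_i)$; in the other, extend $\bar\theta \co x_1 \bxt \cdots \bxt x_n \to q$ to a bimodule map $\hat\theta \co F(x_1 \bxt \cdots \bxt x_n) \to q$ by the freeness of \cref{thm:free-bimodule}, and set $\theta \defeq \hat\theta \circ \omega$.

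To show these constructions are mutually inverse, the composite backward-then-forward reduces, by naturality of $\xi$, by compatibility of $\pi$ with monad units, and by the adjunction triangle identities for free bimodules, to the identity on $\bar\theta$. The converse direction is the technical heart of the proof: given $\theta$ satisfying the compatibility of \cref{thm:compact-bim-multimorphism-general}, one must recover $\theta$ from its restriction $\bar\theta$ as $\hat{\bar\theta} \circ \omega$. This should follow from the very equation~\eqref{equ:compact-bim-multimorphims-general} specialised to $p_i = F(x_i)$: after rewriting the free-bimodule action $\alpha_i = \mu \circ 1 \circ \mu$ and collapsing via the bimodule axioms of $\hat\theta$, the right-hand side exhibits $\theta$ as built from $\bar\theta$ using $\xi$, $\pi \circ 1 \circ \pi$, $\bar\phi$, $\bar\psi$, and the action of $q$, which is precisely the formula defining $\hat{\bar\theta} \circ \omega$ via the definition of $\omega$ from \cref{thm:thm-1-22-dwyer-hess}.

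The principal obstacle will be this last verification, which amounts to a moderately large diagram chase interleaving the monad structures on the commuting tensor products $a_1 \ctensor \cdots \ctensor a_n$ and $b_1 \ctensor \cdots \ctensor b_n$, the bimodule axioms on $q$, the naturality of the interchangers $\xi$, and the universal property defining $\bar\phi, \bar\psi$ through $\pi$. Once this is established, the universality of $\omega$ together with the universal commuting monad multimorphisms follows immediately, completing the proof.
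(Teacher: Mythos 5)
Your proposal takes a genuinely different route from the paper, and it is mostly sound, but it has one concrete gap. For contrast: the paper's proof is a five-line argument that never touches the universal property directly. Since $F(x_1) \ctensor \cdots \ctensor F(x_n)$ is \emph{defined} as the reflexive coequaliser of the pair in (the $n$-ary analogue of) \cref{equ:key-for-bim-representable}, it suffices to show that $\bar\omega \co F(Fx_1 \bxt \cdots \bxt Fx_n) \to F_{\ctensor_i a_i, \ctensor_i b_i}(x_1 \bxt \cdots \bxt x_n)$ coequalises that pair, and the paper observes this is a \emph{split} coequaliser with splittings $F(\eta \bxt \cdots \bxt \eta)$ and $F(F\eta \bxt \cdots \bxt F\eta)$. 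The universal commuting multimorphism structure then transports across the resulting isomorphism for free. Your transposition argument is more self-contained and closer to how one would discover the result, and the two nontrivial verifications you identify (that extension-then-restriction is the identity, via $\pi\circ\eta^{\bxt n}=\eta_{\ctensor}$ and the triangle identities; and that restriction-then-extension is the identity, by precomposing \cref{equ:compact-bim-multimorphims-general} with $(1_{b_i}\hcomp(\eta\hcomp 1\hcomp\eta)\hcomp 1_{a_i})^{\bxt n}$ so that the actions $\mu\hcomp 1\hcomp\mu$ collapse) do go through as you describe.

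The gap is the well-definedness of your backward map. Your two inverse-checks show that restriction is injective on commuting multimorphisms and that $\hat{\bar\theta}\circ\omega$ restricts back to $\bar\theta$; but to conclude that restriction is a \emph{bijection} onto all $2$-morphisms $x_1\bxt\cdots\bxt x_n \to q$ over $f,g$, you must also show that $\hat{\bar\theta}\circ\omega$ satisfies \cref{equ:compact-bim-multimorphims-general} for an \emph{arbitrary} $\bar\theta$ --- otherwise nothing guarantees that a given $\bar\theta$ is the restriction of some commuting multimorphism. Equivalently (and this is in any case part of the proposition's assertion), you must verify that $\omega$ together with the universal $n$-ary commuting monad multimorphisms is itself a commuting bimodule multimorphism $F(x_1),\dots,F(x_n)\to F_{\ctensor_i a_i,\ctensor_i b_i}(x_1\bxt\cdots\bxt x_n)$; the general case then follows since $\CommMultBimC$ is closed under composition with unary maps (\cref{prop:CMultBimC}). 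This check unwinds, via \cref{thm:compact-bim-multimorphism-general} and the formula for $\omega$ in \cref{thm:thm-1-22-dwyer-hess}, to the compatibility of $\pi$ with the multiplications of $a_1\ctensor\cdots\ctensor a_n$ and $b_1\ctensor\cdots\ctensor b_n$ together with coherence for the interchangers --- a computation of comparable weight to the one you call the technical heart, and one the paper's split-coequaliser argument sidesteps entirely. Your "follows immediately" at the end therefore overstates the case; with this verification added, the proof closes up.
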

For example, in the binary case, this result tells us that we have an
isomorphism
\begin{equation}
  \label{equ:ctensor-of-free-bimodules}
  F_{a_1, b_1}(x_1) \ctensor F_{a_2, b_2}(x_2) \cong F_{a_1 \ctensor a_2, b_1 \ctensor b_2}(x_1 \bxt x_2)\rlap{ .}
\end{equation}
\begin{proof}
  It suffices in light of the definition of
  $\ctensor$ via the ($n$-ary analogue of the) 
  coequaliser~\eqref{equ:key-for-bim-representable}, to show that the
  following diagram is itself a coequaliser:
  \begin{equation*} 
    \begin{tikzcd}
      F \big( F^2_{a_1, b_1}(x_1) \bxt \cdots \bxt F^2_{a_n, b_n}(x_n) \big) 
      \ar[r, shift right, "\bar v"'] 
      \ar[r, shift left, "F(u)"] &[2em] 
      F (F_{a_1, b_1}(x_1) \bxt \cdots \bxt F_{a_n, b_n}(x_n)) 
      \ar[r, two heads, "\bar \omega"]  &
      F(x_1 \bxt \cdots \bxt x_n)\rlap{ ,}
    \end{tikzcd}
  \end{equation*}
  where each of the outer ``$F$'s'' is an $F_{\ctensor_i a_i,
    \ctensor_i b_i}$. But it is easy to see that this diagram is a split
  coequaliser, when equipped with the splittings $F(\eta \bxt \cdots
  \bxt \eta)$
  for $\bar \omega$ and $F(F\eta \bxt \cdots
  \bxt F\eta)$ for $F(u)$.
\end{proof}

\begin{thm} \label{thm:commuting-bimodules-representable}
The symmetric multicategory $\CommMultBimC$ of bimodules and commuting bimodule morphisms
is representable.
\end{thm}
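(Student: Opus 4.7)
The plan is to invoke \cite[Proposition~8.5]{RepresentableMulticats}, which asserts that a symmetric multicategory is representable as soon as it admits all pre-universal multimorphisms and these are closed under composition. The first hypothesis has already been secured by \cref{prop:commuting-bimodules-all-pre-universal}, so only the second remains. Concretely, given pre-universal multimorphisms $u \colon p_1, \dots, p_n \to p_1 \ctensor \cdots \ctensor p_n$ and, for each $1 \leqslant i \leqslant n$, $v_i \colon q_{i1}, \dots, q_{im_i} \to q_{i1} \ctensor \cdots \ctensor q_{im_i} \cong p_i$, one must show that the composite $u \circ (v_1, \dots, v_n)$ is pre-universal in $\CommMultBimC$, or equivalently that the canonical comparison
\[
(q_{11} \ctensor \cdots \ctensor q_{1m_1}) \ctensor \cdots \ctensor (q_{n1} \ctensor \cdots \ctensor q_{nm_n}) \longrightarrow q_{11} \ctensor \cdots \ctensor q_{nm_n}
\]
is an isomorphism which, moreover, transports pre-universal multimorphisms to pre-universal ones.

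The approach is to reduce to the case of free bimodules. By \cref{equ:bim-as-coeq-of-free} every bimodule is a reflexive coequaliser of free bimodules, and by \cref{thm:ctensor-preserves-coequalisers} the commuting tensor preserves reflexive coequalisers in each variable. It therefore suffices to establish the required coherence on bimodules of the form $F_{a_i, b_i}(x_i)$. Here \cref{prop:free-tensor-bimodules-is-free}, applied both to the iterated and to the flat $n$-fold commuting tensor, identifies both sides with a free bimodule over the commuting tensor of monads, namely
\[
F_{\ctensor_i (\ctensor_j a_{ij}),\, \ctensor_i (\ctensor_j b_{ij})}\bigl(\bxt_i (\bxt_j x_{ij})\bigr) \qquad \text{and} \qquad F_{\ctensor_{i,j} a_{ij},\, \ctensor_{i,j} b_{ij}}\bigl(\bxt_{i,j} x_{ij}\bigr) \rlap{ .}
\]
The associativity and unit isomorphisms for $\ctensor$ on monads supplied by \cref{thm:ctensor-monads}, together with those for $\bxt$ on $\dcC_0$ and $\dcC_1$, produce a canonical isomorphism between these two free bimodules, and the preservation-of-coequalisers argument then descends this to a canonical isomorphism between the iterated and the flat commuting tensor of arbitrary bimodules.

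The main obstacle will be bookkeeping: one must verify not merely that the two sides are abstractly isomorphic, but that the isomorphism is given \emph{precisely} by the composite multimorphism $u \circ (v_1, \dots, v_n)$, so that the latter inherits the pre-universal property. This amounts to tracing the universal $2$-morphisms $\omega$ of \cref{thm:thm-1-22-dwyer-hess} through the defining coequaliser presentation \eqref{equ:key-for-bim-representable} of $\ctensor$ in both the iterated and the flat configuration, and checking compatibility with the interchangers $\xi$ and the quotient maps $\pi$ of \cref{equ:bxt-to-ctensor}. Once this compatibility is nailed down on free bimodules, the conclusion follows by a standard reflexive coequaliser argument in $\BimC[{(A_{[1,n]}, \ctensor_i a_i), (B_{[1,n]}, \ctensor_i b_i)}]$, and the theorem follows.
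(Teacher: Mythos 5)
Your proposal is correct and follows essentially the same route as the paper: invoke \cite[Proposition~8.5]{RepresentableMulticats}, reduce closure of pre-universal morphisms under composition to invertibility of the canonical comparison between iterated and flat commuting tensors, and establish that invertibility by presenting both sides as reflexive coequalisers of free bimodules via \cref{thm:ctensor-preserves-coequalisers} and \cref{prop:free-tensor-bimodules-is-free}. The only (harmless) difference is that the paper reduces by symmetry to a single placed composition $\circ_1$ and frames the final step as showing the induced map $\kappa$ through the universal $n$-ary multimorphism is invertible, which slightly lightens the ``bookkeeping'' you anticipate, since pre-universality then follows automatically rather than needing to be transported along the isomorphism.
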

\begin{proof}
  Since we know that all pre-universal multimaps exist in
  $\CommMultBimC$, it suffices as explained above to show that
  pre-universal morphisms are closed under composition. It suffices to
  do this for the placed composition $\circ_i$, and by symmetry we can
  reduce to the case of $\circ_1$. Thus, let $p_i \co (A_i, a_i)
  \tickar (B_i, b_i)$ be bimodules for $1 \leqslant i \leqslant n$,
  and let $1 < m < n$; we will show that the composite of
  pre-universal morphisms
  \begin{equation*}
    p_1, \dots, p_m, p_{m+1}, \dots, p_n \xrightarrow{\lambda_m, 1, \dots, 1}
    p_1 \ctensor \dots \ctensor p_m, p_{m+1}, \dots, p_n \xrightarrow{\lambda_{n-m+1}} (p_1 \ctensor \dots \ctensor p_m) \ctensor p_{m+1} \ctensor \dots \ctensor p_n
  \end{equation*}
  is again pre-universal. This composite factors through the
  universal $n$-ary multimorphism $\lambda_n$, say as
  \begin{equation*}
    p_1, \dots, p_m, p_{m+1}, \dots, p_n \xrightarrow{\ \lambda_m\ }
    p_1 \ctensor \dots \ctensor p_n \xrightarrow{\ \ \kappa\ \ } (p_1 \ctensor \dots \ctensor p_m) \ctensor p_{m+1} \ctensor \dots \ctensor p_n
  \end{equation*}
  and it suffices to show that $\kappa$ is invertible. Now,
  $p_1 \ctensor \dots \ctensor p_m$ is the coequaliser of the reflexive pair
  \begin{equation*}
    \begin{tikzcd}
      F \big( Fp_1 \bxt \cdots \bxt Fp_m \big) 
      \ar[r, shift right, "\bar v"'] 
      \ar[r, shift left, "F(u)"] &[2em] 
      F(p_1 \bxt \dots \bxt p_m)\rlap{ .}
    \end{tikzcd}
  \end{equation*}
  while for each $m+1 \leqslant i \leqslant n$, the bimodule $p_i$ is
  as in \cref{equ:bim-as-coeq-of-free} the coequaliser of the reflexive pair
  \begin{equation*}
    \begin{tikzcd}
      F^2p_i
      \ar[r, shift right, "\mu"'] 
      \ar[r, shift left, "F(\alpha)"] &[2em] 
      Fp_i\rlap{ .}
    \end{tikzcd}
  \end{equation*}
  It follows by~\cref{thm:ctensor-preserves-coequalisers} that $(p_1
  \ctensor \dots \ctensor p_m) \ctensor p_{m+1} \ctensor \dots
  \ctensor p_n$ is the coequaliser of the reflexive pair
  \begin{equation*}
    \begin{tikzcd}
      F \big( Fp_1 \bxt \cdots \bxt Fp_m \big) \ctensor F^2p_{m+1} \ctensor \dots \ctensor F^2p_n
      \ar[r, shift right, "\bar v \ctensor \alpha \ctensor \dots \ctensor \alpha"'] 
      \ar[r, shift left, "F(u) \ctensor \mu \ctensor \dots \ctensor \mu"] &[2em] 
      F(p_1 \bxt \dots \bxt p_m) \ctensor Fp_{m+1} \ctensor \dots \ctensor Fp_n\rlap{ .}
    \end{tikzcd}
  \end{equation*}
  and by~\cref{prop:free-tensor-bimodules-is-free} and a short
  calculation, this is isomorphic to the reflexive pair
  \begin{equation*}
    \begin{tikzcd}
      F \big( Fp_1 \bxt \cdots \bxt Fp_m \bxt Fp_{m+1} \bxt \dots \bxt Fp_n\big)
      \ar[r, shift right, "\bar v"'] 
      \ar[r, shift left, "F(u)"] &[2em] 
      F(p_1 \bxt \dots \bxt p_m \bxt p_{m+1} \bxt \dots \bxt p_n)
    \end{tikzcd}
  \end{equation*}
  defining $p_1 \ctensor \cdots \ctensor p_n$; whence $\kappa$ is
  invertible, as required.
\end{proof}

\begin{rmk} \label{rmk:summarycomparison}
We proceeded in a different way from~\cite{DwyerW:BoardmanVtpo}, who
first define the commuting tensor product of free bimodules 
and then extend it to all bimodules via a colimit diagram (see~\cite[Equation~(1.4)]{DwyerW:BoardmanVtpo}).  Here, we first isolated the universal property
that the commuting tensor of bimodules should satisfy, by introducing the notion of a 
commuting multimorphism of bimodules. The two constructions agree, as 
the colimit is equivalent to the coequalizer in~\cref{equ:key-for-bim-representable}.
%Note that, a posteriori, \cref{thm:prop-1-17-dwyer-hess}
%expresses the functorial action of the commuting tensor product on bimodule 2-morphisms between free bimodules.
\end{rmk} 

In a similar spirit to \cref{thm:nctensor-smc,thm:ctensor-monads}, the desired symmetric monoidal structure on the horizontal category of $\BimC$ is now
immediate.

\begin{prop} \label{thm:ctensor-monoidal-horizontal}
The horizontal category $\BimC_1$ of $\BimC$
admits the structure of a symmetric monoidal category, with tensor product the commuting tensor product of bimodules.
\end{prop}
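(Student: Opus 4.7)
The plan is to deduce the symmetric monoidal structure on $\BimC_1$ from the representability of the symmetric multicategory $\CommMultBimC$ of bimodules and commuting bimodule multimorphisms, mirroring exactly the pattern used to derive \cref{thm:nctensor-smc} from \cref{thm:nctensor-representable} and \cref{thm:ctensor-monads} from \cref{prop:CMultMndCrep}.

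First I would observe that the underlying ordinary category of the symmetric multicategory $\CommMultBimC$ is precisely $\BimC_1$. Indeed, a unary commuting bimodule multimorphism $(f,\phi,g,\psi,\theta) \colon p \to q$ as in \cref{def:bimodule-multimorphism} consists of a unary commuting monad multimorphism $(f,\phi)\colon (A,a)\to(C,c)$ (which is just a monad morphism), a unary commuting monad multimorphism $(g,\psi)\colon (B,b)\to(D,d)$, and a 2-morphism $\theta$ satisfying the bimodule compatibility condition; this is exactly the data of a bimodule 2-morphism in the sense of \cref{defi:bimodules}(ii). Composition in $\CommMultBimC$ when restricted to unary arrows recovers the composition in $\BimC_1$.

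Next I would invoke \cref{thm:commuting-bimodules-representable}, which says that $\CommMultBimC$ is representable, and apply the symmetric version of \cite[Def.~9.6]{RepresentableMulticats} to conclude that the underlying category of $\CommMultBimC$ carries a symmetric monoidal structure whose tensor product is the representing object for binary commuting bimodule multimorphisms. By the construction of pre-universal binary morphisms in \cref{prop:commuting-bimodules-prerepresentable}, this representing object is exactly the commuting tensor product $p_1\ctensor p_2$ defined via the coequaliser in~\eqref{equ:key-for-bim-representable}. The unit is the representing object for nullary multimorphisms, which is the identity bimodule on the discrete monad $\disc{I}$.

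There is essentially no obstacle to overcome: the hard technical work has already been done in \cref{prop:commuting-bimodules-prerepresentable,prop:commuting-bimodules-all-pre-universal,thm:commuting-bimodules-representable}, where we established pre-representability, the existence of all pre-universal multimorphisms via the coequalisers~\eqref{equ:key-for-bim-representable}, and the closure of pre-universal maps under composition using the crucial fact (\cref{prop:free-tensor-bimodules-is-free}) that the commuting tensor product of free bimodules is free. Standard multicategory theory then packages the associativity, unitality and symmetry coherences automatically, so that no further hand-computation of coherence isomorphisms is required.
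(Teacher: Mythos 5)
Your proposal is correct and matches the paper's argument exactly: the paper's proof is precisely the observation that $\BimC_1$ is the underlying category of the representable symmetric multicategory $\CommMultBimC$ from \cref{thm:commuting-bimodules-representable}, with the monoidal structure extracted by the standard representability machinery. Your additional check that unary commuting bimodule multimorphisms coincide with bimodule 2-morphisms is a worthwhile detail the paper leaves implicit.
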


\begin{proof} 
It is the underlying monoidal category of the symmetric multicategory of bimodules and commuting bimodule morphisms of \cref{thm:commuting-bimodules-representable}.
\end{proof}

\subsection{The symmetric normal oplax monoidal structure on $\BimC$}
%We now need to establish its functoriality, as required for an oplax monoidal double category in \cref{def:oplaxmonoidal}, which will involve defining the commuting tensor product of bimodules and study its interaction with the composiiton of bimodules. 
The desired oplax monoidal structure on $\BimC$ as a double category consists of the monoidal structures on the vertical and horizontal category, established in 
\cref{thm:ctensor-monoidal-vertical,thm:ctensor-monoidal-horizontal} respectively, as well as the rest of the data in~\cref{def:oplaxmonoidal}.
In particular, it involves interchange maps, which relate the composition of bimodules with their
commuting tensor product. We begin by constructing these for free bimodules.

\begin{lem}[Interchange for free bimodules] \label{thm:interchange-free-bimodules}
For $i \in \{1, 2\}$, let $F_{a_i, b_i}(x_i) \co (A_i, a_i) \tickar (B_i, b_i)$ and $F_{b_i, c_i}(y_i) \co (B_i, b_i) \tickar (C_i, c_i)$ be free bimodules. Then there exists a bimodule 2-morphism
\[
\begin{tikzcd}[column sep = 3cm] 
(A_1 \bxt A_2, a_1 \ctensor a_2) \ar[rr,tick, "(F(y_1) \bimcomp F(x_1)) \ctensor ( F(y_2) \bimcomp F(x_2))"] \ar[d, equal] \ar[drr, phantom,  "\Two \xi'"] & & (C_1 \bxt C_2, c_1 \ctensor c_2)  \ar[d, equal] \\
(A_1 \bxt A_2, a_1 \ctensor a_2)   \ar[r,tick, "F(x_1) \ctensor F(x_2)"'] & (B_1 \bxt B_2,b_1\ot b_2) \ar[r, tick,"F(y_1) \ctensor F(y_2)"'] & (C_1 \bxt C_2, c_1 \ctensor c_2) 
\end{tikzcd}
\]
\end{lem}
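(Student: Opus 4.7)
The plan is to exploit the explicit descriptions of bimodule composition and commuting tensor product for \emph{free} bimodules, so as to identify both sides of the desired $2$-morphism with free bimodules on suitably chosen horizontal maps in $\dcC$; this will reduce the problem to constructing a single globular $2$-morphism between those horizontal maps.

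First, I would apply the isomorphism $F_{b_i,c_i}(y_i) \bimcomp F_{a_i,b_i}(x_i) \cong F_{a_i,c_i}(y_i \circ b_i \circ x_i)$ from \cref{equ:bimcomp-of-free-is-free} to each factor on the domain side, and then apply the binary case \cref{equ:ctensor-of-free-bimodules} of \cref{prop:free-tensor-bimodules-is-free} to identify the commuting tensor of these two free bimodules. The net effect is an isomorphism
\[
\bigl(F(y_1) \bimcomp F(x_1)\bigr) \ctensor \bigl(F(y_2) \bimcomp F(x_2)\bigr) \;\cong\; F_{a_1 \ctensor a_2,\, c_1 \ctensor c_2}\bigl((y_1 \circ b_1 \circ x_1) \bxt (y_2 \circ b_2 \circ x_2)\bigr)\rlap{ .}
\]
Applying the same two results on the codomain side (now first tensoring, then composing), one obtains
\[
\bigl(F(y_1) \ctensor F(y_2)\bigr) \bimcomp \bigl(F(x_1) \ctensor F(x_2)\bigr) \;\cong\; F_{a_1 \ctensor a_2,\, c_1 \ctensor c_2}\bigl((y_1 \bxt y_2) \circ (b_1 \ctensor b_2) \circ (x_1 \bxt x_2)\bigr)\rlap{ .}
\]

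The second step is then to invoke functoriality of $F_{a_1 \ctensor a_2,\, c_1 \ctensor c_2} \colon \dcC[A_1\bxt A_2, C_1 \bxt C_2] \to \BimC[(A_1 \bxt A_2, a_1 \ctensor a_2), (C_1 \bxt C_2, c_1 \ctensor c_2)]$ (as provided by \cref{thm:free-bimodule}): any globular $2$-morphism between the underlying horizontal maps is sent to a globular bimodule $2$-morphism between their free bimodules, with the bimodule-action compatibility then holding automatically. Hence it suffices to construct a globular $2$-morphism in $\dcC$
\[
(y_1 \circ b_1 \circ x_1) \bxt (y_2 \circ b_2 \circ x_2) \;\Longrightarrow\; (y_1 \bxt y_2) \circ (b_1 \ctensor b_2) \circ (x_1 \bxt x_2)\rlap{ .}
\]

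Finally, such a $2$-morphism is assembled from two standard ingredients: first, the (iterated) interchanger $\xi$ of the oplax monoidal structure on $\dcC$ -- built from the binary $\xi$'s of \cref{def:oplaxmonoidal} in the evident way -- yields a $2$-morphism $(y_1 \circ b_1 \circ x_1) \bxt (y_2 \circ b_2 \circ x_2) \Rightarrow (y_1 \bxt y_2) \circ (b_1 \bxt b_2) \circ (x_1 \bxt x_2)$; then whiskering with the canonical $2$-morphism $\pi \colon b_1 \bxt b_2 \Rightarrow b_1 \ctensor b_2$ from \cref{equ:bxt-to-ctensor} produces the desired target. The composite, after applying $F_{a_1 \ctensor a_2,\, c_1 \ctensor c_2}$ and inverting the isomorphisms from the first step, gives the sought-after $\xi'$. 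I do not anticipate any serious obstacle: the only nontrivial verification is that the construction really lands in bimodule $2$-morphisms, but this is guaranteed by the use of the functor $F_{a_1 \ctensor a_2,\, c_1 \ctensor c_2}$. This same pattern -- interchanger followed by $\pi$, packaged via freeness -- will reappear when we later need to exhibit the full double-functoriality of $\ctensor$ in \cref{sec:bimodules}.
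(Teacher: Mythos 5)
Your proposal is correct and follows essentially the same route as the paper's proof: identify both sides as free bimodules via \cref{equ:bimcomp-of-free-is-free} and \cref{equ:ctensor-of-free-bimodules} (applied in the appropriate order on each side), then apply the free bimodule functor $F_{a_1 \ctensor a_2, c_1 \ctensor c_2}$ to the globular $2$-morphism obtained by composing the interchanger $\xi$ of $\dcC$ with the whiskered comparison cell $\pi \colon b_1 \bxt b_2 \Rightarrow b_1 \ctensor b_2$. No gaps.
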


\begin{proof} For the bimodule at the top of the diagram, we have an isomorphism
\begin{align*} 
(F_{b_1, c_1} (y_1) \bimcomp F_{a_1, b_1}(x_1)) \ctensor ( F_{b_2, c_2} (y_2) \bimcomp F_{a_2, b_2}(x_2)) & \cong 
F_{a_1, c_1}( y_1 \hcomp b_1 \hcomp x_1) \ctensor F_{a_2, c_2}( y_2 \hcomp b_2 \hcomp x_2) \\
& \cong F_{a_1 \ctensor a_2, c_1 \ctensor c_2}( ( y_1 \hcomp b_1 \hcomp x_1) \bxt ( y_2 \hcomp b_2 \hcomp x_2 ) )
\end{align*}
given by~\cref{equ:bimcomp-of-free-is-free}  and~\cref{equ:ctensor-of-free-bimodules}, respectively.
For the composite bimodules at the bottom of the diagram, we have an isomorphism 
\begin{align*} 
\big( F_{b_1, c_1}(y_1) \ctensor F_{b_2, c_2}(y_2) \big)  \bimcomp 
\big(  F_{a_1, b_1}(x_1) \ctensor F_{a_2, b_2}(x_2) \big)  & \cong 
\big ( F_{b_1 \ctensor b_2, c_1 \ctensor c_2}( y_1 \bxt y_2 ) \big) \bimcomp
\big( F_{a_1 \ctensor a_2, b_1 \ctensor b_2}( x_1 \bxt x_2) \big)   \\
 & \cong F_{a_1 \ctensor a_2, c_1 \ctensor c_2} \big( (y_1 \bxt y_2) \hcomp (b_1 \ctensor b_2) \hcomp (x_1 \bxt x_2) \big)
 \end{align*}
where we used~\cref{equ:ctensor-of-free-bimodules} and~\cref{equ:bimcomp-of-free-is-free}, respectively. 
Therefore, the required globular bimodule 2-morphism $\xi'$ is given by composing these bimodule isomorphisms with the 2-morphism obtained by applying the free
$(c_1 \ctensor c_2, a_1 \ctensor a_2)$-bimodule functor $F_{a_1 \ctensor a_2, c_1 \ctensor c_2}$ to the composite 2-morphism
\[
\begin{tikzcd}
A_1 \bxt A_2 \ar[rrr, tick,"(y_1 \hcomp b_1 \hcomp x_1) \bxt (y_2 \hcomp b_2 \hcomp x_2)"] \ar[d, equal] 
\ar[drrr, phantom,  "\Two \xi"] & & & C_1 \bxt C_2 \ar[d, equal] \\
A_1 \bxt A_2 \ar[r, tick,"x_1 \bxt x_2"] \ar[d, equal] & B_1 \bxt B_2 \ar[r,tick, "b_1 \bxt b_2"] \ar[d, equal] \ar[dr, phantom,  "\Two \pi"]  & B_1 \bxt B_2 \ar[r,tick, "y_1 \bxt y_2"] \ar[d, equal] & C_1 \bxt C_2 \ar[d, equal]  \\
A_1 \bxt A_2 \ar[r, tick,"x_1 \bxt x_2"']   & B_1 \bxt B_2 \ar[r, tick,"b_1 \ctensor b_2"'] & B_1 \bxt B_2 \ar[r, tick,"y_1 \bxt y_2"'] & C_1 \bxt C_2 \mathrlap{,}
\end{tikzcd} 
\]
where $\xi$ is the interchange for $\dcC$ and $\pi$ is the comparison 2-cell in~\cref{equ:bxt-to-ctensor}. 
\end{proof}

\begin{thm} \label{thm:ctensor-bimodules} Let $\dcC$ be a symmetric normal oplax monoidal closed 
double category. Under \cref{hyp:fibrancy,hyp:coproducts-in-mnd,hyp:equalisers,hyp:tabulators,hyp:free-monad,hyp:coequalisers,hyp:equalisers-in-horizontal,hyp:refl-coequalisers},  the double category $\BimC$ admits the structure of a symmetric oplax monoidal double category,
with tensor product on objects given by the commuting tensor product of monads.
\end{thm}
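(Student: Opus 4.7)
The plan is to piece together the data and axioms of \cref{def:oplaxmonoidal,def:normality,def:symmetricoplaxmon} for $\BimC$ using the structure already constructed. Propositions \ref{thm:ctensor-monoidal-vertical} and \ref{thm:ctensor-monoidal-horizontal} already supply the two monoidal structures on $\BimC_0 = \MndC$ and on $\BimC_1$; moreover, by construction the tensor product of a $(b_1,a_1)$- and a $(b_2,a_2)$-bimodule is a $(b_1 \ctensor b_2, a_1 \ctensor a_2)$-bimodule, so source and target are strictly monoidal. Thus what remains is to produce the interchangers $\xi'$, $\upsilon'$, $\delta'$, $\iota'$ demanded by \cref{def:oplaxmonoidal}, verify the oplax coherence axioms on $\ctensor$ and the pseudoness axioms on $\disc I$, check normality in the sense of \cref{def:normality}, and finally exhibit the associator, unitors and symmetry as vertical transformations.

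The heart of the argument is to extend the interchanger $\xi'$ for free bimodules of \cref{thm:interchange-free-bimodules} to arbitrary bimodules. By \cref{equ:bim-as-coeq-of-free} every bimodule $p$ is a reflexive coequaliser of free bimodules $F F p \rightrightarrows F p \twoheadrightarrow p$. First I would check naturality of $\xi'$ in its four arguments at the level of free bimodules (a direct diagram chase using the interchangers $\xi$ and comparison $\pi$ of \cref{equ:bxt-to-ctensor} of the base $\dcC$). Then, using that $\ctensor$ preserves reflexive coequalisers in each variable by \cref{thm:ctensor-preserves-coequalisers}, and that bimodule composition $\bimcomp$ does so by \cref{hyp:refl-coequalisers} and \cref{thm:bim-c}, both the source and target bimodules in the target of $\xi'$ are computable as iterated reflexive coequalisers of free bimodules. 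This lets me extend $\xi'$ uniquely to arbitrary bimodules by the universal property of these coequalisers. The unit interchanger $\upsilon'$ at $(A,a),(B,b)$ compares the horizontal identity $a \ctensor b$ on $(A,a)\ctensor (B,b)$ with $\hid_{A \bxt B}$ in $\dcC$, and is constructed from the monad unit on $a\ctensor b$. The unit-related 2-cells $\delta', \iota'$ for $\disc I$ are invertible because their components arise from the normal oplax structure of $\dcC$, via \cref{def:normality}.

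The oplax double functor axioms for $\ctensor$ and the pseudo double functor axioms for $\disc I$, as well as the vertical naturality of the associativity, unitality, and symmetry constraints (as in \cref{def:oplaxmonoidal} and \cref{def:symmetricoplaxmon}), are all globular 2-morphism equations between bimodule 2-cells. By the universal property of the defining coequalisers, it suffices to verify them after precomposing with enough free bimodules; on free bimodules, the interchangers $\xi'$, $\upsilon'$, $\delta'$, $\iota'$ unfold to concrete pastings in $\dcC$ built from the interchangers of the base double category and the universal 2-cells $\pi$ of \cref{equ:bxt-to-ctensor}, so the required identities reduce to coherence identities that already hold in $\dcC$, together with the axioms for the monad multiplication and unit. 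The symmetry on $\BimC$ is obtained by applying $F$ to the braiding on $\dcC$ and then descending to the quotient defining $\ctensor$; symmetry of the constraints $\sigma, \tau$ used to build $\ctensor$ (\cref{def:sigma-tau}) is what guarantees this descent is well-defined.

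The principal obstacle is the extension of $\xi'$ from free bimodules to arbitrary ones, and the book-keeping it involves: the source and target of $\xi'$ are each built as a reflexive coequaliser in a different way (one via $\bimcomp$ of $\ctensor$-composites, the other via $\ctensor$ of $\bimcomp$-composites), so descending $\xi'$ to a well-defined and natural bimodule 2-morphism requires tracking that both these coequaliser diagrams, together with $\xi'$ on the free level, all interact coherently. Once this descent is performed the remaining oplax coherence axioms are verified by the same strategy of reduction to the free case, completing the construction of the symmetric oplax monoidal structure on $\BimC$.
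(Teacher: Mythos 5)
Your proposal takes essentially the same route as the paper: the monoidal structures on $\BimC_0$ and $\BimC_1$ come from \cref{thm:ctensor-monoidal-vertical,thm:ctensor-monoidal-horizontal}, the interchanger is first built on free bimodules (\cref{thm:interchange-free-bimodules}) and then descended to arbitrary bimodules by presenting each as a reflexive coequaliser of free ones and invoking preservation of reflexive coequalisers by both $\bimcomp$ and $\ctensor$ (\cref{thm:ctensor-preserves-coequalisers}), and the coherence axioms are checked by reduction to the free case via the universal property of these coequalisers. This is exactly the paper's argument.

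One correction: you should \emph{not} plan to ``check normality in the sense of \cref{def:normality}.'' The theorem claims only a symmetric oplax monoidal structure, and indeed normality is expected to \emph{fail} for $\BimC$: as explained in \cref{thm:not-normal}, the interchanger $\tilde\xi$ with an identity bimodule in one slot is built from an instance of the base interchanger $\xi$ applied to non-identity horizontal cells (the identity bimodule on $(A,a)$ is the horizontal cell $a$, not $\hid_A$), so there is no reason for it to be invertible. Only the unit-level constraints of \cref{equ:bim-oplax-2} are invertible. Dropping that step leaves your argument intact and matching the paper's.
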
 

\begin{proof} We need to verify the clauses of \cref{def:oplaxmonoidal,def:symmetricoplaxmon}.
As mentioned earlier, we have established the symmetric monoidal structure on $\BimC_0$ in \cref{thm:ctensor-monoidal-vertical} and on $\BimC_1$ in \cref{thm:ctensor-monoidal-horizontal}. By construction, the source and target functors are symmetric strict monoidal.

Next, we construct the interchange laws. For this, we extend the interchange law from free bimodules (\cref{thm:interchange-free-bimodules}) to arbitrary ones using the fact that every bimodule is a reflexive coequaliser of free ones.
Let $p_i \co (A_i, a_i) \tickar (B_i, b_i)$ and $q_i \co (B_i, b_i) \tickar (C_i, c_i)$, for $i \in \{ 1, 2 \}$, be bimodules. The required bimodule 2-morphism
\begin{equation}
\label{equ:bim-oplax-1}
\tilde{\xi} \co (q_1 \bimcomp p_1) \ctensor (q_2 \bimcomp p_2) \to (q_1 \ctensor q_2) \bimcomp (p_1 \ctensor p_2)
\end{equation}
is then induced by the universal property of coequalisers via the following diagram:
\begin{equation}
\label{equ:construction-bim-interchange}
\begin{tikzcd}
( F^2 q_1 \bimcomp F^2 p_1 ) \ctensor ( F^2 q_2 \bimcomp F^2 p_2 ) \ar[r, "(\ast)"]  \ar[d, shift right = 1ex] \ar[d, shift left = 1ex] & (F^2q_1 \ctensor F^2q_2) \bimcomp (F^2p_1 \ctensor F^2 p_2)  \ar[d, shift right = 1ex] \ar[d, shift left = 1ex] \\
( Fq_1 \bimcomp Fp_1 ) \ctensor ( Fq_2 \bimcomp F p_2 ) \ar[r, "(\ast \ast)"] \ar[d] & (Fq_1 \ctensor Fq_2) \bimcomp (Fp_1 \ctensor Fp_2) \ar[d] \\
(q_1 \bimcomp p_1) \ctensor (q_2 \bimcomp p_2) \ar[r, dotted, "\tilde{\xi}"] &  (q_1 \ctensor q_2) \bimcomp (p_1 \ctensor p_2) \mathrlap{.}
\end{tikzcd}
\end{equation}
Here, the left and right columns are obtained from the diagrams exhibiting the bimodules under consideration as 
coequalisers of free bimodules as in \cref{equ:bim-as-coeq-of-free}, via functors
that preserve coequalisers, and therefore are again coequaliser diagrams. Indeed, the operations involved are composition of bimodules, which preserves
reflexive coequalisers by \cref{thm:bim-c-fibrant}, and the commuting tensor, which preserves reflexive coequalisers by \cref{thm:ctensor-preserves-coequalisers}. 
The horizontal maps labelled $(\ast)$ and $(\ast \ast)$ are instances of the interchange 2-morphism $\xi'$ for free bimodules constructed in~\cref{thm:interchange-free-bimodules}.

Moreover, we have bimodule isomorphisms of the form
\begin{equation}
\label{equ:bim-oplax-2}
\begin{tikzcd}[column sep = 3cm]
(A_1, a_1) \ctensor (A_2, a_2) 
	\ar[r, "\id_{(A_1, a_1)} \ctensor \id_{(A_2, a_2)}"] 
	\ar[d,equal] 
	\ar[dr, phantom,  "\cong"] & 
(A_1, a_1) \ctensor (A_2, a_2) 
	\ar[d,equal] \\
(A_1, a_1) \ctensor (A_2, a_2) 
	\ar[r, "\id_{(A_1, a_1) \ctensor (A_2, a_2)}"'] & 
(A_1, a_1) \ctensor (A_2, a_2)
\end{tikzcd}
\end{equation}
\begin{displaymath}
\begin{tikzcd}[column sep = 2cm]
(I, \id_I) 
	\ar[rr, "\id_{(I, \id_I})"]
	 \ar[d,equal] 
	 \ar[drr, phantom,  "\cong"]  
	 & 
	 & 
(I, \id_I) 
	\ar[d, equal] \\
(I, \id_I) 
	\ar[r, "\id_{(I, \id_I)}"'] & 
(I, \id_I) 
	\ar[r, "\id_{(I, \id_I)}"'] & 
(I, \id_I) \mathrlap{,}
\end{tikzcd} \qquad
\begin{tikzcd}[column sep = large]
(I, \id_I) 
	\ar[r,"\id_{(I, \id_I})"] 
	\ar[d,  equal] 
	\ar[dr, phantom,  "\cong"]  & 
(I, \id_I) 
	\ar[d,  equal]  \\
(I, \id_I) 
	\ar[r,"\id_{(I, \id_I)}"'] & 
(I, \id_I) \mathrlap{.}
\end{tikzcd} 
\end{displaymath}
In fact, the latter of these is an identity.
These can be constructed easily, recalling that, for a  monad $(A,a)$, 
the horizontal identity $\id_{(A,a)} \co (A,a) \tickar (A,a)$ in $\BimC$
is given by the horizontal map $a \co A \tickar A$, viewed as a $(a,a)$-bimodule via the monad multiplication.

The coherence conditions for this structure to form an oplax monoidal double category which is furthermore symmetric, follow by the universal properties with which all these bimodule 2-morphisms
are constructed.
\end{proof} 

\begin{rmk} \label{thm:not-normal}
\cref{thm:ctensor-bimodules} does not claim that the oplax monoidal structure of the double category $\BimC$ is \emph{normal}, in the sense of \cref{def:normality}. Indeed, while  the bimodule 2-morphisms 
in~\eqref{equ:bim-oplax-2} are invertible, as required, the interchange maps in \eqref{equ:bim-oplax-1} do not seem to satisfy the appropriate normality conditions. To illustrate this point,  let $(A,a_1)$, $(A_2, a_2)$ be monads and consider 
the identity bimodule $\id_{(A_1, a_1)} \co (A_1, a_1) \tickar (A_1, a_1)$ and bimodules $p_2 \co (A_2, a_2) \tickar (B_2, b_2)$, $q_2 \co (B_2, b_2) \tickar (C_2, c_2)$. Then, there is no reason for which
the interchange map for these bimodules
\[
\tilde{\xi} \co (a_1 \bimcomp a_1) \ctensor (q_2 \bimcomp p_2) \to 
(a_1 \ctensor q_2) \bimcomp (a_1 \ctensor p_2)
\]
 constructed as in \eqref{equ:construction-bim-interchange} should be an isomorphism.
 This is essentially because the map $(\ast \ast)$ of~\eqref{equ:construction-bim-interchange}, which has the form
 \[
 (Fa_1 \bimcomp Fa_1) \ctensor (Fq_2 \bimcomp Fp_2) \xrightarrow{\xi'}
 (Fa_1 \ctensor Fq_2) \bimcomp (Fa_1 \ctensor Fp_2)
\]
does not seem to be an isomorphism either. Indeed,  the latter is obtained by composing isomorphisms with the result
of applying the free bimodule functor to the 2-morphism
\[
\begin{tikzcd}
A_1 \bxt A_2 \ar[rrr, tick,"(a_1 \hcomp a_1 \hcomp a_1) \bxt (a_2 \hcomp b_2 \hcomp p_2)"] \ar[d, equal] 
\ar[drrr, phantom,  "\Two \xi"] & & & C_1 \bxt C_2 \ar[d, equal] \\
A_1 \bxt A_2 \ar[r, tick,"a_1 \bxt p_2"] \ar[d, equal] & B_1 \bxt B_2 \ar[r,tick, "a_1 \bxt b_2"] \ar[d, equal] \ar[dr, phantom,  "\Two \pi"]  & B_1 \bxt B_2 \ar[r,tick, "a_1 \bxt q_2"] \ar[d, equal] & C_1 \bxt C_2 \ar[d, equal]  \\
A_1 \bxt A_2 \ar[r, tick,"a_1 \bxt p_2"']   & B_1 \bxt B_2 \ar[r, tick,"a_1 \ctensor b_2"'] & B_1 \bxt B_2 \ar[r, tick,"a_1 \bxt q_2"'] & C_1 \bxt C_2 \mathrlap{,}
\end{tikzcd}
\]
which is not invertible in general, as it involves the interchange of $\dcC$ on non-identity maps.
Attempting to present $(a_1 \bimcomp a_1) \ctensor (q_2 \bimcomp p_2)$ with a different 
coequaliser than the one on the left-hand side of \eqref{equ:construction-bim-interchange},
exploiting that some of the bimodules involved are identity bimodules, runs into the same issue.

Since the oplax monoidal structure of $\BimC$ does not seem to be normal, we do not extract a symmetric monoidal structure on the horizontal bicategory of $\BimC$ (see \cite[Section~4]{Paper1} for details). Far from being a problem, this subtle aspect provides extra motivation for our double categorical approach.
\end{rmk}

\section{Applications}
\label{sec:applications}

In this last section, we present two applications of our theory.
The first one recovers the known fact that the tensor product of (enriched) categories extends to a symmetric monoidal structure on the double category of (enriched) profunctors. The second one establishes 
a new result, which is obtained by replacing (enriched) categories by (enriched) symmetric
multicategories, and profunctors by bimodules between multicategories -- which we here refer to as symmetric multiprofunctors (see \cref{defi:smultprof} for details). More precisely, we show that the Boardman--Vogt tensor product of (enriched) symmetric multicategories (see e.g. \cite{ElmendorfA:percma}) extends to a symmetric oplax monoidal structure on the double category of symmetric multiprofunctors.

\subsection{Categories and profunctors} Let $\ca{V}$ be a symmetric monoidal closed and locally presentable category. We write $\VMat$ for the double category of $\ca{V}$-matrices,
as considered in our running example (starting in \cref{thm:running-1}).
%\cref{thm:running-5,thm:running-3,thm:running-6,thm:running-7,thm:running-8}
%\cref{thm:running-9,thm:running-10,thm:running-11,thm:running-12}
%\cref{thm:running-13}). 
% with sets as objects, functions as vertical maps, and $\ca{V}$-matrices as horizontal maps. Explicitly, a $\ca{V}$-matrix $x \co A \tickar B$ from~$A$ to $B$ is a $\ca{V}$-functor $x \co B \times A \to \ca{V}$ where $A$ and $B$ are considered as discrete $\ca{V}$-categories. The value of such an $x$ to $(b,a) \in B \times A$ will be written~$x[b;a]$\footnote{We use this slightly anorthodx convention to draw an analogy with symmetric sequences of the subsequent \cref{subsec:symseq}.}.
Although we have presented many aspects of this example in the course
of the paper, we give a self-contained treatment here, both for the
reader's convenience, and also in order to highlight the structural
similarities with our second application to follow. In particular, in
\cref{thm:mat-satisfies-1,thm:mat-satisfies-2} below, we re-verify the
fact that $\VMat$ satisfies all the assumptions required to apply our general results, building on~\cite{SweedlerTheoryDouble,Paper1}. We begin by establishing the symmetric monoidal closed structure. Note that this is a genuine monoidal structure, rather than an oplax one.

\begin{lem} \label{thm:mat-satisfies-1}
 The double category $\VMat$ of $\ca{V}$-matrices admits a symmetric monoidal closed 
structure, given on objects by the cartesian product of sets.
\end{lem}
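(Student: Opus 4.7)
The plan is to assemble the symmetric monoidal closed structure on $\VMat$ piecewise from the components already exhibited in the running examples of \cref{sec:preliminiaries}, and then to observe that the oplax structure obtained there is in fact pseudo (\ie strong monoidal) because the base $\ca{V}$ is symmetric. Concretely, on objects and vertical maps the tensor product is the cartesian product of sets, while on horizontal maps $x \co A \tickar A'$ and $y \co B \tickar B'$ one sets
\[
(x \bxt y)\bigl[(a',b');(a,b)\bigr] = x[a';a] \otimes y[b';b],
\]
with unit the one-element set $1$ equipped with its unique horizontal identity. This is the definition already recorded in \cref{thm:running-6}.

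First I would verify the normal oplax monoidal structure: the interchange $\xi$ and unit comparison $\upsilon$ are built from the associator, symmetry and unit constraints of $\ca{V}$ applied factor-wise. As observed in \cref{thm:running-6} (and as in \cite[Example~3.2.2]{SweedlerTheoryDouble}), since $\ca V$ is symmetric monoidal every component of $\xi$ is invertible, so one actually obtains a pseudo monoidal structure on $\VMat$. Normality (\cref{def:normality}) is then automatic. Next, the symmetry: the braiding has as vertical components the symmetry $A\times B \cong B\times A$ of $\Set$, and as horizontal components the evident reindexing morphism induced by the symmetry of $\ca V$, \ie
\[
x[a';a] \otimes y[b';b] \xrightarrow{\;\beta\;} y[b';b] \otimes x[a';a];
\]
the hexagon and involutivity axioms reduce to those of $\ca{V}$, as in \cref{thm:running-8}.

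Finally, closedness (\cref{def:closeddouble}) is inherited from the cartesian closedness of $\Set$ at the vertical level and from the monoidal closedness of $\ca{V}$ at the horizontal level: for matrices $y \co B \tickar B'$ and $z \co C \tickar C'$, define the hom matrix on sets $\boxhom{B,C}=C^{B}$ and $\boxhom{B',C'}=(C')^{B'}$ by
\[
\boxhom{y,z}[g';g]
  \defeq \prod_{b\in B,\,b'\in B'}\bigl[\,y[b';b],\, z[g'(b');g(b)]\,\bigr],
\]
where $[\thg,\thg]$ is the internal hom in $\ca V$. The evaluation and transpose at the vertical level are those of $\Set$, and at the horizontal level are induced componentwise from those of $\ca V$. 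Checking that the transpose is natural, and that $\mathfrak{s}$ and $\mathfrak{t}$ commute with $\boxhom{\thg,\thg}$ on the nose (as in~\eqref{eq:hom-double-functor}), is a direct calculation using the universal properties of products in $\Set$ and of the internal hom in $\ca V$; this matches exactly the construction of \cref{thm:running-7} and \cite[Example~3.2.12]{SweedlerTheoryDouble}. Local presentability of $\ca V$ is not needed at this stage, but it guarantees the existence of the various limits (for internal homs and ends) that we are using. The hardest-looking step is simply bookkeeping of the coherence pentagon, hexagon and closedness axioms, but all of these transport directly from the corresponding axioms in $(\ca V,\otimes,I)$ and in $\Set$, so no genuinely new verification is required.
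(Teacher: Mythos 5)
Your proposal is correct and follows essentially the same route as the paper: both assemble the structure from the formulas already given in the running examples (cartesian product on sets, factor-wise tensor on matrices, the product-of-internal-homs formula for $\boxhom{y,z}$), cite \cite[Examples~3.2.2 and 3.2.12]{SweedlerTheoryDouble}, and reduce the closedness check to the componentwise bijection of $2$-morphisms inherited from the closed structures of $\Set$ and $\ca{V}$. The only nitpick is your remark that local presentability is ``not needed at this stage'': the products over $B \times B'$ in the hom-matrix formula do require $\ca{V}$ to have small products, so some completeness hypothesis is genuinely in use there, as you in fact concede in the same sentence.
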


\begin{proof} This is \cite[Examples~3.2.2 and~3.2.12]{SweedlerTheoryDouble}, but we recall some
definitions in preparation for our second application in \cref{subsec:symseq}.
The vertical category of $\VMat$ is $\Set$, which is cartesian closed. We write $\boxhom{B,C}$ for the set of functions between $B$ and $C$ and $\lambda(f) \co A \to \boxhom{B, C}$ for the adjoint transpose of a function $f \co A \times B \to C$.

This closed structure can be extended to the horizontal category of $\VMat$. For $x \co A \tickar A'$, $y \co B \tickar B'$,
their tensor matrix $x \otimes y \co A \times A' \tickar B \times B'$ is defined by letting
\[
(x \otimes y)\big[ (b,b'); (a,a') \big] \defeq x[a';a] \otimes y[b';b] \mathrlap{.}
\]
%where $(a', a) \in A' \times A$, $(b',b) \in B' \times B$. 
For $ y \co  B \tickar B'$, $z \co C \tickar C'$, their internal hom $\boxhom{y, z} \co
\boxhom{B, C} \tickar \boxhom{B', C'}$ is defined by letting
\begin{equation*}
%\label{equ:closed-boxhom-mat}
\boxhom{y,z}[ g'; g] \defeq
	\prod_{b \in B, b'  \in B'}
	\Big[
	y[ b'; b] , \
	z\big[ g'(b'); g(b) \big] \mathrlap{,}
	\Big]
	\end{equation*}
using the internal hom in $\ca{V}$.
Here, observe that both $y[ b'; b]$ and $z \big[  g'(b'); g(b) \big]$ are in $\ca{V}$.

To conclude the proof that the symmetric monoidal structure of the horizontal category of $\VMat$ is closed, one needs to show that there is a bijection between 2-morphisms on the left and on the right:
\[
\begin{tikzcd}
A \times B \ar[r,tick, "x \otimes y"] \ar[d, "f"']  \ar[dr, phantom,  "\Two \phi"] & A' \times B' \ar[d, "f'"] \\
C \ar[r,tick, "z"'] & C' \mathrlap{,}
\end{tikzcd} \qquad
\begin{tikzcd}
A  \ar[r,tick, "x"]  \ar[d, "\lambda(f)"'] \ar[dr, phantom,  "\ \Two \psi"]  & A'  \ar[d, "\lambda(f')"] \\
\boxhom{B, C} \ar[r, "\boxhom{y,z}"'] & \boxhom{B',C'}  \mathrlap{.}
\end{tikzcd}
\] 
To see this, note that a 2-morphism $\phi$ as above has components of the form
\[
\phi_{(a',b'), (a,b)} \co x[a';a] \otimes y[b';b]  \to z[ f'(a', b'); f(a,b)] \mathrlap{,}
\]
while a 2-morphism like $\psi$ has components of the form
\[
\psi_{a, a'} \co x(a',a) \to \prod_{b \in B, b' \in B'} \big[ y[b';b] , z[ f'(a',b'); f(a,b)]\big] \mathrlap{,}
\]
see~\cite[Proposition~4.3]{VCocats} for more details.
\end{proof} 

The next lemma deals with structure of the double category $\VMat$ required to apply
the general theory of the previous sections, as detailed already in \cref{tab:assumptions}.

\begin{lem} \label{thm:mat-satisfies-2}  The double category $\VMat$ 
satisfies \cref{hyp:fibrancy,hyp:coproducts-in-mnd,hyp:equalisers,hyp:tabulators,hyp:free-monad,hyp:coequalisers,hyp:equalisers-in-horizontal,hyp:refl-coequalisers}.
\end{lem}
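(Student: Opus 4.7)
The plan is to dispatch each of \cref{hyp:fibrancy,hyp:coproducts-in-mnd,hyp:equalisers,hyp:tabulators,hyp:free-monad,hyp:coequalisers,hyp:equalisers-in-horizontal,hyp:refl-coequalisers} in turn, relying almost entirely on facts already catalogued in the running example and standard features of the base $\ca{V}$. First, I would note the structural consequences of the blanket assumption on $\ca{V}$: as $\ca{V}$ is locally presentable it is both complete and cocomplete, and since it is symmetric monoidal closed, the tensor product preserves all colimits in each variable.

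Several clauses are then immediate citations. \cref{hyp:fibrancy} (fibrancy) is \cref{thm:running-2}; \cref{hyp:tabulators} ($1$-tabulators) is \cref{thm:running-5}; and \cref{hyp:equalisers} follows since $\dcC_0 = \Set$. For \cref{hyp:equalisers-in-horizontal}, recall from~\eqref{Matlp} that $\VMat[A,B] \cong \VCat(B \times A, \ca{V})$, where $B \times A$ is regarded as a discrete $\ca{V}$-category; equalisers are then computed pointwise in $\ca{V}$. For \cref{hyp:refl-coequalisers}, this is exactly the statement in \cref{thm:running-4}: each hom-category has reflexive coequalisers computed pointwise, and these are stable under horizontal composition $(y \hcomp x)[c;a] = \sum_b y[c;b] \otimes x[b;a]$ because coproducts commute with reflexive coequalisers and $\otimes$ preserves colimits in each argument.

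Next I would dispatch \cref{hyp:free-monad}. The free monad on an endomap $x \in \EndCA$ is the familiar free $\ca{V}$-category sketched after \cref{def:free-monads}:
\[
\ca{X}(a,a') = \sum_{\substack{a_0,\dots, a_n \in A\\ a_0 = a, a_n = a'}} x[a_{n-1}; a_n] \otimes \cdots \otimes x[a_{0}; a_1]\mathrlap{.}
\]
This exists in $\MndCA$ because $\ca{V}$ has all small coproducts and $\otimes$ preserves them in each variable; the standard verification that $\ca{X}$ is the free monoid on $x$ in the composition monoidal structure of $\EndCA$ goes through without change.

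The remaining clauses \cref{hyp:coproducts-in-mnd,hyp:coequalisers} -- finite coproducts and coequalisers in each $\MndCA$ -- are then an immediate application of \cref{lem:limits-in-end-mnd}(iii): once we have stable local reflexive coequalisers (\cref{hyp:refl-coequalisers}), local free monads (\cref{hyp:free-monad}), and local finite coproducts/coequalisers in each $\EndCA = [A \times A, \ca{V}]$ (which are immediate from cocompleteness of $\ca{V}$), each $\MndCA$ inherits both kinds of colimit. The main place where something could go wrong is in checking stability of local reflexive coequalisers under the coend-style composition of matrices, but the colimit-preservation of $\otimes$ in each variable makes this routine. Thus no single step is a serious obstacle; the proof is essentially a bookkeeping exercise assembling the properties of $\ca{V}$ and the previously established running-example facts.
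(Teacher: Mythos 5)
Your proposal is correct and follows essentially the same route as the paper: immediate citations for fibrancy, tabulators, and equalisers in $\Set$; pointwise (co)limits in the hom-categories $\VMat[A,B]$ for the local equaliser and stable reflexive-coequaliser clauses; existence of free monads via the free-$\ca{V}$-category construction; and finally \cref{lem:limits-in-end-mnd}(iii) to obtain coproducts and coequalisers in each $\MndCA$. The only cosmetic difference is that the paper outsources the stability of local colimits and the free-monoid existence to external references, where you argue them directly from colimit-preservation of $\otimes$.
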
 

\begin{proof}  We need to check that:
\begin{enumerate}[(I)]
\item $\VMat$ is fibrant,
\item each $\Mnd_A(\VMat)$ has coproducts,
\item $(\VMat)_0$ has equalisers,
\item $\VMat$ has 1-tabulators,
\item each forgetful functor $\Mnd_A(\VMat) \to \End_A(\VMat)$ has a left adjoint,
\item each $\Mnd_A(\VMat)$ has local coequalisers,
\item $\VMat$ has local equalisers,
\item $\VMat$ has stable local reflexive coequalisers.
\end{enumerate}
Part~(I) is well-known. 
For part~(III), note that $(\VMat)_0 = \Set$. The existence of 1-tabulators (\cref{def:1tabulators}) as required for part~(IV) is known, and was discussed in \cref{thm:running-5}. 
%  Given a matrix $x \co A \tickar B$, its 1-tabulator is given by  
%\begin{equation*}
%%\label{equ:mat-tabulator}
%Tx \defeq \coprod_{a \in A, b \in B} \ca{V}(I, x[b;a]) \mathrlap{.}
%\end{equation*}
% The functions $\pi_{\mathfrak s} \co Tx \to A$ and $\pi_{\mathfrak t} \co Tx \to B$ are the two evident projections, while the 2-cell
% \begin{equation*}
%    \begin{tikzcd}
%      Tx\ar[r,tick,"\hid_{Tx}"]\ar[d,"\pi_{\mathfrak s}"']\ar[dr,phantom,"\Two \pi"] & {Tx}\ar[d,"\pi_{\mathfrak{t}}"] \\
%      {A}\ar[r,tick,"x"'] & {B}
%    \end{tikzcd}
%  \end{equation*}
% is the $ \ca{V}$-natural transformation $\pi \co \hid_{Tx} \to x \circ (\pi_{\mathfrak t} \times \pi_{\mathfrak{s}})$ whose only non-trivial component is 
% \[
% \pi_{(a,b,u), (a,b,u)} \co I \xrightarrow{u} x[b;a] \mathrlap{.} 
% \]

Next, recall that, for sets $A$ and $B$, the category $\VMat[A,B]$ defined in~\eqref{Matlp} is locally
presentable. In particular, $\VMat$ has local equalisers, giving us (VII).
From the proof of \cite[Proposition~3.4.8]{SweedlerTheoryDouble}, 
local small colimits in $\VMat$ are stable. In particular, (VIII) holds.
Furthermore, (V) holds by known results on free monoids~\cite{KellyGM:unittc,FreeMonoids}, since e.g. each category $\End_A(\VMat)=\VCat(A\times A,\ca{V})$ has countable coproducts preserved by horizontal composition
in each variable.
It follows from this and fibrancy that $\VMat$ has free monads (\cref{def:free-monads})
(see also comments below \cref{def:free-monads}).   But now, since
 $\VMat$ has stable local reflexive coequalisers, free monads, and local small colimits,
 $\Mnd_A(\VMat)$ has local small colimits by part~(iii) of \cref{lem:limits-in-end-mnd}
 (see also comments below \cref{lem:local-to-global-limits} and \cref{lem:limits-in-end-mnd}).
 In particular, we obtain (II) and (VI).
\end{proof} 

When $\dcC=\VMat$, the theory developed in \cref{sec:non-commuting} and \cref{sec:commuting} recovers well-known facts.
Monad multimorphisms in $\VMat$ are called sesquifunctors (see e.g.~\cite{GarnerLopezFranco})
and the non-commuting tensor product defined in \cref{eq:nctensor} is
the so-called \emph{funny tensor product} of $\ca{V}$-categories, while, 
commuting monad multimorphisms are $n$-ary functors and the commuting tensor
product of \eqref{eq:commuting-coequaliser} is the usual tensor product of $\ca{V}$-categories.

We now consider the double category of bimodules
in $\VMat$, which is exactly the double category $\VProf$ of $\ca{V}$-profunctors. Indeed,
as mentioned above, monads in $\VMat$ are $\ca{V}$-categories.  Furthermore, monad
morphisms are $\ca{V}$-functors and monad bimodules are $\ca{V}$-profunctors (e.g.~\cite[Example~11.8]{Framedbicats}). We therefore set:
\[
\VProf =  \Bim(\VMat) \mathrlap{.}
\]
Here, for small $\ca{V}$-categories $A$ and $B$, the hom-category of horizontal maps and globular 2-cells between~$A$ and~$B$ in $\VProf$ is
\[
\VProf[A, B] = \VCat(B^\op \ctensor A, \ca{V}) \mathrlap{.}
\]
We write $p \co A \tickar B$ to denote a profunctor from $A$ to $B$, \ie a functor $p \co B^\op \ctensor A \to \ca{V}$. Our theory  applies to recover the following known fact~\cite{ConstrSymMonBicatsFun}.

\begin{thm}  \label{thm:app-prof} Let $\ca{V}$ be a symmetric monoidal closed locally presentable category. 
The double category $\VProf$ of $\ca{V}$-categories, $\ca{V}$-functors, and $\ca{V}$-profunctors admits a symmetric monoidal structure, given on objects by the tensor product of $\ca{V}$-categories.
\end{thm}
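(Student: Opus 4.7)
The plan is to obtain the result as an instance of \cref{thm:ctensor-bimodules}, and then to upgrade the symmetric oplax monoidal structure it provides to a genuine (pseudo) monoidal structure, exploiting the fact that $\VMat$ itself is not merely normal oplax monoidal but pseudo monoidal.

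First, I would verify that all running assumptions are in place. By \cref{thm:mat-satisfies-1}, the double category $\VMat$ admits a symmetric monoidal closed structure; in particular, it is a symmetric normal oplax monoidal closed double category in the sense of \cref{def:oplaxmonoidal,def:normality,def:closeddouble,def:symmetricoplaxmon}, with the feature that every interchanger $2$-morphism $\xi$ of \cref{def:oplaxmonoidal} is invertible (not only those required by normality). By \cref{thm:mat-satisfies-2}, $\VMat$ furthermore satisfies all of \cref{hyp:fibrancy,hyp:coproducts-in-mnd,hyp:equalisers,hyp:tabulators,hyp:free-monad,hyp:coequalisers,hyp:equalisers-in-horizontal,hyp:refl-coequalisers}. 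Applying \cref{thm:ctensor-bimodules} with $\dcC = \VMat$ then yields a symmetric oplax monoidal structure on $\Bim(\VMat) = \VProf$, whose tensor on objects is the commuting tensor product of monads in $\VMat$, which by the discussion following \cref{thm:nctensor-smc} is the standard tensor product of $\ca{V}$-categories.

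Next, I would show that this oplax monoidal structure is in fact (pseudo) monoidal, \ie that the interchangers \eqref{equ:bim-oplax-1} and unitors \eqref{equ:bim-oplax-2} constructed in the proof of \cref{thm:ctensor-bimodules} are all invertible. The unitors were already seen to be isomorphisms (and one of them an identity); the content is therefore the invertibility of the interchange map $\tilde\xi$ of \eqref{equ:bim-oplax-1}, constructed as the induced map between reflexive coequalisers in the diagram \eqref{equ:construction-bim-interchange}. For this, it suffices by the universal property of coequalisers that the horizontal maps $(\ast)$ and $(\ast\ast)$ in \eqref{equ:construction-bim-interchange}, which are instances of the interchange for free bimodules of \cref{thm:interchange-free-bimodules}, are themselves invertible. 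Tracing through the definition of that $2$-morphism $\xi'$, it factors as a composite of three things: globular isomorphisms coming from \eqref{equ:bimcomp-of-free-is-free} and \eqref{equ:ctensor-of-free-bimodules}; the comparison $2$-cell $\pi$ of \eqref{equ:bxt-to-ctensor}, which is invertible because on $\VMat$ the commuting tensor product coincides with $\bxt$ at this level (equivalently, by~\cref{prop:universal-commuting-binary} the coequaliser \eqref{eq:commuting-coequaliser} is already an isomorphism since $\VMat$ is genuinely symmetric monoidal); and the interchange $\xi$ of $\VMat$, which is invertible because $\VMat$ is pseudo monoidal. Hence $\xi'$ is invertible, so $(\ast),(\ast\ast)$ are invertible, so $\tilde\xi$ is invertible.

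The main obstacle is precisely this normality/pseudoness step: as \cref{thm:not-normal} warns, for a merely normal oplax monoidal base the induced structure on bimodules need not be pseudo. The key input making it work here is that the interchange map $\xi$ of $\VMat$ has \emph{all} components invertible (not only those mandated by normality), which is a consequence of $\ca{V}$ being symmetric monoidal. Once $\tilde\xi$ is known to be an isomorphism, the associativity and unit coherences, which hold automatically in the oplax setting of \cref{thm:ctensor-bimodules}, promote \emph{verbatim} to the pseudo setting, and one concludes as in \cite[Section~4]{Paper1} that $\VProf$ is a symmetric monoidal double category with the claimed tensor on objects.
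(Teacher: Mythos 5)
Your proposal is correct and follows essentially the same route as the paper: apply \cref{thm:ctensor-bimodules} to $\dcC = \VMat$, with the hypotheses supplied by \cref{thm:mat-satisfies-1,thm:mat-satisfies-2}, and identify the commuting tensor product of monads with the usual tensor product of $\ca{V}$-categories (the paper does this last step by observing that commuting monad multimorphisms are exactly $\ca{V}$-functors in several variables, as in \cref{thm:running-27}). Where you go beyond the paper is in actually justifying the upgrade from oplax to pseudo monoidal: the paper merely asserts that ``in this case the oplax monoidal double structure is actually (pseudo) monoidal'', whereas you trace the invertibility of $\tilde\xi$ back through \eqref{equ:construction-bim-interchange} and \cref{thm:interchange-free-bimodules} to the invertibility of $\xi$ and $\pi$ in $\VMat$ --- a worthwhile addition. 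One small correction: your parenthetical claiming that the coequaliser \eqref{eq:commuting-coequaliser} is already an isomorphism is false --- that map is $a_1 \nctensor a_2 \to a_1 \ctensor a_2$, the quotient from the funny tensor product to the commuting one, which is not invertible even for $\ca{V} = \Set$. The map you actually need to be invertible is $\pi \colon a_1 \bxt a_2 \to a_1 \ctensor a_2$ of \eqref{equ:bxt-to-ctensor}, which lives in $\End(\dcC)$ and compares the $\bxt$-tensor of the underlying endomaps with the underlying endomap of the commuting tensor product; for $\VMat$ this is indeed an isomorphism, as your non-parenthetical justification correctly states.
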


\begin{proof} This is an application of~\cref{thm:ctensor-bimodules}, in this case where the oplax monoidal double structure is actually (pseudo) monoidal. The hypotheses are verified
in~\cref{thm:mat-satisfies-1,thm:mat-satisfies-2}. A direct calculation shows that a monad multimorphism
is commuting if and only if it is a $\ca{V}$-functor in many variables. Hence, the commuting
tensor coincides with the tensor product of $\ca{V}$-categories.
\end{proof}

As noted in~\cref{thm:running-6}, we may generalise the above from
the familiar situation of a symmetric monoidal base $\ca{V}$ to that
of a normal duoidal base $\ca{V}$; on doing so, we reconstruct exactly
the ``many-object'' theory of commuting tensor products of
$\ca{V}$-categories over such a base given in~\cite{GarnerLopezFranco}.

\begin{cor} The bicategory of $\ca{V}$-categories, $\ca{V}$-profunctors, and $\ca{V}$-natural transformations
admits a symmetric monoidal structure, given on objects as the tensor product of $\ca{V}$-categories.
\end{cor}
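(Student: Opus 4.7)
The plan is to deduce this corollary directly from \cref{thm:app-prof}, using the general principle that a symmetric (pseudo) monoidal double category has an underlying symmetric monoidal horizontal bicategory. First, I would identify the bicategory in the statement as the horizontal bicategory $\mathcal{H}(\VProf)$: its objects are monads in $\VMat$, \ie, $\ca{V}$-categories; its 1-cells are bimodules between such monads, \ie, $\ca{V}$-profunctors; and its 2-cells are the globular bimodule 2-morphisms of $\VProf$, which by \cref{thm:running-4} unfold to exactly the $\ca{V}$-natural transformations between $\ca{V}$-profunctors.

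Next, I would invoke the construction of Shulman in \cite{ConstrSymMonBicatsFun}, which extracts a symmetric monoidal bicategory from any symmetric (pseudo) monoidal double category by restricting to the globular 2-morphisms. Applied to $\VProf$, this immediately yields the desired symmetric monoidal structure on $\mathcal{H}(\VProf)$, whose tensor product on objects is inherited from that on $\VProf_0 = \Mnd(\VMat)$, which in turn is the usual tensor product of $\ca{V}$-categories by the analysis following \cref{thm:mat-satisfies-2}.

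The only subtle step is to confirm that \cref{thm:app-prof} really gives a pseudo (not merely oplax) monoidal structure on $\VProf$, since Shulman's construction requires genuine pseudo monoidality---cf.\ the caveat in \cref{thm:not-normal}, where the obstruction to normality is traced to the interchangers $\xi$ of the base $\dcC$ failing to be invertible on non-identity cells. For $\dcC = \VMat$, however, \cref{thm:running-6} records that $\xi$ is \emph{always} invertible: it is built from the symmetry of $\ca{V}$. Inspecting the construction of the bimodule interchangers $\tilde\xi$ in~\eqref{equ:bim-oplax-1} via the coequaliser diagram~\eqref{equ:construction-bim-interchange}, the horizontal maps $(\ast)$ and $(\ast\ast)$---instances of the free-bimodule interchanger $\xi'$ of \cref{thm:interchange-free-bimodules}---are then themselves invertible, since they are built by applying a free-bimodule functor to a pasting whose only non-coherence ingredient is $\xi$. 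The induced map between coequalisers $\tilde\xi$ is therefore invertible, so that the oplax monoidal structure on $\VProf$ of \cref{thm:ctensor-bimodules} is in fact pseudo. With this confirmed, the corollary follows.
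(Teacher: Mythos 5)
Your proposal is correct and takes essentially the same approach as the paper, whose proof is precisely the one-line deduction from \cref{thm:app-prof} via the general passage from a symmetric (pseudo) monoidal fibrant double category to a symmetric monoidal horizontal bicategory. One small imprecision in your invertibility check: the free-bimodule interchanger $\xi'$ of \cref{thm:interchange-free-bimodules} involves not only $\xi$ but also the comparison cells $\pi \colon b_1 \bxt b_2 \to b_1 \ctensor b_2$ of~\eqref{equ:bxt-to-ctensor}, which are not coherence data; they happen also to be invertible for $\VMat$ over a symmetric monoidal $\ca{V}$ (the hom-objects of the tensor product of $\ca{V}$-categories are exactly the tensor products of hom-objects), so your conclusion stands.
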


\begin{proof} This follows from~\cref{thm:app-prof} via \cite{Paper1}.
\end{proof}

\subsection{Symmetric multicategories and symmetric multiprofunctors} \label{subsec:symseq}

We now consider a locally presentable category $\ca{V}$, which we view as symmetric monoidal with respect to its finite products. The restriction
to cartesian monoidal structure was already imposed in~\cite{Paper1} in order to construct the \emph{arithmetic product} of categorical symmetric sequences, namely the oplax monoidal structure of our base double category $\VCatSym$, which we now recall.
 
First of all, we set up some preliminary notation. Let $A$ be small $\ca{V}$-category $A$. For $n \in \mathbb{N}$, let $S^nA$ be the $\ca{V}$-category with objects $n$-tuples
 $\vec{a} = (a_1, \ldots, a_n)$ of objects of $A$ and as hom-objects 
 \[
 S^nA \big( (a_1, \ldots, a_n), (a'_1, \ldots, a'_n)\big) \defeq 
 \coprod_{\sigma \in \mathfrak{S}_n} A(a_i, a'_\sigma(i))  \mathrlap{,}
 \]
 where $\mathfrak{S}_n$ is the $n$-th symmetric group. We then let 
 \[
 SA = \coprod_{n \in \mathbb{N}} S^n A \mathrlap{.}
 \]
  The $\ca{V}$-category $SA$ admits a symmetric strict monoidal structure whose tensor product, 
 written~$\vec{a}_1 \oplus \vec{a}_2$, is concatenation of sequences and whose unit is the empty 
 sequence, written $e$. The category $SA$ is in fact the free symmetric strict monoidal $\ca{V}$-category on $A$.
 
 The double category $\VCatSym$ has small $\ca{V}$-categories as objects, $\ca{V}$-functors as vertical maps and symmetric $\ca{V}$-sequences as horizontal maps. Explicitly, a categorical symmetric sequence $x \co A \rightsquigarrow B$ from $A$ to $B$ is
a $\ca{V}$-profunctor $x \co A \tickar SB$, i.e. a $\ca{V}$-functor $x\colon SB^\op
\times A\to\ca{V}$\footnote{Since $\ca{V}$ here is cartesian monoidal, we write $A\times B$ for the usual (pointwise) tensor product of $\ca{V}$-categories.}. 
The value of such a symmetric sequence on $(\vec{b}, a) \in SB^\op
\times A$ is written $x[\vec{b}; a]$. A 2-morphism in $\VCatSym$ of the form
\begin{equation}\label{2mapCatSym}
\begin{tikzcd}
A \ar[r,rightsquigarrow, "x"] \ar[d, "f"']  \ar[dr, phantom,  "\Two \phi"] & A' \ar[d, "f'"] \\
B \ar[r,rightsquigarrow,  "y"'] & B'
\end{tikzcd}
\end{equation}
is a $\ca{V}$-natural transformation with components $\phi_{\vec{a}', a} \co x[\vec{a}'; a] \to y[ Sf' (\vec{a}'); f(a)]$.
For small $\ca{V}$-categories $A$ and $B$, their hom-category (of horizontal maps between $A$ and $B$ and globular 2-cells) in $\VCatSym$ is
\begin{equation}
\label{equ:catsym-hom}
\VCatSym[A,B] = \VProf[A, SB] =  \VCat(SB^\op \times A, \ca{V}) \mathrlap{.}
\end{equation}

Let us also recall from~\cite{FioreM:carcbg,GambinoJoyal}
that, for $x \co A \rightsquigarrow B$ and $y \co B \rightsquigarrow C$,
their composite categorical sequence $y \hcomp x \co A \rightsquigarrow  C$ is defined by
\[
(y \hcomp x)[\vec{c}; a] \defeq \coend^{\vec{b} \in SB} y^e[\vec{c}; \vec{b}] \times
x[\vec{b}; a] \mathrlap{,}
\]
where, for $\vec{b} = (b_1, \ldots, b_n)  \in SB$,
\[
y^e[\vec{c}; \vec{b}] = \coend^{\vec{c}_1 \in SC} \cdots \coend^{\vec{c}_n \in SC} y[\vec{c}_1; b_1] \times \cdots
\times y[\vec{c}_n; b_n] \times SC(\vec{c}, \vec{c}_1 \oplus \cdots \oplus \vec{c}_n)\mathrlap{.}
\]

We now recall the definition of the arithmetic tensor product in $\VCatSym$ from~~\cite{Paper1}. On objects, $A\bxt B$ is defined to be their tensor product $A\times B$. Given categorical symmetric sequences $x \co A  \rightsquigarrow
A'$ and $y \co B \rightsquigarrow B'$, we define $x \bxt y \co A \bxt B \rightsquigarrow A' \bxt B'$ by letting, for $\vec{u} \in S(A' \bxt B')$ and $(a, b) \in A \bxt B$,
\begin{equation}
\label{equ:arith-prod-catsym}
(x \bxt y)[\vec{u}'; (a,b)]   \int^{\vec{a}' \in SA} \int^{\vec{b'} \in SB'} S(A' \bxt B')( \vec{u}', \vec{a}' \bxt \vec{b}') \times x[\vec{a}'; a] \times y[ \vec{b}'; b]  \mathrlap{,}
\end{equation}
where, for $\vec{a}' = \big( a'_i \big)_{1 \leq i \leq m}$ and
$\vec{b}' = \big( b_{j} \big)_{1 \leq j \leq n}$, 
\[
\vec{a}' \bxt \vec{b}' = \big( (a'_{1}, b'_{1}), (a'_2, b'_1), \ldots, (a'_m, b_1), \ldots, (a'_1, b'_n), \ldots, (a'_m, b'_n) \big) \mathrlap{.}
\]

The next two lemmas check that $\VCatSym$ satifies the assumptions needed to apply our general theory, expanding on~\cite[Theorem~10.9]{Paper1}. The first one is analogous to the previous
\cref{thm:mat-satisfies-1}.
Note that the closed structure generalises the  divided power closed structure of~\cite{DwyerW:BoardmanVtpo}, as we explain 
in~\cref{thm:divided-powers} below, and also requires that $\ca{V}$ is cartesian \emph{closed}.

\begin{lem} \label{thm:catsym-oplaxmonoidal}
The double category $\VCatSym$ of categorical symmetric $\ca{V}$-sequences admits a symmetric normal oplax monoidal closed structure, 
given on horizontal maps by the arithmetic product of categorical symmetric sequences.
\end{lem}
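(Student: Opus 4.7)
The plan is to prove the lemma in three stages: first we assemble the (normal oplax) monoidal structure, then equip it with a symmetry, and finally establish closedness.

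Most of the first stage is already available. The tensor product on horizontal maps is given by \eqref{equ:arith-prod-catsym}; the unit is the terminal $\ca{V}$-category $\mathbf{1}$; and the associativity and unit constraints, together with the interchange $2$-morphisms $\xi, \iota, \delta, \upsilon$, are obtained by a direct adaptation to the categorical setting of the construction in \cite{Paper1} for $\VSym$. Normality---that is, invertibility of $\iota, \delta, \upsilon$ and of those components of $\xi$ in which one pair of arguments is an identity---follows by inspection of \eqref{equ:arith-prod-catsym} together with the unit axioms of $\ca{V}$.

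For the symmetry, the vertical component of $\beta$ is inherited from the symmetry of the cartesian product of $\ca{V}$-categories (which exists because $\ca{V}$ is cartesian). The horizontal component $\beta_{x,y} \co x \bxt y \to y \bxt x$ is built from the coend formula \eqref{equ:arith-prod-catsym} using (i) the symmetry of $\times$ in $\ca{V}$ and (ii) a canonical comparison
\[
  S(A' \bxt B')(\vec{u}', \vec{a}' \bxt \vec{b}') \xrightarrow{\cong} S(B' \bxt A')(\beta_{A', B'}(\vec{u}'), \vec{b}' \bxt \vec{a}'),
\]
where $\beta_{A', B'}$ is the vertical component, and the isomorphism is given by the canonical shuffle in $\mathfrak{S}_{mn}$ relating $\vec{a}' \bxt \vec{b}'$ and $\vec{b}' \bxt \vec{a}'$. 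The axioms of \cref{def:symmetricoplaxmon} then follow by routine coherence manipulations.

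The substantive part is closedness. Since $\ca{V}$ is cartesian closed and complete, the category $\VCat$ of small $\ca{V}$-categories is cartesian closed, so we may take $\boxhom{B, C}_0 \defeq [B, C]$ as the object-level closure. For the horizontal closure, we derive an explicit formula for $\boxhom{y, z}$ making the adjunction
\[
  \VCatSym[A \bxt B, C] \cong \VCatSym[A, \boxhom{B, C}]
\]
hold. The derivation starts from \eqref{equ:catsym-hom} and \eqref{equ:arith-prod-catsym} and proceeds by applying, in sequence, the co-Yoneda lemma to eliminate the coend over $S(A' \bxt B')$, the cartesian closure of $\ca{V}$ to internalise the hom into $\ca{V}$, and the end formula for $\ca{V}$-natural transformations. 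The resulting expression for $\boxhom{y, z}$ is an end of ``divided power'' type which, in the one-object case, reconstructs the divided powers closed structure of \cite[\S 1]{DwyerW:BoardmanVtpo}.

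The main obstacle is then to verify that this formula indeed defines a symmetric sequence $\boxhom{y, z} \co [B, C] \rightsquigarrow [B', C']$, \ie a $\ca{V}$-functor $S[B', C']^{\op} \times [B, C] \to \ca{V}$, and that the resulting adjunction is strictly compatible with source and target as required by \cref{def:closeddouble}. Both verifications reduce to routine applications of Fubini for ends and coends together with the universal property of the free symmetric monoidal $\ca{V}$-category $SB$. While notationally heavy, the arguments closely mirror the familiar proof that $\VProf$ is closed when $\ca{V}$ is closed, so no serious conceptual difficulty is expected.
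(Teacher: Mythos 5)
Your proposal follows essentially the same route as the paper's proof: the normal oplax monoidal structure is imported from \cite{Paper1}, the symmetry is dispatched as routine, and the substantive work is the closed structure, where you arrive (by deriving the formula via co-Yoneda rather than writing it down and verifying) at the same divided-powers internal hom $\boxhom{y,z}$ and the same end/coend transposition argument, including the compatibility with source and target. One small point of care: the closure demanded by \cref{def:closeddouble} is an adjunction on $(\VCatSym)_1$, i.e.\ a bijection of $2$-cells with arbitrary vertical boundaries $f, f'$ matched with their transposes $\lambda(f), \lambda(f')$, not merely the globular isomorphism $\VCatSym[A \bxt B, C] \cong \VCatSym[A, \boxhom{B,C}]$ that you display---but your closing remark about strict compatibility with source and target indicates you intend the stronger statement, which is exactly what the paper verifies.
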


\begin{proof} The normal oplax monoidal structure of $\VCatSym$ is~\cite[Theorem~10.9]{Paper1}.
The symmetry of the tensor product is straightforward. 

For the closure of the monoidal structure, first note that the vertical fragment of $\VCatSym$ is $\VCat$ and that the arithmetic tensor product therein coincides with the tensor product of $\ca{V}$-categories, which is well-known to be part of a cartesian closed monoidal structure (as $\ca{V}$ is cartesian monoidal) whose internal hom is given by functor $\ca{V}$-categories, written $\boxhom{B,C}$ here. For~$f \co A \bxt B \to C$, we write $\lambda(f) \co A \to \boxhom{B, C}$ for its adjoint transpose.

Next, we extend this to the horizontal category of $\VCatSym$. For $y \co B \rightsquigarrow B'$ and $z \co C \rightsquigarrow C'$, 
 we define $\boxhom{y, z} \co \boxhom{B, C} \rightsquigarrow \boxhom{B', C'}$ by letting, for
$(g'_1, \ldots, g'_m) \in S\boxhom{B',C'}$ and $g \in \boxhom{B,C}$,
\begin{multline} 
\boxhom{y,z}[ g'_1, \ldots, g'_m; g] \defeq \\
	\prod_{n \in \mathsf{N}} 
	\int_{(b'_1, \ldots, b'_n) \in S_n B'}
	\int_{b \in B}
	\Big[
	y[ b'_1, \ldots, b'_n; b] , \
	z\big[ g'_1(b'_1), \ldots, g'_1(b'_n), \ldots, g'_m(b'_1), \ldots, g'_m(b'_n); g(b) \big] 
	\Big]
	\label{equ:closed-boxhom}
\end{multline}
using the internal hom of $\ca{V}$. Observe that $y[ b'_1, \ldots, b'_n; b]$ and $z \big[  g'_1(b'_1), \ldots, g'_1(b'_n), \ldots, g'_m(b'_1), \ldots, g'_m(b'_n); g(b) \big]$ are objects in $\ca{V}$.

To conclude the proof that horizontal category of $\VCatSym$ is closed, one needs to show that,
with the definitions above, there is a bijection between 2-cells on the left and on the right, as below:
\begin{equation}
\label{equ:closed-transposition}
\begin{tikzcd}
A \bxt B \ar[r,rightsquigarrow,  "x \bxt y"] \ar[d, "f"']  \ar[dr, phantom,  "\Two \phi"] & A' \bxt B' \ar[d, "f'"] \\
C \ar[r,rightsquigarrow,  "z"'] & C' \mathrlap{,}
\end{tikzcd} \qquad
\begin{tikzcd}
A  \ar[r,rightsquigarrow,  "x"]  \ar[d, "\lambda(f)"'] \ar[dr, phantom,  "\Two \psi"]  & A'  \ar[d, "\lambda(f')"] \\
\boxhom{B, C} \ar[r,rightsquigarrow,  "\boxhom{y,z}"'] & \boxhom{B',C'} \mathrlap{.}
\end{tikzcd}
\end{equation}
By the definition of 2-morphisms in $\VCatSym$ \cref{2mapCatSym}, a 2-morphism $\phi$ as on the left is a $\ca{V}$-natural transformation with components of the form
\[
\phi_{\vec{u}', (a,b)} \co (x \bxt y)[ \vec{u}; (a,b)] \to z[ (Sf') (\vec{u}'); f(a,b)]  \mathrlap{,}
\]
for $\vec{u}' \in S(A' \bxt B')$ and $(a,b) \in A \bxt B$. The domain of these maps is defined explicitly in~\eqref{equ:arith-prod-catsym}.
Instead, a 2-morphism $\psi$ as on the right above  is a $\ca{V}$-natural transformation with components of the form
\begin{equation}
\label{equ:cod-lambda-phi}
\psi_{\vec{a}', a} \co x[ \vec{a}'; a] \to \langle y, z \rangle [ S(\lambda f') (\vec{a}') ; \lambda f (a) ] 
\end{equation}
for $\vec{a}' \in SA'$ and $a \in A$. Note that $\lambda f(a) \in \langle B, C \rangle$ and $S(\lambda f') (\vec{a}') \in S \langle B', C' \rangle$. The former is simply the $\ca{V}$-functor $f(a,-)$. The latter, for $\vec{a}' = (a_1', \ldots, a_m')$, is the $m$-tuple of $\ca{V}$-functors $(f'(a'_1, -), \ldots, f'(a'_m, -))$. Hence, the codomain of the map in~\eqref{equ:cod-lambda-phi} can be simplified to 
\[
\langle y, z \rangle \big[ f'(a'_1,-), \ldots, f'(a_m, -); f(a,-)] \mathrlap{.}
\]
This, by the definition in~\eqref{equ:closed-boxhom}, is 
\[
\prod_{n \in \mathsf{N}}
\int_{\vec{b}' \in S_n B'}
	\int_{b \in B} 
	\Big[
	y[ \vec{b}'; b] , \ 
	z\big[ (Sf') (\vec{a}' \bxt \vec{b}') ; f(a,b) \big] 
	\Big]	\mathrlap{.}
\]
The required bijection now follows by a direct calculation with ends and coends which we omit.

In order for the monoidal closed structure on the vertical and horizontal fragment of the double category $\VCatSym$ to form an oplax monoidal \emph{closed} double structure, as in 
\cref{def:closeddouble} one should also verify that the source and target functors commute 
with the definitions of the closed structures, and that the evaluation square of the closed structure on $(\VCatSym)_1$ should have  the evaluations of the closed structure on $\VCat$ as vertical maps, as in
\[
\begin{tikzcd}[column sep = large] 
\langle B, C \rangle \bxt B 
	\ar[r,rightsquigarrow,  "{\langle y, z \rangle \bxt y}"] 
	 \ar[d, "\ev_{B,C}"']  
	 \ar[dr, phantom,  "\Two \ev_{y,z}"] 
	&  
\langle B' \bxt C' \rangle \bxt B' 
	 \ar[d, "\ev_{B', C'} '"] 
	\\
C 
	 \ar[r,rightsquigarrow,  "z"'] 
	& 
C' \mathrlap{.}
\end{tikzcd} 
\] 
These hold by the very definition of $\ev_{y,z}$ as the transpose of the identity on $\langle y, z \rangle$ 
across the bijection in~\eqref{equ:closed-transposition}.
We leave these routine calculations to the readers.
\end{proof}

\begin{rmk} \label{thm:divided-powers}
Just as the arithmetic product of categorical symmetric sequences defined in~\eqref{equ:arith-prod-catsym} generalises the arithmetic product of single-sorted symmetric sequences defined in~\cite{DwyerW:BoardmanVtpo}, the monoidal closed structure defined above generalises
the \emph{divided power} closed structure defined therein. Indeed, if we take $A = B = C = 1$ above, the formula for $\boxhom{y,z}$ reduces to the end
\[
\boxhom{y,z}(m) \defeq \int_{n \in S1} \big[ y(n), z(m \times n) \big]  \mathrlap{.}
\]
In particular, note that the first argument of $z$ in~\eqref{equ:closed-boxhom} is a sequence of length
$m \times n$.
\end{rmk}

We move on to the rest of the assumptions on the double category of symmetric sequences (\cref{tab:assumptions}), that allow for our general theory from the previous sections to be applied. 
The proof of \cref{thm:catsym-has-refl-coeq} is similar to that of \cref{thm:mat-satisfies-2}, but
there is an important difference: in our current setting, we do not have stability of all local small colimits, but only of 
sifted ones (and in particular reflexive coequalisers). This was one of the reasons for working
with \cref{hyp:refl-coequalisers}, rather than stronger versions thereof.

\begin{lem} \label{thm:catsym-has-refl-coeq} The double category $\VCatSym$ satisfies
\cref{hyp:fibrancy,hyp:coproducts-in-mnd,hyp:equalisers,hyp:tabulators,hyp:free-monad,hyp:coequalisers,hyp:equalisers-in-horizontal,hyp:refl-coequalisers}.
\end{lem}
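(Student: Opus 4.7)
The plan is to verify each of the eight hypotheses in turn, exploiting the description of $\VCatSym[A,B]$ in \eqref{equ:catsym-hom} as the profunctor category $\VProf[A,SB]$, and the fact that (under local presentability of $\ca{V}$) such categories are themselves locally presentable.

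First I would dispose of the easy items. The vertical fragment $(\VCatSym)_0$ is $\VCat$, which is locally presentable when $\ca{V}$ is, so it has equalisers, establishing (III). Each hom-category $\VCatSym[A,B] = \VCat(SB^\op\times A,\ca{V})$ is locally presentable in the same way, and hence has all small limits and colimits; the limits are computed pointwise, so the equaliser limit functors $\ca{V}^{SB^\op\times A}\to\ca{V}^{SB^\op\times A}$ yield (VII). Fibrancy of $\VCatSym$ (namely (I)) was established in~\cite{Paper1}; alternatively one can construct companions and conjoints concretely, using $\comp{f}[\vec{b};a] = SB(\vec{b},Sf(a))$ and dually for $\coj{f}$, where $(a)$ is viewed as a length-one sequence. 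For (IV) the $1$-tabulator of $x\co A\rightsquigarrow B$ is constructed explicitly as a $\ca{V}$-category whose set of objects consists of triples $(a,\vec{b},t)$ with $t\co I\to x[\vec{b};a]$, mirroring \cref{thm:running-5}; the verification that this has the expected universal property reduces to a standard Yoneda-type computation.

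The main work lies in (V) and (VIII). For free monads (V), I would mimic the $\VMat$ argument: each $\EndCA=\VCatSym[A,A]=\VProf[A,SA]$ admits countable coproducts preserved in each variable by horizontal composition (coends preserve colimits in each argument), so Kelly's transfinite free-monoid construction~\cite{KellyGM:unittc,FreeMonoids} yields a left adjoint to $\Mnd_A(\VCatSym)\to\End_A(\VCatSym)$. Since we will have shown $\VCatSym$ to be fibrant, \cref{prop:strongerfreemonads} will upgrade this to the existence of free monads in the sense of \cref{def:free-monads}. Hypotheses (II) and (VI) then follow formally from (I), (V), local coequalisers, and local coproducts via \cref{lem:limits-in-end-mnd}(iii).

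The main obstacle is (VIII): the stability of local reflexive coequalisers under horizontal composition. Note that in contrast to the $\VMat$ case, \emph{all} local small colimits are not stable here, since horizontal composition is built from the coend formula
\[
(y\circ x)[\vec{c};a] = \int^{\vec{b}\in SB} y^e[\vec{c};\vec{b}]\times x[\vec{b};a],
\]
which involves finite products via $y^e$ and does not preserve arbitrary colimits in each variable. However, reflexive coequalisers are sifted colimits, and sifted colimits in a locally presentable cartesian closed category commute with finite products; since coends preserve colimits in each variable separately, it follows that both $y\circ(\thg)$ and $(\thg)\circ x$ preserve reflexive coequalisers, yielding stability. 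Identifying precisely this property of sifted colimits is the key conceptual point; the rest of the argument is a routine manipulation of coends, and I would record it as the heart of the proof.
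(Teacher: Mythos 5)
Your overall strategy coincides with the paper's: item-by-item verification, local presentability of the hom-categories $\VCatSym[A,B]=\VProf[A,SB]$ for (III), (VII) and the colimit hypotheses, and the observation that stability of local colimits only holds for sifted ones because of the finite products hidden in $y^e$. Your treatment of (VIII) is exactly the paper's (which cites the corresponding preservation result from the literature), and (I), (II), (III), (VI), (VII) are fine. However, two of your steps contain genuine errors.

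First, the $1$-tabulator. You take as objects of $Tx$ the triples $(a,\vec{b},t)$ with $\vec{b}\in SB$ an arbitrary sequence and $t\colon I\to x[\vec{b};a]$. This cannot be right: the tabulator must come with a $\ca{V}$-functor $\pi_{\mathfrak{t}}\colon Tx\to B$, and there is no object of $B$ to assign to a triple whose middle component is a sequence of length $\neq 1$. More to the point, the universal property concerns $2$-morphisms $\hid_{C}\to x$ from horizontal identities, and the horizontal identity in $\VCatSym$ is $\hid_C[\vec{c}';c]=SC(\vec{c}',(c))$, which is concentrated in arity one; such a $2$-morphism therefore only ever selects elements of $x[(b);a]$ for a \emph{single} object $b\in B$. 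The correct object set is $\coprod_{a\in A,\,b\in B}\ca{V}(I,x[(b);a])$, with hom-objects given by a suitable equaliser. With arbitrary $\vec{b}$ the construction is not even well formed, let alone universal.

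Second, free monads. To run Kelly's construction you assert that each $\EndCA=\VProf[A,SA]$ "admits countable coproducts preserved in each variable by horizontal composition (coends preserve colimits in each argument)". This is false, and you in fact refute it yourself two paragraphs later: because $(y\circ x)[\vec{c};a]=\int^{\vec{b}}y^e[\vec{c};\vec{b}]\times x[\vec{b};a]$ and $y^e$ is built from $n$-fold finite products of values of $y$, the functor $(\thg)\circ x$ does \emph{not} preserve coproducts in the outer variable; it preserves only sifted colimits (while $y\circ(\thg)$ preserves all colimits). Transplanting the $\VMat$ justification of (V) verbatim therefore does not work. The conclusion still holds, but one must invoke the free-monoid existence theorems that apply when the tensor of a locally presentable monoidal category preserves all colimits on one side and only sifted (in particular filtered) colimits and reflexive coequalisers on the other --- which is precisely the form in which the paper cites \cite{KellyGM:unittc,FreeMonoids} together with the preservation result used for (VIII). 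As written, your argument for (V) rests on a premise inconsistent with your (correct) argument for (VIII).
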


\begin{proof}  We need to check the following:
\begin{enumerate}[(I)]
\item $\VCatSym$ is fibrant,
\item each $\Mnd_A(\VCatSym)$ has coproducts,
\item $(\VCatSym)_0$ has equalisers,
\item $\VCatSym$ has 1-tabulators,
\item each forgetful functor $\Mnd_A(\VCatSym) \to \End_A(\VCatSym)$ has a left adjoint,
\item each $\Mnd_A(\VCatSym)$ has coequalisers, 
\item $\VCatSym$ has local equalisers,
\item $\VCatSym$ has stable local reflexive coequalisers.
\end{enumerate}

Part~(I) can be proved along the lines of~\cite[Proposition~9.3]{Paper1} or by direct calculation.
For part~(III), observe that $(\VCatSym)_0 = \VCat$. 
For part~(IV), according to \cref{def:1tabulators} given a categorical symmetric sequence $x \co A \rightsquigarrow B$, its 1-tabulator $Tx$ is given by 
the $\ca{V}$-category whose set of objects is
\[
\coprod_{a \in A, b \in B} \ca{V}(I, x[(b); a]) \mathrlap{.}
\]
The morphisms of its underlying category $(f, g) \co (a',b',u') \to (a,b, u)$ are pairs consisting of $f \co a' \to a$ 
and $g \co b' \to b$ such that $u \cdot g = u' \cdot f$. The hom-objects $Tx( (a', b', u'), (a, b, u)) \in \ca{V}$
are given by a corresponding equaliser. The functors $\pi_{\mathfrak s} \co Tx \to A$ and $\pi_{\mathfrak s} \co Tx \to B$ are the two evident projections. For the 2-cell
 \begin{equation*}
    \begin{tikzcd}
      Tx\ar[r,rightsquigarrow,"\hid_{Tx}"]\ar[d,"\pi_{\mathfrak s}"']\ar[dr,phantom,"\Two \pi"] & {Tx}\ar[d,"\pi_{\mathfrak{t}}"] \\
      {A}\ar[r,rightsquigarrow,"x"'] & {B}
    \end{tikzcd}
  \end{equation*}
we need a $\ca{V}$-natural transformation $\pi \co \hid_{Tx} \to x \circ (S\pi_{\mathfrak t} \times \pi_{\mathfrak{s}})$. 
By the definition of horizontal maps in $\VCatSym$, the only non-trivial components of  
such a transformation are of the form  
\[
\pi_{((a',b',u')), (a,b,u)} \co Tx\big( (a',b', u'), (a,b,u)\big) \to x[(b');a]  \mathrlap{,}
\] 
 which are defined by sending $(f,g) \co (a',b',u') \to (a,b, u)$ to $u \cdot g$ (or, equivalently, $u' \cdot f)$.

Next, recall that, for $\ca{V}$-categories $A$ and $B$, the category $\VCatSym[A,B]$ defined in~\eqref{equ:catsym-hom} is locally
presentable. In particular, $\VCatSym$ has local equalisers, giving us (VII).
Also, by~\cite[Corollary~4.4.9]{GambinoJoyal}, composition in the horizontal bicategory of $\VCatSym$ 
preserves all colimits in the first variable and sifted colimits in the second variable.  
This implies (VIII) by \cref{defi:parallel_local}, as  reflexive coequalisers are sifted colimits, and (V) by known results on free monoids~\cite{KellyGM:unittc,FreeMonoids} applied to $\End_A(\VCatSym)=\VCat(SA^\op\otimes A,\ca{V})$.
It follows from (V) and fibrancy that $\VCatSym$ has free monads (\cref{prop:strongerfreemonads}). But now, since
 $\VCatSym$ has stable local reflexive coequalisers, free monads, local 
coproducts, and local reflexive coequalisers, each
 $\Mnd_A(\VMat)$ has coproducts and reflexive coequalisers by part~(iii) of \cref{lem:limits-in-end-mnd}, thus giving us (II) and (VI).
\end{proof} 

\begin{rmk} Let us note that the definition of the double category $\VCatSym$ and the
properties in \cref{thm:catsym-has-refl-coeq} hold for a general symmetric monoidal category 
$\ca{V}$, while the assumption that the monoidal structure is cartesian is needed for
\cref{thm:catsym-oplaxmonoidal}.
\end{rmk}

Monads in $\VCatSym$ are the \emph{symmetric substitutes} of \cite{DayB:abssec,DayBJ:laxmpoc},
which are similar to symmetric multicategories, the main differerence being that they have a category, rather than a mere set, of objects, whose maps can be thought of as a distinguished set of unary morphisms in a 
symmetric multicategory. In preparation for our main application is to multicategories, we record the following:

\begin{thm} \label{thm:bv-substitutes} 
Let $\ca{V}$ be locally presentable cartesian closed category, considered as a cartesian monoidal category.
The double category $\Bim(\VCatSym)$ of symmetric substitutes and their bimodules 
admits an symmetric oplax monoidal closed structure.
\end{thm}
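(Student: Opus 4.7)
The plan is to deduce this as an application of our general machinery developed across \cref{sec:commuting,sec:bimodules}, by verifying that the base double category $\VCatSym$ satisfies all the required structural hypotheses. The key preparatory work has in fact already been carried out in the two preceding lemmas: \cref{thm:catsym-oplaxmonoidal} establishes that $\VCatSym$ is a symmetric normal oplax monoidal closed double category, and \cref{thm:catsym-has-refl-coeq} establishes that it satisfies \cref{hyp:fibrancy,hyp:coproducts-in-mnd,hyp:equalisers,hyp:tabulators,hyp:free-monad,hyp:coequalisers,hyp:equalisers-in-horizontal,hyp:refl-coequalisers}. With these in hand, \cref{thm:ctensor-bimodules} applies directly to yield the symmetric oplax monoidal structure on $\Bim(\VCatSym)$, with tensor product on objects given by the commuting tensor product of symmetric substitutes.

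What remains beyond the direct invocation is the \emph{closedness} of this oplax monoidal double structure, which requires the closedness of both the vertical category $\MndC$ and the horizontal category $\BimC_1$ (compatibly with source and target), as in \cref{def:closeddouble}. For the vertical part, closedness is immediate from \cref{thm:ctensor-monads}, which under exactly our hypotheses provides the closed symmetric monoidal structure on $\MndC$ whose internal hom is constructed via the equaliser~\eqref{eq:commuting-hom-equaliser}. For the horizontal part, the strategy is to exploit local presentability: each hom-category $\BimC[(A,a),(B,b)]$ is reflective in the locally presentable category $\VCatSym[A,B]$ via the monadic adjunction of \cref{thm:free-bimodule}, and is therefore itself locally presentable. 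Combined with \cref{thm:ctensor-preserves-coequalisers} and the fact that the commuting tensor product is built from the arithmetic product of symmetric sequences---which preserves all colimits in the first variable and sifted colimits in the second---an adjoint functor theorem argument yields a right adjoint to $(\thg) \ctensor p$ for each bimodule $p$, giving the internal hom of bimodules.

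The main obstacle I anticipate is the compatibility of these internal homs with source and target functors in the sense required by \cref{def:closeddouble}, together with verifying the universal property of the evaluation $2$-morphism. Here the plan is to build the horizontal internal hom $[\thg,q]$ by mimicking the construction of \cref{prop:commuting-multimorphisms-closed}: start from an internal hom for the non-commuting structure---built out of the parametrised right adjoint to $\disc{(\thg)} \bxt (\thg)$ on bimodules, in analogy with \cref{lem:mnd-hom}---and then cut down to a subobject via an equaliser analogous to \eqref{eq:commuting-hom-equaliser} that enforces commutativity of the relevant hexagon. The necessary equalisers and reindexings exist by \cref{hyp:equalisers-in-horizontal} and the fibrancy of $\Bim(\VCatSym)$ (\cref{thm:bim-c-fibrant}), and their compatibility with source and target is inherited from the corresponding compatibilities in $\VCatSym$ established in \cref{thm:catsym-oplaxmonoidal}. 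In the end, the entire result is essentially a matter of transferring the structure on $\VCatSym$ through the general bimodule construction; the genuine novelty is contained in the preceding two lemmas, and the present theorem is their harvest.
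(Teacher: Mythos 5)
Your core argument coincides exactly with the paper's: the proof is the direct invocation of \cref{thm:ctensor-bimodules}, with the hypotheses supplied by \cref{thm:catsym-oplaxmonoidal} and \cref{thm:catsym-has-refl-coeq}, and the paper's proof consists of precisely this one sentence.

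Where you diverge is in your (correct) observation that the statement claims a \emph{closed} structure while \cref{thm:ctensor-bimodules} only delivers a symmetric oplax monoidal double category, with no closedness claim. The paper silently glosses over this point; its proof does not address closedness at all, and the only closedness actually established in the general development is that of the vertical category $\MndC$ via \cref{thm:ctensor-monads} and of the multicategory $\CommMultMndC$ via \cref{prop:commuting-multimorphisms-closed}. Your proposed route to horizontal closedness---local presentability of each $\BimC[(A,a),(B,b)]$ via the monadic adjunction of \cref{thm:free-bimodule}, an adjoint functor theorem argument using \cref{thm:ctensor-preserves-coequalisers}, and an equaliser construction mimicking \eqref{eq:commuting-hom-equaliser}---is plausible but remains a sketch: in particular, closedness in the sense of \cref{def:closeddouble} requires the internal homs on $\BimC_0$ and $\BimC_1$ to be strictly compatible with source and target and with the evaluation maps, and none of this is verified either by you or by the paper. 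So your proposal is, if anything, more honest than the paper's proof about what the cited theorem actually provides; just be aware that the closedness portion is not a completed argument, and that the paper offers no help in completing it.
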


\begin{proof} This follows from~\cref{thm:ctensor-bimodules}, which we can apply thanks to 
 \cref{thm:catsym-oplaxmonoidal} and \cref{thm:catsym-has-refl-coeq}.
\end{proof}

We define the double category of symmetric sequences $\VSym$ as the full double subcategory of $\VCatSym$ spanned by sets, viewed as discrete $\ca{V}$-categories, so that there is an inclusion
\begin{equation}
\label{equ:sym-in-catsym}
\VSym \hookrightarrow \VCatSym \mathrlap{.}
\end{equation}

Monads in $\VSym$ are exactly symmetric $\ca{V}$-multicategories (\cf \cite{BaezJ:higda}). Indeed, a monad~$(A,a)$ in  $\VSym$ gives a symmetric $\ca{V}$-multicategory with set of objects $A$ and hom-objects $a[x_1, \ldots, x_n;x] \in \ca{V}$, for $x_1, \ldots, x_n, x \in A$. The composition is given by the multiplication of the monad and the identity maps are given by the unit of the monad. Conversely, given a symmetric $\ca{V}$-multicategory $A$, we obtain a monad in $\VSym$ with
underlying set the set of objects of $A$ and symmetric sequence $\mathrm{hom}_A \co A \tickar A$.
See~\cite[Example~4.1.4]{GambinoJoyal} for more details. Furthermore, the free monad on a symmetric sequence is exactly the free symmetric multicategory on it. From now on, we will identify monads
in  $\VSym$ with symmetric $\ca{V}$-multicategories and simply write $A, B, C, \ldots$ to denote them.

\begin{lem} 
\label{thm:sym-has-refl-coeq}
$\VSym$ is fibrant and has stable local reflexive coequalisers.
\end{lem}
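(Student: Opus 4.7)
The plan is to deduce both properties by transferring them from the already-established case of $\VCatSym$ (\cref{thm:catsym-has-refl-coeq}) along the full inclusion $\VSym \hookrightarrow \VCatSym$ of~\eqref{equ:sym-in-catsym}. Since $\VSym$ is the full sub-double category of $\VCatSym$ spanned by sets (viewed as discrete $\ca{V}$-categories), the hom-categories coincide, i.e., $\VSym[A,B] = \VCatSym[A,B]$ for sets $A,B$, and horizontal composition in $\VSym$ is computed exactly as in $\VCatSym$. Thus the only thing to check in each case is that the relevant constructions performed in $\VCatSym$ stay inside $\VSym$, that is, do not leave the class of set-objects.

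For fibrancy, I would start from a vertical $1$-cell $f \colon A \to B$ in $\VSym$, which is simply a function between sets. By~\cref{thm:catsym-has-refl-coeq}(I), it admits a companion $\comp f \colon A \tor B$ and a conjoint $\coj f \colon B \tor A$ in $\VCatSym$, together with the associated $2$-morphisms $p_1, p_2, q_1, q_2$. Crucially, by construction these horizontal $1$-cells have the same source and target as $f$, namely the sets $A$ and $B$; hence they lie in $\VSym$, and likewise for the $2$-morphisms, whose sources and targets are all among $\{A, B\}$ and $\{\hid_A, \hid_B, \comp f, \coj f, f\}$. Since $\VSym$ is full on the set-objects, this suffices to show that $\VSym$ has all companions and conjoints, and hence is fibrant.

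For stable local reflexive coequalisers, each hom-category $\VSym[A,B]$ is by definition the same as $\VCatSym[A,B]$, which by~\cref{thm:catsym-has-refl-coeq}(VIII) has reflexive coequalisers. Stability under pre- and post-composition in $\VSym$ reduces to stability in $\VCatSym$: if $z \colon B \tor C$ is a horizontal map in $\VSym$ (so $B, C$ are sets), then the functor $z \hcomp (\thg) \colon \VSym[A,B] \to \VSym[A,C]$ agrees with $z \hcomp (\thg) \colon \VCatSym[A,B] \to \VCatSym[A,C]$, which preserves reflexive coequalisers by~\cref{thm:catsym-has-refl-coeq}(VIII); symmetrically for post-composition.

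The only point requiring care is the verification that the embedding $\VSym \hookrightarrow \VCatSym$ really is closed under the formation of companions, conjoints, and horizontal composition, but this is immediate from the fact that the constructions preserve source and target objects, which in the case of $\VSym$ are sets by definition. No serious obstacle arises.
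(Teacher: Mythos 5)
Your proof is correct and follows the same route as the paper, which simply observes that the structure established for $\VCatSym$ in \cref{thm:catsym-has-refl-coeq} restricts along the full inclusion \eqref{equ:sym-in-catsym}; you have merely spelled out the (routine) verification that companions, conjoints, and the local coequalisers all have sources and targets among the discrete objects and hence stay inside $\VSym$.
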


\begin{proof} The required structure and properties were established for $\VCatSym$ in \cref{thm:catsym-has-refl-coeq} and it is clear that they restrict to $\Sym$ along the inclusion in \eqref{equ:sym-in-catsym}.
\end{proof}

Thanks to \cref{thm:sym-has-refl-coeq}, we can consider the double category $\Bim(\VSym)$. The inclusion in~\cref{equ:sym-in-catsym} induces a commutative diagram of inclusions
\begin{equation*}
\begin{tikzcd}
\VSym \ar[d] \ar[r] & \VCatSym \ar[d]  \\
\Bim(\VSym) \ar[r]&  \Bim(\VCatSym)  \mathrlap{.}
\end{tikzcd}
\end{equation*}

We define
\[
\VSMultProf =  \Bim(\VSym)
\]
and refer to $\VSMultProf$ as the \emph{double category of symmetric $\ca{V}$-multiprofunctors}.
Indeed, as we explain below, $\VSMultProf$  is the analogue for symmetric $\ca{V}$-multicategories of the double category $\VProf$ of $\ca{V}$-categories, $\ca{V}$-functors and $\ca{V}$-profunctors.

We describe explicitly $\VSMultProf$. As mentioned, its objects are symmetric $\ca{V}$-multicategories.
Similarly, the vertical maps of $\Bim(\VSym)$ are symmetric $\ca{V}$-multifunctors. Thus, the vertical category of $\Bim(\VSym)$ is the category $\VSMultCat$ of symmetric $\ca{V}$-multicategories and symmetric $\ca{V}$-multifunctors. The horizontal maps of $\Bim(\VSym)$ were called bimodules in~\cite{GambinoJoyal}, but we prefer to refer to them as \emph{symmetric multiprofunctors} in order to emphasize the analogy with profunctors. We unfold the definition for the convenience of the readers.

\begin{defi} \label{defi:smultprof}
Let $A$ and $B$ be symmetric $\ca{V}$-multicategories.  A \emph{symmetric
$\ca{V}$-multiprofunctor} $p \co A \rightsquigarrow B$ consists of
\begin{enumerate}[(i)] 
\item for every $a \in A$ and $(b_1, \ldots, b_n) \in SB$, an object
\[
p[b_1, \ldots, b_n; a] \in \ca{V} \mathrlap{,}
\]
subject to equivariance conditions, so as to give a symmetric sequence $p \co A \rightsquigarrow B$; 
\item for every $\vec{a} = (a_1, \ldots, a_n) \in SA$, $\vec{b}_1, \ldots, \vec{b}_n \in SB$, and $a \in A$, 
a map
\[
\rho_{\vec{a}, \vec{b}_1, \ldots, \vec{b}_n, a} \co 
p[\vec{b}_1; a_1] \otimes \cdots \otimes p[\vec{b}_n; a_n] \otimes
\mathrm{hom}_A[\vec{a};a] \to
p[\vec{b}_1 \oplus \ldots \oplus \vec{b}_n; a]  \mathrlap{,}
\]
 subject to equivariance conditions, so as to give a right action $\rho \co p \circ \mathrm{hom}_A \Rightarrow  p$; 
\item for every $\vec{b} =  (b_1, \ldots, b_n) \in SB$, $\vec{b}_1, \ldots, \vec{b}_n \in SB$, and $a \in A$, 
a map
\[
\lambda_{\vec{b}, \vec{b}_1, \ldots, \vec{b}_n, a} \co
\mathrm{hom}_B[\vec{b}_1; b_1] \otimes \cdots \otimes \mathrm{hom}_B[\vec{b}_n; b_n] \otimes 
p[\vec{b}; a ] \to 
p[\vec{b}_1 \oplus \ldots \oplus \vec{b}_n; a] \mathrlap{,}
\]
subject to equivariance conditions, so as to give a left action $\lambda \co \mathrm{hom}_B \circ p \Rightarrow  p$;
\end{enumerate}
satisfying the bimodule compatibility conditions in \cref{defi:bimodules}.
\end{defi} 

Just as the values $f \in p[b;a]$ of a profunctor $p \co A \to B$ between categories can be understood as heterogenous morphisms~$f \co b \to a$, the values $f \co p[b_1, \ldots, b_n; a]$ of a symmetric multiprofunctor
$p \co A \rightsquigarrow B$ between symmetric multicategories can be understood as heterogenous multimorphisms 
$f \co b_1, \ldots b_n \to a$. From this point of view, the data in parts~(ii) and (iii) \cref{defi:smultprof}
can be readily understood as describing how to compose such multimorphisms 
with the multimorphisms of $A$ and of $B$, respectively. The equivariance, associativity,
unitality and bimodule compatibility conditions are then rather natural laws to impose on such
composition operations. As typical in this subject, these operations can be vividly represented
in terms of grafting operations on trees. Observe that the notation $\VSMultProf$ follows our convention of naming bimodule double categories by the name of their horizontal maps. 

We now obtain the main application of our general theory.

\begin{thm} \label{thm:bv-bimod} 
Let $\ca{V}$ be locally presentable cartesian closed category, considered as a cartesian monoidal category.
The double category $\VSMultProf$ of symmetric $\ca{V}$-multicategories, symmetric $\ca{V}$-multifunctors 
and symmetric $\ca{V}$-multiprofunctors
admits a symmetric oplax monoidal closed structure, given on objects by the Boardman--Vogt tensor product of symmetric $\ca{V}$-multicategories.
\end{thm}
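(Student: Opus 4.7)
The plan is to apply \cref{thm:ctensor-bimodules} directly to the double category $\dcC = \VSym$. Concretely, this requires two things: first, to endow $\VSym$ with a symmetric normal oplax monoidal closed structure, and second, to verify that $\VSym$ satisfies \cref{hyp:fibrancy,hyp:coproducts-in-mnd,hyp:equalisers,hyp:tabulators,hyp:free-monad,hyp:coequalisers,hyp:equalisers-in-horizontal,hyp:refl-coequalisers}. Once these are in place, \cref{thm:ctensor-bimodules} produces a symmetric oplax monoidal closed structure on $\Bim(\VSym) = \VSMultProf$, and it remains only to identify the tensor product on objects with the Boardman--Vogt tensor product.

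For the first step, I would restrict the arithmetic product from \cref{thm:catsym-oplaxmonoidal} along the full inclusion $\VSym \hookrightarrow \VCatSym$ of \eqref{equ:sym-in-catsym}. Since on the vertical fragment this inclusion is the discrete-category functor $\Set \hookrightarrow \VCat$, and since the arithmetic product of symmetric sequences from \cite{Paper1} is already defined at this level, one checks that $\VSym$ is closed under $\bxt$ and admits the interchange, unit and symmetry $2$-morphisms inherited from $\VCatSym$. Closedness requires more care: the internal hom $\boxhom{B, C}$ of sets (not discrete $\ca{V}$-categories) is the set $C^B$, and one must verify that the formula~\eqref{equ:closed-boxhom} for $\boxhom{y,z}$ indeed produces a symmetric sequence between sets when $y,z$ are so; this is immediate as the ends and products of \eqref{equ:closed-boxhom} remain well defined. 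Hence $\VSym$ acquires a symmetric normal oplax monoidal closed structure.

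For the second step, \cref{thm:sym-has-refl-coeq} already delivers \cref{hyp:fibrancy,hyp:refl-coequalisers}. \cref{hyp:equalisers} holds since $(\VSym)_0 = \Set$, and \cref{hyp:tabulators} is obtained by observing that the $1$-tabulator construction of \cref{thm:catsym-has-refl-coeq} applied to a sequence $x\colon A\rightsquigarrow B$ between discrete $\ca{V}$-categories yields the discrete $\ca{V}$-category on the set $\coprod_{a,b}\ca V(I, x[(b);a])$. For \cref{hyp:equalisers-in-horizontal} one uses that each hom-category $\VSym[A,B] = \VCat(SB^{\mathrm{op}} \times A, \ca V)$ is locally presentable, hence has equalisers. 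For \cref{hyp:free-monad,hyp:coproducts-in-mnd,hyp:coequalisers}, the key observation is that monads in $\VSym$ are precisely symmetric $\ca{V}$-multicategories; free monads on endo-sequences exist by the standard free-monoid results of \cite{KellyGM:unittc,FreeMonoids} applied to the substitution monoidal structure on $\EndCA[A] = \VCat(SA^{\mathrm{op}} \times A, \ca V)$, whose horizontal composition preserves sifted colimits in each variable by the analogue of \cite[Corollary~4.4.9]{GambinoJoyal}. Combined with fibrancy via \cref{prop:strongerfreemonads} and the local cocompleteness arguments of \cref{lem:limits-in-end-mnd}(iii), this yields free monads globally and coproducts and coequalisers in each $\Mnd_A(\VSym)$.

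With all hypotheses verified, \cref{thm:ctensor-bimodules} produces the desired symmetric oplax monoidal closed structure on $\VSMultProf$, and \cref{thm:ctensor-monads} gives a commuting tensor product on $\Mnd(\VSym) = \VSMultCat$. The main obstacle is the final identification: showing that this commuting tensor product of monads in $\VSym$ coincides with the Boardman--Vogt tensor product $M \ctensor N$ of \cite{ElmendorfA:percma}. For this I would exploit the universal property of \cref{thm:ctensor-monads}: commuting binary monad multimorphisms $M, N \rightarrow P$ in $\VSym$ unfold, via the concrete descriptions of the arithmetic product and the interchanger $\xi$, to pairs consisting of a symmetric multifunctor $M \rightarrow P$ and a symmetric multifunctor $N \rightarrow P$ whose images ``commute up to permutation'' in the precise sense encoded by the hexagon of \cref{defi:commuting-binary-monad-multimorphism}; this is exactly the universal property characterising the Boardman--Vogt tensor product in \cite{ElmendorfA:percma}. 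The match is then forced by uniqueness of representing objects.
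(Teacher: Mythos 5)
Your proposal is correct in substance, but it takes a different route from the paper. You apply \cref{thm:ctensor-bimodules} directly to $\dcC=\VSym$, which means re-verifying the symmetric normal oplax monoidal closed structure and all of \cref{hyp:fibrancy,hyp:coproducts-in-mnd,hyp:equalisers,hyp:tabulators,hyp:free-monad,hyp:coequalisers,hyp:equalisers-in-horizontal,hyp:refl-coequalisers} for $\VSym$ itself. The paper instead does all of this work once for the larger double category $\VCatSym$ (\cref{thm:catsym-oplaxmonoidal,thm:catsym-has-refl-coeq}), obtains the oplax monoidal closed structure on $\Bim(\VCatSym)$ as \cref{thm:bv-substitutes}, and then deduces \cref{thm:bv-bimod} by restricting along the full inclusion $\Bim(\VSym)\hookrightarrow\Bim(\VCatSym)$, using only \cref{thm:sym-has-refl-coeq} to ensure $\Bim(\VSym)$ exists. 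The paper's route buys the extra generality of \cref{thm:bv-substitutes} for symmetric substitutes and avoids redoing the hypothesis checks; your route avoids having to argue that the commuting tensor products of monads and of bimodules stay inside the full subcategory, since everything is computed there from the start. Your verifications do go through: the fibres $\End_A(\VSym)$ and $\Mnd_A(\VSym)$ over a set $A$ coincide with those of $\VCatSym$ over $\disc A$, so the arguments of \cref{thm:catsym-has-refl-coeq} specialise verbatim, and the $1$-tabulator of $x\colon A\rightsquigarrow B$ in $\VSym$ is indeed the set $\coprod_{a,b}\ca{V}(I,x[(b);a])$ because the universal property in $\VSym$ only quantifies over discrete objects. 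One point you pass over too quickly: for closedness you need not just that $\boxhom{B,C}=C^B$ as a set, but that the $\VCat$-internal hom of discrete $\ca{V}$-categories is again discrete (equivalently, that $\boxhom{y,z}$ of \eqref{equ:closed-boxhom} has discrete source and target); this holds because the initial object of a cartesian closed category is strict, so the hom-objects $\prod_b\disc C(fb,gb)$ collapse to $I$ or $0$, but it deserves a sentence. Your final identification of the commuting tensor product with the Boardman--Vogt tensor product via the universal property of \cref{thm:ctensor-monads} and the hexagon of \cref{defi:commuting-binary-monad-multimorphism} is the same identification the paper makes, just phrased through \cite{ElmendorfA:percma} rather than through restriction from symmetric substitutes.
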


\begin{proof} This follows from~\cref{thm:bv-substitutes} by observing that the 
commuting tensor product of symmetric substitutes restricts to the Boardman--Vogt tensor
product of symmetric multicategories. 
\end{proof}

\begin{rmk}  Part of \cref{thm:bv-bimod} is the statement that the vertical category of $\VSMultProf$
admits a symmetric monoidal structure, which is in fact closed by \cref{thm:ctensor-monads}.
This recovers the known fact that  $\VSMultCat$ admit a symmetric monoidal closed structure.
 In fact, by \cref{thm:nctensor-smc}, this category admits also a non-commuting tensor product, 
 which was constructed in~\cite{ElmendorfA:percma} and should be regarded as the counterpart for symmetric
$\ca{V}$-multicategories of the funny tensor product of $\ca{V}$-categories. 
Compare also the pushout in~\cref{eq:pushout} with \cite[Construction~4.10]{ElmendorfA:percma}.
%Our construction of the non-commuting and commuting tensor products in the general setting generalises the development therein. 
\end{rmk} 

We write $\VOpdBim$ for the full double subcategory of $\VSMultProf$ spanned by symmetric $\ca{V}$-multicategories with one object, which are the same thing as $\ca{V}$-operads~\cite{KellyGM:opejpm,Mayoperad}. Following the terminology in the literature, we refer to 
the horizontal maps of $\VOpdBim$ as bimodules. An explicit definition of this notion can be readily obtained
by instantiating \cref{defi:smultprof} taking $A$ and $B$ therein to be operads, but we do not spell it out.
Our final theorem shows that the Boardman--Vogt tensor product of operads extends to a 
symmetric normal oplax monoidal closed structure on $\VOpdBim$.

\begin{thm} \label{corollary:oplaxoperads} 
Let $\ca{V}$ be locally presentable category, considered as a cartesian symmetric monoidal category.
The double category $\VOpdBim$ of symmetric $\ca{V}$-operads, symmetric $\ca{V}$-operad morphisms, and symmetric $\ca{V}$-operad
bimodules  admits a symmetric  oplax monoidal closed structure, given by the Boardman--Vogt tensor product.
\end{thm}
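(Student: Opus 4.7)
The proof will proceed by applying the general machinery of \cref{sec:commuting,sec:bimodules} to a suitable one-object base double category, with care taken around the hypotheses. First, I would observe that the full sub-double-category $\dcD \subseteq \VSym$ on the single object $*$ has monads equal to operads, so $\Bim(\dcD) = \VOpdBim$. Since $\dcD_0 = 1$, the symmetric normal oplax monoidal structure from $\VSym$ restricts to $\dcD$: the substitution product on horizontal maps restricts (as $1 \times 1 = 1$), as does the arithmetic product; closedness of the latter is furnished by the divided powers internal hom recalled in \cref{thm:divided-powers}.

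Most of the hypotheses listed in \cref{tab:assumptions} carry over to $\dcD$ from those already verified for $\VSym$ in \cref{thm:sym-has-refl-coeq}: fibrancy is trivial, equalisers in $\dcD_0 = 1$ are trivial, coproducts and coequalisers in $\Mnd_*(\dcD)$ follow from the corresponding facts for operads, and local equalisers and stable local reflexive coequalisers transfer from $\VSym$ by restriction. The main obstacle, and the delicate point of the argument, is \cref{hyp:tabulators}: since $\dcD_0 = 1$, $\dcD$ does not literally admit $1$-tabulators. My plan is to show that this hypothesis is dispensable in the one-object setting. Tabulators enter our development only in the proofs of \cref{lemma1,lem:mnd-hom} to secure representability of the functors underlying the enrichments $\uEndC$ and $\uMndC$, which in turn supply the underlying objects of the internal homs constructed in \cref{prop:multimorphisms-closed,prop:commuting-multimorphisms-closed}. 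When $\dcD_0 = 1$, these underlying objects are forced to be $*$, and so the construction of the internal hom simplifies drastically: for monads (operads) $(*, b)$ and $(*, c)$ one directly takes $\nchom{b,c} = \boxhom{\hid_*, c}$, given as a monad by the analogue of \cref{eq:mu-for-internal-hom}, and then extracts $\chom{b,c}$ as the equaliser \eqref{eq:commuting-hom-equaliser}. I expect that the proofs of \cref{thm:ctensor-monads,thm:ctensor-bimodules} go through essentially verbatim with this modification, since all subsequent uses of $\uEndC$ and $\uMndC$ collapse to the trivial fiber.

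Once the adapted theory has been applied to $\dcD$, the conclusion is immediate: the category of operads acquires the expected symmetric monoidal closed structure with tensor given by the Boardman--Vogt tensor product (recovering the classical result, \cf \cite{ElmendorfA:percma} and \cite{GarnerLopezFranco} in the single-object context), and this extends to a symmetric oplax monoidal closed structure on $\VOpdBim$ by \cref{thm:ctensor-bimodules}. To confirm that this structure indeed coincides with what is obtained by restricting \cref{thm:bv-bimod} to one-object multicategories, I would compare the universal properties defining the two commuting tensor products; the point is that a commuting bimodule multimorphism in the sense of \cref{def:bimodule-multimorphism} between operad bimodules is the same thing whether computed in $\VSym$ or in $\dcD$, since all relevant object-components live in $1$. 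The interaction between composition of bimodules and the tensor product described in \cref{thm:interchange-free-bimodules} then specialises to describe the interaction of the Boardman--Vogt tensor product with composition of operad bimodules, as desired.
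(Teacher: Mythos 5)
The paper's proof of this theorem is a one-line restriction argument: $\VOpdBim$ is by definition the full sub-double-category of $\VSMultProf$ spanned by one-object symmetric multicategories, the object-set of a Boardman--Vogt tensor product is the cartesian product of the object-sets, so $1\times 1=1$ and the symmetric oplax monoidal structure already established on $\VSMultProf$ in \cref{thm:bv-bimod} simply restricts. Your proposal instead tries to re-run the machinery of \cref{sec:commuting,sec:bimodules} from scratch on the one-object sub-double-category $\dc{D}\subseteq\VSym$, and this is where it breaks down.

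The gap is your treatment of closedness. You correctly notice that $\dc{D}$ fails \cref{hyp:tabulators}, but the obstruction is not merely the tabulators: the internal homs themselves cannot exist in $\dc{D}$. The underlying object $\nchom{b,c}_0$ of the non-commuting hom must represent the functor $A\mapsto \Mnd(\dc{D})\bigl(\disc{A}\bxt(*,b),(*,c)\bigr)$ on $\dc{D}_0^{\op}=1^{\op}$ (\cref{lem:mnd-hom}); this is representable by the unique object $*$ only if the set $\Mnd(\dc{D})\bigl(\disc{*}\bxt(*,b),(*,c)\bigr)$ of operad morphisms $b\to c$ is a singleton, which is false in general. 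Your proposed substitute $\nchom{b,c}=\boxhom{\hid_*,c}$ therefore does not have the required universal property: as the paper's construction in $\VSym$ shows, the internal hom of two operads is a genuinely many-object multicategory whose objects are the operad morphisms, and it cannot be realised inside the one-object world. This is fatal to the plan, because the paper's only route from pre-representability to representability of the commuting multimorphisms is \cref{prop:closed-preuniversal-universal}, which requires closedness; without it you would have to verify universality \emph{with parameters} directly, which is exactly what the whole architecture of the argument is designed to avoid. The repair is the paper's argument: establish everything in $\VSMultProf$, where the homs do exist, and then restrict the tensor product (not the hom construction) to the full sub-double-category of operads. Your closing paragraph, which compares universal properties with the restriction of \cref{thm:bv-bimod}, is in fact the entire proof rather than a concluding sanity check.
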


\begin{proof} This follows by~\cref{thm:bv-bimod} observing that the Boardman--Vogt tensor product of
symmetric multicategories restricts to operads.
\end{proof} 

While the definition of the Boardman--Vogt tensor product symmetric multicategories goes back to~\cite{BoardmanJ:homias} and its definition on operad bimodules to~\cite{DwyerW:BoardmanVtpo},
the interaction between the Boardman--Vogt tensor product and the composition operation of bimodules isolated  in~\cref{thm:bv-bimod,corollary:oplaxoperads}
appears to be new. For operads, this amounts to the fact that, 
given operad bimodules $p_1 \co A_1 \rightsquigarrow B_1$ and $p_2 \co A_2 \rightsquigarrow B_2$,
$q_1 \co B_1 \rightsquigarrow C_1$ and $q_2 \co B_2 \rightsquigarrow C_2$, we have an operad morphism
\[
\begin{tikzcd}[column sep=.4in]
 A_1\ctensor A_2\ar[rr,rightsquigarrow,"(q_1\bimcomp p_1) \ctensor (q_2 \bimcomp p_2)"]\ar[d,equal]\ar[drr,phantom,"\Two \xi"] && C_1\ctensor C_2\ar[d,equal] \\
 A_1\ctensor A_2\ar[r,rightsquigarrow,"p_1\ctensor p_2"'] & B_1\ctensor B_2\ar[r,rightsquigarrow,"q_1\ctensor q_2"'] & C_1\ctensor C_2 \mathrlap{,}
\end{tikzcd}
\]
satisfying appropriate coherence conditions. As discussed in \cref{thm:not-normal}, we do not expect the oplax monoidal structures of \cref{thm:bv-bimod} and \cref{corollary:oplaxoperads} 
to be normal. Hence, the Boardman--Vogt tensor product does not seem to yield a 
symmetric monoidal oplax structure on the horizontal bicategories of $\VSMultProf$ and $\VOpdBim$. Happily,  the double categories $\VSMultProf$ and $\VOpdBim$ 
have instead a genuine symmetric oplax monoidal structure.

\bibliographystyle{alpha}
\bibliography{References}

\end{document}